\documentclass[a4paper, leqno, 10pt]{amsart}

\usepackage[T1]{fontenc}	
\usepackage{graphicx}
\usepackage{amssymb,centernot}
\usepackage{upgreek}
\usepackage{mathrsfs}
\usepackage{pdfsync}
\usepackage[usenames,dvipsnames]{xcolor}
\usepackage[inline,shortlabels]{enumitem}
\usepackage[sort,numbers]{natbib}						
\usepackage{verbatim}								
\usepackage{dsfont}									
\usepackage{microtype}

\usepackage{pdflscape}
\usepackage{afterpage}

\usepackage[bookmarks,
						colorlinks,				
						breaklinks,unicode,
						hypertexnames=false,
						citecolor=OliveGreen,
						linkcolor=Maroon
						]{hyperref} 


\usepackage{multirow}

\usepackage{xypic}
\usepackage{mathtools}
\usepackage{xspace}

\usepackage[a4paper,top=3.2cm,bottom=2.7cm,left=3cm,right=3cm, bindingoffset=0mm]{geometry}
\linespread{1.05}
\usepackage{booktabs}

\usepackage{tikz}
\usetikzlibrary{calc, positioning, shapes.geometric, patterns, cd,decorations.pathreplacing}
\usepackage{pgfplots}

\makeatletter
\newcommand*{\centerfloat}{%
  \parindent \z@
  \leftskip \z@ \@plus 1fil \@minus \textwidth
  \rightskip\leftskip
  \parfillskip \z@skip}
\makeatother





\newcommand{\boldpi}{{\boldsymbol \pi}}

%


\usepackage{xparse}

\ExplSyntaxOn
\NewDocumentCommand{\makeabbrev}{mmm}
 {
  \yoruk_makeabbrev:nnn { #1 } { #2 } { #3 }
 }

\cs_new_protected:Npn \yoruk_makeabbrev:nnn #1 #2 #3
 {
  \clist_map_inline:nn { #3 }
   {
    \cs_new_protected:cpn { #2 } { #1 { ##1 } }
   }
 }
 \ExplSyntaxOff

\makeabbrev{\textbf}{tbf#1}{a,b,c,d,e,f,g,h,i,j,k,l,m,n,o,p,q,r,s,t,u,v,w,x,y,z,A,B,C,D,E,F,G,H,I,J,K,L,M,N,O,P,Q,R,S,T,U,V,W,X,Y,Z}

\makeabbrev{\textbf}{bf#1}{a,b,c,d,e,f,g,h,i,j,k,l,m,n,o,p,q,r,s,t,u,v,w,x,y,z,A,B,C,D,E,F,G,H,I,J,K,L,M,N,O,P,Q,R,S,T,U,V,W,X,Y,Z}

\makeabbrev{\textsf}{tsf#1}{a,b,c,d,e,f,g,h,i,j,k,l,m,n,o,p,q,r,s,t,u,v,w,x,y,z,A,B,C,D,E,F,G,H,I,J,K,L,M,N,O,P,Q,R,S,T,U,V,W,X,Y,Z}

\makeabbrev{\mathsf}{mss#1}{a,b,c,d,e,f,g,h,i,j,k,l,m,n,o,p,q,r,s,t,u,v,w,x,y,z,A,B,C,D,E,F,G,H,I,J,K,L,M,N,O,P,Q,R,S,T,U,V,W,X,Y,Z}

\makeabbrev{\mathfrak}{mf#1}{a,b,c,d,e,f,g,h,i,j,k,l,m,n,o,p,q,r,s,t,u,v,w,x,y,z,A,B,C,D,E,F,G,H,I,J,K,L,M,N,O,P,Q,R,S,T,U,V,W,X,Y,Z}

\makeabbrev{\mathrm}{mrm#1}{a,b,c,d,e,f,g,h,i,j,k,l,m,n,o,p,q,r,s,t,u,v,w,x,y,z,A,B,C,D,E,F,G,H,I,J,K,L,M,N,O,P,Q,R,S,T,U,V,W,X,Y,Z}

\makeabbrev{\mathbf}{mbf#1}{a,b,c,d,e,f,g,h,i,j,k,l,m,n,o,p,q,r,s,t,u,v,w,x,y,z,A,B,C,D,E,F,G,H,I,J,K,L,M,N,O,P,Q,R,S,T,U,V,W,X,Y,Z}

\makeabbrev{\mathcal}{mc#1}{A,B,C,D,E,F,G,H,I,J,K,L,M,N,O,P,Q,R,S,T,U,V,W,X,Y,Z}

\makeabbrev{\mathbb}{mbb#1}{A,B,C,D,E,F,G,H,I,J,K,L,M,N,O,P,Q,R,S,T,U,V,W,X,Y,Z}

\makeabbrev{\mathscr}{ms#1}{A,B,C,D,E,F,G,H,I,J,K,L,M,N,O,P,Q,R,S,T,U,V,W,X,Y,Z}

\makeabbrev{\mathrm}{#1}{
Id,id,ran,rk,diag,stab,ann,conv,pr,ev,tr,End,Hom,sgn,im,op,can,fin,ext,red,tot,
%
rot,usc,lsc,Lip,LocLip,lip,bSymLip,osc,AC,loc,uloc,spec,coz,z,ul,
%
supp,Opt,Adm,Cpl,Geo,GeoOpt,GeoAdm,GeoCpl,reg,
%
bd,co,Ric,Exp,dExp,dist,seg,Seg,cut,fcut,Cut,SDiff,Iso,Isom,diam,cl,Homeo,Diff,Der,vol,dvol,inj,relint, Graph,sub,length,eucl,
%
var,law,Var,Gam,pa,so,iso,fs,inv,pqi,mix,inn,
TestF,
hyp,epi,
}

\makeabbrev{\mathsf}{#1}{CD,BE,MCP,Ent,wMTW,MTW,RCD,QCD,EVI,Irr,SC,wFe,VA,UP,Curv,Alex,CAT}

\newcommand{\bLip}{\mathrm{Lip}_b}


\newcommand{\T}{\tau} 
\newcommand{\A}{\Sigma} 
\newcommand{\Bo}[1]{\msB_{#1}} 

\newcommand{\eps}{\varepsilon}

\newcommand{\defeq}{\eqqcolon}
\renewcommand{\complement}{\mathrm{c}}

\newcommand{\mathsc}[1]{\text{\textsc{#1}}}
\newcommand{\emparg}{{\,\cdot\,}}

\newcommand{\slo}[2][]{\abs{\mathrm{D}#2}_{#1}}
\newcommand{\tslo}[2][]{\tabs{\mathrm{D}#2}_{#1}}
\newcommand{\wslo}[2][]{\abs{\mathrm{D}#2}_{w,\, #1}}
\newcommand{\twslo}[2][]{\tabs{\mathrm{D}#2}_{w,\, #1}}

\newcommand{\dint}[2][]{\sideset{^{#1}\!\!\!}{_{#2}^{\scriptstyle\oplus}}\int}

\newcommand{\Ch}[1][]{\mathsf{Ch}_{#1}}
\newcommand{\CE}[1][]{\mathsf{CE}_{#1}}

\newcommand{\forallae}[1]{{\textrm{\,for ${#1}$-a.e.\,}}}
\newcommand{\as}[1]{\quad #1\text{-a.e.}}
\newcommand{\Leb}{{\mathrm{Leb}}}
\newcommand{\dLeb}{{\mathrm{dLeb}}}


\newcommand{\dom}{\mcF}
\newcommand{\domb}{\mcF_b}

\newcommand{\domain}[1]{\msD({#1})}
\newcommand{\dotloc}[1]{{{#1}^\bullet_{\loc}}}

\newcommand{\domext}{\mcF_e}
\newcommand{\domextb}{\mcF_{eb}}

\newcommand{\DzLoc}[1]{\mbbL^{#1}_{\loc}}
\newcommand{\DzLocE}[1]{\mbbL^{\!\!E\ #1}_{\loc}}
\newcommand{\DzLocBE}[1]{\mbbL^{\!\!E\ #1}_{\loc,b}}

\newcommand{\DzLocB}[1]{\mbbL^{#1}_{\loc,b}}
\newcommand{\DzLocBprime}[1]{\mbbL^{\prime\, #1}_{\loc,b}}
\newcommand{\DzB}[1]{\mbbL^{#1}_b}
\newcommand{\DzE}[1]{\mbbL^{\!\!E\ #1}}
\newcommand{\DzBE}[1]{\mbbL^{\!\!E\ #1}_b}

\newcommand{\Lipu}{\mathrm{Lip}^1}
\newcommand{\bLipu}{\mathrm{Lip}^1_b}

\newcommand{\aLoc}[1]{\mathsf{Loc}^{\times}_{#1}}
\newcommand{\Loc}[1]{\mathsf{Loc}_{#1}}

\newcommand{\Rad}[2]{\mathsf{Rad}_{#1,#2}}
\newcommand{\Poi}[2]{\mathsf{P}_{#1,#2}}
\newcommand{\Dou}[2]{\mathsf{D}_{#1,#2}}
\newcommand{\IH}[2]{\mathsf{IH}_{#1,#2}}
\newcommand{\DP}[2]{\mathsf{DP}_{#1,#2}}
\newcommand{\dRad}[2]{{#1}\textrm{-}\mathsf{Rad}_{#2}}
\newcommand{\ScL}[3]{\mathsf{ScL}_{#1,#2,#3}}
\newcommand{\cSL}[3]{\mathsf{cSL}_{#1,#2,#3}}
\newcommand{\SL}[2]{\mathsf{SL}_{#1,#2}}
\newcommand{\dcSL}[3]{{#1}\textrm{-}\mathsf{cSL}_{#2,#3}}
\newcommand{\dSL}[2]{{#1}\textrm{-}\mathsf{SL}_{#2}}

\newcommand{\sqf}[1]{\boldsymbol\Gamma_{#1}}
\newcommand{\sq}[1]{\mu_{#1}}
\newcommand{\sqE}[1]{\mu^E_{#1}}

\newcommand{\qe}[1]{\;{#1}\text{-q.e.}}

\DeclareMathOperator{\eqdef}{\coloneqq}

\let\epsilon\varepsilon

\let\temp\phi
\let\phi\varphi
\let\varphi\temp

\newcommand{\longrar}{\longrightarrow}
\newcommand{\rar}{\rightarrow}

\newcommand{\nlim}{\lim_{n}}								

\newcommand{\nliminf}{\liminf_{n }}

\newcommand{\nlimsup}{\limsup_{n }\,}


\newcommand{\diff}{\mathop{}\!\mathrm{d}}						

\newcommand{\tabs}[1]{\big\lvert#1\big\rvert}	
\newcommand{\abs}[1]{\left\lvert#1\right\rvert}						

\newcommand{\norm}[1]{\left\lVert#1\right\rVert}					
\newcommand{\set}[1]{\left\{#1\right\}}							
\newcommand{\tset}[1]{\big\{#1\big\}}							
\newcommand{\ttset}[1]{\{#1\}}									
\newcommand{\paren}[1]{\left(#1\right)}							
\newcommand{\tparen}[1]{\big({#1}\big)}
\newcommand{\braket}[1]{\left[#1\right]}							
\newcommand{\tbraket}[1]{\big[#1\big]}							
\newcommand{\tclass}[2][]{\big [#2\big]_{#1}}						

\newcommand{\Li}[2][]{\mathrm{L}_{#1}(#2)}						
\newcommand{\Lia}[2][]{\mathrm{Lip}^a_{#1}[#2]}						

\newcommand{\rep}[1]{\hat #1}									
\newcommand{\widerep}[1]{\widehat{#1}}									
\newcommand{\reptwo}[1]{\tilde{#1}}							
\newcommand{\scalar}[2]{\left\langle #1 \,\middle |\, #2\right\rangle}		
\newcommand{\hotimes}{\widehat{\otimes}}

\newcommand{\sym}[1]{{\scriptscriptstyle{(#1)}}}
\newcommand{\tym}[1]{{\scriptscriptstyle{\times #1}}}
\newcommand{\otym}[1]{{\scriptscriptstyle{\otimes #1}}}

\DeclareSymbolFont{symbolsC}{U}{pxsyc}{m}{n}
\SetSymbolFont{symbolsC}{bold}{U}{pxsyc}{bx}{n}
\DeclareFontSubstitution{U}{pxsyc}{m}{n}
\DeclareMathSymbol{\medcirc}{\mathbin}{symbolsC}{7}
\DeclareSymbolFont{symbolsZ}{OMS}{pxsy}{m}{n}
\SetSymbolFont{symbolsZ}{bold}{OMS}{pxsy}{bx}{n}
\DeclareFontSubstitution{OMS}{pxsy}{m}{n}

\DeclareMathOperator{\interior}{int}								
							
\newcommand{\seq}[1]{\paren{#1}}								
\newcommand{\tseq}[1]{{\big(#1\big)}}

\newcommand{\Cb}{\mcC_b}									
\newcommand{\Cz}{\mcC_0}									

\newcommand{\Mbp}{\mfM^+_b}
\newcommand{\Msp}{\mfM^+_\sigma}
\newcommand{\Ms}{\mfM^\pm_\sigma}
\newcommand{\Mb}{\mfM^\pm_b}
\newcommand{\MbpR}{\mfM^+_{b\mathsc{r}}}

\newcommand{\MbR}{\mfM^\pm_{b\mathsc{r}}}
\newcommand{\Ne}[1]{\msN_{#1}} 
\newcommand{\intE}{\interior_\mcE}
\newcommand{\clE}{\cl_\mcE}

\newcommand{\pfwd}{\sharp}
\DeclareMathOperator*{\esssup}{esssup}

\DeclareMathOperator{\car}{\mathds 1}

\DeclareMathOperator{\emp}{\varnothing} 
\newcommand{\N}{{\mathbb N}}
\newcommand{\R}{{\mathbb R}}

\newcommand{\restr}{\big\lvert}

\newcommand{\mrestr}[1]{_{#1}}

\allowdisplaybreaks

\usetikzlibrary{shapes.misc}
\tikzset{cross/.style={cross out, draw=black, minimum size=2*(#1-\pgflinewidth), inner sep=0pt, outer sep=0pt},
cross/.default={4pt}}

\newcommand{\comma}{\,\,\mathrm{,}\;\,}
\newcommand{\comm}{\,\,\mathrm{,}\;\,}
\newcommand{\semicolon}{\,\,\mathrm{;}\;\,}
\newcommand{\fstop}{\,\,\mathrm{.}}

\newcommand{\cdc}{\Gamma}


\newcommand{\bint}{-\!\!\!\!\!\!\int}

\usepackage{scrextend}						


\DeclareMathOperator{\inter}{int}

\newcommand{\Dz}[1]{\mbbL^{#1}}

\newcommand{\D}{\mcD} 

\newcommand{\hr}[1]{\bar\mssd_{#1}} 									

\DeclareMathOperator{\Cont}{\mcC}					

\numberwithin{equation}{section}
\theoremstyle{plain}
\newtheorem{theorem}{Theorem}[section]
\newtheorem*{theorem*}{Theorem}
\newtheorem{proposition}[theorem]{Proposition}
\newtheorem{lemma}[theorem]{Lemma}
\newtheorem{corollary}[theorem]{Corollary}

\theoremstyle{definition}
\newtheorem{definition}[theorem]{Definition}
\newtheorem*{defs*}{Definition}

\theoremstyle{remark}
\newtheorem{remark}[theorem]{Remark}
\newtheorem{example}[theorem]{Example}
\newtheorem{assumption}[theorem]{Assumption}

\renewcommand{\paragraph}[1]{\medskip\emph{#1}.\quad}

\begin{document}

\title[Persistence of Rademacher-type and Sobolev-to-Lipschitz properties
]{Persistence of Rademacher-type and\\ Sobolev-to-Lipschitz properties
}

\thanks{}

\author[L.~Dello Schiavo]{Lorenzo Dello Schiavo}
\address{Institute of Science and Technology Austria, Am Campus 1, 3400 Klosterneuburg, Austria}
\email{lorenzo.delloschiavo@ist.ac.at}
\thanks{This research was funded in whole, or in part, by the Austrian Science Fund (FWF) ESPRIT~208. For the purpose of open access, the author has applied a CC BY public copyright licence to any Author Accepted Manuscript version arising from this submission.}

\author[K.~Suzuki]{Kohei Suzuki}
\address{Department of Mathematical Science, Durham University, Science Laboratories, South Road, DH1 3LE, United Kingdom}
\email{kohei.suzuki@durham.ac.uk}
\thanks{K.S.~gratefully acknowledges funding by the Alexander von Humboldt Stiftung, Humboldt-Forschungsstipendium}

\keywords{Dirichlet spaces; Rademacher Theorem; Sobolev-to-Lipschitz property; tensorization}

\subjclass[2020]{31C25,30L99, 31E05}

\begin{abstract}
We consider the Rademacher- and Sobolev-to-Lipschitz-type properties for arbitrary quasi-regular strongly local Dirichlet spaces. We discuss the persistence of these properties under localization, globalization, transfer to weighted spaces, tensorization, and direct integration.
As byproducts we obtain: necessary and sufficient conditions to identify a quasi-regular strongly local Dirichlet form on an extended metric topological $\sigma$-finite possibly non-Radon measure space with the Cheeger energy of the space;
the tensorization of intrinsic distances;
the tensorization of the Varadhan short-time asymptotics.
\end{abstract}

\maketitle

\setcounter{tocdepth}{3}
\makeatletter
\def\l@subsection{\@tocline{2}{0pt}{2.5pc}{5pc}{}}
\def\l@subsubsection{\@tocline{2}{0pt}{4.75pc}{5pc}{}}
\makeatother
\tableofcontents

\section{Introduction}
The interplay between analysis, geometry, and stochastic analysis on non-smooth spaces has recently gained incredibly large attention.
Extraordinarily insightful theories have been developed connecting these three aspects on:
\begin{itemize*}
\item[] Ricci-limit spaces;
\item[] sub-Riemannian manifolds;
\item[] Alexandrov and Cartan--Alexandrov--Topogonov spaces;
\item[] metric measure spaces satisfying synthetic Ricci-curvature lower bounds \`a la Lott--Sturm--Villani and Ambrosio--Gigli--Savar\'e;
\item[] Lorentzian metric measure spaces;
\item[] configuration, Wasserstein, Wiener, and other infinite-dimensional spaces;
\end{itemize*}
only to name a few.

On the one hand, every metric measure space~$(X,\mssd,\mssm)$ may be endowed with a natural convex energy functional nicely capturing the properties of the metric measure structure, namely the \emph{Cheeger energy}~$\Ch[\mssd,\mssm]$ on the space of real-valued functions on~$X$.
Starting from~$\Ch[\mssd,\mssm]$, it is possible to build a sophisticated theory of non-smooth analysis on~$(X,\mssd,\mssm)$ encompassing for instance: first- and second-order Sobolev spaces, a (non-linear) `Laplacian', a (non-linear) `heat flow', etc; see e.g.~\cite{AmbGigSav14,Gig18}.
It was a particularly fruitful intuition ---in great generality put forward by N.~Gigli--- that the validity of the parallelogram identity for~$\Ch[\mssd,\mssm]$ ---equivalently, the linearity of the heat flow---  provides a setting most suitable to the study of the interplay mentioned above.

On the other hand however, other standard ---this time naturally \emph{quadratic}--- energy forms appear in many settings and are in principle unrelated to the metric-measure structure of the underlying space.
This includes, in particular, Dirichlet energy forms on configuration spaces~\cite{AlbKonRoe98,RoeSch99,ErbHue15,LzDSSuz21,LzDSSuz22a}, spaces of measures~\cite{LzDS17+,ForSavSod22,KonLytVer15}, Wiener spaces~\cite{AidKaw01,AidZha02,HinRam03}, and others.

Linking ---and possibly reconciling--- these two aspects of the theory of non-smooth spaces provides great insight, allowing us to import tools from stochastic analysis into metric measure geometry, and vice versa.
To this end, in~\cite{LzDSSuz20} we introduced several Rademacher- and Sobolev-to-Lipschitz-type properties comparing the domain of the energy form under consideration with the space of Lipschitz functions induced by an assigned distance.
We used this comparison to give sufficient conditions for the validity of the integral-type Varadhan short-time asymptotics of the heat semigroup with respect to an assigned distance.

Here, we address the persistence of these Rademacher- and Sobolev-to-Lipschitz-type properties under various constructions/transformations on arbitrary quasi-regular strongly local Dirichlet spaces, including:
\begin{itemize}
\item \emph{localization} to form restrictions (in the sense of e.g.~\cite[\S3]{Kuw98});
\item \emph{globalization} from form restrictions on coverings of the space;
\item transfer to \emph{weighted spaces} (more general than Girsanov transforms);
\item \emph{tensorization} to product spaces;
\item \emph{direct integration} (in the sense of~\cite{LzDS20, LzDSWir21}). 
\end{itemize}

As anticipated, our first goal is to compare notions in Dirichlet-form theory (quasi-regular strongly local forms, energy measures, square field operators) with notions in metric measure geometry (Cheeger energies, minimal weak upper gradients, minimal relaxed slopes).
Whenever possible, we do so in great generality, on extended metric topological $\sigma$-finite non-Radon measure spaces; in particular, away from any assumption of geometric type (e.g., measure doubling, Poincar\'e inequalities, synthetic curvature bounds, etc.).

\paragraph{Applications to particle systems} Apart from the theoretical motivations explained above, the persistence of the aforementioned properties is inspired also by geometric and analytic constructions of infinite particle systems. 
Each of the above transformations of Dirichlet forms plays a significant role for the construction of interacting particle systems of diffusions from a single diffusion process on the base space.
Starting from a one-particle diffusion associated with the Dirichlet energy on the base space, the correspondence between each operation on Dirichlet spaces and diffusion processes is the following:
\begin{itemize} 
\item \emph{localization} $\rightsquigarrow$ \emph{killing} of a one-particle diffusion upon exiting given set;
\item \emph{globalization} $\rightsquigarrow$ \emph{patching} a one-particle diffusions; 
\item \emph{tensorization} $\rightsquigarrow$ \emph{independent copy} of a one-particle diffusion;
\item transfer to \emph{weighted spaces} $\rightsquigarrow$ \emph{interacting} many-particle system;
\item \emph{direct integration} $\rightsquigarrow$  {\it superposition} of finite-particle systems to the space of infinite particles.
\end{itemize}
The study of the persistence under these transformations leads us to lift metric measure properties of the base space to the space of infinite particles such as the configuration space or the space of atomic probability measures, see e.g.,~\cite{LzDSSuz21,LzDSSuz22a} for the application to the configuration space and~\cite{LzDS17+} for the Wasserstein space.

\subsection{The Rademacher and Sobolev-to-Lipschitz properties}
Let~$(\mcE,\dom)$ be a quasi-regular strongly local Dirichlet form on the space~$L^2(\mssm)$ of a topological Luzin space~$(X,\T)$ endowed with a Radon measure~$\mssm$.
Given a $\sigma$-finite Borel measure~$\mu$ on~$X$ (possibly different from~$\mssm$), we denote by~$\DzLocB{\mu}$ the space of bounded functions in the \emph{broad local domain} (see~\S\ref{ss:BroadLoc}) of~$(\mcE,\dom)$ with \emph{$\mu$-uniformly bounded $\mcE$-energy} (see~\S\ref{sss:LocDom}).
In particular ---in order to exemplify this concept--- when~$(\mcE,\dom)$ admits carr\'e du champ operator~$\cdc$, and~$\mu= g\mssm$ is absolutely continuous w.r.t.~$\mssm$, then~$\DzLocB{\mu}$ is the space of all functions~$f$ in the broad local domain of~$(\mcE,\dom)$ additionally satisfying~$\cdc(f)\leq g$ $\mssm$-a.e..

For~$\mu$ as above, we introduced in~\cite{LzDSSuz20} the \emph{intrinsic distance}~$\mssd_\mu$ of~$(\mcE,\dom)$ induced by~$\mu$, see~\eqref{eq:IntrinsicD}.
It is an extended pseudo-distance, that is, possibly taking the value $+\infty$ and/or vanishing outside the diagonal in~$X^\tym{2}$.
When~$\mu=\mssm$ and~$(\mcE,\dom)$ is a regular Dirichlet form, $\mssd_\mssm$ coincides with the standard intrinsic distance of~$(\mcE,\dom)$.

Now, let~$\mssd\colon X^\tym{2}\to [0,\infty]$ be any extended pseudo-distance.
For the sake of notational simplicity, throughout this introduction we denote by~$\Lipu(\mssd)$ the space of \emph{measurable} $\mssd$-Lipschitz functions on~$X$ with Lipschitz constant less than~$1$, postponing to later sections a discussion of measurability issues and the choice(s) of a $\sigma$-algebra.
Finally, for~$f\in L^0(\mssm)$, denote by~$\rep f$ and~$\reptwo f$ any of its (everywhere defined) $\mssm$-representatives.

After~\cite{LzDSSuz20} (also cf.~Dfn.~\ref{d:RadStoL} below), we say that~$(X,\mcE,\mssd,\mu)$ has:
\begin{itemize}
\item[($\Rad{\mssd}{\mu}$)] the \emph{Rademacher property} if, whenever~$\rep f\in \Lipu(\mssd)$, then~$f\in \DzLoc{\mu}$;

\item[($\dRad{\mssd}{\mu}$)] the \emph{distance-Rademacher property} if~$\mssd\leq \mssd_\mu$;

\item[($\ScL{\mssm}{\T}{\mssd}$)] the \emph{Sobolev--to--continuous-Lipschitz property} if each~$f\in\DzLoc{\mu}$ has a $\T$-continuous $\mssm$-representative $\rep f\in\Lip^1(\mssd)$;

\item[($\SL{\mu}{\mssd}$)] the \emph{Sobolev--to--Lipschitz property} if each~$f\in\DzLoc{\mu}$ has an $\mssm$-representat\-ive $\rep f\in\Lip^1(\mssd)$;

\item[($\dcSL{\mssd}{\mu}{\mssd}$)] the \emph{$\mssd$-continuous-Sobolev--to--Lipschitz property} if each~$f\in \DzLoc{\mu}$ having a $\mssd$-continuous (measurable) representative~$\rep f$ also has a representative $\reptwo f\in \Lip^1(\mssd)$ (possibly,~$\reptwo f\neq \rep f$);

\item[($\cSL{\T}{\mssm}{\mssd}$)] the \emph{continuous-Sobolev--to--Lipschitz property} if each $\T$-continuous~$f\in \DzLoc{\mu}$ satisfies $f\in\Lip^1(\mssd)$;

\item[($\dSL{\mssd}{\mu}$)] the \emph{distance Sobolev-to-Lipschitz property} if~$\mssd\geq \mssd_\mu$.
\end{itemize}

For implications between the above properties in the present setting see~\eqref{eq:EquivalenceRadStoL}.

\subsection{Main results}
Let us now summarize our main results.

\paragraph{Localization/globalization}
We prove that the Rademacher property is stable under localization (Prop.~\ref{p:LocRad}), while the Sobolev--to--Lipschitz property is stable under globalization (in the sense of sheaves, see Prop.~\ref{p:GlobCSL}) but not under localization (Ex.~\ref{ese:FailureLocSL}).
Let us stress that these stability results are trivial (or trivially false) if one replaces the local space~$\DzLocB{\mssm}$ with its global counterpart~$\DzB{\mssm}$ (see~\S\ref{sss:LocDom}), which is a first indication---see below for stronger ones---that the Rademacher and Sobolev-to-Lipschitz properties ought to be phrased in terms of \emph{local} spaces (as opposed to: subspaces of the domain).

\paragraph{Weighted spaces} Both the Rademacher and the Sobolev-to-Lipschitz property for~$(\mcE,\dom)$ on~$L^2(\mssm)$ transfer to weighted space~$L^2(\theta\mssm)$ for any density~$\theta$ bounded away from $0$ and infinity locally in the sense of quasi-open nests.
For simplicity, we state here a single result under stronger assumptions than necessary, combining the transfer of the Rademacher property with the transfer of the Sobolev-to-Lipschitz property (for minimal assumptions see Prop.s~\ref{p:IneqCdC} and~\ref{p:IneqCdC2} respectively).

\begin{theorem*}[Cor.~\ref{c:LocalityDistances}]
Let~$(\mbbX,\mcE)$ and~$(\mbbX',\mcE')$ be quasi-regular strongly local Dirichlet spaces with same underlying topological measurable space~$(X,\T,\A)=(X',\T',\A')$ and possibly different measures~$\mssm$ and~$\mssm'$.
Further let~$\mssd\colon X^\tym{2}\to [0,\infty]$ be an extended pseudo-distance.
Assume that:
\begin{enumerate}[$(a)$]
\item there exists a linear subspace~$\mcD$ both~$\dom$-dense in~$\dom$ and~$\dom'$-dense in~$\dom'$, additionally so that~$\cdc=\cdc'$ on~$\mcD$.

\item $\mssm'=\theta\mssm$ for some~$\theta\in L^0(\mssm)$, and there exists $E_\bullet, G_\bullet$ quasi-open nests for both~$\mcE$ and~$\mcE'$ with the following properties:
\begin{enumerate}[$({b}_1)$]
\item for each~$k\in \N$ there exists a constant~$a_k>0$ such that
\[
0<a_k\leq \theta \leq a_k^{-1}<\infty  \as{\mssm} \quad \text{on } G_k\semicolon
\]
(that is,~$\theta, \theta^{-1}\in \dotloc{\tparen{L^\infty(\mssm)}}(G_\bullet)$.)
\item for each~$k\in \N$ it holds~$E_k\subset G_k$ $\mcE$- and $\mcE'$-quasi-everywhere, and there exists~$\varrho_k\in \mcD$ with~$\car_{E_k}\leq \varrho_k\leq \car_{G_k}$ $\mssm$-a.e..
\end{enumerate}
\end{enumerate}
Then,
\begin{enumerate}[$(i)$]
\item $\tparen{\cdc, \DzLocB{\mssm}}= \tparen{\cdc',\DzLocBprime{\mssm'}}$ and $\mssd_\mssm= \mssd_{\mssm'}$;
\item $(\ScL{\mssm}{\T}{\mssd})$, resp.\ $(\SL{\mssm}{\mssd})$, $(\cSL{\T}{\mssm}{\mssd})$, $(\Rad{\mssd}{\mssm})$, holds if and only if $(\ScL{\mssm'}{\T}{\mssd})$, resp.\ $(\SL{\mssm'}{\mssd})$, $(\cSL{\T}{\mssm'}{\mssd})$, $(\Rad{\mssd}{\mssm'})$ holds.
\end{enumerate}
\end{theorem*}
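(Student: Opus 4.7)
The plan is to deduce the corollary from the one-sided Propositions~\ref{p:IneqCdC} and~\ref{p:IneqCdC2}, exploiting the fact that the hypotheses here are phrased symmetrically in the two Dirichlet spaces. Indeed,~$(a)$ treats~$\dom$ and~$\dom'$ on equal footing; $(b_1)$ bundles together the two-sided bounds $a_k\leq \theta\leq a_k^{-1}$ $\mssm$-a.e.\ on~$G_k$; and the cutoffs $\varrho_k$ in $(b_2)$ lie in the common core $\mcD$. Consequently, the assumptions are preserved under the role-swap $(\mssm,\theta)\leftrightarrow(\mssm',\theta^{-1})$, so each of the cited Propositions can be invoked twice: once as stated and once after interchanging the two forms.

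For item~(i): a first application of Proposition~\ref{p:IneqCdC} yields the inclusion $\DzLocB{\mssm}\subseteq\DzLocBprime{\mssm'}$ together with an inequality comparing $\cdc$ and $\cdc'$ on the smaller space. Re-applying the same Proposition with the roles of $(\mcE,\mssm)$ and $(\mcE',\mssm')$ interchanged delivers the reverse inclusion and the reverse inequality, forcing the equality $\tparen{\cdc,\DzLocB{\mssm}}=\tparen{\cdc',\DzLocBprime{\mssm'}}$. The equality $\mssd_\mssm=\mssd_{\mssm'}$ of intrinsic distances is then immediate from the variational definition~\eqref{eq:IntrinsicD}, which expresses $\mssd_\mssm$ as a supremum over pairs $(f,\cdc(f))$ with $f\in\DzLocB{\mssm}$ and $\cdc(f)\leq 1$: both the indexing set and the evaluating functional coincide for the two Dirichlet spaces.

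For item~(ii): each of the four properties $(\Rad{\mssd}{\mssm})$, $(\ScL{\mssm}{\T}{\mssd})$, $(\SL{\mssm}{\mssd})$, and $(\cSL{\T}{\mssm}{\mssd})$ is a comparison between the broad local domain and $\Lipu(\mssd)$; the Lipschitz side depends only on~$\mssd$, and by~(i) the broad local domain side agrees on the nose. The remaining subtlety is that the SL-type statements refer to ``an $\mssm$-representative'' of a Sobolev function, and one must check that the $\mssm$- and $\mssm'$-negligible sets produce the same quotient. This is ensured by~$(b_1)$: the two measures are mutually absolutely continuous on each~$G_k$ with density bounded away from~$0$ and~$\infty$, and $\bigcup_k G_k$ is quasi-full for both forms because $G_\bullet$ is a common nest; hence $\mssm$- and $\mssm'$-a.e.\ equivalence agree on a quasi-full set, sufficient for each of the four properties. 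The corresponding transfers of the Rademacher and Sobolev-to-Lipschitz properties then follow by direct translation of the definitions.

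The main difficulty I anticipate is Step~1: the one-sided inequalities provided by Propositions~\ref{p:IneqCdC} and~\ref{p:IneqCdC2} must combine into the claimed \emph{two-sided} equality, rather than two parallel inequalities pointing in the same direction. This is precisely why the symmetric phrasing of~$(b_1)$ matters: it allows the substitution $\theta\leftrightarrow \theta^{-1}$, which flips the inequality in the original one-sided conclusion while preserving the remaining hypotheses $(a)$ and $(b_2)$. Once~(i) is secured, item~(ii) is essentially a formality, as all relevant objects have been shown to be invariant under the change of reference measure.
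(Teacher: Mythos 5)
Your overall strategy — symmetrize the hypotheses, apply a one-sided square-field comparison twice (once as stated, once after swapping the roles of the two Dirichlet spaces), then transfer the Rademacher/Sobolev-to-Lipschitz properties — is precisely the paper's route. However, you have mis-cited the propositions doing the work, and the mis-citation is not cosmetic.

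You invoke Propositions~\ref{p:IneqCdC} and~\ref{p:IneqCdC2} and claim that a ``first application of Proposition~\ref{p:IneqCdC} yields the inclusion $\DzLocB{\mssm}\subseteq\DzLocBprime{\mssm'}$ together with an inequality comparing $\cdc$ and $\cdc'$ on the smaller space.'' That is not what those propositions say. Proposition~\ref{p:IneqCdC}, resp.~\ref{p:IneqCdC2}, compares the square field~$\cdc$ of a single Dirichlet form with a minimal weak upper gradient, yielding $\cdc(f)\leq\slo[w,\mssd]{f}^2$, resp.\ $\cdc(f)\geq\slo[w,\mssd]{f}^2$ — i.e.\ a comparison with the Cheeger energy, not with $\cdc'$ — and, more importantly, each \emph{requires} one of the Rademacher / Sobolev-to-Lipschitz properties as a standing hypothesis. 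In the present Corollary those properties are not assumed; they appear only in item~$(ii)$ as objects whose equivalence is to be proved. Citing these propositions would therefore be circular in $(ii)$ and simply off-target in $(i)$. The proposition your argument actually needs for item~$(i)$ is Proposition~\ref{p:LocalityProbab} (``Comparison of square fields''), whose conclusion is exactly the inclusion $\tparen{\cdc,\DzLocB{\mssm}}\leq\tparen{\cdc',\DzLocBprime{\mssm'}}$ and $\mssd_\mssm\geq\mssd_{\mssm'}$ under a pseudo-core with $\cdc\leq\cdc'$ and a one-sided density bound $\theta\geq a_k$ on $G_k$; applying it a second time with $(\mssm,\theta)\leftrightarrow(\mssm',\theta^{-1})$ — made possible by the two-sided bound in~$(b_1)$, the equality $\cdc=\cdc'$ on $\mcD$ from~$(a)$, and the cutoffs of~$(b_2)$ taken in~$\mcD$ (cf.\ Remark~\ref{r:CutOff}) — gives the reverse inequalities. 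For item~$(ii)$ you should invoke Proposition~\ref{p:Locality}, which is precisely the observation that once the square fields and broad local spaces agree, all four properties transfer because $\Lipu(\mssd,\A)$ and $\Cb(\T)$ do not see the form or the measure, and $\mssm\sim\mssm'$ gives $L^\infty(\mssm)=L^\infty(\mssm')$. With those substitutions your proof is the paper's; as written, the references point to results that neither state nor imply what you attribute to them.
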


The above theorem provides us with the following general guidelines:
\begin{itemize}
\item the broad local space of functions with uniformly bounded energy, the intrinsic distance, and various Rademacher and Sobolev-to-Lipschitz-type properties are all \emph{completely determined by a dense subspace and in a local fashion}.
\item the notion of broad local space introduced by K.~Kuwae in~\cite{Kuw98} is the right one to address the interplay between the Dirichlet-space and metric measure space structures.
\item the intrinsic distance, and therefore ---under the assumption of both the Rademacher and the Sobolev-to-Lipschitz property--- the Varadhan-type short-time asymptotics for the heat-semigroup/kernel, both transfer to weighted spaces in far greater generality than Girsanov transforms.
\end{itemize}

When considering applications to metric measure geometry it is natural to choose the space~$\mcD$ in the above Theorem to be some algebra of Lipschitz functions.
In this case, as a consequence of the above result, the Rademacher, Sobolev-to-Lipschitz-type and related properties may be shown in the simplified setting of probability spaces and subsequently transferred to general $\sigma$-finite spaces.
This applies in particular to the comparison of a Dirichlet space~$(\mcE,\dom)$ with the Cheeger energy~$\Ch[\mssd,\mssm]$ of the underlying extended metric measure space, for which we prove two comparison results (Prop.~\ref{p:IneqCdC} and~\ref{p:IneqCdC2}) separately showing the inequalities~$\mcE\leq \Ch[\mssd,\mssm]$ and~$\mcE\geq \Ch[\mssd,\mssm]$ under minimal assumptions.
Combining the two, we further have---in the general case of $\sigma$-finite extended metric spaces---the following identification of $\mcE$ with~$\Ch[\mssd,\mssm]$, previously shown by L.~Ambrosio, N.~Gigli, and G.~Savar\'e~\cite{AmbGigSav15} for energy-measure spaces, and by L.~Ambrosio, M.~Erbar, and G.~Savar\'e~\cite{AmbErbSav16} for extended metric-topological \emph{probability} spaces.
Namely we prove:
\begin{theorem*}[Cor.~\ref{c:RadStoLCheegerComparison}]
Let~$(\mbbX,\mcE)$ be a quasi-regular strongly local Dirichlet space admitting carr\'e du champ operator~$\cdc$, and~$\mssd\colon X^\tym{2}\to[0,\infty]$ be an extended distance.
Further assume that~$(\Rad{\mssd}{\mssm})$ and~$(\cSL{\T}{\mssm}{\mssd})$ hold.
Then,~$\mcE\leq \Ch[\mssd,\mssm]$.
The equality~$\mcE=\Ch[\mssd,\mssm]$ holds if and only if $(\mbbX,\mcE)$ is additionally $\T$-upper regular (see Dfn.~\ref{d:TUpperReg}).
\end{theorem*}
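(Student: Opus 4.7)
My plan is to obtain the corollary as a direct combination of Propositions~\ref{p:IneqCdC} and~\ref{p:IneqCdC2}, which separately establish the two halves of the comparison between $\mcE$ and $\Ch[\mssd,\mssm]$. The hypotheses $(\Rad{\mssd}{\mssm})$ and $(\cSL{\T}{\mssm}{\mssd})$ in the corollary are precisely what Proposition~\ref{p:IneqCdC} requires to produce the inequality $\mcE\leq\Ch[\mssd,\mssm]$, while the additional $\T$-upper regularity is what Proposition~\ref{p:IneqCdC2} asks in order to yield the reverse inequality.

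For the inequality $\mcE\leq\Ch[\mssd,\mssm]$, the argument I would run is the following. Pick $f\in L^2(\mssm)$ with $\Ch[\mssd,\mssm](f)<\infty$ and a recovery sequence $f_n\in\Lipu(\mssd)\cap L^2(\mssm)$ with $f_n\to f$ in $L^2(\mssm)$ and $\liminf_n\int\slog{f_n}^2\diff\mssm\leq\Ch[\mssd,\mssm](f)$, where $\slog{f_n}$ denotes the minimal relaxed slope. Applying $(\Rad{\mssd}{\mssm})$ places each $f_n$ in the broad local domain $\DzLocB{\mssm}$ with $\cdc(f_n)\leq\slog{f_n}^2$ $\mssm$-a.e.; the role of $(\cSL{\T}{\mssm}{\mssd})$ is to ensure that this local energy bound transfers to the global Dirichlet form, so that $f_n\in\dom$ with $\mcE(f_n)\leq\tfrac{1}{2}\int\slog{f_n}^2\diff\mssm$. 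Lower semicontinuity of $\mcE$ then passes the estimate to the limit, giving $f\in\dom$ and $\mcE(f)\leq\Ch[\mssd,\mssm](f)$.

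For the reverse inequality under the added assumption of $\T$-upper regularity, I would proceed dually. $\T$-upper regularity supplies, for any $f\in\dom$, a sequence $g_n\in\dom$ of bounded $\T$-continuous functions with $g_n\to f$ in $L^2(\mssm)$ and $\mcE(g_n)\to\mcE(f)$. Property $(\cSL{\T}{\mssm}{\mssd})$ then forces each $g_n$ to be $\mssd$-Lipschitz with slope pointwise dominated by $\sqrt{\cdc(g_n)}$; since bounded $\mssd$-Lipschitz functions are admissible in the Cheeger relaxation, one concludes $\Ch[\mssd,\mssm](f)\leq\liminf_n\tfrac{1}{2}\int\cdc(g_n)\diff\mssm=\mcE(f)$.

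For the converse half of the equivalence, $\Ch[\mssd,\mssm]$ is itself $\T$-upper regular essentially by construction, since its defining relaxation is performed over bounded $\mssd$-Lipschitz functions, which are $\T$-continuous in the extended metric-topological setting; the equality $\mcE=\Ch[\mssd,\mssm]$ then transfers this property to $\mcE$. I expect the main technical obstacle to lie not in the corollary itself---which is essentially a packaging statement---but in the preceding propositions, specifically in reconciling the local formulation of $(\Rad)$ and $(\cSL)$ via the broad local domain $\DzLocB{\mssm}$ with the global energy functional, which will require careful truncation arguments along quasi-open nests of finite-measure sets.
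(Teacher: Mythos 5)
Your overall plan—obtain the corollary as a direct consequence of Propositions~\ref{p:IneqCdC} and~\ref{p:IneqCdC2}—is exactly the route the paper takes; its proof is a one-liner reading ``It suffices to verify the assumptions in Propositions~\ref{p:IneqCdC} and~\ref{p:IneqCdC2}.'' However, your routing of the hypotheses between the two propositions is wrong, and the proof sketches you append for each half would not survive inspection.

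First, the attribution. Proposition~\ref{p:IneqCdC} produces $\mcE\leq \Ch[w,\mssd,\mssm]$ using only $(\Rad{\mssd}{\mssm})$ (together with topological and moderance hypotheses). The property $(\cSL{\T}{\mssm}{\mssd})$ plays no role there. Conversely, Proposition~\ref{p:IneqCdC2} produces $\mcE\geq \Ch[w,\mssd,\mssm]$ from $(\cSL{\T}{\mssm}{\mssd})$ together with $\T$-upper regularity (and the generating/localizability hypotheses). You describe $(\Rad{\mssd}{\mssm})$ and $(\cSL{\T}{\mssm}{\mssd})$ as jointly feeding Proposition~\ref{p:IneqCdC}, with $\T$-upper regularity feeding Proposition~\ref{p:IneqCdC2}; that is not how the propositions are stated, and if you tried to carry it out you would find $(\cSL)$ unused in one direction and $(\Rad)$ unused in the other.

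Second, your sketch of $\mcE\leq\Ch$ contains a concrete error: you write that $(\Rad{\mssd}{\mssm})$ ``places each $f_n$ in $\DzLocB{\mssm}$ with $\cdc(f_n)\leq\slog{f_n}^2$ $\mssm$-a.e.'' The Rademacher property only yields the \emph{global} Lipschitz-constant bound $\cdc(f_n)\leq \Li[\mssd]{f_n}^2$; getting the pointwise comparison against the (relaxed or weak) slope is the actual content of Proposition~\ref{p:IneqCdC}, which in the $\sigma$-finite case is proved by a careful Girsanov-type reduction to the probability case of~\cite{AmbErbSav16}, not by a direct localization of $(\Rad)$. You also assign to $(\cSL)$ the job of ``ensuring the local energy bound transfers to the global Dirichlet form''; it has no such role. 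In the paper the passage from the broad local space to $\dom$ is governed by moderance and by the choice of cut-off functions along a quasi-open nest, not by the Sobolev-to-Lipschitz property. Your description of $\T$-upper regularity as merely supplying a recovery sequence with $\mcE(g_n)\to\mcE(f)$ is also imprecise: the definition involves dominating $\sqrt{\cdc(f_n)}$ by $\T$-upper semicontinuous functions $g_n$ with a $\limsup$-bound on $\int g_n^2\diff\mssm$, which is precisely the handle needed to compare against the asymptotic-Lipschitz-constant-based Cheeger relaxation. Your observation in the last paragraph, that $\Ch[w,\mssd,\mssm]$ is always $\T$-upper regular and hence equality forces $\T$-upper regularity of $\mcE$, is correct and matches the paper's remark following Definition~\ref{d:TUpperReg}.

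In summary: you identified the correct two building blocks, which is the essential content of the corollary, but the map from hypotheses to propositions is inverted for $(\cSL)$, and the proof sketches of the two constituent propositions would not work as written.
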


See Remark~\ref{r:ComparisonAGS-AES} below for a thorough comparison of our results with those in~\cite{AmbGigSav15,AmbErbSav16}.
Let us further stress that the above Theorem allows us to \emph{deduce} the parallelogram identity for the Cheeger energy, from that for the form~$\mcE$, thus providing necessary and sufficient conditions to implement the \emph{Riemannian} point of view in the sense of Gigli's.

\paragraph{Tensorization}
In the case when~$\mssd$ is a distance (\emph{not} extended), we also discuss the tensorization of the Rademacher and Sobolev-to-Lipschitz properties.

When~$\mcE=\Ch[\mssd,\mssm]$ is the Cheeger energy of a metric measure space, the tensorization of the Rademacher property is a byproduct of the tensorization of the Cheeger energy, discussed under different geometric assumptions in~\cite{AmbGigSav14b,AmbPinSpe15} and recently settled by S.~Eriksson-Bique, T.~Rajala, and E.~Soultanis for infinitesimally (quasi-)Hilbertian metric measure spaces in~\cite{EriRajSou22}; see \S\ref{sss:TensorizationConseq} below for a detailed account.
Here, we discuss the case of general quasi-regular strongly local Dirichlet spaces~$(\mbbX,\mcE)$, without any geometric assumption.

As it turns out, a product space inherits the Rademacher property from its factors.

\begin{theorem*}[Thm.~\ref{t:TensorRad}]
Let~$\mbbX\eqdef (X,\mssd,\mssm)$ be a metric measure space (Dfn.~\ref{d:MMSp}), and~$(\mcE,\dom)$ be a quasi-regular strongly local Dirichlet form on~$\mbbX$ admitting carr\'e du champ operator~$\cdc$ and satisfying~$(\Rad{\mssd}{\mssm})$.
Further let~$(\mbbX',\mcE')$ be satisfying the same assumptions as~$(\mbbX,\mcE)$.

Then, their product space~$(\mbbX^\otym{},\mcE^\otym{})$ satisfies~$(\Rad{\mssd^\otym{}}{\mssm^\otym{}})$.
\end{theorem*}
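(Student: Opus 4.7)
The plan is to verify $(\Rad{\mssd^\otym{}}{\mssm^\otym{}})$ by showing that every measurable $\mssd^\otym{}$-$1$-Lipschitz function $f \colon X \times X' \to \R$ lies in the broad local domain of $(\mcE^\otym{}, \dom^\otym{})$ and satisfies $\cdc^\otym{}(f) \leq 1$ $\mssm^\otym{}$-a.e.

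First, by truncation $f \mapsto (-n) \vee f \wedge n$ and the locality of the broad local domain, I would reduce to the case where $f$ is bounded. Next, I would exploit $(\Rad{\mssd}{\mssm})$ factorwise via sections: for each $x' \in X'$ the section $f(\cdot, x')$ is $\mssd$-$1$-Lipschitz since $\mssd^\otym{}((x, x'), (y, x')) = \mssd(x, y)$, so by $(\Rad{\mssd}{\mssm})$ it belongs to $\DzLocB{\mssm}$ with $\cdc(f(\cdot, x'))(x) \leq 1$ for $\mssm$-a.e.\ $x$; the analogous statement holds for $f(x, \cdot)$. A Fubini-type argument together with a tensorization of quasi-open nests (building a nest for the product form out of nests on the factors) then places $f$ in the broad local domain $\DzLoc{\mssm^\otym{}}^\otym{}$ of the product form, with the decomposition
\[
\cdc^\otym{}(f)(x, x') = \cdc\tparen{f(\cdot, x')}(x) + \cdc'\tparen{f(x, \cdot)}(x') \comma
\]
valid $\mssm^\otym{}$-a.e.

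At this stage only $\cdc^\otym{}(f) \leq 2$ is available. The decisive improvement to $\cdc^\otym{}(f) \leq 1$ proceeds by invoking the comparison $\mcE \leq \Ch[\mssd, \mssm]$ on each factor (Prop.~\ref{p:IneqCdC}), itself a consequence of $(\Rad{\mssd}{\mssm})$, combined with the tensorization of the Cheeger energy on products of metric measure spaces~\cite{EriRajSou22}. This yields $\mcE^\otym{} \leq \Ch[\mssd^\otym{}, \mssm^\otym{}]$, whence $\cdc^\otym{}(f) \leq \tabs{\nabla f}_w^2$ $\mssm^\otym{}$-a.e.\ in the sense of minimal weak upper gradients on the product metric measure space. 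Since $f$ is $\mssd^\otym{}$-$1$-Lipschitz, its minimal weak upper gradient is bounded by $1$ a.e., concluding $\cdc^\otym{}(f) \leq 1$.

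\emph{The main obstacle} is precisely the passage from $\cdc^\otym{}(f) \leq 2$ to $\cdc^\otym{}(f) \leq 1$. A factor-by-factor analysis alone cannot produce the sharp bound because, for a generic $\mssd^\otym{}$-$1$-Lipschitz $f$, the partial slopes in the two coordinates may each approach $1$ at non-differentiability points (as for $f(x, x') = \max(\abs{x}, \abs{x'})$ on $\R \times \R$ near the origin), so that their sum of squares can exceed $1$ pointwise. The obstruction is overcome by the Cheeger-energy tensorization, which correctly realizes the ``orthogonal'' decomposition of the minimal weak upper gradient on the product, coupled with the paper's $\mcE \leq \Ch$ comparison bridging the Dirichlet-form structure to the metric measure structure.
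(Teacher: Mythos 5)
Your diagnosis of the difficulty is exactly right — the factor-by-factor Rademacher only gives $\cdc^\otym{}\leq 2$, and some genuinely two-dimensional input is required to recover the sharp constant~$1$; your $\max(\abs{x},\abs{x'})$ example is precisely the relevant one.

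The gap is in the mechanism you invoke to close it. Appealing to the tensorization of the Cheeger energy from~\cite{EriRajSou22} imports the hypothesis of infinitesimal \emph{(quasi-)}Hilbertianity of the factor metric measure spaces, which is \emph{not} among the assumptions of the theorem. The Rademacher property~$(\Rad{\mssd}{\mssm})$ together with Proposition~\ref{p:IneqCdC} gives only the one-sided comparison~$\mcE\leq\Ch[\mssd,\mssm]$; it says nothing about whether~$\Ch[\mssd,\mssm]$ is itself quadratic (take~$\mcE\equiv 0$ on any non-quasi-Hilbertian metric measure space: the Rademacher property holds trivially, yet~\cite{EriRajSou22} is not applicable). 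Indeed the paper explicitly frames Theorem~\ref{t:TensorRad} in contrast with this literature: ``we never require a two-sided comparison of~$(\mcE,\dom)$ with~$\Ch[*,\mssd,\mssm]$ implicit in the definition of infinitesimal (quasi-)Hilbertianity in~\cite{EriRajSou22}.'' The paper's own Step~II sidesteps this by never attempting to prove~$\mcE^\otym{}\leq\Ch[\mssd^\otym{},\mssm^\otym{}]$ or a pointwise Pythagorean bound on weak upper gradients. Instead it fixes a Lipschitz approximating sequence~$\seq{f^\otym{}_{k,n}}_n$ whose \emph{Cartesian} slopes converge strongly in~$L^2$ to the Cartesian relaxed gradient (\cite[Thm.~2.2(ii)]{AmbPinSpe15}); applies Proposition~\ref{p:IneqCdC} factorwise to each \emph{approximant} to get~$\cdc^\otym{}(f^\otym{}_{k,n})\leq\tparen{\tslo[c,\mssd,\mssd']{f^\otym{}_{k,n}}^\otym{}}^2$; passes to the limit via lower semicontinuity of~$\cdc^\otym{}$ against bounded test functions (\cite[Lem.~3.3(ii)]{AriHin05}); and finally identifies the Cartesian relaxed gradient with the relaxed gradient of the product metric~(\cite[Thm.~3.2]{AmbPinSpe15}), a metric-relaxation identity that does \emph{not} rest on infinitesimal Hilbertianity. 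This detour through the Cartesian Cheeger energy is precisely what allows the theorem to hold ``without any geometric assumption'' on the factors. A further, smaller soft spot: your Step~I treats membership of~$f$ in~$\dotloc{\dom^\otym{}}$ as following from sectional membership by ``Fubini + tensorization of nests''; what is actually needed, and what the paper supplies, is the construction of concrete cutoff functions~$\varrho^\otym{}_k$ with bounded support so that~$f^\otym{}\varrho^\otym{}_k\in\dom^\otym{}$ can be verified directly from the defining characterization~\eqref{eq:ProductDomain} of the product domain.
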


As for the Sobolev-to-Lipschitz property, we show that a product space satisfies the continuous-Sobolev-to-Lipschitz property if the factors satisfy the Sobolev-to-Lipschitz property.

\begin{theorem*}[Thm.~\ref{t:TensorSL}]
Let~$\mbbX\eqdef (X,\mssd,\mssm)$ be a metric measure space (Dfn.~\ref{d:MMSp}), and~$(\mcE,\dom)$ be a quasi-regular strongly local Dirichlet form on~$\mbbX$ satisfying~$(\SL{\mssm}{\mssd})$.
Further let~$(\mbbX',\mcE')$ be satisfying the same assumptions as~$(\mbbX,\mcE)$.
Then, their product space~$(\mbbX^\otym{},\mcE^\otym{})$ satisfies~$(\cSL{\T^\otym{}}{\mssm^\otym{}}{\mssd^\otym{}})$.
\end{theorem*}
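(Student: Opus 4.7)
The plan is to reduce the continuous-Sobolev-to-Lipschitz property on the product to the distance inequality $\mssd_{\mssm^\otym{}} \leq \mssd^\otym{}$, and to establish this inequality by combining the Sobolev-to-Lipschitz hypothesis on each factor with the tensorization of intrinsic distances---an independent byproduct announced in the abstract and established elsewhere in the paper.

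\textbf{Execution.} First, the Sobolev-to-Lipschitz hypothesis on each factor yields the distance inequalities $\mssd_\mssm \leq \mssd$ on $X$ and $\mssd_{\mssm'} \leq \mssd'$ on $X'$: this is the general implication $(\SL)\Rightarrow(\dSL)$ from the relationships collected in~\eqref{eq:EquivalenceRadStoL}, obtained by inserting the Lipschitz representatives provided by $(\SL)$ into the supremum~\eqref{eq:IntrinsicD} defining the intrinsic distance. Second, the tensorization of intrinsic distances gives
\[
\mssd_{\mssm^\otym{}}\bigl((x,y),(x',y')\bigr) = \sqrt{\mssd_\mssm(x,x')^2 + \mssd_{\mssm'}(y,y')^2},
\]
and combining with the previous step produces $\mssd_{\mssm^\otym{}} \leq \mssd^\otym{}$, i.e., $(\dSL{\mssd^\otym{}}{\mssm^\otym{}})$ holds on the product.

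\textbf{Closing the argument.} Now let $f$ be a $\T^\otym{}$-continuous element of $\DzLoc{\mssm^\otym{}}$ with $\cdc^\otym{}(f) \leq 1$ $\mssm^\otym{}$-a.e.\ Reduce to bounded $f$ by the standard truncation $f_k \eqdef \phi_k \circ f$ with $\phi_k(t) = (-k) \vee (k \wedge t)$: the chain rule for the carré du champ yields $\cdc^\otym{}(f_k) \leq \cdc^\otym{}(f) \leq 1$, while $f_k$ remains $\T^\otym{}$-continuous and bounded, and is therefore a competitor in the supremum defining $\mssd_{\mssm^\otym{}}$. Consequently,
\[
|f_k(p) - f_k(q)| \leq \mssd_{\mssm^\otym{}}(p,q) \leq \mssd^\otym{}(p,q) \qquad \text{for all } p, q \in X \times X',
\]
and letting $k \to \infty$ gives $|f(p) - f(q)| \leq \mssd^\otym{}(p, q)$ for all $p, q$. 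Hence $f \in \Lip^1(\mssd^\otym{})$, establishing $(\cSL{\T^\otym{}}{\mssm^\otym{}}{\mssd^\otym{}})$.

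\textbf{Main obstacle.} The quantitative heart of the proof is the tensorization of intrinsic distances used in the second step. A more elementary slicing approach---using the 1-Lipschitzness of the sections $f(\cdot,y)$ and $f(x,\cdot)$ obtained from Fubini-type decompositions of $\cdc^\otym{}$ together with $(\SL)$ on each factor---bounds $|f(x,y) - f(x',y')|$ only by the $\ell^1$-type quantity $\mssd(x,x') + \mssd'(y,y')$, falling short of the $\ell^2$-type product distance $\mssd^\otym{}((x,y),(x',y')) = \sqrt{\mssd(x,x')^2 + \mssd'(y,y')^2}$. Bridging this $\ell^1$-vs-$\ell^2$ gap is precisely what the tensorization of intrinsic distances accomplishes, by encoding the full quadratic structure of the carré du champ on the product; this is also why the conclusion must be phrased in the continuous Sobolev-to-Lipschitz form, since the control is obtained pointwise from the very definition of~$\mssd_{\mssm^\otym{}}$ applied to continuous competitors.
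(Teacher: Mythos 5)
Your argument is circular. The ``tensorization of intrinsic distances'' you invoke to pass from the $\ell^1$-type slicing bound to the $\ell^2$-type bound $\mssd_{\mssm^\otym{}}\leq\mssd^\otym{}$ is Corollary~\ref{c:IntrinsicDTensor} in the paper, and it is proved \emph{after} and \emph{using} Theorem~\ref{t:TensorSL}: the proof of Corollary~\ref{c:IntrinsicDTensor} combines $(\dRad{\mssd^\otym{}}{\mssm^\otym{}})$, obtained from the tensorization of the Rademacher property (Theorem~\ref{t:TensorRad}), with $(\dSL{\mssd^\otym{}}{\mssm^\otym{}})$, which is exactly the conclusion of Theorem~\ref{t:TensorSL} you are trying to prove. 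The only tensorization result for intrinsic distances available \emph{before} Theorem~\ref{t:TensorSL} is Lemma~\ref{l:TensorIntrinsicD} (and its pointwise sharpening Lemma~\ref{l:IntrinsicIneq}), and it yields precisely the $\ell^1$-type bound $\mssd_{\mssm^\otym{}}\leq\mssd_\mssm\oplus\mssd_{\mssm'}$ that you correctly identify as insufficient. Moreover, even ignoring circularity, Corollary~\ref{c:IntrinsicDTensor} assumes the Rademacher property $(\Rad{\mssd}{\mssm})$ on both factors, which is not among the hypotheses of Theorem~\ref{t:TensorSL}.

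The missing ingredient is not an off-the-shelf identity for intrinsic distances but a genuinely different mechanism to cross the $\ell^1$-vs-$\ell^2$ gap under the weaker hypothesis of $(\SL)$ only. The paper does this via the Ariyoshi--Hino short-time asymptotics (Theorem~\ref{t:AriHino}): the multiplicative factorization $T_t^\otym{}=T_t\otimes T_t'$ of the product semigroup becomes, after taking $-2t\log$ and passing to $t\downarrow 0$, the \emph{additivity of squared maximal functions} $\hr{\mssm^\otym{},A^\otym{}}^2=\hr{\mssm,A}^2+\hr{\mssm',A'}^2$ for product sets $A^\otym{}=A\times A'$. One then bounds each factor by $\hr{\mssm,A}\leq\mssd(\emparg,A)$ using $(\SL)$ on that factor together with~\cite[Lem.~4.19]{LzDSSuz21}, and finishes with the elementary $\inf f+\inf g\leq\inf(f+g)$ to produce $\hr{\mssm^\otym{},A^\otym{}}^2\leq\mssd^\otym{}(\emparg,A^\otym{})^2$; the comparison $\mssd_{\mssm^\otym{}}(\emparg,A^\otym{})\leq\hr{\mssm^\otym{},A^\otym{}}$ from Lemma~\ref{l:Lemma}\ref{i:l:Lemma:3} then gives the point-to-set inequality, from which the pointwise one follows by shrinking balls (Lemma~\ref{l:SupDistanceShrinking}) and density. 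Your ``closing the argument'' step --- from $(\dSL{\mssd^\otym{}}{\mssm^\otym{}})$ to $(\cSL{\T^\otym{}}{\mssm^\otym{}}{\mssd^\otym{}})$ by plugging continuous competitors into the definition of $\mssd_{\mssm^\otym{}}$ --- is correct and is exactly the equivalence already recorded in~\eqref{eq:EquivalenceRadStoL}; the gap is entirely in the claim that $\mssd_{\mssm^\otym{}}\leq\mssd^\otym{}$ is available from prior results.
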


\paragraph{Notation}
For a measure~$\mu$ on a measurable space~$(X,\A)$ we denote by~$\mu A$, resp.~$\mu f$, the $\mu$-measure of $A\in\A$, resp.\ the integral with respect to~$\mu$ of a $\A$-measurable function~$f$ (whenever the integral makes sense).

\section{Setting}\label{s:Preliminaries}
\subsection{Metric and topological spaces}
Let~$X$ be any non-empty set. A function~$\mssd\colon X^\tym{2}\rar [0,\infty]$ is an \emph{extended pseudo-distance} if it is symmetric and satisfying the triangle inequality. Any such~$\mssd$ is: a \emph{pseudo-distance} if it is everywhere finite, i.e.~$\mssd\colon X^{\tym{2}}\rar [0,\infty)$; an \emph{extended distance} if it does not vanish outside the diagonal in~$X^{\tym{2}}$, i.e.~$\mssd(x,y)=0$ iff~$x=y$; a \emph{distance} if it is both finite and non-vanishing outside the diagonal.

Let~$x_0\in X$ and~$r\in (0,\infty]$. We write~$B^\mssd_r(x_0)\eqdef \set{\mssd_{x_0}<r}$.
We call~$B^\mssd_\infty(x_0)$ the \emph{$\mssd$-accessible component} of~$x_0$ in~$X$.
Note that, if~$\mssd$ is an extended pseudo-metric, then both of the inclusions~$\set{x_0}\subset \cap_{r>0} B^\mssd_r(x_0)$ and~$B^\mssd_\infty(x_0)\subset X$ may be strict ones. 
We say that an extended metric space is \emph{complete} if~$B^\mssd_\infty(x)$ is complete for each~$x\in X$.
Finally set
\begin{align*}
\mssd(\emparg, A)\eqdef& \inf_{x\in A} \mssd(\emparg,x) \colon X\longrar [0,\infty] \comm \qquad A\subset X\fstop
\end{align*}

\begin{lemma}\label{l:SupDistanceShrinking}
Let~$\mssd\colon X^\tym{2}\to [0,\infty]$ be an extended pseudo-distance.
Furter fix~$x\in X$ and let~$\msA_x$ be any family of subsets with~$x\in\cap\msA_x$ and~$\inf_{A\in\msA_x}\diam_\mssd A=0$.
Then,
\[
\sup_{A\in\msA_x} \mssd(\emparg, A)= \mssd(\emparg,x) \fstop
\]
\begin{proof}
The inequality~`$\leq$' is always satisfied by definition of point-to-set extended pseudo-distance, thus it suffices to show the converse inequality.
Up to restricting to any up-to-countable sub-family of~$\msA_x$, it suffices to show the assertion in the case when~$\msA_x\eqdef\seq{A_n}_n$ is up-to-countable.

Now, fix~$y\in X$ and~$\eps>0$.
For each~$n\in\N_1$ let~$x_n$ be so that~$\mssd(y,A_n)\geq \mssd(y,x_n)+\eps$.
Then,
\begin{align*}
\sup_n \mssd(y,A_n) \geq&\ \eps+\sup_n \mssd(y,x_n) \geq \eps+\sup_n \tparen{\mssd(y,x)-\mssd(x,x_n)} \geq \eps+ \mssd(y,x) - \inf_n\diam_\mssd(A_n)
\\
=&\ \eps + \mssd(y,x) \fstop
\end{align*}
By arbitrariness of~$\eps>0$ we conclude that~$\sup_n\mssd(y,A_n)=\mssd(y,x)$, and the conclusion follows by arbitrariness of~$y$.
\end{proof}
\end{lemma}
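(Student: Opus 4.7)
The inequality $\sup_{A\in\msA_x}\mssd(\emparg,A)\le \mssd(\emparg,x)$ is automatic from the definition of point-to-set extended pseudo-distance, since $x\in A$ for every $A\in\msA_x$, so $\mssd(\emparg,A)\le \mssd(\emparg,x)$ pointwise. All the content lies in the reverse inequality.

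To prove the $\ge$ direction, I would fix an arbitrary $y\in X$ and an arbitrary $\eps>0$, and aim to show $\mssd(y,x)\le \sup_{A\in\msA_x}\mssd(y,A)+\eps$. For each $A\in\msA_x$ the definition of $\mssd(y,A)$ as an infimum produces a point $x_A\in A$ with
\[
\mssd(y,x_A)\le \mssd(y,A)+\eps\fstop
\]
Since both $x$ and $x_A$ lie in $A$, we have $\mssd(x,x_A)\le \diam_\mssd A$. The triangle inequality then yields
\[
\mssd(y,x)\le \mssd(y,x_A)+\mssd(x_A,x) \le \mssd(y,A)+\eps+\diam_\mssd A \le \sup_{A'\in\msA_x}\mssd(y,A')+\eps+\diam_\mssd A \fstop
\]
The left-hand side is independent of $A$, so taking the infimum over $A\in\msA_x$ on the right and invoking the hypothesis $\inf_{A\in\msA_x}\diam_\mssd A=0$ collapses the $\diam_\mssd A$ term. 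Letting $\eps\downarrow 0$ and then taking the supremum over $y\in X$ closes the argument.

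The argument is really a one-line triangle-inequality computation, so I do not expect any genuine obstacle; the only point requiring a small amount of care is the order of quantifiers, namely that the $\eps$ in the choice of $x_A$ must be independent of $A$ so that it survives taking the infimum over $\msA_x$. The reduction to the at most countable case flagged in the author's proof is not strictly necessary for the argument I sketched, since the inequality $\mssd(y,x)\le \mssd(y,A)+\eps+\diam_\mssd A$ holds for \emph{every} individual $A\in\msA_x$ and one may directly pass to the infimum in $A$ on the right.
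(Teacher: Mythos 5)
Your proof is correct. The core of both arguments is the same triangle-inequality estimate, but you handle the family directly and pass to the infimum over $A\in\msA_x$ at the end, rather than first reducing to a countable subfamily and working with indexed $(A_n,x_n)$. Your observation that the countable reduction is unnecessary is a genuine, if small, simplification: indeed the paper's phrasing ``restricting to \emph{any} up-to-countable sub-family'' is slightly imprecise, since one must pick a subfamily along which the diameters still infimize to zero, whereas your version sidesteps the issue entirely. You might also note that the paper's proof contains a sign slip when selecting $x_n$: it writes $\mssd(y,A_n)\geq \mssd(y,x_n)+\eps$, but since $\mssd(y,A_n)$ is an infimum over $A_n$ the correct selection is $\mssd(y,x_n)\leq \mssd(y,A_n)+\eps$, and the ensuing display should carry a $-\eps$ rather than a $+\eps$. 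Your write-up keeps the $\eps$ on the correct side throughout, so it is the cleaner of the two even though the underlying idea is identical.
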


For an extended pseudo-distance~$\mssd$ on~$X$, let~$\T_\mssd$ denote the (possibly \emph{not} Hausdorff) topology on~$X$ induced by the pseudo-distance~$\mssd\wedge 1$.
The topology~$\T_\mssd$ is Hausdorff if and only if~$\mssd$ is an extended distance.
The topology~$\T_\mssd$ is separable if and only if there exists a countable family of points~$\seq{x_n}_n\subset X$ so that~$X=\cup_n B^\mssd_\infty(x_n)$ and~$(B^\mssd_\infty(x_n),\mssd)$ is a separable pseudo-metric space for every~$n\in \N$.

\paragraph{Lipschitz functions} A function~$\rep f\colon X\rar \R$ is $\mssd$-Lipschitz if there exists a constant~$L>0$ so that
\begin{align}\label{eq:Lipschitz}
\tabs{\rep f(x)-\rep f(y)}\leq L\, \mssd(x,y) \comm \qquad x,y\in X \fstop
\end{align}
The smallest constant~$L$ so that~\eqref{eq:Lipschitz} holds is the (global) \emph{Lipschitz constant of $\rep f$}, denoted by~$\Li[\mssd]{\rep f}$.
Further let the \emph{slope} of~$\rep f$ at~$x$ be defined as
\[
\tslo[\mssd]{\rep f}(x)\eqdef \limsup_{\mssd(x,y)\to 0} \frac{\abs{f(x)-f(y)}}{\mssd(x,y)}\fstop
\]
Conventionally,~$\tslo[\mssd]{\rep f}(x)=0$ whenever~$x$ is isolated relative to~$\mssd$.
We omit the specification of~$\mssd$ whenever apparent from context.

\begin{remark}\label{r:McShane}
It is worth stressing that ---\emph{conventionally}--- in~\eqref{eq:Lipschitz} we set~$0\cdot\infty\eqdef\infty$.
We further note that, for $\rep f\colon X\to \R$, having~$\Li[\mssd]{\rep f}=0$ does not imply that~$\rep f$ is constant, unless $\mssd$ were in fact a distance (\emph{not}: extended).
Indeed, if~$\mssd$ has accessible components~$\seq{X_i}_{i\in I}$, then every function~$\rep f$ constant on each~$X_i$ is $\mssd$-Lipschitz with~$\Li[\mssd]{\rep f}=0$.
\end{remark}

For any non-empty~$A\subset X$ we write~$\Lip(A,\mssd)$, resp.~$\bLip(A,\mssd)$ for the family of all finite, resp.\ bounded, $\mssd$-Lipschitz functions on~$A$. For simplicity of notation, further let~$\Lip(\mssd)\eqdef \Lip(X,\mssd)$, resp.\ $\bLip(\mssd)\eqdef \bLip(X,\mssd)$.

\paragraph{Topological spaces}
A Hausdorff topological space~$(X,\T)$ is:
\begin{enumerate}[$(a)$]
\item\label{i:Top:2} a \emph{topological Luzin space} if it is a continuous injective image of a Polish space;
\item\label{i:Top:3} a \emph{metrizable Luzin space} if it is homeomorphic to a Borel subset of a compact metric space.
\end{enumerate}

\smallskip

Let~$(X,\T)$ be a Hausdorff topological space. A family of pseudo-distances~$\UP$ is a \emph{uniformity} (\emph{of pseudo-distances}) if:
\begin{enumerate*}[$(a)$]
\item it is directed, i.e., $\mssd_1\vee \mssd_2\in\UP$ for every~$\mssd_1,\mssd_2\in \UP$;
and
\item it is order-closed, i.e., $\mssd_2\in \UP$ and~$\mssd_1\leq \mssd_2$ implies~$\mssd_1\in \UP$ for every pseudo-distance~$\mssd_1$ on~$X$.
\end{enumerate*}
A uniformity is \emph{Hausdorff} if it separates points.

\paragraph{Extended metric-topological space} The next definition is a reformulation of~\cite[Dfn.~4.1]{AmbErbSav16}.

\begin{definition}[Extended metric-topological space]\label{d:AES}
Let~$(X,\T)$ be a Hausdorff topological space. An extended pseudo-distance~$\mssd\colon X^{\tym{2}}\rar [0,\infty]$ is $\T$-\emph{admissible} if there exists a uniformity~$\UP$ of \emph{$\T^\tym{2}$-continuous} pseudo-distances~$\mssd'\colon X^\tym{2}\rar [0,\infty)$, so that
\begin{align}\label{eq:d=supUP}
\mssd=\sup\set{\mssd':\mssd'\in\UP}\fstop
\end{align}
The triple $(X,\T,\mssd)$ is an \emph{extended metric-topological space} if~$\mssd$ is $\T$-admissible, and there exists a uniformity~$\UP$ witnessing the $\T$-admissibility of~$\mssd$, and additionally Hausdorff and generating~$\T$.
\end{definition}

Let~$\mssd\colon X^\tym{2}\to[0,\infty]$ be an extended pseudo-distance on~$X$.
We denote by~$\T_\mssd$ the topology induced by~$\mssd$ and we note that, even in the case when~$\mssd$ is $\T$-admissible, $\T_\mssd$ is in general strictly finer than~$\T$.
If~$\mssd$ is a distance however, then~$\T_\mssd=\T$.

\subsection{Measure spaces}\label{ss:MeasureTopSp}
Let~$(X,\T)$ be a Hausdorff topological space. We denote by~$\Bo{\T}$ the Borel $\sigma$-algebra of~$(X,\T)$. Given a Borel measure~$\mu$ on~$(X,\Bo{\T})$, we denote by~$\Bo{\T}^\mu$ the (Carath\'edory) completion of~$\Bo{\T}$ with respect to~$\mu$.
Given $\sigma$-finite measures~$\mu_0$,~$\mu_1$ on~$(X,\Bo{\T})$, we write~$\mu_0\leq \mu_1$ to indicate that~$\mu_0 A\leq \mu_1 A$ for every~$A\in\Bo{\T}$. 
Every Borel measure on a strongly Lindel\"of space has support, e.g.~\cite[p.~148]{MaRoe92}.

Let~$\A$ be a $\sigma$-algebra over~$X$.
We denote by~$\mcL^0(\A)$, resp.~$\mcL^\infty(\A)$, the vector space of all everywhere-defined real-valued, resp.~uniformly bounded, ($\A$-)measurable functions on~$X$.
When~$\mu$ is a measure on~$(X,\A)$, we denote  by~$L^0(\mu)$ the corresponding vector space of $\mu$-classes.
We denote by~$\mcL^2(\mu)$ the vector space of all $\mu$-square-integrable functions in~$\mcL^0(\A)$, by~$L^2(\mu)$ the corresponding space of~$\mu$-classes.
Let the corresponding definition of~$\mcL^p(\mu)$, resp.~$L^p(\mu)$, be given for all~$p\in [1,\infty)$.

As a general rule, we denote measurable functions by either~$\rep f$ or~$\reptwo f$, and classes of measurable functions up to a.e.\ equality simply by~$f$.
When~$\mu$ has full support on~$X$, we drop this distinction for $\T$-continuous functions, simply writing~$f$ for both the class and its unique $\T$-continuous representative.

\paragraph{Measurability and continuity of Lipschitz functions} Let~$(X,\T)$ be a Hausdorff space, $\mssd\colon X^{\tym{2}}\rar [0,\infty]$ be an extended distance on~$X$.  Let~$\rep f\colon X \rar [-\infty,\infty]$ be $\mssd$-Lipschitz with~$\rep f\not\equiv \pm\infty$. In general, $\rep f$ is \emph{neither} everywhere finite, nor $\T$-continuous, nor $\Bo{\T}^\mssm$-measurable, see \cite{LzDSSuz20}. For a given $\sigma$-algebra~$\A$ on~$X$, this motivates to set
\begin{align*}
\Lip(\mssd,\A)\eqdef \Lip(\mssd)\ \cap&\ \mcL^0(\A)\comm & \bLip(\mssd,\A)\eqdef \bLip(\mssd)\ \cap&\ \mcL^0(\A)\fstop
\\
\Lip(\mssd,\T)\eqdef \Lip(\mssd)\ \cap&\ \mcC(\T)\comm & \bLip(\mssd,\T)\eqdef \bLip(\mssd)\ \cap&\ \mcC(\T)\fstop
\end{align*}

\paragraph{Main assumptions} Everywhere in the following, $\mbbX$ is a quadruple~$(X,\T,\A,\mssm)$ so that~$\Bo{\T}\subset \A\subset \Bo{\T}^\mssm$, the reference measure~$\mssm$ is positive $\sigma$-finite on~$(X,\A)$, and one of the following holds:
\begin{enumerate}[$(\mathsc{sp}_1)$]
\item\label{ass:Hausdorff}
$(X,\T)$ is a Hausdorff space;
\item\label{ass:Luzin}
$(X,\T)$ is a topological Luzin space and~$\supp[\mssm]=X$;
\item\label{ass:Polish}
$(X,\T)$ is a second countable locally compact Hausdorff space,~$\mssm$ is Radon and $\supp[\mssm]=X$.
\end{enumerate}

\subsection{Dirichlet spaces}
Given a bilinear form~$(Q,\domain{Q})$ on a Hilbert space~$H$, we write
\begin{align*}
Q(h)\eqdef Q(h,h)\comm \qquad Q_\alpha(h_0,h_1)\eqdef Q(h_0,h_1)+\alpha\scalar{h_0}{h_1}\comm \alpha>0\fstop
\end{align*}

Let~$\mbbX$ be satisfying Assumption~\ref{ass:Hausdorff}. A \emph{Dirichlet form on~$L^2(\mssm)$} is a non-negative definite densely defined closed symmetric bilinear form~$(\mcE,\dom)$ on~$L^2(\mssm)$ satisfying the Markov property
\begin{align*}
f_0\eqdef 0\vee f \wedge 1\in \dom \qquad \text{and} \qquad \mcE(f_0)\leq \mcE(f)\comm \qquad f\in\dom\fstop
\end{align*}

If not otherwise stated,~$\dom$ is always regarded as a Hilbert space with norm~$\norm{\emparg}_\dom\eqdef\mcE_1(\emparg)^{1/2}=\sqrt{\mcE(\emparg)+\norm{\emparg}_{L^2(\mssm)}^2}$.
A \emph{Dirichlet space} is a pair~$(\mbbX,\mcE)$, where~$\mbbX$ satisfies~\ref{ass:Hausdorff} and~$(\mcE,\dom)$ is a Dirichlet form on~$L^2(\mssm)$.
A \emph{pseudo-core} is any $\dom$-dense linear subspace of~$\dom$.

\subsubsection{Quasi-notions} For any~$A\in\Bo{\T}$ set~$\dom_A\eqdef \set{u\in \dom: u= 0 \text{~$\mssm$-a.e.~on~} X\setminus A}$.
A sequence $\seq{A_n}_n\subset \Bo{\T}$ is a \emph{Borel $\mcE$-nest} if $\cup_n \dom_{A_n}$ is dense in~$\dom$.
For any~$A\in\Bo{\T}$, let~$(p)$ be a proposition defined with respect to~$A$. We say that~`$(p_A)$ holds' if~$A$ satisfies~$(p)$.
A \emph{$(p)$-$\mcE$-nest} is a Borel nest~$\seq{A_n}$ so that~$(p_{A_n})$ holds for every~$n$. In particular, a \emph{closed $\mcE$-nest}, henceforth simply referred to as an \emph{$\mcE$-nest}, is a Borel $\mcE$-nest consisting of closed sets.

A set~$N\subset X$ is \emph{$\mcE$-polar} if there exists an $\mcE$-nest~$\seq{F_n}_n$ so that~$N\subset X\setminus \cup_n F_n$.
A set~$G\subset X$ is \emph{$\mcE$-quasi-open} if there exists an $\mcE$-nest~$\seq{F_n}_n$ so that~$G\cap F_n$ is relatively open in~$F_n$ for every~$n\in \N$.
A set~$F$ is \emph{$\mcE$-quasi-closed} if~$X\setminus F$ is $\mcE$-quasi-open.
Without loss of generality, and without explicit mention, we will assume that $\mcE$-quasi-open/closed, sets are additionally Borel measurable, see~\cite[Lem.~2.6]{LzDSSuz20}.
Any countable union or finite intersection of $\mcE$-quasi-open sets is $\mcE$-quasi-open; analogously, any countable intersection or finite union of $\mcE$-quasi-closed sets is $\mcE$-quasi-closed; see~\cite[Lem.~2.3]{Fug71}.

A property~$(p_x)$ depending on~$x\in X$ holds $\mcE$-\emph{quasi-everywhere} (in short:~$\mcE$-q.e.) if there exists an $\mcE$-polar set~$N$ so that~$(p_x)$ holds for every~$x\in X\setminus N$.
Given sets~$A_0,A_1\subset X$, we write~$A_0\subset A_1$ $\mcE$-q.e.\ if~$\car_{A_0}\leq \car_{A_1}$ $\mcE$-q.e. Let the analogous definition of~$A_0=A_1$ $\mcE$-q.e.\ be given.

A function~$\rep f\in \mcL^0(\A)$ is \emph{$\mcE$-quasi-continuous} if there exists an $\mcE$-nest~$\seq{F_n}_n$ so that~$\rep f\restr_{F_n}$ is continuous for every~$n\in \N$.
Equivalently,~$\reptwo f$ is $\mcE$-quasi-continuous if and only if it is $\mcE$-q.e.\ finite and $\reptwo f^{-1}(U)$ is $\mcE$-quasi-open for every open~$U\subset \R$, see e.g.~\cite[p.~70]{FukOshTak11}.
Whenever~$f\in L^0(\mssm)$ has an $\mcE$-quasi-continuous $\mssm$-version, we denote it by~$\reptwo f\in \mcL^0(\A)$.

\paragraph{Spaces of measures} We write~$\Mbp(\A)$, resp.~$\Msp(\A)$, $\Mb(\A)$, $\Ms(\A)$, for the space of finite, resp.\ $\sigma$-finite, finite signed, extended $\sigma$-finite signed, measures on~$(X,\A)$. A further subscript~`$\mathsc{r}$' indicates (sub-)spaces of Radon measures, e.g.~$\MbpR(\A)$.
We write~$\Ms(\A,\Ne{\mcE})$ for the space of extended $\sigma$-finite signed measures not charging sets in the family~$\Ne{\mcE}$ of $\mcE$-polar Borel subsets of~$X$.

\subsubsection{General properties} 
Let~$\mbbX$ be a topological measure space as in~\S\ref{ss:MeasureTopSp}.
When~$(\mcE,\dom)$ is a Dirichlet form on~$L^2(\mssm)$, we say that~$(\mbbX,\mcE)$ is a \emph{Dirichlet space}.

A Dirichlet space~$(\mbbX,\mcE)$ is \emph{quasi-regular} if each of the following holds:
\begin{enumerate}[$({\mathsc{qr}}_1)$]
\item\label{i:QR:1} there exists an $\mcE$-nest~$\seq{F_n}_n$ consisting of $\T$-compact sets;

\item\label{i:QR:2} there exists a dense subset of~$\dom$ the elements of which all have $\mcE$-quasi-continuous $\mssm$-versions;

\item\label{i:QR:3} there exists an $\mcE$-polar set~$N$ and a countable family~$\seq{u_n}_n$ of functions~$u_n\in\dom$ having $\mcE$-quasi-continuous versions~$\reptwo u_n$ so that~$\seq{\reptwo u_n}_n$ separates points in~$X\setminus N$.
\end{enumerate}

Let~$(\mbbX,\mcE)$ be a quasi-regular Dirichlet space,~$\seq{F_n}_n$ be an $\mcE$-nest witnessing its quasi-regularity, and set~$X_0\eqdef \cup_n F_n$, endowed with the trace topology~$\T_0$,~$\sigma$-algebra~$\A_0$, and the restriction~$\mssm_0$ of~$\mssm$ to~$\A_0$.
Then,~$\mbbX_0$ satisfies~\ref{ass:Luzin}, and the space~$L^p(\mssm)$ may be canonically identified with the space~$L^p(\mssm_0)$,~$p\in[0,\infty]$, since~$X\setminus X_0$ is $\mcE$-polar, hence~$\mssm$-negligible.
By letting~$\mcE_0$ denote the image of~$\mcE$ under this identification,~$(\mbbX_0,\mcE_0)$ is a quasi-regular Dirichlet space, and~$\dom_0$ is canonically linearly isometrically isomorphic to~$\dom$. See~\cite[Rmk.~IV.3.2(iii)]{MaRoe92} for the details of this construction.

\begin{remark}
When considering a quasi-regular Dirichlet space~$(\mbbX,\mcE)$, we may and shall therefore assume, with no loss of generality, that~$\mbbX$ satisfies~\ref{ass:Luzin}.
In particular~$(X,\T)$ is separable.
\end{remark}

A Dirichlet space~$(\mbbX,\mcE)$ with~$\mbbX$ satisfying~\ref{ass:Luzin} is
\begin{itemize}
\item \emph{local} if~$\mcE(f,g)=0$ for every~$f,g\in\dom$ with~$\supp[f]$,~$\supp[g]$ compact,~$\supp[f]\cap\supp[g]=\emp$;

\item \emph{strongly local} if~$\mcE(f,g)=0$ for every~$f,g\in\dom$ with~$\supp[f]$,~$\supp[g]$  compact and~$f$ constant on a neighborhood of~$\supp[g]$;

\item \emph{regular} if~$\mbbX$ satisfies~\ref{ass:Polish}, and~$\Cz(\T)\cap \dom$ is both dense in~$\dom$ and dense in the space~$\Cz(\T)$ of all $\T$-continuous functions on~$X$ vanishing at infinity.
\end{itemize}

\paragraph{Domains}
Let~$(\mbbX,\mcE)$ be a Dirichlet space. We write~$\domb\eqdef \dom\cap L^\infty(\mssm)$. The \emph{extended domain}~$\domext$ of~$(\mcE,\dom)$ is the space of all functions~$f\in L^0(\mssm)$ so that there exists an $\mcE^{1/2}$-fundamental (i.e.\ Cauchy) sequence~$\seq{f_n}_n\subset \dom$ with $\mssm$-a.e.-$\nlim f_n=f$. We write~$\domextb\eqdef\domext\cap L^\infty(\mssm)$. The bilinear form~$\mcE$ on~$\mcF$ extends to a (non-relabeled) bilinear form on~$\domext$,~\cite[Prop.~3.1]{Kuw98}. Furthermore,
\begin{itemize}
\item $\domb$ is an algebra with respect to\ the pointwise multiplication, \cite[Prop.~I.2.3.2]{BouHir91};

\item if~$(\mbbX,\mcE)$ is quasi-regular, then $\domb$ is dense in~$\dom$, \cite[Cor.~2.1]{Kuw98}; 

\item if~$(\mbbX,\mcE)$ is quasi-regular, then $\dom$ is separable, \cite[Prop.~IV.3.3, p.~102]{MaRoe92};
\end{itemize}

\paragraph{Quasi-interior, quasi-closure} Let~$(\mbbX,\mcE)$ be a quasi-regular Dirichlet space. Every $f\in\dom$ has an \emph{$\mcE$-q.e.-unique} $\mcE$-quasi-continuous $\mssm$-representative, denoted by~$\reptwo f$, \cite[Prop.~IV.3.3.(iii)]{MaRoe92}.
For~$A\subset X$ set
\begin{align*}
\msU(A)\eqdef& \set{G : G \text{ is an~$\mcE$-quasi-open subset of } A} \comm
\\
\msF(A)\eqdef& \set{F : F \text{ is an~$\mcE$-quasi-closed superset of } A }\fstop
\end{align*}
By~\cite[Thm.~2.7]{Fug71},~$\msU(A)$ has an $\mcE$-q.e.-maximal element denoted by~$\intE A$, $\mcE$-quasi-open, and called the $\mcE$-\emph{quasi-interior} of~$A$.
Analogously,~$\msF(A)$ has an $\mcE$-q.e.-minimal element denoted by~$\clE A$, $\mcE$-quasi-closed, and called the $\mcE$-\emph{quasi-closure} of~$A$.

\paragraph{Carr\'e du champ operators}
Let~$(\mbbX,\mcE)$ be a Dirichlet space with~$\mbbX$ satisfying~\ref{ass:Hausdorff}, and set
\begin{align*}
\sqf{f,g}(h)\eqdef \mcE(fh,g)+\mcE(gh,f)-\mcE(fg,h)\comm \qquad \sqf{f}(h)\eqdef \sqf{f,f}(h) \comm \qquad f,g,h\in\domb \fstop
\end{align*}

We say that~$(\mcE,\dom)$ admits \emph{carr\'e du champ operator}~$\cdc$ if there exists~$\cdc\colon \domb^\tym{2}\to L^1(\mssm)$ such that
\[
\sqf{f,g}(h)=2 \int h\, \cdc(f,g)\diff \mssm\comma \qquad f,g,h\in\domb\fstop
\]

\paragraph{Energy measures}
Not every quasi-regular strongly local Dirichlet space admits a carr\'e du champ operator.
However we have the following:

\begin{theorem}[Cf.~{\cite[Thm.~5.2, Lem.s~5.1, 5.2]{Kuw98}}]\label{t:Kuwae}
Let~$(\mbbX,\mcE)$ be a quasi-regular strongly local Dirichlet space. Then, a bilinear form~$\sq{\emparg,\emparg}$ is defined on~$\domb^{\tym2}$ with values in~$\MbR(\Bo{\T},\Ne{\mcE})$ by
\begin{align}\label{eq:EnergyMeas}
2\int \reptwo{h}\diff \sq{f,g}= \sqf{f,g}(h) \comm \qquad f,g,h\in\domb\fstop
\end{align}
\end{theorem}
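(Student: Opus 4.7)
The plan is to construct, for each $f \in \domb$, a positive Borel measure $\sq{f}$ on $X$ representing the linear functional $L_f \colon h \mapsto \sqf{f}(h) = 2\mcE(fh,f) - \mcE(f^2,h)$ on $\domb$, and then to obtain the bilinear assignment $\sq{f,g}$ by polarization, exploiting the bilinearity of $\sqf{\emparg,\emparg}$ in its first two arguments.

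The first preparatory step is to verify that $L_f$ is positive on $\domb$, i.e.\ that $L_f(h) \geq 0$ whenever $h \geq 0$ $\mssm$-a.e.. This is a standard consequence of the Markov property together with the chain rule for bounded normal contractions applied to the map $t \mapsto t^2$. The second step, in the \emph{regular} case, is to apply the Riesz--Markov--Kakutani representation theorem: by regularity $\domb \cap \Cc(\T)$ is sup-norm dense in $\Cc(\T)$, and with the appropriate sup-norm continuity estimate $L_f$ extends to a positive linear functional on $\Cc(\T)$, giving rise to the desired finite Radon measure $\sq{f}$. Strong locality of $(\mbbX,\mcE)$ translates directly into (strong) locality of the measure $\sq{f}$ on $\tset{f = 0}$.

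In the general quasi-regular case, one invokes the Ma--R\"ockner transfer method, which yields a quasi-homeomorphism $\Phi$ identifying $(\mbbX,\mcE)$ ---modulo $\mcE$-polar sets--- with a regular strongly local Dirichlet space $(\mbbX',\mcE')$ on a locally compact second countable Hausdorff space endowed with a Radon measure. One then defines $\sq{f}$ as the pullback through $\Phi$ of the regularly constructed measure $\sq{\Phi_*f}'$, and verifies that $\sq{f}$ belongs to $\MbR(\Bo{\T},\Ne{\mcE})$ and satisfies~\eqref{eq:EnergyMeas}. The main obstacle lies in this last verification: one must check that both sides of~\eqref{eq:EnergyMeas} are independent of the choice of $\mcE$-quasi-continuous representative $\reptwo{h}$, which rests on the identification of $\mcE$-polar sets with $\mcE'$-polar sets under $\Phi$ and on the intertwining of $\mcE$-quasi-continuous representatives on $X$ with $\mcE'$-quasi-continuous representatives on $X'$ along matching $\mcE$- and $\mcE'$-nests. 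Uniqueness of $\sq{f}$ follows from~\eqref{eq:EnergyMeas} itself since $\reptwo{h}$ ranges over a set separating points $\mcE$-quasi-everywhere by quasi-regularity condition~(${\mathsc{qr}}_3$).
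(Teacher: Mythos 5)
Your proposal reconstructs the construction of Kuwae \cite[Lem.s~5.1, 5.2, Thm.~5.2]{Kuw98}, which is exactly what the paper cites in place of a proof---no argument for this statement appears in the text. Positivity of $L_f$ on nonnegative elements of $\domb$, the sup-norm estimate and Riesz representation in the regular case, the reduction via a quasi-homeomorphism, the bookkeeping of nests/polar sets/quasi-continuous representatives under the transfer map, and polarization to pass to $\sq{f,g}$ are all precisely what Kuwae does.

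One small caveat: the mechanism you cite for positivity ---``chain rule for bounded normal contractions applied to $t\mapsto t^2$''--- is misattributed; $t\mapsto t^2$ is not a normal contraction, and positivity is not a chain-rule phenomenon. It follows rather from sub-Markovianity of the semigroup: for $h\geq 0$ and $f\in\domb$, the approximating forms $\mcE^{(t)}(u,v)=\tfrac{1}{t}\int (u-T_tu)\,v\diff\mssm$ give
\[
2\mcE^{(t)}(fh,f)-\mcE^{(t)}(f^2,h)=\frac{1}{t}\int h\,\tbraket{f^2-2fT_tf+T_t(f^2)}\diff\mssm\geq 0\comma
\]
the bracketed quantity being nonnegative $\mssm$-a.e.\ because $T_t$ is positivity-preserving with $T_t\car\leq 1$, and then $t\downarrow 0$ yields $L_f(h)\geq 0$. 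This is standard (cf.\ \cite[\S I.4]{BouHir91}), so it is a misattributed citation rather than a gap; the rest of the sketch is sound.
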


The bilinear form~$\sq{\emparg,\emparg}$ constructed above is called the \emph{energy measure} of~$(\mbbX,\mcE)$.
When~$(\mbbX,\mcE)$ is quasi-regular strongly local and additionally admits carr\'e du champ operator, then~$\sq{f,g} \ll \mssm$ for every~$f,g\in\domb$, in which case~$\cdc(f,g)\eqdef \frac{\diff \sq{f,g}}{\diff\mssm}$ is the Radon--Nikodym derivative of~$\sq{f,g}$ w.r.t.~$\mssm$.

\subsection{Broad local spaces and energy moderance}\label{ss:BroadLoc}
Let~$(\mbbX,\mcE)$ be a quasi-regular Dirichlet space.
For any $\mcE$-quasi-open~$E\subset X$ set
\begin{align}\label{eq:Xi0}
\msG(E)\eqdef&\ \set{ G_\bullet\eqdef\seq{G_n}_n :  G_n \text{~$\mcE$-quasi-open,~} G_n\subset G_{n+1} \text{~$\mcE$-q.e.,~} \cup_n G_n=E \qe{\mcE}} \comma
\end{align}
When~$E=X$ we simply write~$\msG$ in place of~$\msG(X)$.
We say that~$G_\bullet\in\msG(E)$ is $\mcE$-moderate if for each~$n\in\N_0$ there exists~$e_n\in\dom$ with~$e_n\geq 1$ $\mssm$-a.e.\ on~$G_n$, and we set
\begin{equation}\label{eq:Nests}
\begin{aligned}
\msG_0(E)\eqdef&\ \set{G_\bullet \in\msG(E): G_\bullet \text{~is $\mcE$-moderate}} \comm
\\
\msG_c(E)\eqdef&\ \set{G_\bullet \in\msG_0(E) : \cl_\T G_n \text{~is $\T$-compact for all~$n$}}\fstop
\end{aligned}
\end{equation}
For~$G_\bullet\in\msG_0(E)$, we write~$e_\bullet\eqdef\seq{e_n}_n$ for any sequence of functions witnessing the $\mcE$-moderance of~$G_\bullet$.
When the sequence~$e_\bullet$ is relevant, we write as well~$(G_\bullet,e_\bullet)\in\msG_0(E)$.
As usual, we omit the specification of~$E=X$.
Since~$\car\in\dotloc{\dom}$ by~\eqref{eq:E(1)=0}, then~$\msG_0\neq \emp$.
Clearly,~$\msG_c\subset \msG_0\subsetneq \msG$.

\begin{definition}[$\mcE$-moderance,~{\cite[Dfn.~2.22]{LzDSSuz20}}]
Let~$(\mbbX,\mcE)$ be a quasi-regular Dirichlet space. For a measure~$\mu\in\Msp(\Bo{\T},\Ne{\mcE})$ we say that $(G_\bullet,e_\bullet)\in\msG_0$ is \emph{$\mu$-moderated} if~$e_\bullet$ is additionally so that~$\mu \reptwo e_n <\infty$ for every~$n$.
We say that~$\mu$ is:
\begin{itemize}
\item \emph{$\mcE$-moderate} if there exists a $\mu$-moderated $G_\bullet\in\msG_0$;
\item \emph{absolutely $\mcE$-moderate} if for every $G_\bullet\in\msG_0$ there exists~$e_\bullet$ so that~$(G_\bullet, e_\bullet)$ is $\mu$-moderated.
\end{itemize}
\end{definition}

We refer the reader to~\cite[\S2.5.2]{LzDSSuz20} for the heuristics behind all the above definitions.

\subsubsection{Broad local spaces}
For~$G_\bullet\in\msG(E)$ and~$\msA\subset L^0(\mssm)$, we say that~$f\in L^0(\mssm\mrestr{E})$ is in the \emph{broad local space}~$\dotloc{\msA}(E,G_\bullet)$ if for every~$n$ there exists~$f_n\in \msA$ so that~$f_n=f$ $\mssm$-a.e.\ on~$G_n$.
The \emph{broad local space}~$\dotloc{\msA}(E)$ of~$(\mbbX,\mcE)$ relative to~$E$ is the space~\cite[\S4, p.~696]{Kuw98},
\begin{align}\label{eq:Xi}
\dotloc{\msA}(E)\eqdef \bigcup_{G_\bullet\in \msG(E)} \dotloc{\msA}(E,G_\bullet) \fstop
\end{align}
The set~$\dotloc{\msA}(E,G_\bullet)$ depends on~$G_\bullet$. Again, we omit the specification of~$E=X$.

\begin{proposition}[Extension of energy measure,~{\cite[Prop.~2.12]{LzDSSuz20}}]\label{p:PropertiesLoc}
Let~$(\mbbX,\mcE)$ be a quasi-regular strongly local Dirichlet space.
Then, the quadratic form $\sq{\emparg}\colon \dom\rar \MbpR(\Bo{\T},\Ne{\mcE})$ associated to the bilinear form~$\sq{\emparg,\emparg}$ in~\eqref{eq:EnergyMeas} uniquely extends to a non-relabeled form on~$\dotloc{\dom}$ with values in~$\Msp(\Bo{\T},\Ne{\mcE})$, satisfying:
\begin{enumerate}[$(i)$]
\item\label{i:p:PropertiesLoc:00} the representation property
\begin{align}\label{eq:RepresentationLoc}
\mcE(f,g)=\tfrac{1}{2}\sq{f,g} X \comm\qquad f,g\in\domext \semicolon
\end{align}

\item\label{i:p:PropertiesLoc:1} the truncation property
\begin{equation}\label{eq:TruncationLoc}
\begin{aligned}
f\wedge g \in \dotloc{\dom} \quad\text{and}\quad \sq{f\wedge g}=\car_{\ttset{\reptwo f\leq \reptwo g}}\sq{f}+\car_{\ttset{\reptwo f> \reptwo g}} \sq{g}\comm \qquad f,g\in\dotloc{\dom} \semicolon
\\
f\vee g \in \dotloc{\dom} \quad\text{and}\quad \sq{f\vee g}=\car_{\ttset{\reptwo f\leq \reptwo g}}\sq{g}+\car_{\ttset{\reptwo f> \reptwo g}} \sq{f}\comm \qquad f,g\in\dotloc{\dom} \semicolon
\end{aligned}
\end{equation}

\item\label{i:p:PropertiesLoc:2} the chain rule
\begin{equation}\label{eq:ChainRuleLoc}
\phi\circ f \in \dotloc{\dom} \quad\text{and}\quad \sq{\phi\circ f}=(\phi'\circ \reptwo f)^2 \cdot \sq{f}\comm \qquad f\in\dotloc{\dom} \comm \quad \begin{aligned}&\phi\in \mcC^1(\R)\comm\\ &\phi(0)=0\end{aligned} \semicolon
\end{equation}

\item\label{i:p:PropertiesLoc:5} the strong locality property
\begin{align}\label{eq:SLoc:2}
\car_G \sq{f}=\car_G \sq{g}\comm  \qquad G \text{~$\mcE$-quasi-open}\comm f,g\in\dotloc{\dom}\comm f\equiv g \text{~$\mssm$-a.e.\ on $G$}\fstop
\end{align}
\end{enumerate}

Furthermore,~$\dotloc{\dom}$ is an algebra for the pointwise multiplication, and
\begin{align}\label{eq:E(1)=0}
\car\in\dotloc{\dom}\comm \qquad \sq{\car}\equiv 0 \fstop
\end{align}
\end{proposition}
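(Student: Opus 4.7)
The plan is to extend the energy-measure bilinear form $\sq{\emparg,\emparg}$, given on $\domb^\tym{2}$ by Theorem~\ref{t:Kuwae}, to $\dotloc{\dom}^\tym{2}$ by \emph{gluing} locally defined measures along a common nest, and then to derive each of the listed properties by localizing its counterpart on $\domb$.

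For the construction, given $f,g\in\dotloc{\dom}$ with local $\dom$-representatives $f_n, g_n$ on a common nest $G_\bullet\in\msG$, I would first reduce to the bounded case $f_n,g_n\in\domb$ by truncating $f_n\rightsquigarrow(-N_n)\vee f_n\wedge N_n$ and intersecting $G_n$ with the $\mcE$-quasi-open set $\ttset{\ttabs{\reptwo f_n}<N_n}$ (and analogously for $g_n$). The measure $\sq{f_n,g_n}\in\MbR(\Bo{\T},\Ne{\mcE})$ is then available on $G_n$ by Theorem~\ref{t:Kuwae}, and I set $\sq{f,g}\restr_{G_n}\eqdef \sq{f_n,g_n}\restr_{G_n}$. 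The consistency of this definition across different $n$ and different choices of local representatives reduces to the \emph{quasi-open strong locality} in $\domb$: whenever $u_1,u_2\in\domb$ satisfy $u_1=u_2$ $\mssm$-a.e.\ on an $\mcE$-quasi-open $G$, then $\sq{u_1}\restr_G=\sq{u_2}\restr_G$. This follows from the classical strong locality on open sets combined with approximation of indicator functions of quasi-open sets by functions in $\domb$ (a standard use of $1$-equilibrium potentials); once this is in place, the glued measure is $\sigma$-finite on $\cup_n G_n=X$ $\mcE$-q.e., charges no $\mcE$-polar set, and thus lies in $\Msp(\Bo{\T},\Ne{\mcE})$. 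Bilinearity and uniqueness of the extension are immediate from the gluing.

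Each of (i)--(iv) then follows by reduction to $\domb$. For the representation~(i) on $\domext$, Cauchy approximants $f_k,g_k\in\dom$ may be truncated to $\domb$, for which $\mcE(\emparg,\emparg)=\tfrac{1}{2}\sq{\emparg,\emparg}X$ is standard; passing to the limit uses that $\mcE^{1/2}$-Cauchyness forces total-variation Cauchyness of the energy measures via the identity $\sq{f-g}X=2\mcE(f-g)$. The truncation formula~(ii) and the chain rule~(iii) reduce, by localization on the common nest, to the Bouleau--Hirsch calculus on $\domb$: with $f_n, g_n\in\domb$, both $f_n\wedge g_n$ and $\phi\circ f_n$ (for $\phi\in\mcC^1(\R)$ with $\phi(0)=0$, which is automatically Lipschitz on the bounded range of~$f_n$) belong to $\domb$, the identities hold there in the form stated, and one glues using the quasi-continuous representatives $\reptwo f,\reptwo g$ to interpret the indicator functions $\car_{\ttset{\reptwo f\leq \reptwo g}}$. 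Strong locality~(iv) is then a direct localization of strong locality in $\domb$. Finally, $\car\in\dotloc{\dom}$ follows from quasi-regularity, which furnishes cutoff functions $e_n\in\dom$ with $e_n=1$ $\mssm$-a.e.\ on a given $\mcE$-quasi-open nest with quasi-compact closure; strong locality applied to the a.e.-constant $e_n$ on $G_n$ yields $\sq{\car}\equiv 0$, and the algebra structure of $\dotloc{\dom}$ is inherited from that of $\domb$ by the same localization.

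The main technical obstacle is the quasi-open refinement of strong locality, which underpins both the consistency of the gluing and the fidelity of the chain rule for local representatives; it hinges on careful approximation with quasi-continuous representatives but is otherwise standard. Once it is in hand, the remainder of the proof is local bookkeeping over the nest $G_\bullet$.
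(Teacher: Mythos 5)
Your gluing strategy---extend by patching $\sq{f_n,g_n}$ along a common quasi-open nest, with consistency coming from quasi-open strong locality of the energy measure on $\domb$, and then derive (ii)--(iv), the algebra property, and $\sq{\car}\equiv 0$ by localizing the Bouleau--Hirsch calculus to the nest---is exactly the Kuwae-type construction underlying \cite[Prop.~2.12]{LzDSSuz20}, which the present paper imports without reproving; so the architecture is right. Two ingredients are left implicit and should be supplied before this counts as a complete argument. For (i), you need the nontrivial inclusion $\domext\subset\dotloc{\dom}$ (otherwise $\sq{f,g}$ for $f,g\in\domext$ is not covered by the extension you built), and then the identification of your patched measure with the total-variation limit coming from $\mcE^{1/2}$-Cauchy approximants; the Cauchy argument alone produces a candidate limit but neither yields the inclusion nor reconciles the two constructions. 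For (ii) and (iii), the indicator sets $\ttset{\reptwo f\leq\reptwo g}$ presuppose an $\mcE$-quasi-continuous $\mssm$-version $\reptwo f$ of a general $f\in\dotloc{\dom}$, which you should manufacture by patching the $\reptwo f_n$ along the nest, using that $\mssm$-a.e.\ equality on a quasi-open set upgrades to $\mcE$-q.e.\ equality there. With those in place, the remainder is local bookkeeping over $G_\bullet$, as you say.
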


\begin{corollary}[Extension of carr\'e du champ operator]\label{c:BH}
Let~$(\mbbX,\mcE)$ be a quasi-regular strongly local Dirichlet space admitting carr\'e du champ operator~$(\cdc,\domb)$.
Then, the bilinear form~$\cdc\colon \domb^\tym{2}\to L^1(\mssm)$ extends to a non-relabeled bilinear form~$\cdc\colon\dotloc{\dom}^\tym{2}\to L^0(\mssm)$ representing the quadratic form~$\sq{\emparg}\colon \dotloc{\dom}\to \Msp(\Bo{\T},\Ne{\mcE})$ in Proposition~\ref{p:PropertiesLoc} in the sense that~$\tfrac{\diff\sq{f}}{\diff \mssm}=\cdc(f,f)$ for every~$f\in\dotloc{\dom}$.

Furthermore, the form~$\cdc\colon\dotloc{\dom}\to L^0(\mssm)$ satisfies the strong locality property
\begin{equation}\label{eq:c:BH:0}
\cdc(f,h)=\cdc(g,h) \as{\mssm} \quad \text{on} \quad \set{f\equiv g} \comma \qquad f,g,h\in\dotloc{\dom}\fstop
\end{equation}

\begin{proof}
Since~$\dotloc{\dom}=\dotloc{(\domb)}$, e.g.~\cite[Eqn.~(2.15)]{LzDSSuz20}, the extension is an immediate consequence of the definition of~$\cdc\colon\domb\to L^1(\mssm)$ together with the strong locality property of~$\sq{\emparg}$ in Proposition~\ref{p:PropertiesLoc}\ref{i:p:PropertiesLoc:5}.
Noting that our definition of broad local space~$\dotloc{\dom}$ is more restrictive than the definition of local space in~\cite[Dfn.~I.7.1.3]{BouHir91}, the equality~\eqref{eq:c:BH:0} is a standard consequence of e.g.~\cite[Prop.~I.7.1.4]{BouHir91}.
\end{proof}
\end{corollary}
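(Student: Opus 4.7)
The extension is constructed by localizing on representatives. Given $f\in \dotloc{\dom}$, the identity $\dotloc{\dom}=\dotloc{(\domb)}$ (cited as \cite[Eqn.~(2.15)]{LzDSSuz20}) lets us pick $G_\bullet \in \msG$ and $f_n \in \domb$ with $f_n = f$ $\mssm$-a.e.\ on~$G_n$. Since $\mcE$ admits carr\'e du champ operator on $\domb$, we have $\sq{f_n}=\cdc(f_n,f_n)\cdot\mssm$. Applying the strong locality of the energy measure (Proposition~\ref{p:PropertiesLoc}\ref{i:p:PropertiesLoc:5}) yields $\car_{G_n}\sq{f} = \car_{G_n}\sq{f_n} = \car_{G_n}\cdc(f_n,f_n)\cdot\mssm$. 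Since $\cup_n G_n = X$ $\mcE$-q.e.\ and hence $\mssm$-a.e., this shows $\sq{f}\ll\mssm$ with $L^0(\mssm)$-density which we denote $\cdc(f,f)$. Independence of this density from the choice of $G_\bullet$ and of the representatives $f_n$ is again a consequence of strong locality: two local representatives of $f$ coincide $\mssm$-a.e.\ on the overlap of their nests, whence the associated energy measures ---and their densities--- agree $\mssm$-a.e.\ there.

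Bilinearity is obtained either by polarization, setting $\cdc(f,g)\eqdef\tfrac{1}{2}\tparen{\cdc(f+g,f+g)-\cdc(f,f)-\cdc(g,g)}$, or equivalently by choosing a common nest $G_\bullet$ with representatives $f_n, g_n \in \domb$ for $f, g$ and setting $\cdc(f,g)|_{G_n}\eqdef \cdc(f_n,g_n)|_{G_n}$. The two constructions agree, and both coincide with the original $\cdc$ on $\domb^{\times 2}$, by the Cauchy--Schwarz inequality for $\cdc$ on $\domb$ combined with the strong locality already established. The representation $\sq{f,g}=\cdc(f,g)\cdot\mssm$ then follows by polarization of measures from the diagonal case.

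Finally, the strong locality~\eqref{eq:c:BH:0} for $\cdc$ on $\dotloc{\dom}^{\times 2}$ reduces to the corresponding classical property on $\domb^{\times 2}$, a standard consequence of~\cite[Prop.~I.7.1.4]{BouHir91} (our notion of broad local space being more restrictive than Bouleau--Hirsch's). Given $f,g,h\in\dotloc{\dom}$ and a common nest with representatives $f_n, g_n, h_n \in \domb$, on $\set{f\equiv g}\cap G_n$ we have $f_n=g_n$ $\mssm$-a.e., so $\cdc(f_n,h_n)=\cdc(g_n,h_n)$ $\mssm$-a.e.\ there by the locality of $\cdc$ on $\domb$; the local identification $\cdc(f,h)|_{G_n}=\cdc(f_n,h_n)|_{G_n}$ (and likewise for $g$) and passage to the union over~$n$ yield the claim. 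The main ---and only mild--- obstacle throughout is the bookkeeping needed to verify that the local density depends only on~$f$ and not on the representatives or the nest, which is uniformly handled by the strong locality of~$\sq{\cdot}$.
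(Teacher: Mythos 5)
Your proof is correct and takes essentially the same approach as the paper's: you localize to representatives in $\domb$ via the identity $\dotloc{\dom}=\dotloc{(\domb)}$, use the strong locality of the energy measure $\sq{\cdot}$ to patch the densities $\cdc(f_n,f_n)$ into a well-defined $L^0(\mssm)$-density for $\sq{f}$, and derive the strong locality~\eqref{eq:c:BH:0} from the classical Bouleau--Hirsch locality on $\domb$. You have simply spelled out the bookkeeping (consistency across nests, bilinearity by polarization, common nests for pairs and triples) that the paper compresses into the phrase ``immediate consequence.''
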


\subsubsection{Local domains of bounded-energy and intrinsic distances}\label{sss:LocDom}
Let~$(\mbbX,\mcE)$ be a quasi-regular strongly local Dirichlet space, and~$\mu\in\Msp(\Bo{\T},\Ne{\mcE})$. For $\mcE$-quasi-open~$E\subset X$ and any $G_\bullet\in\msG_0(E)$, set
\begin{align*}
\DzLoc{\mu}(E,G_\bullet)\eqdef& \set{f\in \dotloc{\dom}(E,G_\bullet): \sq{f}\leq \mu}\comm \qquad \DzLoc{\mu}(E)\eqdef \tset{f\in \dotloc{\dom}(E): \sq{f}\leq \mu}\comm
\\
\Dz{\mu}(E)\eqdef&\ \DzLoc{\mu}(E)\cap \dom\comm \quad \ \DzLoc{\mu,\T}(E)\eqdef \DzLoc{\mu}(E)\cap \Cont(E,\T) \comm \quad \ \Dz{\mu,\T}(E)\eqdef \DzLoc{\mu,\T}(E)\cap \dom \fstop
\end{align*}
As usual, we omit~$E$ from the notation whenever~$E=X$. For~$G_\bullet\in\msG_0(E)$ we additionally denote by
\begin{align*}
\DzLocB{\mu}(E,G_\bullet)\eqdef \DzLoc{\mu}(E,G_\bullet)\cap L^\infty(\mssm)
\end{align*}
the space of $\mssm$-essentially uniformly bounded functions in~$\DzLoc{\mu}(E,G_\bullet)$.
Let the analogous definitions for~$\DzLocB{\mu}(E)$,~$\DzB{\mu}(E)$,~$\DzLocB{\mu,\T}(E)$, and~$\DzB{\mu,\T}(E)$ be given.

\begin{lemma}[{\cite[Prop.~2.26]{LzDSSuz20}}]\label{l:2.26}
Let~$(\mbbX,\mcE)$ be a quasi-regular strongly local Dirichlet space, and~$\mu\in\Msp(\Bo{\T},\Ne{\mcE})$ be absolutely $\mcE$-moderate.
Then, for every~$G_\bullet, G_\bullet'\in\msG_0$,
\[
\DzLocB{\mu}(G_\bullet) = \DzLocB{\mu}(G_\bullet')\fstop
\]
In particular, if~$f\in\DzLocB{\mu}$, then there exists~$G_\bullet\in \msG_c$ additionally so that~$\cl_\T G_k\subset G_{k+1}$ for all~$k\in\N$, and~$f_\bullet\subset \dom$, such that~$(G_\bullet,f_\bullet)$ witnesses that~$f\in\DzLocB{\mu}$.
\begin{proof}
The first assertion is shown in~\cite[Prop.~2.26]{LzDSSuz20}. The second follows from~\cite[Lem.~3.5(iii)]{Kuw98}.
\end{proof}
\end{lemma}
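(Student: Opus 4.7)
The plan is to split the argument into two pieces. For the first assertion, I would use the absolute $\mcE$-moderance of~$\mu$ to build, for any target nest~$G_\bullet'\in\msG_0$, a sequence of global~$\dom$-approximants witnessing a given~$f\in\DzLocB{\mu}(G_\bullet)$ on~$G_\bullet'$. For the second (``in particular'') assertion, I would combine the first with Kuwae's refinement result on $\mcE$-nests.

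For the first step, let~$f\in \DzLocB{\mu}(G_\bullet)$ and set~$M\eqdef \norm{f}_{L^\infty(\mssm)}$. Since~$\mu$ is absolutely $\mcE$-moderate and~$G_\bullet'\in\msG_0$, there exist~$e_n'\in\dom$ with~$e_n'\geq 1$ $\mssm$-a.e.\ on~$G_n'$ and~$\mu\reptwo{e_n'}<\infty$. I would then define the bounded cutoffs $h_n'\eqdef 0 \vee e_n' \wedge 1\in\domb$, which equal~$1$ $\mssm$-a.e.\ on~$G_n'$, and consider the products~$f_n'\eqdef f\cdot h_n'$. Since~$\dotloc{\dom}$ is an algebra by Proposition~\ref{p:PropertiesLoc}, $f_n'\in\dotloc{\dom}$, and the Leibniz and Cauchy--Schwarz inequalities for energy measures, combined with the assumption $\sq{f}\leq\mu$, give
\[
\sq{f_n'}\leq 2\, (h_n')^2\,\sq{f} + 2\, f^2\,\sq{h_n'} \leq 2\, (h_n')^2\, \mu + 2 M^2 \sq{h_n'}\comma
\]
whence $\sq{f_n'}(X)\leq 2\,\mu\reptwo{e_n'}+4M^2\, \mcE(h_n')<\infty$ by the moderance bound. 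Since~$\abs{f_n'}\leq M\abs{h_n'}$ $\mssm$-a.e.\ and~$h_n'\in L^2(\mssm)$, also~$f_n'\in L^2(\mssm)$, hence~$f_n'\in\dom$. As~$f_n'=f$ $\mssm$-a.e.\ on~$G_n'$, the family~$\seq{f_n'}_n$ witnesses~$f\in\dotloc{\dom}(G_\bullet')$, and since the condition~$\sq{f}\leq \mu$ does not depend on the witnessing nest, $f\in\DzLocB{\mu}(G_\bullet')$.

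The main conceptual obstacle is the simultaneous production of a cutoff in~$\dom$ that is bounded by~$1$, equal to~$1$ on~$G_n'$, \emph{and} has a $\mu$-integrable square; this is precisely what the hypothesis of absolute $\mcE$-moderance delivers, and is the reason why the bounded layer~$\DzLocB{\mu}$ (in contrast with~$\Dz{\mu}$) is well-behaved under change of nest. For the second assertion, I would start from any nest witnessing $f\in\DzLocB{\mu}$, use the quasi-regularity of~$(\mbbX,\mcE)$ to pass to an $\mcE$-nest of $\T$-compact sets, and invoke~\cite[Lem.~3.5(iii)]{Kuw98} to extract from it a subsequence~$G_\bullet\in\msG_c$ satisfying the topological-inscription property~$\cl_\T G_k\subset G_{k+1}$. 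Applying the first assertion to this new nest and the original one then produces the required witnessing family~$f_\bullet\subset\dom$, concluding the argument.
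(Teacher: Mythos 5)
The overall strategy—multiply $f$ by the truncated moderance cutoff $h_n'=0\vee e_n'\wedge 1$ and use the Leibniz--Cauchy--Schwarz bound—is sensible, and the estimate $\sq{f_n'}(X)<\infty$ is correctly derived. However, the step where you conclude $f_n'\in\dom$ is a genuine gap. You infer it from $f_n'\in\dotloc{\dom}\cap L^2(\mssm)$ together with $\sq{f_n'}(X)<\infty$, but this implication is false in general. For a concrete counterexample, take the Dirichlet Laplacian on $(0,1)$ (so $\dom=H^1_0(0,1)$, a regular strongly local form) and $g\equiv 1$: with the nest $G_k=(1/k,1-1/k)$ and hat-function witnesses one has $g\in\dotloc{\dom}\cap L^2(\mssm)\cap L^\infty(\mssm)$ and $\sq{g}\equiv 0$, yet $g\notin H^1_0(0,1)=\dom$. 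Having finite total energy-measure mass therefore does \emph{not} place an element of the broad local space into $\dom$.

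What is actually needed to conclude $f_n'\in\dom$ is an approximating sequence $u_k\in\dom$ with $u_k\to f_n'$ in $L^2(\mssm)$ and $\sup_k\mcE(u_k)<\infty$ (after which e.g.~\cite[Lem.~I.2.12]{MaRoe92} yields $f_n'\in\dom$). The natural candidate $u_k\eqdef f_k h_n'$, with $f_k\in\domb$ witnessing $f\in\dotloc{\dom}(G_\bullet)$, does converge in $L^2(\mssm)$, but the same Leibniz bound now gives $2\,\mcE(u_k)\leq 2\int (h_n')^2\,\diff\sq{f_k}+2M^2\sq{h_n'}(X)$, and the measure $\sq{f_k}$ is only known to be dominated by $\mu$ on $G_k$, where $f_k$ agrees with $f$; off $G_k$ there is no control, so the bound does not close. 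The way to repair this is exactly the device you reserve for the ``in particular'' part: first pass, via~\cite[Lem.~3.5(iii)]{Kuw98}, to a nest in $\msG_c$ with $\cl_\T G_k\subset G_{k+1}$, which allows you to choose cutoffs $\chi_k\in\domb$ with $\chi_k\equiv 1$ on $G_k$ and $\chi_k\equiv 0$ off $G_{k+1}$. Then $f\chi_k=f_{k+1}\chi_k\in\domb$ \emph{identically} (not just on $G_k$), and the troublesome term $\int (h_n')^2\,\diff\sq{f\chi_k}$ is supported on $G_{k+1}$, where $\sq{f}\leq\mu$ does apply. So the refinement is not a dispensable afterthought for the second assertion; it is the mechanism that makes the witnesses land in $\dom$ in the first place.
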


\begin{lemma}\label{l:ModerateDomain}
Let~$(\mbbX,\mcE)$ be a quasi-regular strongly local Dirichlet space, and~$\mu\in\Msp(\Bo{\T},\Ne{\mcE})$ be absolutely $\mcE$-moderate.
If~$\car\in\dom$, then
\[
\DzLocB{\mu}=\DzB{\mu}\fstop
\]
\begin{proof}
Since~$\car\in\dom$, the constant sequence~$X_\bullet\eqdef \seq{X}_k$ satisfies~$X_\bullet \in\msG_0$.
Since~$\mu$ is absolutely $\mcE$-moderate,~$X_\bullet$ is $\mu$-moderated, and we have that~$\DzLocB{\mu}=\DzLocB{\mu}(X_\bullet)$ by Lemma~\ref{l:2.26}.
On the other hand, it follows from the definition that~$\DzLocB{\mu}(X_\bullet)=\DzB{\mu}$.
\end{proof}
\end{lemma}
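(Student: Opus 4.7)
The plan is to exploit the hypothesis $\car\in\dom$ to produce a trivial constant nest on which the broad local space collapses to the global domain, and then invoke Lemma \ref{l:2.26} to transfer this identification to an arbitrary nest.

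\textbf{Step 1: Constructing a trivial nest.} I would consider the constant sequence $X_\bullet\eqdef\tseq{X}_{k\in\N}$ and verify that $X_\bullet\in\msG_0$. The set $X$ is clearly $\mcE$-quasi-open, so $X_\bullet\in\msG$. For $\mcE$-moderance, the standing assumption $\car\in\dom$ allows us to take $e_n\eqdef\car$, which trivially satisfies $e_n\geq 1$ $\mssm$-a.e.\ on $X=G_n$. Hence $(X_\bullet,\car_\bullet)$ witnesses $X_\bullet\in\msG_0$.

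\textbf{Step 2: Identification on the trivial nest.} I would then unfold the definition of the broad local space along $X_\bullet$: by definition, $f\in\dotloc{\dom}(X_\bullet)$ if and only if for each $n\in\N$ there exists $f_n\in\dom$ with $f_n=f$ $\mssm$-a.e.\ on $X_n=X$, which forces $f=f_n\in\dom$. Therefore $\dotloc{\dom}(X_\bullet)=\dom$, and consequently
\[
\DzLocB{\mu}(X_\bullet)=\set{f\in\dom:\sq{f}\leq\mu}\cap L^\infty(\mssm)=\DzB{\mu}.
\]

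\textbf{Step 3: Transfer via Lemma \ref{l:2.26}.} Finally, since $\mu$ is absolutely $\mcE$-moderate and $X_\bullet\in\msG_0$, the measure $\mu$ moderates $X_\bullet$ in the sense of the hypothesis of Lemma \ref{l:2.26} (and in any case, that lemma states equality of $\DzLocB{\mu}(G_\bullet)$ across all $G_\bullet\in\msG_0$). Combined with the ``in particular'' clause of Lemma \ref{l:2.26}, which asserts $\DzLocB{\mu}=\DzLocB{\mu}(G_\bullet)$ for any $G_\bullet\in\msG_0$, this yields $\DzLocB{\mu}=\DzLocB{\mu}(X_\bullet)=\DzB{\mu}$.

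I do not foresee a genuine obstacle: the only subtlety is the bookkeeping between the various equivalent formulations ($\DzLocB{\mu}$ as a union over $\msG(X)$, versus the constant identification along a single $\msG_0$-nest), and this is precisely what Lemma \ref{l:2.26} is designed to handle. The hypothesis $\car\in\dom$ is what renders the constant nest $X_\bullet$ admissible in $\msG_0$, which is the crux of the argument.
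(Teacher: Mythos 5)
Your proof is correct and follows essentially the same route as the paper's: use $\car\in\dom$ to place the constant nest $X_\bullet$ in $\msG_0$, observe that $\DzLocB{\mu}(X_\bullet)=\DzB{\mu}$ by definition, and invoke Lemma~\ref{l:2.26} to identify $\DzLocB{\mu}$ with $\DzLocB{\mu}(X_\bullet)$. The only difference is the order in which Steps~2 and~3 are presented, which is immaterial.
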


\subsubsection{Intrinsic distances}
We recall a definition of generalized intrinsic distance introduced in~\cite{LzDSSuz20}.
For more information on intrinsic distances, we refer the reader to~\cite[\S{2.6}]{LzDSSuz20}.

\begin{definition}[Intrinsic distance]\label{d:IntrinsicD}
Let~$\mu\in\Msp(\Bo{\T},\Ne{\mcE})$. The \emph{intrinsic distance generated by~$\mu$} is the extended pseudo-distance~$\mssd_\mu\colon X^{\times 2}\rar [0,\infty]$ defined as
\begin{align}\label{eq:IntrinsicD}
\mssd_\mu(x,y)\eqdef \sup\tset{f(x)-f(y) : f\in \DzLocB{\mu,\T}} \fstop
\end{align}
\end{definition}

Note that~$\mssd_\mu$ is always $\T^\tym{2}$-l.s.c., hence $\Bo{\T^\tym{2}}$- and $\A^{\otym 2}$-measurable, for it is the supremum of a family of $\T^\tym{2}$-continuous functions. Furthermore,~$\mssd_\mu$ is $\T$-admissible by definition, as witnessed by the bounded uniformity
\begin{align*}
\UP_\mu\eqdef \set{\mssd_f(x,y)\eqdef \abs{f(x)-f(y)}: f\in\DzLocB{\mu,\T}}\fstop
\end{align*}

In the case when~$(\mbbX,\mcE)$ is a regular strongly local Dirichlet space and~$\mu\eqdef\mssm$,~\eqref{eq:IntrinsicD} coincides with the standard definition of intrinsic distance, see~\cite[Prop.~2.31]{LzDSSuz20}.

\subsubsection{Localizability}
Let us recall (a version of) the definition of \emph{$\mu$-uniform $\T$-localizability} given in~\cite[\S3.4]{LzDSSuz20}, also cf.~\cite[\S2.5.2]{LzDSSuz20}, and amounting to the existence of good Sobolev cut-off functions.

\begin{definition}\label{d:Localizability}
Let~$(\mbbX,\mcE)$ be a quasi-regular strongly local Dirichlet space, and $\mu\in \Msp(\Bo{\T},\Ne{\mcE})$.
We say that $(\mbbX,\mcE,\mu)$ is \emph{$\mu$-uniformly latticially}, resp.\ \emph{algebraically}, \emph{$\T$-localizable}, in short: $(\mbbX,\mcE,\mu)$ satisfies~$(\Loc{\mu,\T})$, resp.~$(\aLoc{\mu,\T})$, if $\mu$ is $\mcE$-moderate, and there exists a latticial approximation to the identity $\seq{\theta_n}_n$, resp.\ an algebraic approximation to the identity~$\seq{\psi_n}_n$, uniformly bounded by~$\mu$ in energy measure, viz.
\begin{align}
\tag*{$(\Loc{\mu,\T})$}\label{ass:Loc}
&& 0 \leq \theta_n\leq \theta_{n+1} \nearrow_n \infty\comma \qquad \theta_n \in \DzB{\mu,\T} \quad \tparen{\subset \Cb(\T)}\fstop
\\
\tag*{$(\aLoc{\mu,\T})$}\label{ass:ALoc}
\text{resp.}&& 0 \leq \psi_n\leq \psi_{n+1} \nearrow_n 1\comma \qquad \psi_n \in \DzB{\mu,\T} \quad \tparen{\subset \Cb(\T)}\fstop
\end{align}
\end{definition}

\begin{remark}[{\cite[Rmk.~3.28, 3.29]{LzDSSuz20}}]\label{r:AlgLocTrivial}
Let us note the following.
\begin{enumerate*}[$(a)$]
\item We may additionally assume with no loss of generality that $\inter_\T \set{\theta_n=n}\neq \emp$ and that~$\theta_n\leq n$ for every~$n\in\N$.

\item If~$\car\in\dom$, then~\ref{ass:ALoc} is trivially satisfied with~$\psi_n\eqdef \car$ for all~$n$, and the same holds for~\ref{ass:Loc} for the functions~$\theta_n$ in~\cite[Rmk.~3.29]{LzDSSuz20}.

\item \ref{ass:Loc} implies~\ref{ass:ALoc} by setting~$\psi_n\eqdef \theta_n\vee 1$.
\end{enumerate*}
\end{remark}

\subsection{Rademacher and Sobolev-to-Lipschitz properties}\label{sss:RadStoL}
Let~$(\mbbX,\mcE)$ be a quasi-regular strongly local Dirichlet space,~$\mu\in \Msp(\Bo{\T})$, and~$\mssd\colon X^\tym{2}\to [0,\infty]$ be an extended pseudo-distance.

\begin{definition}[Rademacher and Sobolev-to-Lipschitz properties~{\cite{LzDSSuz20}}]\label{d:RadStoL}
We say that~$(\mbbX,\mcE,\mssd,\mu)$ has the:
\begin{itemize}
\item[($\Rad{\mssd}{\mu}$)] \emph{Rademacher property} (\emph{for}~$\A$) if, whenever~$\rep f\in \Lipu(\mssd,\A)$, then~$f\in \DzLoc{\mu}$;
\item[($\dRad{\mssd}{\mu}$)] \emph{distance-Rademacher property} if~$\mssd\leq \mssd_\mu$;

\item[($\ScL{\mssm}{\T}{\mssd}$)] \emph{Sobolev--to--continuous-Lipschitz property} if each~$f\in\DzLoc{\mu}$ has an $\mssm$-representative $\rep f\in\Lip^1(\mssd,\T)$;

\item[($\SL{\mu}{\mssd}$)] \emph{Sobolev--to--Lipschitz property} (\emph{for}~$\A$) if each~$f\in\DzLoc{\mu}$ has an $\mssm$-representat\-ive $\rep f\in\Lip^1(\mssd,\A)$;

\item[($\dcSL{\mssd}{\mu}{\mssd}$)] \emph{$\mssd$-continuous-Sobolev--to--Lipschitz property} (\emph{for}~$\A$) if each~$f\in \DzLoc{\mu}$ having a $\mssd$-continuous $\A$-measurable representative~$\rep f$ also has a representative $\reptwo f\in\Lip^1(\mssd,\A^\mssm)$ (possibly,~$\reptwo f\neq \rep f$);

\item[($\cSL{\T}{\mssm}{\mssd}$)] \emph{continuous-Sobolev--to--Lipschitz property} if each~$f\in \DzLoc{\mu,\T}$ satisfies $f\in\Lip^1(\mssd,\T)$;

\item[($\dSL{\mssd}{\mu}$)] \emph{distance Sobolev-to-Lipschitz property} if~$\mssd\geq \mssd_\mu$.
\end{itemize}
\end{definition}

As noted in~\cite{LzDSSuz20}, in all the above definitions we may equivalently additionally require that~$\rep f$ be uniformly bounded.
In the following, we shall make use of this fact without further mention.

We refer the reader to~\cite[Rmk.s~3.2, 4.3]{LzDSSuz20} for comments on the terminology and to~\cite[Lem.~3.6, Prop.~4.2]{LzDSSuz20} for the interplay of all such properties.
For a quasi-regular strongly local Dirichlet space~$(\mbbX,\mcE)$, they reduce to the following scheme:
\begin{equation}\label{eq:EquivalenceRadStoL}
\begin{tikzcd}
& (\Rad{\mssd}{\mu}) \arrow[r, Rightarrow] & (\dRad{\mssd}{\mu}) & & \text{\cite[Lem.~3.6]{LzDSSuz20}}\comma
\\
(\ScL{\mu}{\T}{\mssd}) \arrow[r, Rightarrow] & (\SL{\mu}{\mssd}) \arrow[r, Rightarrow] & (\cSL{\mu}{\T}{\mssd}) \arrow[r, Leftrightarrow] & (\dSL{\mssd}{\mu}) &  \text{\cite[Prop.~4.2]{LzDSSuz20}}\fstop
\end{tikzcd}
\end{equation}

\begin{remark}[About~$(\dcSL{\mssd}{\mu}{\mssd})$]\label{r:dcSL}
Let us note that, whereas both~$(\dcSL{\mssd}{\mu}{\mssd})$ and $(\cSL{\T}{\mu}{\mssd})$ are implied by~$(\SL{\mu}{\mssd})$ and coincide on metric spaces, they do \emph{not} ---~at least in principle~--- imply each other on extended metric spaces.
In particular, while the $\mssd$-Lipschitz representative in~$(\cSL{\T}{\mu}{\mssd})$ is taken to coincide with the given $\T$-continuous one, it is important in the definition of~$(\dcSL{\mssd}{\mu}{\mssd})$ to allow for the $\mssd$-Lipschitz representative to be different from the $\mssd$-continuous one, and for the former to be only~$\A^\mssm$-measurable, rather than $\A$-measurable.
\end{remark}

\begin{lemma}[{\cite[Prop.~3.7]{LzDSSuz20}}]\label{l:RadCompleteness}
Let~$(\mbbX,\mcE)$ be a quasi-regular strongly local Dirichlet space with~$\mbbX$ satisfying~\ref{ass:Luzin},~$\mssd\colon X^\tym{2}\to [0,\infty]$ be an extended pseudo-distance on~$X$, and~$\mu\in \Msp(\Bo{\T},\Ne{\mcE})$ be $\mcE$-moderate.
Further assume that~$(\mbbX,\mcE)$ satisfies~$(\dRad{\mssd}{\mu})$. If~$(X,\T,\mssd)$ is a complete extended metric-topological space (Dfn.~\ref{d:AES}), then so is~$(X,\T,\mssd_\mu)$.
\end{lemma}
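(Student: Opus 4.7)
There are two things to establish: first, that $(X,\T,\mssd_\mu)$ is itself an extended metric-topological space in the sense of Definition~\ref{d:AES}; and second, that every $\mssd_\mu$-Cauchy sequence in an accessible component $B^{\mssd_\mu}_\infty(x_0)$ admits a $\mssd_\mu$-limit in the same component. The hypothesis $(\dRad{\mssd}{\mu})$ enters only as the comparison $\mssd\le\mssd_\mu$, so both tasks will consist of transferring structure from $\mssd$ to $\mssd_\mu$ by exploiting this inequality together with the $\T$-admissibility intrinsic to $\mssd_\mu$ via the uniformity
\[
\UP_\mu\eqdef \set{\mssd_f\colon(x,y)\mapsto \tabs{f(x)-f(y)} \,:\, f\in\DzLocB{\mu,\T}} \comma
\]
already noted to witness the $\T$-admissibility of $\mssd_\mu$.

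For the first task, I would let $\UP$ be a Hausdorff uniformity generating $\T$ that witnesses the extended metric-topological structure of $(X,\T,\mssd)$. Since $\mssd\le\mssd_\mu$, every $\mssd'\in\UP$ satisfies $\mssd'\le\mssd_\mu$ and is $\T^\tym{2}$-continuous; therefore the uniformity $\UP'$ obtained from $\UP\cup\UP_\mu$ by closing under pairwise $\vee$ and under $\le$ consists of $\T^\tym{2}$-continuous pseudo-distances bounded above by $\mssd_\mu$, with $\sup\UP'=\mssd_\mu$ because already $\sup\UP_\mu=\mssd_\mu$ by~\eqref{eq:IntrinsicD}. Hausdorffness of $\UP'$ is inherited from $\UP\subset\UP'$; the inclusion $\UP\subset\UP'$ also shows the topology generated by $\UP'$ contains $\T$, while $\T^\tym{2}$-continuity of each $\mssd''\in\UP'$ makes $\mssd''$-balls $\T$-open, hence the two topologies coincide.

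For the second task, let $\seq{x_n}_n\subset B^{\mssd_\mu}_\infty(x_0)$ be $\mssd_\mu$-Cauchy. By $\mssd\le\mssd_\mu$ the sequence is also $\mssd$-Cauchy and contained in $B^\mssd_\infty(x_0)$, so by completeness of $(X,\T,\mssd)$ there exists $y\in B^\mssd_\infty(x_0)$ with $\mssd(x_n,y)\to 0$. Since every $\mssd'\in\UP$ satisfies $\mssd'\le \mssd$ and $\UP$ generates $\T$, this $\mssd$-convergence upgrades automatically to $\T$-convergence $x_n\to y$.

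The remaining step---and in my view the heart of the argument---is to promote the $\T$-convergence $x_n\to y$ together with the $\mssd_\mu$-Cauchy condition to $\mssd_\mu$-convergence. For this I would appeal to the $\T^\tym{2}$-lower semicontinuity of $\mssd_\mu$ (immediate from its definition as a supremum of the $\T^\tym{2}$-continuous functions $\mssd_f$, $f\in\DzLocB{\mu,\T}$). Fixing $\eps>0$ and choosing $N$ with $\mssd_\mu(x_n,x_m)<\eps$ for $n,m\ge N$, the inequality
\[
\mssd_\mu(y,x_m)\le \liminf_n \mssd_\mu(x_n,x_m)\le \eps \comma\qquad m\ge N \comma
\]
shows $\mssd_\mu(y,x_m)\to 0$. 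Finally, $y\in B^{\mssd_\mu}_\infty(x_0)$ follows from $\mssd_\mu(y,x_0)\le\mssd_\mu(y,x_m)+\mssd_\mu(x_m,x_0)<\infty$ for $m$ large. The main subtlety is precisely this three-way interplay: $\mssd\le\mssd_\mu$ transfers the Cauchy property and locates the candidate limit $y$; the extended metric-topological structure of $\mssd$ upgrades $\mssd$-convergence to $\T$-convergence; and the intrinsic $\T$-l.s.c.\ of $\mssd_\mu$ returns control from the topological to the $\mssd_\mu$-metric side.
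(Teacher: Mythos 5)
Your proof is correct, and it follows the natural route: the hypothesis $(\dRad{\mssd}{\mu})$ gives $\mssd\le\mssd_\mu$, which transfers the Cauchy property and locates the candidate limit $y$ via completeness of $(X,\T,\mssd)$; the upgrade from $\T$-convergence $x_n\to y$ to $\mssd_\mu$-convergence then comes exactly from the $\T^\tym{2}$-lower semicontinuity of $\mssd_\mu$, which is the key mechanism here, as you correctly identified. Your preliminary verification that $(X,\T,\mssd_\mu)$ is itself an extended metric-topological space (by enlarging the uniformity $\UP$ for $\mssd$ with $\UP_\mu$) is also sound; only one small observation simplifies the convergence step slightly --- since $\mssd$ is $\T$-admissible, $\T_\mssd\supseteq\T$ automatically, so $\mssd$-convergence already implies $\T$-convergence without needing to pass through the individual $\mssd'\in\UP$.
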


\subsubsection{Metric completions}
Let~$(\mbbX,\mcE)$ be a Dirichlet space, and~$\mssd\colon X^\tym{2}\rar [0,\infty]$ be an extended distance. Further let~$(X^\iota,\mssd^\iota)$ be the abstract completion of~$(X,\mssd)$ and denote by~$\iota$ the completion embedding~$\iota\colon X\rar X^\iota$.
If~$\iota(X)$ is a Borel subset of~$X^\iota$, then~$\iota$ is $\Bo{\T_{\mssd}}/\Bo{\T_{\mssd^\iota}}$-measurable, and the image form~$(\mcE^\iota,\dom^\iota)$ of~$(\mcE,\dom)$ via~$\iota$ is well-defined on the image space~$\mbbX^\iota$.
When~$(\mbbX^\iota,\mcE^\iota)$ is a quasi-regular strongly local Dirichlet space, we denote its intrinsic distance by~$\mssd_{\mssm^\iota}$.

\begin{remark}\label{r:Completeness}
Let~$(\mbbX,\mcE)$ be a quasi-regular strongly local Dirichlet space, and~$\mssd\colon X^\tym{2}\to [0,\infty]$ be an extended distance on~$X$.
In order to discuss the interplay between~$(\mbbX,\mcE)$ and~$\mssd$ ---~precisely: the relation between~$\mssd$ and~$\mssd_\mssm$~---, it is \emph{natural} to assume that~$(X,\mssd)$ be complete.
In particular, it is usually not enough to resort to the completion of~$(X,\mssd)$, as we now show.

Indeed, let~$A\eqdef X^\iota\setminus\iota(X)$ be the complement of the $\iota$-image of~$X$ inside its completion.
The set~$A$ is in general not~$\mcE^\iota$-polar, that is, $(\mbbX,\mcE)$ and~$(\mbbX^\iota,\mcE^\iota)$ are in general not quasi-homeomorphic.
Furthermore, even in the case when~$(\mbbX,\mcE)$ and~$(\mbbX^\iota,\mcE^\iota)$ are quasi-homeomorphic, the corresponding intrinsic distances may be not related in any way.
That is, the metric completion~$(X^\iota,\mssd_\mssm^\iota)$ of~$(X,\mssd_\mssm)$ may differ (even as a set) from the (extended) metric space~$(X^\iota,\mssd_{\mssm^\iota})$, where this time we denoted by~$X^\iota$ the $\mssd$-completion of~$(X,\mssd)$ and by~$\mssd_{\mssm^\iota}$ the intrinsic distance of the Dirichlet form~$(\mcE^\iota,\dom^\iota)$.
For examples and counterexamples to the above statements, and for their relationship with the Sobolev-to-Lipschitz property, we refer the reader to~\cite[Ex.~4.7, \S4.1]{LzDSSuz20}.
\end{remark}

\subsection{Cheeger energies}\label{sss:CheegerE}
The content of this section is mostly taken from the detailed discussion of extended metric measure spaces put forward by L.~Ambrosio, N.~Gigli, and G.~Savar\'e in~\cite{AmbGigSav14}, Ambrosio, M.~Erbar, and Savar\'e in~\cite{AmbErbSav16}, and the one ---~more general still~--- in Savar\'e's monograph~\cite{Sav19}.

\begin{definition}[Extended metric-topological measure space,~{\cite[Dfn.~4.7]{AmbErbSav16}}]
By an \emph{extended metric-topological measure space}~$(X,\T,\mssd,\mssm)$ we mean an extended metric-topological space (Dfn.~\ref{d:AES}) together with a Radon measure~$\mssm$ restricted to the Borel $\sigma$-algebra~$\Bo{\T}$.
We use the expression \emph{extended metric-topological probability space} to indicate that~$\mssm$ is additionally a probability measure.
\end{definition}

Let~$(X,\T,\mssd,\mssm)$ be an extended metric-topological measure space.

\paragraph{Minimal relaxed slopes}
A $\Bo{\T}$-measurable function~$G\colon X\to [0,\infty]$ is a ($\mssm$-)\emph{relaxed slope} of~$f\in L^2(\mssm)$ if there exist~$\seq{f_n}_n\subset \Lip(\mssd,\Bo{\T})$ so that
\begin{itemize}
\item $L^2(\mssm)$-$\nlim f_n=f$ and~$\slo{f_n}$ $L^2(\mssm)$-weakly converges to~$\tilde G\in L^2(\mssm)$;
\item $\tilde G\leq G$ $\mssm$-a.e..
\end{itemize}

We say that~$G$ is the ($\mssm$-)\emph{minimal} ($\mssm$-)\emph{relaxed slope} of~$f\in L^2(\mssm)$, denoted by~$\slo[*]{f}$, if its $L^2(\mssm)$-norm is minimal among those of all relaxed slopes.
The notion is well-posed, and $\slo[*]{f}$ is in fact $\mssm$-a.e.\ minimal as well, see~\cite[\S{4.1}]{AmbGigSav14}.

\paragraph{Minimal weak upper gradients}
A Borel probability measure~$\boldpi$ on~$\AC^2(I;X)$ is a \emph{test plan of bounded compression} if there exists a constant~$C=C_\boldpi>0$ such that
\begin{align*}
(\ev_t)_\pfwd \boldpi \leq C\mssm \comma \qquad \ev_t\colon \AC^2(I;X)\to X\comma \ev_t\colon x_\emparg\mapsto x_t\fstop
\end{align*}
A Borel subset~$A\subset \AC^2(I;X)$ is called \emph{negligible} if~$\boldpi(A)=0$ for every test plan of bounded compression.
A property of $\AC^2(I;X)$-curves is said to hold \emph{for a.e.-curve} if it holds for every curve in a co-negligible set.

A $\Bo{\T}$-measurable function~$G\colon X\to [0,\infty]$ ($\mssm$-)\emph{weak upper gradient} of~$\rep f\in \mcL^0(\mssm)$ if
\begin{align*}
\tabs{\rep f(x_1)-\rep f(x_0)}\leq \int_0^1 G(x_r)\, \abs{\dot x}_r\diff r<\infty \quad \text{for a.e.\ curve } \seq{x_t}_t\in \AC^2(I;X)\fstop
\end{align*}

We say that~$G$ is the ($\mssm$-)\emph{minimal} ($\mssm$-)\emph{weak upper gradient} of~$f\in L^2(\mssm)$, denoted by~$\slo[w]{f}$, if it is $\mssm$-.a.e.-minimal among the weak upper gradients of~$\rep f$ for every representative~$\rep f$ of~$f$.
See e.g.~\cite[Dfn.~2.12]{AmbGigSav14} for the well-posedness of this notion, independently of the representatives of~$f$.

\paragraph{Asymptotic Lipschitz constants and asymptotic slopes}
For~$f\in \bLip(\mssd,\T)$ set
\begin{align*}
\Lia[\mssd]{f}(x,r)\eqdef \sup_{\substack{y,z\in B^\mssd_r(x)\\ \mssd(y,z)>0}} \frac{\abs{f(z)-f(y)}}{\mssd(z,y)}\comma \qquad x\in X\comma r>0 \fstop
\end{align*}
The \emph{asymptotic Lipschitz constant}~$\Lia[\mssd]{f}\colon X\rar [0,\infty]$ is defined as
\begin{align*}
\Lia[\mssd]{f}(x)\eqdef \lim_{r\downarrow 0} \Lia[\mssd]{f}(x,r) \comma
\end{align*}
with the usual convention that~$\Lia[\mssd]{f}(x)=0$ whenever~$x$ is a $\mssd$-isolated point in~$X$.
We drop the subscript~$\mssd$ whenever apparent from context.
Note that~$\Lia{f}$ is \mbox{$\mssd$-u.s.c.}\ by construction, thus if~$\mssd$ additionally metrizes~$\T$, then~$\Lia{f}$ is $\T$-u.s.c.\ as well, and therefore it is $\Bo{\T}$-measurable.

\paragraph{Cheeger energies}
Each extended metric-topological measure space~$(X,\T,\mssd,\mssm)$ is naturally endowed with a convex local energy functional, the l.s.c.\ $L^2(\mssm)$-relaxation of the natural energy on Lipschitz functions, called the \emph{Cheeger energy} of the space; e.g.~\cite{AmbGigSav14, Sav19}.
Several ---~\emph{a priori} inequivalent~--- definitions of Cheeger energy are possible. We collect here three of them, referring to~\cite{Sav19} for additional ones.

\begin{definition}[{\cite[Thm.~4.5]{AmbGigSav14}}]
The $\Ch[*]$-\emph{Cheeger energy} of~$f\in L^2(\mssm)$ is defined as
\begin{gather*}
\Ch[*,\mssd,\mssm](f)\eqdef \int \slo[*]{f}^2 \diff\mssm\comma \qquad
\domain{\Ch[*,\mssd,\mssm]}\eqdef \set{f\in L^2(\mssm) : \Ch[*,\mssd,\mssm](f)<\infty} \fstop
\end{gather*}
\end{definition}

\begin{definition}[{\cite[Rmk.~5.12]{AmbGigSav14}}]\label{d:CheegerW}
The $\Ch[w]$-\emph{Cheeger energy} of~$f\in L^2(\mssm)$ is defined as
\begin{gather*}
\Ch[w,\mssd,\mssm](f)\eqdef \int \slo[w]{f}^2 \diff\mssm\comma \qquad
\domain{\Ch[w,\mssd,\mssm]}\eqdef \set{f\in L^2(\mssm) : \Ch[w,\mssd,\mssm](f)<\infty} \fstop
\end{gather*}
\end{definition}

\begin{definition}[{\cite[Dfn.~6.1]{AmbErbSav16}}]\label{d:Cheeger}
The $\Ch[a]$-\emph{Cheeger energy} of~$f\in L^2(\mssm)$ is defined as
\begin{gather*}
\Ch[a,\mssd,\mssm](f)\eqdef \inf\nliminf \int g_n^2 \diff\mssm\comma \quad
\domain{\Ch[a,\mssd,\mssm]}\eqdef \set{f\in L^2(\mssm) : \Ch[a,\mssd,\mssm](f)<\infty} \comma
\end{gather*}
where the infimum is taken over all sequences~$\seq{f_n}_n \subset \bLip(\mssd,\T)$, $L^2(\mssm)$-strongly converging to~$f$, and all sequences~$\seq{g_n}_n$ of $\Bo{\T}^\mssm$-measurable functions satisfying~$g_n\geq \Lia{f_n}$ $\mssm$-a.e..
\end{definition}

In all cases, we shall omit the specification of either~$\mssd$,~$\mssm$ or both, whenever not relevant or apparent from context.
We refer the reader to~\cite[\S4]{AmbGigSav14} for a thorough treatment of~$\Ch[*]$, and to~\cite[\S6]{AmbErbSav16} for a thorough treatment of~$\Ch[a]$ in the setting of extended metric-topological probability spaces.

In the following, in order to refer to results in the literature, we shall need to make use of all the above definitions on some extended metric-topological \emph{probability} space.
To this end, we first show that they coincide on every such space.

\begin{proposition}\label{p:ConsistencyCheeger}
Let~$(X,\T,\mssd,\mssm)$ be an extended metric-topological \emph{probability} space. Then,
\begin{equation*}
\Ch[*,\mssd,\mssm]=\Ch[a,\mssd,\mssm]=\Ch[w,\mssd,\mssm]\comma
\end{equation*}
and each of these functionals is densely defined on~$L^2(\mssm)$.

\begin{proof}
Since~$\mssm X<\infty$, the domain of~$\Ch[*,\mssd,\mssm]$ contains~$\bLip(\mssd,\Bo{\T})$, and the latter is dense in~$L^2(\mssm)$ by e.g.~\cite[Prop.~4.1]{AmbGigSav14}.
Let us show the identification.
Firstly, note that our Definition~\ref{d:Cheeger} (i.e.~\cite[Dfn.~6.1]{AmbErbSav16}) differs from~\cite[Dfn.~5.1]{Sav19} ---~as do our definition of the asymptotic Lipschitz constant (again after~\cite{AmbErbSav16}) and the one in~\cite{Sav19}. Thus, we need to show that our definition of~$\Ch[a]$ coincides with the one of~$\CE[2]$ in~\cite[Dfn.~5.1]{Sav19}.
Once this identity is established, the assertion will be a consequence of the identification of both~$\Ch[*]$ and~$\CE[2]$ with~$\Ch[w]$.

The identification of~$\CE[2]$ with~$\Ch[w]$ is shown in~\cite[Thm.~11.7]{Sav19} for the choice $\msA\eqdef \bLip(\mssd,\T)$.
The identification of~$\Ch[*]$ with~$\Ch[w]$ is shown in~\cite[Thm.~6.2]{AmbGigSav14}. The assumption in Equation~(4.2) there is trivially satisfied since~$\mssm X=1$.
Thus, the proof is concluded if we show that
\begin{equation}\label{eq:p:Cheeger:1}
\Ch[w]\leq \Ch[a]\leq \CE[2]\fstop
\end{equation}

Since~$\T_\mssd$ is finer than~$\T$, for each~$x\in X$ and for each neighborhood~$U$ of~$x$, there exists~$r>0$ so that~$B^\mssd_r(x)\subset U$. Thus, for any $\Bo{\T}$-measurable~$f\colon X\rar \overline{\R}$,
\begin{align*}
\sup_{x,y\in B^\mssd_r(x)} \frac{f(z)-f(y)}{\mssd(x,y)} \leq \sup_{x,y\in U} \frac{f(z)-f(y)}{\mssd(x,y)} \fstop
\end{align*}
As a consequence, $\Lia[\mssd]{f}$~is dominated by the asymptotic Lipschitz constant of~$f$ as defined in~\cite[Eqn.~(2.48)]{Sav19}, and the second inequality in~\eqref{eq:p:Cheeger:1} follows.

The first inequality in~\eqref{eq:p:Cheeger:1} is a consequence of~\cite[Prop.~6.3$(b)$ and $(g)$]{AmbErbSav16}. Importantly, we note that~\cite{AmbErbSav16} denotes by~$\slo[w]{f}$ the minimal relaxed slope~$\slo[*]{f}$.
\end{proof}
\end{proposition}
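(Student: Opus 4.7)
The plan is to collect three identifications from the literature and then close a short chain of inequalities. First I would settle the density claim: since $\mssm X = 1 < \infty$, every bounded $\mssd$-Lipschitz $\Bo{\T}$-measurable function lies in $L^2(\mssm)$ and admits the ordinary slope as a (relaxed) upper gradient, hence belongs to $\domain{\Ch[*,\mssd,\mssm]}$. Density of $\bLip(\mssd,\Bo{\T})$ in $L^2(\mssm)$ for extended metric-topological probability spaces is available in~\cite[Prop.~4.1]{AmbGigSav14}, which would immediately give the density of each of the three domains once the equalities are shown.

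For the identifications, I would invoke two results already in the literature. The equality $\Ch[*,\mssd,\mssm] = \Ch[w,\mssd,\mssm]$ is~\cite[Thm.~6.2]{AmbGigSav14}; importantly, the auxiliary integrability condition there (Equation (4.2) of \emph{loc.\ cit.}) is vacuous because $\mssm X = 1$, so the theorem applies in our setting. The equality $\CE[2] = \Ch[w,\mssd,\mssm]$ between Savar\'e's relaxation functional and the weak-upper-gradient energy is~\cite[Thm.~11.7]{Sav19}, with the choice of algebra $\msA \eqdef \bLip(\mssd,\T)$. This reduces the problem to showing that the paper's $\Ch[a,\mssd,\mssm]$ (defined after~\cite[Dfn.~6.1]{AmbErbSav16}) agrees with Savar\'e's $\CE[2]$.

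The remaining bridge is best phrased as the two-sided inequality $\Ch[w] \leq \Ch[a] \leq \CE[2]$, which combined with $\Ch[w] = \CE[2]$ forces equality throughout. The upper bound $\Ch[a] \leq \CE[2]$ is the key computation: the two functionals use slightly different asymptotic Lipschitz constants, the one in the paper taking the supremum over $\mssd$-balls $B^\mssd_r(x)$ and the one in~\cite{Sav19} taking it over $\T$-neighborhoods of $x$. Because $\T_\mssd$ is finer than $\T$, every $\T$-neighborhood of $x$ contains some $B^\mssd_r(x)$, so the paper's $\Lia[\mssd]{f}$ is pointwise dominated by Savar\'e's asymptotic Lipschitz constant; the infimum defining $\Ch[a]$ can thus be taken along the same approximating sequences used for $\CE[2]$. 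The reverse bound $\Ch[w] \leq \Ch[a]$ follows from the standard relaxation argument (e.g.~\cite[Prop.~6.3]{AmbErbSav16}): any $L^2$-weak cluster point of $\Lia{f_n}$ is a relaxed slope, hence dominates $\slo[w]{f}$ in $L^2$-norm.

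The only real subtlety is bookkeeping: the notations for asymptotic Lipschitz constants in~\cite{AmbErbSav16} and~\cite{Sav19} differ, and one must verify that~\cite{AmbErbSav16} writes $\slo[w]{f}$ for what is the minimal relaxed slope in~\cite{AmbGigSav14}, so that the results in \emph{loc.\ cit.}\ really transport across the three frameworks. No geometric assumption (doubling, Poincar\'e, etc.) is needed; the probability hypothesis on $\mssm$ plays its role only through the density of $\bLip(\mssd,\Bo{\T})$ in $L^2(\mssm)$ and through removing the integrability condition in~\cite[Thm.~6.2]{AmbGigSav14}.
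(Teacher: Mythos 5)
Your proposal follows essentially the same path as the paper's own proof: the same density argument from~\cite[Prop.~4.1]{AmbGigSav14}, the same reduction to the chain $\Ch[w]\leq \Ch[a]\leq \CE[2]$, the same comparison of asymptotic Lipschitz constants via the fact that $\T_\mssd$ is finer than $\T$, and the same appeals to~\cite[Thm.~6.2]{AmbGigSav14},~\cite[Thm.~11.7]{Sav19}, and~\cite[Prop.~6.3]{AmbErbSav16} (the paper pins down parts~$(b)$ and~$(g)$). You also correctly flag the notational pitfall that~\cite{AmbErbSav16} writes $\slo[w]{f}$ for the minimal relaxed slope.
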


\section{Localization and globalization}\label{s:LocGlob}
Let~$(\mbbX,\mcE)$ be a quasi-regular Dirichlet space, and~$E\subset X$ be $\mcE$-quasi-open, with~$\mssm E>0$.
Further set
\begin{equation}\label{eq:LocalizationKuwae}
\dom_E\eqdef\set{u\in\dom: \reptwo u \equiv 0\ \qe{\mcE} \text{ on } X\setminus E} \comma \qquad \mcE_E(u,v)\eqdef \mcE(u,v)\comma \quad u,v\in\dom_E\fstop
\end{equation}
Note that~$\dom_E\subset L^2(\mssm\mrestr{E})$.
Let us gather here various results proved by K.~Kuwae in~\cite{Kuw98}.

\begin{proposition}[Kuwae]\label{p:Kuwae}
Let~$(\mbbX,\mcE)$ be a quasi-regular Dirichlet space, and~$E\subset X$ be $\mcE$-quasi-open, with~$\mssm E>0$.
Then,
\begin{enumerate}[$(i)$]
\item\label{i:p:Kuwae:1} \cite[Lem.~3.4(i)]{Kuw98} $\dom_E$ is dense in~$L^2(m\mrestr{E})$ and~$(\mcE_E,\dom_E)$ is a Dirichlet form on~$L^2(m\mrestr{E})$;

\item\label{i:p:Kuwae:2} \cite[Lem.~3.4(ii), Lem.~3.5(ii)\&(iv)]{Kuw98} $A\subset E$ is $\mcE$-polar, resp.\ $\mcE$-quasi-open, if and only if it is $\mcE_E$-polar, resp. $\mcE_E$-quasi-open;

\item\label{i:p:Kuwae:3} \cite[Lem.~3.4(ii)]{Kuw98} the restriction to~$E$ of any $\mcE$-quasi-continuous function is $\mcE_E$-quasi-continuous;

\item\label{i:p:Kuwae:4} \cite[Lem.~3.4(ii)]{Kuw98} $(\mcE_E,\dom_E)$ is quasi-regular;

\item\label{i:p:Kuwae:5} \cite[Thm.~4.2]{Kuw98} $\dotloc{\dom}(E)=\dotloc{(\dom_E)}$. In particular,~$\dom\restr_{E}\eqdef\set{f\restr_E: f\in\dom}\subset \dotloc{(\dom_E)}$.
\end{enumerate}
\end{proposition}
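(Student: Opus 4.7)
Since each item is attributed to a specific lemma or theorem in [Kuw98], the plan is to follow Kuwae's arguments there, proceeding in the order (i)--(v) as each builds on the previous. Items (i)--(iv) are standard applications of the nest/quasi-open formalism of quasi-regular Dirichlet spaces: they all exploit the fact that quasi-open subsets of $E$ in the sense of $\mcE_E$ correspond to quasi-open subsets of $X$ (in the sense of $\mcE$) contained in $E$, after appropriate manipulation of witnessing nests. The substantive content lies in (v), where an extra cut-off argument is required.

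For (i), closedness of $\dom_E$ in $(\dom, \mcE_1^{1/2})$ follows from the fact that $\mcE_1$-convergence implies quasi-uniform convergence of quasi-continuous representatives along a subsequence, preserving the q.e.\ vanishing condition. The Markov property transfers from $\mcE$ upon noting that the truncation $0 \vee u \wedge 1$ still vanishes q.e.\ on $X \setminus E$. Density in $L^2(\mssm\mrestr{E})$ is proved by intersecting a compact $\mcE$-nest witnessing quasi-regularity with $E$ and multiplying elements of $\dom$ by bounded cut-off functions in the algebra $\domb$. Items (ii) and (iii) then follow by matching nests: any $\mcE$-nest can be intersected with a suitable family of quasi-open neighborhoods of $E$ to obtain an $\mcE_E$-nest, and conversely any $\mcE_E$-nest extends to an $\mcE$-nest. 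Since polarity, quasi-openness, and quasi-continuity are all defined through nests, these correspondences yield (ii) and (iii). For (iv), each of the three conditions $({\mathsc{qr}}_1)$--$({\mathsc{qr}}_3)$ is checked for $(\mcE_E, \dom_E)$ by restricting the corresponding witnesses for $(\mcE, \dom)$ to $E$ via the constructions of (ii)--(iii).

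The main obstacle is (v). The inclusion $\supseteq$ is essentially immediate, since $\dom_E \subset \dom$ and any $G_\bullet \in \msG(E)$ witnessing membership in $\dotloc{(\dom_E)}$ a fortiori witnesses membership in $\dotloc{\dom}(E)$ by (ii). For the converse inclusion, given $f \in \dotloc{\dom}(E)$ witnessed by $G_\bullet \in \msG(E)$ and a sequence $f_n \in \dom$ with $f_n = f$ $\mssm$-a.e.\ on $G_n$, the plan is to produce $\tilde f_n \in \dom_E$ coinciding with $f$ $\mssm$-a.e.\ on $G_n$. Using quasi-regularity together with $G_\bullet \in \msG(E)$, one refines $G_\bullet$ so that $\clE G_n$ is contained q.e.\ in some quasi-open $G_n' \subset E$, and then selects a quasi-continuous cut-off $\chi_n \in \domb$ with $\chi_n \equiv 1$ q.e.\ on $G_n$ and $\chi_n \equiv 0$ q.e.\ off $G_n'$. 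Since $\domb$ is an algebra, $\tilde f_n \eqdef \chi_n f_n \in \domb$; the condition $\chi_n \equiv 0$ q.e.\ off $E$ yields $\tilde f_n \in \dom_E$, while $\chi_n \equiv 1$ on $G_n$ gives $\tilde f_n = f$ there. The existence of the cut-off $\chi_n$ is the technical heart of the argument, carried out in [Kuw98, Thm.~4.2], and is where the full strength of quasi-regularity is used. The final assertion $\dom\restr_E \subset \dotloc{(\dom_E)}$ follows by taking $f_n \equiv f \in \dom$ constant in $n$, for any $G_\bullet \in \msG(E)$.
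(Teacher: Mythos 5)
The paper does not actually prove this proposition: each item is a direct pointer to a specific lemma or theorem in Kuwae's article, so there is no argument in the text against which to compare your sketch. Your reconstruction of Kuwae's proofs is broadly sound, and you correctly single out the cut-off construction underlying (v) as the technical crux.

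One point in (v) deserves sharpening. Your phrase ``one refines $G_\bullet$ so that $\clE G_n$ is contained q.e.\ in some quasi-open $G'_n\subset E$'' reads as if the original $G_n$ already admits such a $G'_n$; in general it does not. Take the Sobolev form on $[0,1]$ with $E=(0,1)$ and $G_n=(0,1-1/n)$: then $\clE G_n=[0,1-1/n]\not\subset E$ and $\{0\}$ is non-polar, so no quasi-continuous $\chi_n$ can equal $1$ q.e.\ on $G_n$ while vanishing q.e.\ off $E$, since q.e.\ equality to $1$ on a quasi-open set propagates to its $\mcE$-quasi-closure. The refinement must therefore \emph{shrink} the witnessing nest: one replaces $G_\bullet$ by a smaller $G'_\bullet\in\msG(E)$ with $\clE G'_n$ q.e.\ contained in a further quasi-open set $\subset E$. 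This is harmless precisely because $f_n=f$ a.e.\ on $G_n\supset G'_n$, so the same functions $f_n$ witness membership relative to $G'_\bullet$; after this adjustment the cut-off and product argument you describe closes the case exactly as in Kuwae.
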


We denote by~$\mbbX_E\eqdef (E,\T_E,\A_E,\mssm\mrestr{E})$ the restricted space of~$\mbbX$ to~$E$, defined in the obvious way, and by~$(\mbbX_E,\mcE_E)$ the corresponding quasi-regular Dirichlet space constructed in Proposition~\ref{p:Kuwae}\ref{i:p:Kuwae:1},~\ref{i:p:Kuwae:4}.
When~$(\mbbX_E,\mcE_E)$ is a quasi-regular strongly local Dirichlet space, we denote by~$\sqE{\emparg,\emparg}\colon (\dom_E)_b^\tym{2}\to \MbpR(\Bo{\T},\Ne{\mcE_E})$ its energy measure, and by~$\sqE{\emparg}$ the corresponding extension to~$\dotloc{(\dom_E)}$ with values in~$\Msp(\Bo{\T},\Ne{\mcE_E})$ constructed in Proposition~\ref{p:PropertiesLoc}.

\begin{corollary}[Kuwae]\label{c:Kuwae}
Let~$(\mbbX,\mcE)$ be a quasi-regular strongly local Dirichlet space, and~$E\subset X$ be $\mcE$-quasi-open, with~$\mssm E>0$.
Then, $(\mbbX_E,\mcE_E)$ is a quasi-regular strongly local Dirichlet space. Furthermore,
\[
\sqE{f,g}=\car_E\sq{f,g}\comma \quad f,g\in\dom_E \qquad \text{and} \qquad \sqE{f}=\car_E\sq{f} \comma \quad f\in\dotloc{(\dom_E)} \fstop
\]
\begin{proof}
The quasi-regularity of~$(\mbbX_E,\mcE_E)$ was noted in Proposition~\ref{p:Kuwae}\ref{i:p:Kuwae:4}.
By (the proof\footnote{Mistakenly marked as `Proof of Corollary~6.1', see~\cite[p.~702]{Kuw98}.} of)~\cite[Cor.~5.1]{Kuw98}, the form~$(\mcE_E,\dom_E)$ is as well strongly local, and~$\sqE{f,g}=\car_E \sq{f,g}$ for every~$f,g\in\dom_E$.
The last assertion follows by extension to the broad local domain as in Proposition~\ref{p:PropertiesLoc}\ref{i:p:PropertiesLoc:5}.
\end{proof}
\end{corollary}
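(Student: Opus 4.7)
The plan is to assemble the three claims from results already available in the excerpt, using Kuwae's localization framework together with the extension machinery of Proposition~\ref{p:PropertiesLoc}.

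First, the quasi-regularity of $(\mbbX_E,\mcE_E)$ is nothing new: it is recorded verbatim in Proposition~\ref{p:Kuwae}\ref{i:p:Kuwae:4}. So I would open the proof by citing this, and spend no further effort on it.

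The substantive content is strong locality together with the identity $\sqE{f,g}=\car_E\sq{f,g}$ on $(\dom_E)_b^{\tym{2}}$. Here I would invoke~\cite[Cor.~5.1]{Kuw98} (acknowledging the misprint in the Kuwae reference label, exactly as the excerpt does). The intuition is that for $f,g,h\in (\dom_E)_b$ the defining relation
\[
2\int \reptwo h\diff\sqE{f,g}=\mcE_E(fh,g)+\mcE_E(gh,f)-\mcE_E(fg,h)
\]
rewrites, by the very definition of $\mcE_E$ as the restriction of $\mcE$ to $\dom_E$, as $\sqf{f,g}(h)$ computed in the ambient form; since $f,g,h$ are all $\mcE$-quasi-everywhere supported on $E$, strong locality of the ambient form forces the resulting measure to be supported on $E$, which yields both strong locality of $\mcE_E$ and the formula $\sqE{f,g}=\car_E\sq{f,g}$ simultaneously. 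The only thing to be careful with is that the representatives chosen for the strong locality of $\mcE$ and those chosen for $\mcE_E$ agree $\mcE_E$-quasi-everywhere on $E$, which is Proposition~\ref{p:Kuwae}\ref{i:p:Kuwae:2}--\ref{i:p:Kuwae:3}.

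Finally, for the extension to $\dotloc{(\dom_E)}$, I would argue by uniqueness. By Proposition~\ref{p:PropertiesLoc} applied to the quasi-regular strongly local space $(\mbbX_E,\mcE_E)$, the quadratic form $\sqE{\emparg}$ on $(\dom_E)_b$ extends uniquely to $\dotloc{(\dom_E)}$ with values in $\Msp(\Bo{\T_E},\Ne{\mcE_E})$ satisfying the strong locality property~\eqref{eq:SLoc:2}. On the other hand, Proposition~\ref{p:Kuwae}\ref{i:p:Kuwae:5} identifies $\dotloc{(\dom_E)}$ with $\dotloc{\dom}(E)$, and the map $f\mapsto \car_E\sq{f}$ defined on $\dotloc{\dom}(E)$ also satisfies the strong locality property and agrees with $\sqE{f}$ on $(\dom_E)_b$ by the step above. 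Uniqueness of the extension thus forces $\sqE{f}=\car_E\sq{f}$ for all $f\in \dotloc{(\dom_E)}$, and bilinearity (or polarization) yields the bilinear identity.

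The only genuine obstacle is the first substantive step, i.e.\ justifying the interchange of $\mcE$ and $\mcE_E$ in the definition of the energy measure carefully enough to harvest both strong locality and the restriction formula at once; once that is granted, the broad-local extension is a formal consequence of the uniqueness statement already recorded in Proposition~\ref{p:PropertiesLoc}.
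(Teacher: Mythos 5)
Your proof is correct and follows the same route as the paper: quasi-regularity from Proposition~\ref{p:Kuwae}\ref{i:p:Kuwae:4}, strong locality of $(\mcE_E,\dom_E)$ together with the restriction identity on $\dom_E$ from Kuwae's Corollary~5.1, and the passage to $\dotloc{(\dom_E)}$ via the locality of the extended energy measure. Your last step is phrased as a uniqueness argument, whereas the paper simply says the extension to the broad local domain proceeds as in Proposition~\ref{p:PropertiesLoc}\ref{i:p:PropertiesLoc:5}; the two formulations are interchangeable, both resting on the strong-locality property~\eqref{eq:SLoc:2} to pin down the measure on an exhausting nest of $E$.
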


\begin{remark}[\emph{Caveat}]\label{r:Caveat}
Let~$(\mbbX,\mcE)$ be a quasi-regular strongly local Dirichlet space, and~$E\subset X$ be $\mcE$-quasi-open with~$0<\mssm E<\infty$.
Then,~$(\mcE_E,\dom_E)$ is conservative (and~$\car_E\in\dom_E$) if and only if $E$ is additionally $\mcE$-quasi-closed.
Indeed, if $E$ is both $\mcE$-quasi-open and $\mcE$-quasi-closed, then it is $\mcE$-invariant, see e.g.~\cite[Cor.~4.6.3, p.~194]{FukOshTak11}.
Thus,~$\car_E\in\dom$ by $\mcE$-invariance of~$E$, e.g.~\cite[p. 53]{FukOshTak11}, and thus~$\car_E\in\dom_E$ by definition of~$\dom_E$.
If otherwise,~$E$ is not $\mcE$-quasi-closed, then it is not $\mcE$-invariant, hence~$\car_E\not\in\dom\supset \dom_E$.
\end{remark}

\begin{corollary}\label{c:RestrictionDzLoc}
Let~$(\mbbX,\mcE)$ be a quasi-regular strongly local Dirichlet space,~$E\subset X$ be $\mcE$-quasi-open, with~$\mssm E>0$, and~$\mu\in\Msp(\Bo{\T},\Ne{\mcE})$.
Then, with obvious meaning of the notation,
\begin{gather*}
\DzLoc{\mu}(E)= \DzLocE{\mu\mrestr{E}}\eqdef \set{f\in\dotloc{(\dom_E)}: \sqE{f}\leq \mu\mrestr{E}}\comma
\\
\DzLocB{\mu}(E)= \DzLocBE{\mu\mrestr{E}}\comma \qquad \DzLocB{\mu,\T}(E)=\ \DzLocBE{\mu\mrestr{E},\T} \fstop
\end{gather*}

Further assume that~$\mu$ is additionally absolutely $\mcE$-moderate, and that there exists~$e_E\in\domb$ with~$0\leq \reptwo e_E\leq 1$ $\mcE$-q.e.\ on~$E$.
Then,
\begin{equation}\label{eq:c:RestrictionDzLoc:1}
\DzLocB{\mu}\restr_E\eqdef \set{f\restr_E : f\in \DzLocB{\mu}}\subset \DzLocE{\mu\mrestr{E}} \fstop
\end{equation}

\begin{proof}
We only show the first equality, all others being trivial consequences of the first one.
Firstly, note that~$\mu\mrestr{E}\in\Msp(\Bo{\T_E},\Ne{\mcE_E})$ as a consequence of Proposition~\ref{p:Kuwae}\ref{i:p:Kuwae:2}.
Thus, the statement is well-posed.
Let~$f\in\DzLoc{\mu}(E)$.
Then,~$\sqE{f}=\car_E\sq{f}\leq \car_E \mu\defeq\mu\mrestr{E}$, where the first equality is shown in Corollary~\ref{c:Kuwae} and the inequality holds by definition of~$\DzLoc{\mu}(E)$.
This concludes the first assertion in light of Proposition~\ref{p:Kuwae}\ref{i:p:Kuwae:5}.

We show the second assertion. Let~$(G_\bullet,e_\bullet)\in\msG_0$ be $\mu$-moderated, and set~$G_k'\eqdef G_k\cup E$ for every~$k\in \N$.
Since~$e_E\equiv 1$ $\mssm$-a.e.\ on~$E$, the function~$e_k\vee e_E$ satisfies~$e_k\vee e_E \in\domb$ and~$e_k\vee e_E\equiv 1$ $\mssm$-a.e.\ on~$G_k'$. 
Thus,~$G_\bullet'\in\msG_0$.
Now, let~$(G^f_\bullet, f_\bullet)$ with~$G^f_\bullet\in\msG_0$ be witnessing that~$f\in \DzLocB{\mu}$.
Since~$\mu$ is absolutely $\mcE$-moderate, both~$G^f_\bullet$ and~$G'_\bullet$ are $\mu$-moderated.
Thus,~$f\in\DzLocB{\mu}(G^f_\bullet)=\DzLocB{\mu}(G'_\bullet)$ by Lemma~\ref{l:2.26}.
By definition of the latter space, there exists~$f^E\in \dom$ with~$f^E\equiv f$ $\mssm$-a.e.\ on~$E$ and~$\sq{f}\leq \mu$.
By Proposition~\ref{p:Kuwae}\ref{i:p:Kuwae:5} we conclude that~$f^E\restr_E \in \dotloc{(\dom_E)}$.
In fact,~$f^E\restr_{E}\in \DzLocBE{\mu\mrestr{E}}$ by locality of~$\sq{f^E}$ as in the first assertion, which concludes the second assertion and the proof.
\end{proof}
\end{corollary}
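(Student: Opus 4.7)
The plan is to deduce the three identities of the first part as a direct consequence of Proposition~\ref{p:Kuwae}\ref{i:p:Kuwae:5}---which identifies the broad local domains $\dotloc{\dom}(E)=\dotloc{(\dom_E)}$---together with Corollary~\ref{c:Kuwae}, which compares the two energy measures. The inclusion~\eqref{eq:c:RestrictionDzLoc:1} will then follow by a change-of-nest argument: the cutoff $e_E\in\domb$ is used to enlarge any $\mcE$-moderated nest witnessing membership in $\DzLocB{\mu}$ to one covering $E$, so that Lemma~\ref{l:2.26} applies.

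For the first part, I would fix $f\in\dotloc{\dom}(E)$ and, using Proposition~\ref{p:Kuwae}\ref{i:p:Kuwae:5}, produce both representatives $f_n\in\dom$ and $g_n\in\dom_E$ agreeing with $f$ on a common nest $G_\bullet\in\msG(E)$. Strong locality (Proposition~\ref{p:PropertiesLoc}\ref{i:p:PropertiesLoc:5}), applied to $f_n-g_n\equiv 0$ on $G_n$, yields $\car_{G_n}\sq{f_n}=\car_{G_n}\sq{g_n}$, while Corollary~\ref{c:Kuwae} applied to $g_n\in\dom_E$ gives $\sqE{g_n}=\car_E\sq{g_n}$. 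Passing to the limit along $G_n\nearrow E$ produces the identity $\sqE{f}=\car_E\sq{f}$ on $E$, showing that $\sq{f}\leq\mu$ is equivalent to $\sqE{f}\leq\mu\mrestr{E}$. The bounded and $\T$-continuous variants follow at once, since boundedness and $\T$-continuity of $f$ are intrinsic and independent of whether one views $f$ as an element of $\dotloc{\dom}(E)$ or of $\dotloc{(\dom_E)}$.

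For the inclusion~\eqref{eq:c:RestrictionDzLoc:1}, given $f\in\DzLocB{\mu}$ witnessed by $(G^f_\bullet,f_\bullet)\in\msG_0$, the plan is to pass to the enlarged nest $G'_k\eqdef G^f_k\cup E$. Since $\domb$ is an algebra, the sequence $e'_k\vee e_E$---where $e'_\bullet$ is a $\mu$-moderating witness for $G^f_\bullet$, whose existence is ensured by absolute $\mcE$-moderance of $\mu$---lies in $\domb$ and dominates $\car_{G'_k}$ $\mssm$-a.e., so $G'_\bullet\in\msG_0$ is again $\mu$-moderated. Lemma~\ref{l:2.26} then gives $f\in\DzLocB{\mu}(G'_\bullet)$, providing, for each $k$, some $f^E_k\in\dom$ with $f^E_k=f$ $\mssm$-a.e.\ on $G'_k\supset E$ and $\sq{f^E_k}\leq\mu$; restricting to $E$ and invoking the first part of the corollary places $f\restr_E$ in $\DzLocE{\mu\mrestr{E}}$. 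The main obstacle I anticipate is the correct interpretation of $\sq{f}\leq\mu$ for $f\in\dotloc{\dom}(E)$ with $E\subsetneq X$, and more generally the bookkeeping between representatives in $\dom$ versus $\dom_E$ and between moderance witnesses for $\mcE$ versus $\mcE_E$; the cutoff $e_E$ is precisely what allows one to transfer witnesses from the ambient to the restricted setting.
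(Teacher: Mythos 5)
Your proposal is correct and follows essentially the same route as the paper: the first part rests on Proposition~\ref{p:Kuwae}\ref{i:p:Kuwae:5} together with Corollary~\ref{c:Kuwae}, and the second part on the nest enlargement $G'_k\eqdef G_k\cup E$, Lemma~\ref{l:2.26}, and absolute $\mcE$-moderance. The only small remark is that your first-part argument re-derives the identity $\sqE{f}=\car_E\sq{f}$ on the broad local domain from its version on $\dom_E$, whereas Corollary~\ref{c:Kuwae} already states this for all $f\in\dotloc{(\dom_E)}$, so that detour is not needed; likewise, the $\mu$-moderance of $G'_\bullet$ need not come from the explicit witness $e'_k\vee e_E$ (whose $\mu$-integrability is not clear) but directly from absolute $\mcE$-moderance once $G'_\bullet\in\msG_0$ is established.
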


\begin{remark}
\begin{enumerate*}[$(a)$]
\item In general,~$\DzLocE{\mu\mrestr{E},\T}\not\subset\DzLocB{\mu,\T}\restr_E$.
Indeed, let~$\mcE$ be the $0$-form on $L^2(\mssm)$. Then, $\DzLocE{\mu\mrestr{E},\T}=\Cb(\T_E)\not\subset \Cb(\T)\restr_{E}=\DzLoc{\mu,\T}\restr_{E}$, unless $E$ is additionally $\T$-closed.

\item In general,~$\DzB{\mu}\restr_E \not\subset \DzE{\mu\mrestr{E}}$, since~$\car\in \Dz{\mu}\restr_E$ yet~$\car_E\notin \dom \supset \dom_E$, unless~$E$ is additionally $\mcE$-quasi-closed.
\end{enumerate*}
\end{remark}

Let us state here the following localization property for~$(\Rad{\mssd}{\mu})$.

\begin{proposition}[Localization of the Rademacher property]\label{p:LocRad}
Let~$(\mbbX,\mcE)$ be a quasi-regular strongly local Dirichlet space with~$\mbbX$ satisfying~\ref{ass:Luzin}, $\mu\in \Msp(\Bo{\T},\Ne{\mcE})$ be absolutely $\mcE$-moderate, and~$\mssd\colon X^\tym{2}\to [0,\infty]$ be an extended pseudo-distance on~$X$.
Further assume that
\begin{enumerate}[$(a)$]
\item\label{i:p:LocRad:1} $\T_\mssd$ is separable;
\item\label{i:p:LocRad:2} $\mssd(\emparg, x_0)\colon X\to [0,\infty]$ is $\A$-measurable for every~$x_0\in X$.
\end{enumerate}

If~$(\mbbX,\mcE,\mssd,\mu)$ satisfies~$(\Rad{\mssd}{\mu})$, then~$(\mbbX_E,\mcE_E,\mssd,\mu\mrestr{E})$ satisfies~$(\Rad{\mssd}{\mu\mrestr{E}})$ for every $\mcE$-quasi-open $E\subset X$ with~$\mssm E>0$.
\end{proposition}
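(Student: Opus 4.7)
The plan is to reduce the assertion to the global Rademacher property on $(\mbbX, \mcE)$ via a McShane-type extension that is compatible both with the $\sigma$-algebra $\A$ and with the possibly extended nature of $\mssd$. Fix $\rep f \in \Lipu(\mssd, \A_E)$; by the observation following Definition~\ref{d:RadStoL} we may additionally assume $\abs{\rep f} \leq M$ for some $M > 0$. The target is to show $f \in \DzLocE{\mu \mrestr{E}}$, which by Corollary~\ref{c:RestrictionDzLoc} coincides with the space $\DzLoc{\mu \mrestr{E}}$ associated to the restricted Dirichlet space $(\mbbX_E, \mcE_E)$.

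Since $(X, \mssd\wedge 1)$ is a separable pseudo-metric space by~(a), its subset $E$ admits a countable $\mssd$-dense sequence $\seq{y_n}_n$. Put $E^\infty \eqdef \tset{x \in X : \inf_n \mssd(x, y_n) < \infty}$, which equals the union of the $\mssd$-accessible components meeting $E$, and is $\A$-measurable by~(b). Define $\rep g \colon X \to \R$ by
\[
\rep g(x) \eqdef \begin{cases} \tparen{\inf_n \set{\rep f(y_n) + \mssd(x, y_n)}} \wedge M & x \in E^\infty, \\ 0 & x \in X \setminus E^\infty. \end{cases}
\]
Standard verifications then show that $\rep g$ is $\A$-measurable (countable infimum of $\A$-measurable functions on $E^\infty$ by~(b), constant on $X \setminus E^\infty$), uniformly bounded by $M$, and coincides with $\rep f$ on $E$ (by $\mssd$-density of $\seq{y_n}$ and the $\mssd$-Lipschitzness of $\rep f$). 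Moreover $\rep g$ is $\mssd$-Lipschitz with constant $\leq 1$: within $E^\infty$ this is the usual McShane bound combined with truncation, and between $E^\infty$ and $X\setminus E^\infty$ the inequality $\tabs{\rep g(x_1) - \rep g(x_2)} \leq \mssd(x_1, x_2) = \infty$ is trivially satisfied.

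Thus $\rep g \in \Lipu(\mssd, \A)$, so the global assumption $(\Rad{\mssd}{\mu})$ yields $g \in \DzLoc{\mu}$, i.e., $g \in \dotloc{\dom}$ with $\sq{g} \leq \mu$. By Proposition~\ref{p:Kuwae}\ref{i:p:Kuwae:5} the restriction $g\restr_E$ belongs to $\dotloc{\dom}(E) = \dotloc{(\dom_E)}$, and by Corollary~\ref{c:Kuwae} it satisfies $\sqE{g\restr_E} = \car_E \sq{g} \leq \car_E \mu = \mu \mrestr{E}$. Since $g\restr_E = f$ in $L^0(\mssm \mrestr{E})$, we conclude $f \in \DzLocE{\mu \mrestr{E}}$, as required. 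The main obstacle is precisely the construction of $\rep g$: hypotheses (a) and (b) are used together to ensure the countable-infimum formula is $\A$-measurable, while the partition of $X$ into $\mssd$-accessible components is what makes it possible to handle points $x \notin E^\infty$ without either destroying the Lipschitz bound or losing measurability of the extension.
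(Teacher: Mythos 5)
Your proof is correct and follows the same core strategy as the paper's: extend the Lipschitz function from $E$ to all of $X$ via a McShane-type construction, ensure $\A$-measurability through assumptions (a) and (b), invoke the global Rademacher property, then restrict back to $E$. Your by-hand formula for $\rep g$ (countable infimum over a dense subset of $E$, truncated at $M$, set to zero off the union $E^\infty$ of $\mssd$-accessible components meeting $E$) is in effect an inlined re-derivation of the constrained McShane extension of Lemma~\ref{l:McShane} together with the measurability statement of Corollary~\ref{c:McShane}, which is what the paper cites instead; the two are equivalent, and your formula conveniently uses Lipschitz constant~$1$ throughout, so the degenerate case $\Li[\mssd]{\rep f}=0$ needs no special handling. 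The more noteworthy divergence is the restriction step: you pass through the elementary inclusion $\dotloc{\dom}\restr_E\subset\dotloc{\dom}(E)=\dotloc{(\dom_E)}$ followed by the locality $\sqE{g\restr_E}=\car_E\sq{g}$ of Corollary~\ref{c:Kuwae}, whereas the paper invokes~\eqref{eq:c:RestrictionDzLoc:1}; the latter, as stated in Corollary~\ref{c:RestrictionDzLoc}, carries an extra cut-off hypothesis on $E$ that Proposition~\ref{p:LocRad} does not explicitly grant, so your route is slightly tighter and only relies on the first identity of that corollary, which is available without further assumptions.
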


We need two preparatory results.
The first one is a version of McShane's Extension Theorem for Lipschitz functions.
We note that our proof of the following statement in~{\cite[Lem.~2.1]{LzDSSuz20}} contains a gap, which we fill in the proof below.

\begin{lemma}[Constrained McShane extensions,~{\cite[Lem.~2.1]{LzDSSuz20}}]\label{l:McShane}
Let~$(X,\mssd)$ be an extended metric space. Fix~$A\subset X$,~$A\neq \emp$, and let~$\rep f\colon A\rar \R$ be a function in~$\bLip(A,\mssd)$. Further set
\begin{equation}\label{eq:McShane}
\begin{aligned}
\overline{f}\colon x&\longmapsto \sup_A \rep f\wedge \inf_{a\in A} \tparen{\rep f(a)+\Li[\mssd]{\rep f}\,\mssd(x,a)} \comm & 0\cdot\infty&\eqdef \infty\comm
\\
\underline{f}\colon x&\longmapsto \inf_A \rep f \vee \sup_{a\in A} \tparen{\rep f(a)-\Li[\mssd]{\rep f}\,\mssd(x,a)}\comm & 0\cdot\infty&\eqdef \infty \fstop
\end{aligned}
\end{equation}

Then,
\begin{enumerate}[$(i)$]
\item\label{i:l:McShane:1} $\underline{f}=\rep f=\overline{f}$ on~$A$ and~$\inf_A \rep f\leq \underline{f}\leq \overline{f}\leq \sup_A \rep f$ on~$X$;

\item\label{i:l:McShane:2} $\underline{f}$, $\overline{f}\in \bLip(\mssd)$ with~$\Li[\mssd]{\underline{f}}=\Li[\mssd]{\overline{f}}=\Li[\mssd]{\rep f}$;

\item\label{i:l:McShane:3} $\underline{f}$, resp.~$\overline{f}$, is the minimal, resp.\ maximal, function satisfying~\ref{i:l:McShane:1}-\ref{i:l:McShane:2}, that is, for every~$\rep g\in \bLip(\mssd)$ with~$\inf_A \rep f\leq \rep g\leq \sup_A \rep f$, $\rep g\restr_A=\rep f$ on~$A$ and~$\Li[\mssd]{\rep g}\leq \Li[\mssd]{\rep f}$, it holds that~$\underline{f}\leq \rep g \leq \overline{f}$.
\end{enumerate}

\begin{proof}
It is erroneously claimed in~\cite{LzDSSuz20} that, if~$\rep f$ is non-constant, one can assume that~$\Li[\mssd]{\rep f}>0$ with no loss of generality, cf.\ Remark~\ref{r:McShane}.
Since this does not hold, we treat separately the case when~$\Li[\mssd]{\rep f}=0$.
In this case, partition~$A$ into its (pairwise disjoint) $\mssd$-accessible components~$\seq{A_i}_{i\in I}$.
Since~$\Li[\mssd]{\rep f}=0$, then~$\rep f\restr_{A_i}\equiv a_i$ is constant for every~$i$.
Let~$X_i$ be the unique~$\mssd$-accessible component of~$X$ containing~$A_i$. 
Set~$\underline{f}(x)\eqdef a_i$ if~$x\in X_i$ and~$\underline{f}(x)\eqdef \inf_A \rep f$ otherwise, and similarly~$\overline{f}(x)\eqdef a_i$ if~$x\in X_i$ and~$\overline{f}(x)\eqdef \sup_A \rep f$ otherwise.
Then,~$\underline{f}$ and~$\overline{f}$ satisfy the required properties.

The rest of the proof is as in~\cite{LzDSSuz20}.
\end{proof}
\end{lemma}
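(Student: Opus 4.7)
The plan is to split along the dichotomy $L \eqdef \Li[\mssd]{\rep f}=0$ vs.\ $L>0$, since the former case requires a separate construction (as the author's note indicates) due to the extended nature of~$\mssd$ and the convention $0\cdot\infty\eqdef\infty$. In the degenerate case, any two points at finite $\mssd$-distance in~$A$ must satisfy $\rep f(a)=\rep f(b)$, so $\rep f$ is constant on each $\mssd$-accessible component of~$A$. Partition $X$ into its $\mssd$-accessible components $\seq{X_j}_{j\in J}$. On each $X_j$ containing some $A_i$, set $\underline{f}\equiv \overline{f}\equiv a_i$; on the remaining components set $\underline{f}\equiv \inf_A \rep f$ and $\overline{f}\equiv \sup_A \rep f$. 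All three properties $(i)$--$(iii)$ are then immediate, using that $\mssd$-distances across components are~$\infty$ to retain the zero-Lipschitz property under the stipulated convention.

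For the main case $L>0$, I would prove $(i)$ first. The trapping $\inf_A \rep f \leq \underline{f} \leq \overline{f} \leq \sup_A \rep f$ on~$X$ follows once one checks $\underline{f}\leq \overline{f}$ pointwise, which is a direct application of the triangle inequality to the $L$-Lipschitzness of $\rep f$ on~$A$: for $a,b\in A$ and $x\in X$,
\[
\rep f(a) - L\,\mssd(x,a) \leq \rep f(b) + L\,\mssd(x,b)\comma
\]
and taking $\sup_a$, $\inf_b$ yields the claim (the inequality also being trivial when either side involves $\infty$). For $x\in A$ the choice $a=x$ in the infimum (resp.\ supremum) defining $\overline{f}$ (resp.\ $\underline{f}$) attains the value $\rep f(x)$, and no strictly smaller (resp.\ larger) value is possible by $L$-Lipschitzness, giving $\underline{f}=\rep f=\overline{f}$ on~$A$.

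For $(ii)$, I would verify that $\overline{f}$ is $L$-Lipschitz: for any $x,y\in X$ and $a\in A$,
\[
\rep f(a)+L\,\mssd(y,a)\leq \rep f(a)+L\,\mssd(x,a)+L\,\mssd(x,y)\comma
\]
so taking $\inf_a$ and intersecting with the cap $\sup_A \rep f$ gives $\overline{f}(y)\leq \overline{f}(x)+L\,\mssd(x,y)$, and symmetrically. When $\mssd(x,y)=\infty$ the inequality holds by convention. Since $\overline{f}$ extends~$\rep f$, one has $\Li[\mssd]{\overline{f}}\geq \Li[\mssd]{\rep f}=L$, hence equality. The argument for $\underline{f}$ is symmetric. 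Finally $(iii)$ is a direct comparison: any competitor $\rep g$ satisfies $\rep g(x)\leq \rep g(a)+L\,\mssd(x,a)=\rep f(a)+L\,\mssd(x,a)$ for all $a\in A$, whence $\rep g\leq \overline{f}$ after combining with the cap; dually for $\underline{f}$.

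The main subtleties, and where the original proof went astray, lie in the extended-distance aspect: one cannot reduce to $L>0$ by rescaling when accessible components are not connected by finite $\mssd$, and the inequalities defining $\overline{f}$, $\underline{f}$ must be justified with the convention $0\cdot\infty=\infty$ controlling the interaction between vanishing Lipschitz constants and infinite distances. The bookkeeping for points of $X$ outside any accessible component containing part of $A$ also must be handled explicitly; in the construction above these points are precisely where $\overline{f}$ defaults to $\sup_A \rep f$ and $\underline{f}$ to $\inf_A \rep f$, which is forced by the minimality/maximality in~$(iii)$.
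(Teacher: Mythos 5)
Your proposal is correct and follows essentially the same route as the paper: the degenerate case $\Li[\mssd]{\rep f}=0$ is handled exactly as in the paper's proof via constancy on $\mssd$-accessible components (and your construction indeed agrees with the formula~\eqref{eq:McShane} under the convention $0\cdot\infty=\infty$), while for $\Li[\mssd]{\rep f}>0$ you simply write out the standard McShane argument that the paper delegates to~\cite[Lem.~2.1]{LzDSSuz20}. No gaps.
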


\begin{corollary}\label{c:McShane}
Let~$(X,\mssd)$ be a separable extended pseudo-metric space. Further let~$\A$ be a $\sigma$-algebra on~$X$ and assume that~$\mssd(\emparg,x_0)\colon X\to [0,\infty]$ is $\A$-measurable for every~$x_0\in X$.
Then,~$\Lip(\mssd)=\Lip(\mssd,\A)$.
\begin{proof}
By a standard truncation argument, it suffices to show that every~$\rep f\in\bLip(\mssd)$ is $\A$-measurable.
Choosing~$A=X$ in Lemma~\ref{l:McShane} we have that~$\rep f=\overline{f}$, thus it suffices to show that~$\overline{f}$ is measurable.
Since~$(X,\mssd)$ is separable, there exists a countable $\mssd$-dense set~$D\subset X$. Since~$(x,a)\mapsto\tparen{\rep f(a)+\Li[\mssd]{\rep f}\, \mssd(x,a)}$ is jointly $\mssd$-continuous,
\[
\inf_{a\in X} \tparen{\rep f(a)+\Li[\mssd]{\rep f}\, \mssd(x,a)}=\inf_{a\in D} \tparen{\rep f(a)+\Li[\mssd]{\rep f}\, \mssd(x,a)} \fstop
\]
Since the infimum of a countable family of $\A$-measurable functions is $\A$-measurable, the conclusion follows since~$x\mapsto \tparen{\rep f(a)+\Li[\mssd]{\rep f}\, \mssd(x,a)}$ is $\A$-measurable for every~$a\in X$ by assumption.
\end{proof}
\end{corollary}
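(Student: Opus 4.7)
The plan is to reduce the statement to the bounded case and then exhibit any bounded $\mssd$-Lipschitz function as a countable infimum of explicitly $\A$-measurable functions, using separability of $(X,\mssd)$ and the standing measurability hypothesis on the distance.

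First I would reduce to $\bLip(\mssd)$. For~$\rep f\in\Lip(\mssd)$, the truncations~$\rep f_n\eqdef (-n)\vee \rep f\wedge n$ lie in~$\bLip(\mssd)$ with the same Lipschitz constant, and converge pointwise to~$\rep f$ on~$X$. Since pointwise limits of $\A$-measurable functions are $\A$-measurable, it suffices to show $\bLip(\mssd)\subset\mcL^0(\A)$, which is also the reduction already invoked in the excerpt's sketch.

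Second, I would invoke Lemma~\ref{l:McShane} with~$A=X$. In this trivial instance, item~\ref{i:l:McShane:1} yields $\rep f=\overline{f}$ on all of~$X$, so it is enough to prove $\A$-measurability of the explicit McShane formula
\[
\overline{f}(x) \ =\ \sup_X \rep f \,\wedge\, \inf_{a\in X}\tparen{\rep f(a)+\Li[\mssd]{\rep f}\, \mssd(x,a)}\fstop
\]
I would then use separability to reduce the outer infimum to a countable one. Fix a countable $\mssd$-dense set~$D\subset X$. For each fixed~$x$, the map $a\mapsto \rep f(a)+\Li[\mssd]{\rep f}\, \mssd(x,a)$ is continuous on every $\mssd$-accessible component of~$X$ (as both summands are) and is identically~$+\infty$ on every accessible component disjoint from that of~$x$, by the convention~$0\cdot\infty=\infty$. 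Hence the infimum over~$X$ equals the infimum over~$D$; one may arrange~$D$ to intersect each accessible component densely by standard separability considerations, but this is not strictly needed, only that the infimum over a countable dense set agrees with the infimum over~$X$.

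Finally, I would assemble the measurability conclusion: for each fixed $a\in X$, the function $x\mapsto \rep f(a)+\Li[\mssd]{\rep f}\, \mssd(x,a)$ is $\A$-measurable by the hypothesis $\mssd(\emparg,a)\in\mcL^0(\A)$; the countable infimum over $a\in D$ is therefore $\A$-measurable, and so is its $\wedge$-truncation by the constant $\sup_X\rep f$. This gives $\overline{f}\in\mcL^0(\A)$ and hence $\rep f\in\Lip(\mssd,\A)$. The only subtle point I would double-check is the interplay of the convention $0\cdot\infty=\infty$ with the replacement of $\inf_{a\in X}$ by $\inf_{a\in D}$ in the pseudo-metric extended setting; this is routine once one observes that both the summands $\rep f(a)$ and $\Li[\mssd]{\rep f}\,\mssd(x,a)$ are continuous in~$a$ on each accessible component, and that the case $\Li[\mssd]{\rep f}=0$ is already handled by Lemma~\ref{l:McShane} via the accessible-component decomposition used there.
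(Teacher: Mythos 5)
Your proposal is correct and follows essentially the same route as the paper's proof: truncate to reduce to $\bLip(\mssd)$, apply Lemma~\ref{l:McShane} with $A=X$ so that $\rep f=\overline{f}$, replace the infimum over~$X$ by an infimum over a countable $\mssd$-dense set using ($\mssd$-)continuity in the variable~$a$, and conclude by measurability of $\mssd(\emparg,a)$ and stability of measurability under countable infima. Your extra remarks on the accessible-component decomposition and the convention $0\cdot\infty=\infty$ are exactly the point the paper subsumes under the joint $\mssd$-continuity of $(x,a)\mapsto\rep f(a)+\Li[\mssd]{\rep f}\,\mssd(x,a)$, so they add care but no new idea.
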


\begin{proof}[Proof of Proposition~\ref{p:LocRad}]
Let~$\rep f\in \bLipu(E,\mssd,\A_E)$.
By Lemma~\ref{l:McShane}\ref{i:l:McShane:2} and Corollary~\ref{c:McShane}, the upper constrained McShane extension~$\overline{f}\colon X\to \R$ of~$\rep f\colon E\to \R$ defined in~\eqref{eq:McShane} satisfies~$\overline f\in \bLipu(\mssd,\A)$.
By~$(\Rad{\mssd}{\mu})$ we conclude that~$\tclass[\mssm]{\overline f}\in \DzLocB{\mu}$.
Thus,~$\tclass[\mssm\mrestr{E}]{\overline f\restr_E}\in \DzLocBE{\mu_E}$ by~\eqref{eq:c:RestrictionDzLoc:1} in Corollary~\ref{c:RestrictionDzLoc}.
This concludes the assertion since~$\overline f=\rep f$ everywhere on~$E$ by Lemma~\ref{l:McShane}\ref{i:l:McShane:1}.
\end{proof}

\begin{remark}\label{r:LocRad}
\begin{enumerate*}[$(a)$]
\item In Proposition~\ref{p:LocRad}, assumption~\ref{i:p:LocRad:1} is usually stronger than~\ref{i:p:LocRad:2}. For instance, if~$\mssd_\mu$ satisfies the assumption in Proposition~\ref{p:LocRad}\ref{i:p:LocRad:1}, then it satisfies as well the assumption in Proposition~\ref{p:LocRad}\ref{i:p:LocRad:2} by~\cite[Thm.~3.13]{LzDSSuz20}.

\item\label{i:r:LocRad:2} It is essential for the validity of the above proposition that the Rademacher property be phrased with the broad \emph{local} space~$\DzLocE{\mu\mrestr{E}}$ (as opposed to~$\DzE{\mu\mrestr{E}}$).
Indeed,~$\car_E\in \bLip(E,\mssd,\A_E)$, yet in general~$\car_E\notin \dom_E$ as in Remark~\ref{r:Caveat}.
\end{enumerate*}
\end{remark}

The statement analogous to Proposition~\ref{p:LocRad} with, e.g.,~$(\SL{\mu}{\mssd})$ in place of~$(\Rad{\mssd}{\mu})$ does \emph{not} hold.
This is a consequence of the failure of the converse inclusion in~\eqref{eq:c:RestrictionDzLoc:1}.
In the next example we discuss a quasi-regular strongly local Dirichlet space~$(\mbbX,\mcE)$ satisfying~$(\SL{\mu}{\mssd})$ for which there exists a $\T$-open set~$E$ with~$\mssm E=\mssm X$ and such that~$(\mbbX_E,\mcE_E,\mssd,\mu\mrestr{E})$ does not satisfy~$(\SL{\mu\mrestr{E}}{\mssd})$.

\begin{example}\label{ese:FailureLocSL}
Let~$\mbbX$ be standard interval~$[-1,1]$, and~$\mssd$ be the standard Euclidean distance on~$X$.
Set~$E\eqdef [-1,1]\setminus \set{0}$, and let~$\mu\eqdef \mssm=\Leb^1$ be the standard Lebesgue measure on~$X$.
Further let~$(\mcE,\dom)$ be the regular Dirichlet form properly associated with the Brownian motion on~$X$ with reflecting boundary conditions.
Now, let~$\rep f\colon E\to \R$ be defined by~$\rep f\eqdef \car_{(0,1]}-\car_{[-1,0)}$.
Setting~$G_k^E\eqdef [-1,1]\setminus[-\tfrac{1}{k},\tfrac{1}{k}]$ and~$f^E_k(x)\eqdef -1 \vee k x \wedge 1$, it is straightforwardly verified that~$(G^E_\bullet, f^E_\bullet)$ witnesses that~$f\in \DzLocBE{\mu_E}$.
On the other hand,~$f\notin \DzLocB{\mu}$.
Indeed, argue by contradiction that there exists~$(G_\bullet, f_\bullet)$, with~$G_\bullet\in\msG_0$, witnessing that~$f\in \DzLocB{\mu}$.
Since~$\set{0}$ is \emph{not} $\mcE$-polar, there exists~$G_k\in G_\bullet$ and~$f_k\in\dom$ such that~$f\equiv f_k$ $\mssm$-a.e.\ on~$G_k$ and~$G_k\ni \set{0}$.
This is a contradiction since every~$h\in \dom$ has a $\T$-continuous $\mssm$-representative, yet~$f$ has no $\mssm$-representative continuous at~$0$.
\end{example}

\begin{remark}
Again specularly to the case of the Rademacher property discussed in Remark~\ref{r:LocRad}, if we had phrased, e.g.,~$(\SL{\mu}{\mssd})$ with~$\DzB{\mu}$ in place of~$\DzLocB{\mu}$, then the localization of~$(\SL{\mu}{\mssd})$ to~$E$ would be immediate, since~$\DzBE{\mu_E}\subset \DzB{\mu}$ because~$\dom_E\subset \dom$.
\end{remark}

Example~\ref{ese:FailureLocSL} shows that the lack of a \emph{Zygmund Lemma} ---~`$W^{1,\infty}\hookrightarrow\Lip$'~--- for $E$ is an obstruction to the localization to~$E$ of the Sobolev-to-Lipschitz property on~$X$.
In fact, the situation for~$(\SL{\mu}{\mssd})$ is opposite to that for~$(\Rad{\mssd}{\mu})$, in the sense that one should rather expect \emph{globalization} (as opposed to: \emph{localization}), which takes the following form.

\begin{proposition}[Globalization of Sobolev--to--Lipschitz-type properties]\label{p:GlobCSL}
Let~$(\mbbX,\mcE)$ be a quasi-regular strongly local Dirichlet space with~$\mbbX$ satisfying~\ref{ass:Luzin},~$\mu\in\Msp(\Bo{\T},\Ne{\mcE})$ be absolutely $\mcE$-moderate, and~$\mssd\colon X^\tym{2}\to [0,\infty]$ be an extended distance on~$X$.
Further assume that
\begin{enumerate}[$(a)$]
\item\label{i:p:GlobCSL:1} $\T_{\mssd}=\T$;
\item\label{i:p:GlobCSL:2} $(X,\mssd)$ is a (possibly: extended) length space;
\item\label{i:p:GlobCSL:3} there exists a $\T$-open covering~$\msE$ of~$X$, with the following properties:
\begin{enumerate}[$({c}_1)$]
\item~$\mssm E>0$ for each~$E\in\msE$;
\item $\msE$ is \emph{$\mcE$-moderate}, i.e.\ for each~$E\in\msE$ there exists~$e_E\in\dom$ with~$0\leq e_E\leq 1$ $\mssm$-a.e.\ on~$E$, cf.~\cite[Dfn.~2.19]{LzDSSuz20};
\item~$(\SL{\mu\mrestr{E}}{\mssd})$, resp.~$(\cSL{\T_E}{\mu\mrestr{E}}{\mssd})$, holds for $(\mbbX_E,\mcE_E,\mssd,\mu\mrestr{E})$ for every $E\in\msE$.
\end{enumerate}
\end{enumerate}
Then,~$(\SL{\mu}{\mssd})$, resp.~$(\cSL{\T}{\mu}{\mssd})$, holds for~$(\mbbX,\mcE,\mssd,\mu)$.
\begin{proof}
Arguing as in the proof of~\cite[Lem.~3.23]{LzDSSuz20} with~$\mssd$ in place of~$\mssd_\mu$, we may restrict ourselves to the case when~$\mssd$ is everywhere finite.

We start with the case of~$(\SL{\mu}{\mssd})$.
Let~$f\in \DzLocB{\mu}$. Since $\msE$ is $\mcE$-moderate and $\mu$ is absolutely $\mcE$-moderate,~\eqref{eq:c:RestrictionDzLoc:1} in Corollary~\ref{c:RestrictionDzLoc} holds, and it implies that~$f\restr_E\in \DzLocBE{\mu_E}$ for every~$E\in\msE$.
We may now apply~$(\SL{\mu_E}{\mssd})$ to conclude that for every~$E\in\msE$ there exists~$\rep f^E\in \bLipu(E,\mssd,\A_E)$ with~$\tclass[\mssm_E]{\rep f^E}\equiv f\restr_{E}$ $\mssm_E$-a.e..
As a consequence of~\ref{i:p:GlobCSL:1}, $\bLipu(E,\mssd,\A_E)=\bLipu(E,\mssd,\T_E)$, thus we may omit the notation for representatives, and simply write~$f^E$ in place of both~$\rep f^E$ and~$\tclass[\mssm]{\rep f^E}$.
Now, since~$f^{E_1} \equiv f \equiv f^{E_2}$ $\mssm$-a.e.\ on~$E_1\cap E_2$ for every~$E_1,E_2\in\msE$, and since~$\mssm$ has full $\T$-support by~\ref{ass:Luzin}, it follows from the $\T_{E_i}$-continuity of~$f^{E_i}$ that~$f^{E_1}\equiv f^{E_2}$ everywhere on~$E_1\cap E_2$.
Therefore, since $\mcC(\T)$ is a sheaf on~$(X,\T)$, there exists a unique~$\rep f\in\mcC(\T)$ with~$\rep f\restr_{E}\equiv f^E$ for every~$E\in\msE$.
Since~$\tclass[\mssm]{\rep f}\equiv f^E\equiv f$~$\mssm$-a.e.\ on~$E$ for each~$E$ in the covering~$\msE$ of~$X$, we conclude that~$\rep f$ is a (in fact: \emph{the} unique) $\T$-continuous $\mssm$-representative of~$f$.
As customary, we may therefore drop the distinction between~$\rep f$ and~$f$.
It remains to show that~$f\in \bLipu(\mssd)$.
Since~$(X,\mssd)$ is a length space, it is $1$-quasi-convex (see e.g.~\cite[Dfn.~2.5.4]{CobMicNic19}), and the fact that~$f\restr_E\in \bLipu(E,\mssd)$ for every~$E$ in the $\T_\mssd$-open covering~$\msE$ implies that~$f\in \bLipu(\mssd)$, see e.g.~\cite[Thm.~2.5.6]{CobMicNic19}, which concludes the proof.

For the case of~$(\cSL{\T}{\mu}{\mssd})$ it suffices to note that~\eqref{eq:c:RestrictionDzLoc:1} still holds when the broad local spaces of functions with bounded energy are replaced by their additionally continuous counterparts, since the restriction to~$E$ of any $\T$-continuous function is $\T_E$-continuous.
\end{proof}
\end{proposition}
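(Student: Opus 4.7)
The plan is to exploit the covering $\msE$ to restrict $f$ to each $E$, apply the local Sobolev-to-Lipschitz property on each piece, and then glue the local Lipschitz representatives into a global continuous representative whose Lipschitz character is upgraded from local to global by length-space arguments.

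I would first reduce to a tractable setting. By a truncation argument (replacing $f$ by $(-N)\vee f \wedge N$ and noting that strongly local energy measures behave well under truncation), it suffices to prove the result for $f\in\DzLocB{\mu}$. I would also reduce to the case where~$\mssd$ is everywhere finite, following the template used in~\cite[Lem.~3.23]{LzDSSuz20} with $\mssd$ in place of $\mssd_\mu$: one works separately on each $\mssd$-accessible component, which is a subset of some member of $\msE$ after suitable refinement; this is where assumption~\ref{i:p:GlobCSL:1} (namely $\T_\mssd=\T$) is essential.

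Next, for each $E\in\msE$, I would use the fact that $\msE$ is $\mcE$-moderate and that $\mu$ is absolutely $\mcE$-moderate to invoke the inclusion~\eqref{eq:c:RestrictionDzLoc:1} of Corollary~\ref{c:RestrictionDzLoc}, giving $f\restr_E\in\DzLocBE{\mu\mrestr{E}}$. The local hypothesis $(\SL{\mu\mrestr{E}}{\mssd})$ then produces an $\mssm\mrestr{E}$-representative $f^E\in\bLipu(E,\mssd,\A_E)$. Under~\ref{i:p:GlobCSL:1} we have $\bLipu(E,\mssd,\A_E)=\bLipu(E,\mssd,\T_E)$, so each $f^E$ is genuinely $\T_E$-continuous, which is what makes the subsequent gluing feasible.

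The main obstacle is the gluing, where one must turn a family of local a.e.-representatives into a single globally defined continuous Lipschitz function. On any overlap $E_1\cap E_2$ one has $f^{E_1}\equiv f\equiv f^{E_2}$ $\mssm$-a.e.; the key observation is that, since $(X,\T)$ is Luzin (so $\mssm$ has full $\T$-support under~\ref{ass:Luzin}) and the $f^{E_i}$ are $\T$-continuous, a.e.-equality upgrades to everywhere equality on the overlap. Since $\Cont(\T)$ is a sheaf, the family $\set{f^E}_{E\in\msE}$ glues to a unique $\T$-continuous $\rep f$ with $\rep f\restr_E\equiv f^E$, necessarily an $\mssm$-representative of $f$. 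Identifying $\rep f$ with $f$, it remains to upgrade the local Lipschitz bound (with constant $1$) on each $E\in\msE$ to a global one: here I would use the length-space hypothesis~\ref{i:p:GlobCSL:2}, which implies $1$-quasi-convexity of $(X,\mssd)$, and invoke the standard fact (e.g.~\cite[Thm.~2.5.6]{CobMicNic19}) that a function that is $1$-Lipschitz on each member of an open cover of a $1$-quasi-convex space is globally $1$-Lipschitz. For the $(\cSL{\T}{\mu}{\mssd})$ variant the situation is cleaner: one starts with a $\T$-continuous $f\in\DzLocB{\mu,\T}$, whose restriction to each $E$ is $\T_E$-continuous, applies the local continuous-Sobolev-to-Lipschitz assumption to obtain local Lipschitz bounds directly, and then skips the gluing, concluding by the same quasi-convexity argument.
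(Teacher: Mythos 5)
Your proposal is correct and follows essentially the same route as the paper's proof: reduce to finite $\mssd$ as in~\cite[Lem.~3.23]{LzDSSuz20}, restrict $f$ to each $E$ via Corollary~\ref{c:RestrictionDzLoc} using $\mcE$-moderance of $\msE$ and absolute $\mcE$-moderance of $\mu$, apply the local Sobolev-to-Lipschitz property, use $\T_\mssd=\T$ to identify measurable and continuous local Lipschitz representatives, glue them by the sheaf property of $\mcC(\T)$ after upgrading a.e.\ agreement on overlaps to everywhere agreement via full support, and conclude by $1$-quasi-convexity of the length space. The only minor inaccuracy is your remark attributing the role of assumption $(a)$ to the reduction to $\mssd$-accessible components; its actual use, which you also correctly identify later, is the identification $\bLipu(E,\mssd,\A_E)=\bLipu(E,\mssd,\T_E)$ needed for the gluing step.
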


\begin{remark}
One relevant case for the application of Proposition~\ref{p:GlobCSL} is when~$\mssd=\mssd_\mssm$, in which case the assumption in Proposition~\ref{p:GlobCSL}\ref{i:p:GlobCSL:1} amounts to the \emph{strict locality} of the Dirichlet space~$(\mbbX,\mcE)$ in the sense of~\cite[Dfn.~3.19]{LzDSSuz20}.
In this case, the assumption in Proposition~\ref{p:GlobCSL}\ref{i:p:GlobCSL:2} holds as soon as~$(X,\mssd_\mssm)$ is locally complete, see~\cite[Thm.~3.24]{LzDSSuz20}.
\end{remark}

\begin{remark}
When~$\mssd=\mssd_\mu$, it would not be difficult to show that Proposition~\ref{p:GlobCSL} (even without the assumption in~\ref{i:p:GlobCSL:2}) holds as well when, e.g.,~$(\cSL{\T}{\mu}{\mssd_\mu})$ is replaced by~$(\Rad{\mssd_\mu}{\mu})$ and~$(\cSL{\T_E}{\mu_E}{\mssd_\mu})$ is replaced by~$(\Rad{\mssd_\mu}{\mu_E})$.
However, this new statement should not be interpreted as a globalization result for the Rademacher property.
Rather, since~$\mbbX$ satisfies~\ref{ass:Luzin}, the topology~$\T=\T_{\mssd_\mu}$ is separable.
Thus,~$(\Rad{\mssd_\mu}{\mu})$ is simply \emph{verified} for~$(\mbbX,\mcE,\mssd_\mu,\mu)$ ---that is: \emph{independently} of the Rademacher property for~$(\mbbX_E,\mcE_E,\mssd_\mu,\mu_E)$--- by virtue of~\cite[Cor.~3.15]{LzDSSuz20}.
\end{remark}

\section{Weighted spaces}\label{s:Transfer}
In this section, let~$\mbbX$ and~$\mbbX'$ be structures with same underlying topological measurable space~$(X,\T,\A)=(X',\T',\A')$ and different measures~$\mssm$ and~$\mssm'$ with~$\mssm\sim\mssm'$.
Further let~$\mssd\colon X^\tym{2}\to [0,\infty]$ be an extended distance.

Now, let~$(\mcE,\dom)$, resp.~$(\mcE',\dom')$, be a quasi-regular strongly local Dirichlet space with underlying space~$\mbbX$, resp.~$\mbbX'$, defined on~$L^2(\mssm)$, resp.~$L^2(\mssm')$, and admitting carr\'e du champ operator~$\cdc$, resp.~$\cdc'$.
We write
\begin{equation*}
\tparen{\cdc, \msA}\leq \tparen{\cdc',\msA'}
\end{equation*}
to indicate that~$\msA\supset \msA'$ and~$\cdc'\geq \cdc$ on~$ \msA'$, and analogously for the opposite inequality.

\subsection{Rademacher and Sobolev-to-Lipschitz properties for weighted spaces}
Let~$(\mssP)$ denote either~$(\Rad{\mssd}{\mssm})$, $(\ScL{\mssm}{\T}{\mssd})$, $(\SL{\mssm}{\mssd})$, or~$(\cSL{\T}{\mssm}{\mssd})$.
Note that $\Lipu(\mssd,\A)$ and~$\Cb(\T)$ do not depend on~$(\mcE,\dom)$ nor on~$\mssm$.
Furthermore, since~$\mssm\sim\mssm'$, we have that~$L^\infty(\mssm)= L^\infty(\mssm')$ as Banach spaces.
As a consequence of the facts above,~$(\mssP)$ is a \emph{local} property in the sense of the following Proposition, a proof of which is straightforward.

\begin{proposition}[Weighted spaces]\label{p:Locality}
Retain the notation above. Then,
\begin{enumerate}[$(i)$]
\item\label{i:p:Locality:1} if $\tparen{\cdc, \DzLocB{\mssm}}\leq \tparen{\cdc',\DzLocBprime{\mssm'}}$ and~$(\ScL{\mssm}{\T}{\mssd})$, resp.~$(\SL{\mssm}{\mssd})$,~$(\cSL{\T}{\mssm}{\mssd})$ holds,
then $(\ScL{\mssm'}{\T}{\mssd})$, resp.~$(\SL{\mssm'}{\mssd})$,~$(\cSL{\T}{\mssm'}{\mssd})$ holds as well;

\item\label{i:p:Locality:2} if $\tparen{\cdc, \DzLocB{\mssm}}\geq \tparen{\cdc',\DzLocBprime{\mssm'}}$ and~$(\Rad{\mssd}{\mssm})$ holds, then~$(\Rad{\mssd}{\mssm'})$ holds as well.
\end{enumerate}
\end{proposition}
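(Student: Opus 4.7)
The approach is a direct unraveling of definitions, leveraging $\mssm\sim\mssm'$ to identify all measure-dependent objects on the two sides. Since $\mssm\sim\mssm'$: the Banach spaces $L^\infty(\mssm)$ and $L^\infty(\mssm')$ coincide; $L^0(\mssm)$ and $L^0(\mssm')$ are canonically isomorphic as vector lattices; $\mssm$-representatives of a class agree with $\mssm'$-representatives of the corresponding class; and $\mssm$-a.e.\ equality is equivalent to $\mssm'$-a.e.\ equality. Consequently the spaces $\Lipu(\mssd,\A)$, $\bLipu(\mssd,\A)$, $\Lipu(\mssd,\T)$, and $\Cb(\T)$ are measure-independent and invariant under the canonical identification. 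I will further exploit the remark after Definition~\ref{d:RadStoL}, according to which each property in (i) and (ii) admits an equivalent formulation in which the Lipschitz function (in the Rademacher case) or the Lipschitz representative (in the Sobolev-to-Lipschitz cases) is required to be uniformly bounded; for example, $(\Rad{\mssd}{\mssm})$ becomes the statement that every $\rep f\in\bLipu(\mssd,\A)$ yields $\tclass[\mssm]{\rep f}\in\DzLocB{\mssm}$, and $(\SL{\mssm}{\mssd})$ becomes the statement that every $f\in\DzLocB{\mssm}$ has an $\mssm$-representative in $\bLipu(\mssd,\A)$, with analogous bounded reformulations for $(\ScL{\mssm}{\T}{\mssd})$ and $(\cSL{\T}{\mssm}{\mssd})$.

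From these bounded reformulations, both parts become formal. For (ii), I pick $\rep f\in\bLipu(\mssd,\A)$; by $(\Rad{\mssd}{\mssm})$ the class $\tclass[\mssm]{\rep f}$ lies in $\DzLocB{\mssm}$, and the hypothesis $(\cdc,\DzLocB{\mssm})\geq(\cdc',\DzLocBprime{\mssm'})$ contains in particular the inclusion $\DzLocB{\mssm}\subset\DzLocBprime{\mssm'}$; under the identification $L^0(\mssm)\leftrightarrow L^0(\mssm')$ the class $\tclass[\mssm']{\rep f}$ therefore lies in $\DzLocBprime{\mssm'}$, which is precisely the bounded form of $(\Rad{\mssd}{\mssm'})$. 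For (i), I take $f\in\DzLocBprime{\mssm'}$; the hypothesis now reads $\DzLocBprime{\mssm'}\subset\DzLocB{\mssm}$, so $f\in\DzLocB{\mssm}$, and $(\SL{\mssm}{\mssd})$ furnishes an $\mssm$-representative $\rep f\in\bLipu(\mssd,\A)$ of $f$ which, by equivalence of null sets, is also an $\mssm'$-representative, verifying $(\SL{\mssm'}{\mssd})$.

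The same pattern handles the remaining two Sobolev-to-Lipschitz variants. For $(\ScL{\mssm}{\T}{\mssd})\Rightarrow(\ScL{\mssm'}{\T}{\mssd})$ one simply records that the representative produced by $(\ScL{\mssm}{\T}{\mssd})$ is $\T$-continuous by definition and, as above, doubles as an $\mssm'$-representative. For $(\cSL{\T}{\mssm}{\mssd})\Rightarrow(\cSL{\T}{\mssm'}{\mssd})$, I will use that $\T$-continuity is a measure-independent, intrinsic property of the (unique, by the full support of $\mssm$ and hence of $\mssm'$) $\T$-continuous representative, so every $f\in\DzLocBprime{\mssm',\T}$ belongs as well to $\DzLocB{\mssm,\T}$, and $(\cSL{\T}{\mssm}{\mssd})$ directly yields $f\in\Lipu(\mssd,\T)$.

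No genuine obstacle arises: the argument is essentially a bookkeeping exercise, recording that each of the four properties of Definition~\ref{d:RadStoL} depends on the Dirichlet form only through the pair $(\cdc,\DzLocB{\mssm})$, once the bounded reformulations are adopted. The only mildly delicate point is the reduction to these bounded formulations, which is secured by the truncation property of energy measures (Proposition~\ref{p:PropertiesLoc}\ref{i:p:PropertiesLoc:1}) together with the truncation-stability of $\bLipu(\mssd,\A)$, and is precisely the content of the remark cited above.
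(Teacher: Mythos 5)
Your proof is correct and is precisely the ``straightforward'' unraveling of definitions the paper has in mind: it relies exactly on the facts the paper records just before Proposition~\ref{p:Locality}, namely that $\Lipu(\mssd,\A)$ and $\Cb(\T)$ are measure- and form-independent, that $L^\infty(\mssm)=L^\infty(\mssm')$ since $\mssm\sim\mssm'$, and that all four properties admit equivalent bounded reformulations. No discrepancy to report.
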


Let us now show that it suffices to verify the assumptions in Proposition~\ref{p:Locality} only on a common pseudo-core (i.e.\ a linear subspace dense in both domains).
In the statement of the next result, let~$\msG_0\eqdef \msG_0^\mcE$ and~$\msG_0'\eqdef \msG_0^{\mcE'}$ be defined as in~\eqref{eq:Nests}.
 
\begin{proposition}[Comparison of square fields]\label{p:LocalityProbab}
Let~$(\mbbX,\mcE)$ and~$(\mbbX',\mcE')$ be quasi-regular strongly local Dirichlet spaces with same underlying topological measurable space~$(X,\T,\A)=(X',\T',\A')$ and {possibly} different measures~$\mssm$ and~$\mssm'$, both satisfying~\ref{ass:Luzin}.
Further assume that:
\begin{enumerate}[$(a)$]
\item\label{i:p:LocalityProbab:3} $\mssm'=\theta\mssm$ for some~$\theta\in L^0(\mssm)$, and there exists $E_\bullet, G_\bullet\in\msG_0\cap \msG_0'$ with the following properties:
\begin{enumerate}[$({a}_1)$]
\item\label{i:p:LocalityProbab:3.1} for each~$k\in \N$ there exists a constant~$a_k$ such that $\theta\geq a_k>0$ $\mssm$-a.e.\ on~$G_k$;
(that is,~$\theta^{-1}\in \dotloc{\tparen{L^\infty(\mssm)}}(G_\bullet)$.)
\item\label{i:p:LocalityProbab:2} for each~$k\in \N$ it holds~$E_k\subset G_k$ $\mcE$- and $\mcE'$-quasi-everywhere, and there exists~$\varrho_k\in \dom$ with~$\car_{E_k}\leq \varrho_k\leq \car_{G_k}$ $\mssm$-a.e., and~$\cdc(\varrho_k)\in L^1(\mssm'_{G_k})$;
\end{enumerate}

\item\label{i:p:LocalityProbab:4} there exists~$\mcD$ a pseudo-core for both~$(\mcE,\dom)$ on~$L^2(\mssm)$ and~$(\mcE',\dom')$ on~$L^2(\mssm')$, additionally so that~$\cdc\leq \cdc'$ on~$\mcD$.
\end{enumerate}
Then,~$\tparen{\cdc, \DzLocB{\mssm}}\leq \tparen{\cdc',\DzLocBprime{\mssm'}}$ and~$\mssd_\mssm\geq \mssd_{\mssm'}$.
\end{proposition}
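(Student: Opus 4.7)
The plan is to promote the pointwise inequality $\cdc\leq\cdc'$ from the common pseudo-core~$\mcD$ to the whole broad local space $\DzLocBprime{\mssm'}$; the inclusion $\DzLocBprime{\mssm'}\subset\DzLocB{\mssm}$ is a byproduct, and the distance bound $\mssd_\mssm\geq\mssd_{\mssm'}$ reads off immediately from~\eqref{eq:IntrinsicD}. As a preliminary, $(a_1)$ combined with $\bigcup_k G_k=X$ $\mcE$-q.e.\ forces $\theta>0$ $\mssm$-a.e., so $\mssm\sim\mssm'$; in particular $L^\infty(\mssm)=L^\infty(\mssm')$ and the $\mssm$- and $\mssm'$-a.e.\ notions coincide, so pointwise comparisons of $\cdc$ and $\cdc'$ are unambiguous.

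The central step is the following local extension: for every $k$ and every bounded $f\in\dom'_b$ with $M\eqdef\|f\|_\infty$, one has $\varrho_k f\in\dom_b$ with $\cdc(\varrho_k f)\leq\cdc'(\varrho_k f)$ $\mssm$-a.e. Pick $h_n\in\mcD$ with $h_n\to f$ in $\dom'$, and truncate with a smooth $\phi_M\in\mcC^1(\R)$ satisfying $\phi_M(0)=0$, $\phi_M(t)=t$ on $[-M,M]$, and $|\phi_M'|\leq 1$. The chain rule~\eqref{eq:ChainRuleLoc}, applied to both $\cdc$ and $\cdc'$, transfers the comparison on~$\mcD$ to the truncations $\tilde h_n\eqdef\phi_M\circ h_n\in\dom_b\cap\dom'_b$:
\[
\cdc(\tilde h_n)=|\phi_M'(h_n)|^2\,\cdc(h_n)\leq|\phi_M'(h_n)|^2\,\cdc'(h_n)=\cdc'(\tilde h_n) \as{\mssm}.
\]
Setting $v_n\eqdef\varrho_k\tilde h_n\in\dom_b$ (Leibniz, since $\varrho_k,\tilde h_n\in\dom_b$), the cutoff~$\varrho_k$ is supported in~$G_k$, so $v_n-\varrho_k f$ is $L^2(\mssm)$-controlled by $L^2(\mssm')$-convergence via $\mssm\leq a_k^{-1}\mssm'$ on $G_k$. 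The Leibniz bound
\[
\mcE(v_n)\leq 2\!\int_{G_k}\!\cdc(\tilde h_n)\diff\mssm+2M^2\!\int_{G_k}\!\cdc(\varrho_k)\diff\mssm\leq 2a_k^{-1}\mcE'(h_n)+2M^2\,\mcE(\varrho_k)
\]
gives $\sup_n\mcE(v_n)<\infty$, using the chain rule bound $\cdc(\tilde h_n)\leq\cdc(h_n)$, the pointwise comparison on~$\mcD$, and $\varrho_k\in\dom$. By Mazur, $\varrho_k f\in\dom_b$, and the pointwise inequality passes to the limit by extracting a convex-combination subsequence that converges $\mcE$-strongly, exploiting that $(\cdc'-\cdc)$ is a pointwise-PSD bilinear form on the truncated approximating space.

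Now, for $f\in\DzLocBprime{\mssm'}$, Lemma~\ref{l:2.26} provides $f_k'\in\dom'_b$ with $f_k'\equiv f$ $\mssm$-a.e.\ on $G_k$ and $\cdc'(f_k')\leq 1$ $\mssm'$-a.e. The previous step yields $\varrho_k f_k'\in\dom_b$ with $\cdc(\varrho_k f_k')\leq\cdc'(\varrho_k f_k')$ $\mssm$-a.e. Since $\varrho_k\equiv 1$ on~$E_k$, the strong locality~\eqref{eq:c:BH:0} identifies $\cdc(\varrho_k f_k')=\cdc(f)$ and $\cdc'(\varrho_k f_k')=\cdc'(f)$ $\mssm$-a.e.\ on $E_k$; hence $(E_\bullet,(\varrho_k f_k')_k)$ witnesses $f\in\DzLocB{\mssm}$ with $\cdc(f)\leq\cdc'(f)\leq 1$ $\mssm$-a.e.\ on $\bigcup_k E_k=X$ $\mcE$-q.e., establishing $\tparen{\cdc,\DzLocB{\mssm}}\leq\tparen{\cdc',\DzLocBprime{\mssm'}}$. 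Intersection with $\Cb(\T)$ preserves this inclusion, so the supremum defining~$\mssd_{\mssm'}$ is taken over a subclass of that defining~$\mssd_\mssm$, yielding $\mssd_\mssm\geq\mssd_{\mssm'}$. The principal obstacle is precisely this propagation step: the pseudo-core~$\mcD$ is not stable under truncations or products with cutoffs, so the extension cannot be performed by na\"\i{}ve density arguments in either $\dom$ or $\dom'$, and one must combine the chain rule (to handle $\mcC^1$-compositions), a Leibniz-plus-cutoff argument that converts $\mcE'$-control into $\mcE$-control on $G_k$ via $(a_1)$ and $(a_2)$, and a Mazur/weak-lower-semicontinuity step to recover the pointwise comparison on the limit.
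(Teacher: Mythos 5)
Your overall strategy matches the paper's: approximate an element of $\DzLocBprime{\mssm'}$ along the pseudo-core~$\mcD$, multiply by the cutoff~$\varrho_k$, exploit assumption~$(a_1)$ to convert $\mcE'$-control into $\mcE$-control on~$G_k$, and finally restrict to~$E_k$ where $\varrho_k\equiv\car$ to discard the cutoff terms by strong locality. The paper gets $\dom$-strong convergence of $\varrho_k f^{(n)}_k$ via a direct Cauchy estimate, whereas you use weak compactness plus Mazur; that is a legitimate alternative, and your explicit truncation $\phi_M$ with the chain rule is in fact a cleaner account of what the paper dismisses as ``a standard truncation argument,'' because the truncations $\tilde h_n$ need not lie in~$\mcD$ and the comparison $\cdc\leq\cdc'$ must be propagated to them. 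In this respect your route is slightly more careful than the paper's.

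The genuine problem is the formulation of your ``central step.'' You assert that for $f\in\dom'_b$ one has $\varrho_k f\in\dom_b$ \emph{with} $\cdc(\varrho_k f)\leq\cdc'(\varrho_k f)$ $\mssm$-a.e., and later you invoke $\cdc'(\varrho_k f'_k)$ and strong locality of~$\cdc'$ at~$\varrho_k f'_k$. Neither is available: $\varrho_k\in\dom$ only, not in $\dom'$, so $\varrho_k f$ has no reason to lie in $\dotloc{\dom'}$ and the quantity $\cdc'(\varrho_k f)$ is simply undefined; and even granting it existed, the Leibniz expansion $\cdc(\varrho_k u)=\varrho_k^2\cdc(u)+u^2\cdc(\varrho_k)+2\varrho_k u\,\cdc(\varrho_k,u)$ involves $\cdc(\varrho_k)$ and the cross term $\cdc(\varrho_k,u)$, none of which are comparable with their primed counterparts, so the inequality would fail off~$E_k$. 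Your Mazur/PSD limit passage cannot produce this global comparison. What you actually need --- and what the argument supports --- is only the localized inequality $\car_{E_k}\cdc(\varrho_k f)\leq\car_{E_k}\cdc'(f)$: writing the Mazur combinations as $w_n=\varrho_k g_n$ with $g_n$ a tail convex combination of $\tilde h_n$, one has $\cdc(w_n)=\cdc(g_n)$ $\mssm$-a.e.\ on the quasi-open set $\intE\{\varrho_k=1\}\supset E_k$ by~\eqref{eq:c:BH:0}, then $\cdc(g_n)\leq\cdc'(g_n)$ by the pointwise positive semi-definiteness of $\cdc'-\cdc$ extended from~$\mcD$ to the span of the~$\tilde h_n$ via the bilinear chain rule $\cdc(\phi\circ u,\psi\circ v)=\phi'(u)\psi'(v)\cdc(u,v)$, and finally $\cdc'(g_n)\to\cdc'(f)$ since $g_n\to f$ in $\dom'$. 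Passing to the limit on a subsequence gives $\car_{E_k}\cdc(\varrho_k f)\leq\car_{E_k}\cdc'(f)$, from which the rest of your argument proceeds unchanged. You should also spell out why $\cdc'-\cdc$ is PSD on the truncated span: it is not automatic from the diagonal bound $\cdc(\tilde h_n)\leq\cdc'(\tilde h_n)$ alone, but follows once you observe that PSD of $\cdc'-\cdc$ on $\mcD$ is inherited by linear combinations with $L^0$-coefficients, and that the chain rule realizes $\alpha\tilde h_n+\beta\tilde h_m$ as such a combination of $h_n,h_m\in\mcD$ with coefficients $\alpha\phi'_M(h_n),\beta\phi'_M(h_m)$.
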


\begin{remark}\label{r:CutOff}
\begin{enumerate}[$(a)$, wide]
\item Assumption~\ref{i:p:LocalityProbab:2} in Proposition~\ref{p:LocalityProbab} is implied by the following: 
\begin{itemize}
\item[$(a_2')$] for each~$k\in\N$ it holds~$E_k\subset G_k$ $\mcE$- and $\mcE'$-quasi-everywhere and there exists~$\varrho_k\in\mcD$ with~$\car_{E_k}\leq \varrho_k\leq \car_{G_k}$ $\mssm$-a.e.; 
\end{itemize}
indeed, in this case~$\cdc(\varrho_k)\leq \cdc'(\varrho_k)\in L^1(\mssm')$ by assumption~\ref{i:p:LocalityProbab:4} in Proposition~\ref{p:LocalityProbab} and since~$\mcD\subset \dom'$, and we may thus choose~$C_k\eqdef \cdc'(\varrho_k)$.
Depending on the choice of~$\mcD$, assumption~$(a_2')$ is possibly more easily verified; in particular, it is immediately satisfied if~$\mcD=\dom'$.

\item The existence of the cut-off functions in~Proposition~\ref{p:LocalityProbab}\ref{i:p:LocalityProbab:2} is a quite mild assumption.
For instance, let~$\mssd\colon X^\tym{2}\to [0,\infty)$ be a distance metrizing~$\T$ and assume that~$(\mbbX,\mcE)$ satisfies~$(\Rad{\mssd}{\mssm})$.
We say that~$A_1$, $A_2\subset X$ are $\T$-\emph{well-separated} if~$\cl_\T A_1\cap \cl_\T A_2=\emp$.
Then it is readily verified that, for \emph{every} pair of sets~$E,G$ with
\begin{itemize}
\item~$E\subset G$, and~$G$ (hence~$E$) $\mssd$-bounded and of finite $\mssm'$-measure,
\item~$E$ and~$G^\complement$ $\T$-well-separated,
\end{itemize}
the function~$\varrho_{E,G}\eqdef 0\vee \tparen{1-\mssd(E,G^\complement)^{-1}\mssd(\emparg,E)}$ satisfies
\[
\car_{\cl_\T E} \leq \varrho_k \leq \car_{\cl_\T G}\comma \qquad \cdc(\varrho_{E,G})\leq \mssd(E,G^\complement)^{-1}\car_{\cl_\T G}\in L^1(\mssm')\fstop
\]
\end{enumerate}
\end{remark}

\begin{proof}[Proof of Proposition~\ref{p:LocalityProbab}]
Let~$\varrho_k$ be given by~\ref{i:p:LocalityProbab:2}.
Since~$G_\bullet\in\msG_0$ (resp.~$G_\bullet\in\msG_0'$), we have in particular that~$\mssm G_k<\infty$ (resp.~$\mssm'G_k<\infty$) for every~$k\in \N$.
Thus,~$\varrho_k\in L^p(\mssm)\cap L^p(\mssm')$ for every every~$p\in [1,\infty]$ by interpolation, for every~$k\in\N$.

Now, let~$(G'_\bullet, f_\bullet)$, with~$G'_\bullet \in\msG_0'$, be witnessing that~$f\in \DzLocBprime{\mssm'}$.
With no loss of generality, by truncation, we may assume that~$\sup_n\norm{f_n}_{L^\infty(\mssm)}\leq M\eqdef \norm{f}_{L^\infty(\mssm)}<\infty$.
In light of Lemma~\ref{l:2.26} and since~$G_\bullet\in\msG_0\cap\msG_0'$, we may assume with no loss of generality that~$G'_\bullet= G_\bullet$.
For each~$k\in\N$, since~$f_k\in\dom'$, there exists~$\seq{f^\sym{n}_k}_n\subset \D\subset\dom'$ a sequence of functions $\dom'$-converging to~$f_k$.
With no loss of generality, up to replacing~$\seq{f^\sym{n}_k}_n$ with a non-relabeled subsequence, we may further assume that~$\nlim f^\sym{n}_k=f_k$ $\mssm'$- (hence~$\mssm$-)a.e..
By a standard truncation argument, by the Markov property and by the closability of~$(\mcE',\dom')$, we may finally also assume that~$\abs{f^\sym{n}_k}\leq \abs{f_k}\leq M$ $\mssm'$-a.e.\ for every~$k,n\in \N$.
As a consequence, by Dominated Convergence with dominating function~$M\car_{G_k}\in L^2(\mssm')$ we have in particular that
\begin{equation}\label{eq:p:LocalityProbab:1}
L^2(\mssm')\text{-}\nlim f^\sym{n}_k\varrho_k= f\varrho_k\fstop
\end{equation}
Furthermore, by the Leibniz rule for~$\cdc$ and by the assumption, we have that~$\seq{f^\sym{n}_k\varrho_k}_n\subset \domb$ satisfies
\begin{align*}
\cdc(f^\sym{n}_k\varrho_k-f^\sym{m}_k\varrho_k) =&\ \abs{f^\sym{n}_k-f^\sym{m}_k}^2 \cdc(\varrho_k) + \abs{\varrho_k}^2 \cdc(f^\sym{n}_k-f^\sym{m}_k) 
\\
\leq&\ \abs{f^\sym{n}_k-f^\sym{m}_k}^2\cdc(\varrho_k)+\car_{G_k}\cdc'(f^\sym{n}_k-f^\sym{m}_k)
\fstop
\end{align*}
Since~$\varrho_k\equiv 0$ $\mcE$-q.e.\ on~$G_k^\complement$, and since~$G_k$ is $\mcE$-quasi-open, by~\eqref{eq:SLoc:2} we conclude that
\begin{align}
\label{eq:p:LocalityProbab:2}
\cdc(f^\sym{n}_k\varrho_k-f^\sym{m}_k\varrho_k)
\leq&\ \car_{G_k}\tparen{\abs{f^\sym{n}_k-f^\sym{m}_k}^2\cdc(\varrho_k)+ \cdc'(f^\sym{n}_k-f^\sym{m}_k)} \fstop
\end{align}

Now, since~$\theta\geq a_k>0$ on~$G_k$, the $L^2(\mssm'\mrestr{G_k})$-convergence implies the $L^2(\mssm\mrestr{G_k})$-convergence.
Thus, there exists
\begin{equation}\label{eq:p:LocalityProbab:2.5}
L^2(\mssm)\text{-}\nlim f^\sym{n}_k\varrho_k = L^2(\mssm')\text{-}\nlim f^\sym{n}_k\varrho_k = f_k\varrho_k = f\varrho_k\comma
\end{equation}
where the last equality holds by definition of~$(G_\bullet,f_\bullet)$.
Furthermore, integrating~\eqref{eq:p:LocalityProbab:2} we see that
\begin{equation}\label{eq:p:LocalityProbab:2.75}
\begin{aligned}
\lim_{n,m} &\int \cdc(f^\sym{n}_k\varrho_k-f^\sym{m}_k\varrho_k) \diff\mssm
\\
&\leq a_k^{-1}\lim_{n,m}\int_{G_k} \abs{f^\sym{n}_k-f^\sym{m}_k}^2\cdc(\varrho_k) \diff\mssm' +\lim_{n,m}\mcE'(f^\sym{n}_k-f^\sym{m}_k)=0\comma
\end{aligned}
\end{equation}
where the first limit in the right-hand side vanishes by Dominated Convergence with dominating function~$4M^2 \cdc(\varrho_k)$, and the second vanishes by definition of~$\seq{f^\sym{n}_k}_n$.

Combining~\eqref{eq:p:LocalityProbab:2.5} and~\eqref{eq:p:LocalityProbab:2.75} implies the existence of
\begin{align*}
\dom\text{-}\nlim f^\sym{n}_k\varrho_k=f\varrho_k\in\domb \fstop
\end{align*}
Thus, since~$G_\bullet$ is increasing to~$X$ $\mcE$-quasi-everywhere, and~$\seq{f\varrho_k}_k\subset \dom$ is a sequence of functions satisfying~$f\varrho_k\equiv f$ on~$G_k$, then~$f\in\dotloc{\dom}$ by definition of broad local space.

Since~$\varrho_k\equiv 1$ on~$E_k$, we have~$\car_{E_k}\cdc(\varrho_k)\equiv 0$ by~\eqref{eq:SLoc:2}. Thus, again by the Leibniz rule and by the assumption,
\begin{align*}
\car_{E_k}\cdc(f^\sym{n}_k\varrho_k)= \car_{E_k}\abs{f^\sym{n}_k}^2\cdc(\varrho_k)+\car_{E_k}\abs{\varrho_k}^2\cdc(f^\sym{n}_k)=\car_{E_k} \cdc(f^\sym{n}_k)\leq \car_{E_k}  \cdc'(f^\sym{n}_k)\comma
\end{align*}
and, taking the limit as~$n\to\infty$ (possibly, up to choosing a suitable non-relabeled subsequence),
\begin{equation}\label{eq:p:LocalityProbab:3}
\begin{aligned}
\car_{E_k} \cdc(f\varrho_k)=& \car_{E_k} \cdc(f_k\varrho_k)= \car_{E_k} \nlim\cdc(f^\sym{n}_k\varrho_k)\leq \car_{E_k} \nlim \cdc'(f^\sym{n}_k)
\\
=& \car_{E_k}  \cdc'(f_k)=\car_{E_k}\cdc'(f) \leq 1
\end{aligned}
\quad\as{\mssm}\comma
\end{equation}
where the last equality holds by locality~\eqref{eq:SLoc:2} of~$\cdc$ and definition of~$(G_\bullet,f_\bullet)$.
Again by~\eqref{eq:SLoc:2}, and by~\eqref{eq:p:LocalityProbab:3},
\begin{align*}
\car_{E_k} \cdc(f) =\car_{E_k} \cdc(f\varrho_k)\leq \car_{E_k}  \cdc'(f) \as{\mssm}\comma
\end{align*}
hence, letting~$k\to\infty$ and since~$\mssm\tparen{ \cap_k E_k^\complement}=0$,
\begin{align*}
\cdc(f) \leq \cdc'(f) \leq 1 \as{\mssm}\comma
\end{align*}
which also shows that~$f\in\DzLocB{\mssm}$, and thus concludes the proof of the first assertion.

The assertion on the intrinsic distances is an immediate consequence of the first assertion together with the definition~\eqref{eq:IntrinsicD} of intrinsic distance.
\end{proof}

Simmetrizing the assumptions in Proposition~\ref{p:LocalityProbab} and combining it with Proposition~\ref{p:Locality}, we obtain the following.

\begin{corollary}[Mutual implications for weighted spaces]\label{c:LocalityDistances}
Let~$(\mbbX,\mcE)$ and~$(\mbbX',\mcE')$ be quasi-regular strongly local Dirichlet spaces with same underlying topological measurable space~$(X,\T,\A)=(X',\T',\A')$ satisfying~\ref{ass:Luzin} and different measures~$\mssm$ and~$\mssm'$.
Further let~$\mssd\colon X^\tym{2}\to [0,\infty]$ be an extended pseudo-distance.
Assume that:
\begin{enumerate}[$(a)$]
\item\label{i:c:LocalityDistances:4} there exists~$\mcD$ a pseudo-core for both~$(\mcE,\dom)$ on~$L^2(\mssm)$ and~$(\mcE',\dom')$ on~$L^2(\mssm')$, additionally so that~$\cdc=\cdc'$ on~$\mcD$.

\item\label{i:c:LocalityDistances:3} $\mssm'=\theta\mssm$ for some~$\theta\in L^0(\mssm)$, and there exists $E_\bullet, G_\bullet\in\msG_0\cap \msG_0'$ with the following properties:
\begin{enumerate}[$({b}_1)$]
\item\label{i:c:LocalityDistances:1} for each~$k\in \N$ there exists a constant~$a_k>0$ such that
\[
0<a_k\leq \theta \leq a_k^{-1}<\infty  \as{\mssm} \quad \text{on } G_k\semicolon
\]
that is,~$\theta, \theta^{-1}\in \dotloc{\tparen{L^\infty(\mssm)}}(G_\bullet)$.
\item\label{i:c:LocalityDistances:2} for each~$k\in \N$ it holds~$E_k\subset G_k$ $\mcE$- and $\mcE'$-quasi-everywhere, and there exists~$\varrho_k\in \mcD$ with~$\car_{E_k}\leq \varrho_k\leq \car_{G_k}$ $\mssm$-a.e..
\end{enumerate}
\end{enumerate}
Then,
\begin{enumerate}[$(i)$]
\item $\tparen{\cdc, \DzLocB{\mssm}}= \tparen{\cdc',\DzLocBprime{\mssm'}}$ and $\mssd_\mssm= \mssd_{\mssm'}$;
\item $(\ScL{\mssm}{\T}{\mssd})$, resp.\ $(\SL{\mssm}{\mssd})$, $(\cSL{\T}{\mssm}{\mssd})$, $(\Rad{\mssd}{\mssm})$, holds if and only if $(\ScL{\mssm'}{\T}{\mssd})$, resp.\ $(\SL{\mssm'}{\mssd})$, $(\cSL{\T}{\mssm'}{\mssd})$, $(\Rad{\mssd}{\mssm'})$ holds.
\end{enumerate}
\end{corollary}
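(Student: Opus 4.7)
The plan is to derive Corollary~\ref{c:LocalityDistances} by invoking Proposition~\ref{p:LocalityProbab} \emph{twice}, with the roles of the two structures interchanged, and then combining the resulting equalities with Proposition~\ref{p:Locality}. This is possible because the hypotheses~\ref{i:c:LocalityDistances:4}-\ref{i:c:LocalityDistances:3} are fully symmetric in the unprimed and primed data.

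First I would check that hypotheses~\ref{i:c:LocalityDistances:4}-\ref{i:c:LocalityDistances:3} entail hypotheses~\ref{i:p:LocalityProbab:4},~\ref{i:p:LocalityProbab:3.1},~\ref{i:p:LocalityProbab:2} of Proposition~\ref{p:LocalityProbab}. The identity $\cdc=\cdc'$ on $\mcD$ trivially gives $\cdc\le\cdc'$ on $\mcD$, so~\ref{i:p:LocalityProbab:4} holds. Condition~\ref{i:c:LocalityDistances:1} gives in particular $\theta\ge a_k>0$ $\mssm$-a.e.\ on $G_k$, so $\theta^{-1}\in\dotloc{L^\infty(\mssm)}(G_\bullet)$ and~\ref{i:p:LocalityProbab:3.1} holds. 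For~\ref{i:p:LocalityProbab:2}, observe that $\varrho_k\in\mcD\subset\dom\cap\dom'$ and that on $\mcD$ we have $\cdc(\varrho_k)=\cdc'(\varrho_k)$; since $\varrho_k\in\dom'$ and $\mssm'G_k<\infty$ (because $G_\bullet\in\msG_0'$), $\cdc(\varrho_k)=\cdc'(\varrho_k)\in L^1(\mssm'_{G_k})$. This is precisely the situation of assumption $(a_2')$ in Remark~\ref{r:CutOff}, so~\ref{i:p:LocalityProbab:2} is satisfied. Proposition~\ref{p:LocalityProbab} therefore yields
\[
\tparen{\cdc,\DzLocB{\mssm}}\le\tparen{\cdc',\DzLocBprime{\mssm'}} \qquad\text{and}\qquad \mssd_\mssm\ge\mssd_{\mssm'}\fstop
\]

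Next I would swap the two structures. With $\tilde\theta\eqdef\theta^{-1}$ we have $\mssm=\tilde\theta\mssm'$, and~\ref{i:c:LocalityDistances:1} implies $\tilde\theta^{-1}=\theta\in\dotloc{L^\infty(\mssm')}(G_\bullet)$ since $\theta\le a_k^{-1}$ on $G_k$ and $\mssm'\sim\mssm$; the sequences $E_\bullet,G_\bullet$ still lie in $\msG_0'\cap\msG_0$; the common pseudo-core $\mcD$ is unchanged and $\cdc'\le\cdc$ on $\mcD$; the cut-off check is identical with the roles of $\mssm,\mssm'$ exchanged. A second application of Proposition~\ref{p:LocalityProbab} thus gives
\[
\tparen{\cdc',\DzLocBprime{\mssm'}}\le\tparen{\cdc,\DzLocB{\mssm}} \qquad\text{and}\qquad \mssd_{\mssm'}\ge\mssd_\mssm\fstop
\]
Combining these with the previous inequalities proves part~$(i)$.

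Finally, assertion~$(ii)$ is immediate from Proposition~\ref{p:Locality}: equality of $(\cdc,\DzLocB{\mssm})$ with $(\cdc',\DzLocBprime{\mssm'})$ provides both inequalities needed to invoke Proposition~\ref{p:Locality}\ref{i:p:Locality:1} and~\ref{i:p:Locality:2} in either direction, so each of $(\ScL{\mssm}{\T}{\mssd})$, $(\SL{\mssm}{\mssd})$, $(\cSL{\T}{\mssm}{\mssd})$, $(\Rad{\mssd}{\mssm})$ is equivalent to its primed counterpart. The only delicate point is the correct bookkeeping in the second application of Proposition~\ref{p:LocalityProbab} (in particular, verifying integrability of $\cdc'(\varrho_k)$ against $\mssm$), but this is secured by the symmetric two-sided bound on $\theta$ in~\ref{i:c:LocalityDistances:1} together with $\cdc=\cdc'$ on~$\mcD$; the rest is purely formal.
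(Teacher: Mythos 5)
Your proposal is correct and follows exactly the route the paper indicates in the sentence preceding the corollary: symmetrize Proposition~\ref{p:LocalityProbab} (apply it twice, swapping $\mssm$ and~$\mssm'$, using the two-sided bound on~$\theta$ and $\cdc=\cdc'$ on~$\mcD$ to verify the hypotheses in both directions) and combine with Proposition~\ref{p:Locality}. Your bookkeeping for the cut-off integrability via Remark~\ref{r:CutOff} and the swapped verification are all sound, so nothing is missing.
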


\subsection{Form comparison}
In this section we provide a full comparison of a Dirichlet form~$(\mcE,\dom)$ with a Cheeger energy~$\Ch[*,\mssd,\mssm]$ defined on the same space.
Roughly speaking, we show that~$\mcE$ is dominated by $\Ch[*,\mssd,\mssm]$ under the Rademacher property for~$\mssd$, and that the reverse domination holds under the continuous-Sobolev--to--Lipschitz property for~$\mssd$ and the \emph{$\T$-upper regularity} property for~$(\mcE,\dom)$ (Dfn.~\ref{d:TUpperReg}).

\subsubsection{Form comparison under the Rademacher property}
The next Proposition~\ref{p:IneqCdC} is an extension of the same result obtained for \emph{energy-measure spaces} by L.~Ambrosio, N.~Gigli, and G.~Savar\'e in~\cite{AmbGigSav15}, and for extended metric-topological \emph{probability} spaces by L.~Ambrosio, M.~Erbar, and G.~Savar\'e in~\cite{AmbErbSav16}.
In the proof we make use of assertions proven in~\cite{AmbErbSav16} for extended metric-topological probability spaces, and of assertions proven in~\cite{AmbGigSav14}.
The compatibility between different definitions in these references is granted by Proposition~\ref{p:ConsistencyCheeger}.

\begin{lemma}\label{l:AGS}
Let~$(X,\T,\mssd)$ be an extended metric-topological space (Dfn.~\ref{d:AES}) and~$K\subset X$ be $\T$-compact.
Then,~$\mssd(\emparg, K)$ is $\T$-lower semicontinuous (in particular, $\Bo{\T}$-measurable).
\end{lemma}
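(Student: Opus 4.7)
The plan is to exploit the defining uniformity of the extended metric-topological structure. By Definition~\ref{d:AES}, there exists a directed family $\UP$ of $\T^{\otym{2}}$-continuous pseudo-distances with $\mssd = \sup_{\mssd' \in \UP} \mssd'$. My strategy is to establish the pointwise identity
\[
\mssd(y, K) = \sup_{\mssd' \in \UP} \mssd'(y, K)\comma \qquad y \in X\comma
\]
from which the conclusion follows at once: for each $\mssd' \in \UP$ the triangle inequality gives $\tabs{\mssd'(y_1, K) - \mssd'(y_2, K)} \leq \mssd'(y_1, y_2)$, and the right-hand side tends to $0$ as $y_1 \to y_2$ in $\T$ because $\mssd'$ is $\T^{\otym{2}}$-continuous and vanishes on the diagonal; hence each $\mssd'(\empargop, K)$ is $\T$-continuous, and $\mssd(\empargop, K)$ is a pointwise supremum of $\T$-continuous functions, therefore $\T$-lower semicontinuous.

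The easy direction $\mssd(y, K) \geq \sup_{\mssd'} \mssd'(y, K)$ is immediate from $\mssd \geq \mssd'$ pointwise, by interchanging $\inf_{x \in K}$ with $\sup_{\mssd'}$. For the reverse direction I would use $\T$-compactness of $K$ in the form of the finite intersection property. Fix $y \in X$ and set $s \eqdef \sup_{\mssd' \in \UP} \mssd'(y, K)$. The inequality is trivial if $s = \infty$, so assume $s < \infty$ and define, for each $\mssd' \in \UP$,
\[
K_{\mssd'} \eqdef \tset{x \in K : \mssd'(y, x) \leq s} \fstop
\]
Each $K_{\mssd'}$ is $\T$-closed as the preimage of $[0, s]$ under the $\T$-continuous function $\mssd'(y, \empargop)$, and is non-empty because the infimum of that function over the $\T$-compact set $K$ is attained and equals $\mssd'(y, K) \leq s$. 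Given $\mssd'_1, \ldots, \mssd'_n \in \UP$, directedness of $\UP$ furnishes $\mssd'_* \in \UP$ dominating all the $\mssd'_i$, and then $K_{\mssd'_*} \subset \bigcap_{i \leq n} K_{\mssd'_i}$; so the family $\tset{K_{\mssd'}}_{\mssd' \in \UP}$ has the finite intersection property. By $\T$-compactness of $K$, $\bigcap_{\mssd' \in \UP} K_{\mssd'} \neq \emp$, and any $x$ in this intersection satisfies $\mssd(y, x) = \sup_{\mssd' \in \UP} \mssd'(y, x) \leq s$, whence $\mssd(y, K) \leq s$.

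The main obstacle is precisely the sup--inf interchange in the second paragraph, and this is exactly where $\T$-compactness of $K$ is indispensable: without it, the closed sets $K_{\mssd'}$ need not share a common point, and the reverse inequality can fail for merely $\T$-closed $K$ even in very simple examples.
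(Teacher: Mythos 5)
Your proof is correct, but it takes a genuinely different route from the paper's. The paper first invokes the $\T^{\tym{2}}$-lower semicontinuity of $\mssd$ itself (citing \cite[Eqn.~(4.3)]{AmbErbSav16}, a direct consequence of~\eqref{eq:d=supUP}), then transfers this to $\mssd(\emparg,K)$ by a net argument: given $x_\alpha\to x$, one picks for each $\alpha$ an $\eps$-almost-infimizer $y_\alpha\in K$, extracts a $\T$-convergent subnet by compactness of $K$, and applies l.s.c.\ of $\mssd$ along the product subnet. You instead bypass the l.s.c.\ of $\mssd$ entirely and prove the sharper pointwise identity $\mssd(\emparg,K)=\sup_{\mssd'\in\UP}\mssd'(\emparg,K)$ via a finite-intersection-property argument, using directedness of $\UP$ to get FIP for the sublevel sets $K_{\mssd'}$ and compactness of $K$ to get a common point; lower semicontinuity then drops out as the supremum of the $\T$-continuous functions $\mssd'(\emparg,K)$. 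The paper's argument is modular and shorter; yours is self-contained and yields a cleaner structural statement — since $\UP$ is directed, $\mssd(\emparg,K)$ is an increasing (directed) supremum of $\T$-continuous, even $\T_{\mssd}$-$1$-Lipschitz, functions — which is stronger than mere l.s.c.\ and could conceivably be reused. Both are valid; one small point worth making explicit in your write-up is the degenerate case $K=\emp$ (then $\mssd(\emparg,K)\equiv+\infty$ is trivially l.s.c.), and that the non-emptiness of each $K_{\mssd'}$ really does need the infimum of the $\T$-continuous $\mssd'(y,\emparg)$ over $\T$-compact $K$ to be attained, which you correctly note.
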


\begin{remark}
When~$(X,\T)$ is additionally Polish, Lemma~\ref{l:AGS} is claimed without proof in (the proof of)~\cite[Lem.~4.11]{AmbGigSav14}.
We provide a proof for completeness and for reference.
\end{remark}

\begin{proof}[Proof of Lemma~\ref{l:AGS}]
By~\cite[Eqn.~(4.3)]{AmbErbSav16} the $\T$-admissible extended distance~$\mssd$ is $\T^\tym{2}$-lower semicontinuous.
Let~$\seq{x_\alpha}_\alpha\subset X$ be any net $\T$-converging to~$x\in X$.
For each~$\eps>0$ there exists~$y_{\alpha(\eps)}\in K$ so that~$\mssd(x_\alpha,K)\geq \mssd(x_\alpha,y_{\alpha(\eps)})-\eps$.
By $\T$-compactness of~$K$, for each~$\eps>0$ there exists~$y_\eps\in K$ a $\T$-accumulation point of~$\seq{y_{\alpha(\eps)}}_{\alpha(\eps)}$.
Thus, by the above inequality, by $\T^\tym{2}$-lower-semicontinuity of~$\mssd$, and since~$y_\eps\in K$,
\begin{align*}
\liminf_\alpha \mssd(x_\alpha,K)\geq \liminf_{\alpha(\eps)}\liminf_\alpha \mssd(x_\alpha,y_{\alpha(\eps)})-\eps \geq \mssd(x,y_\eps) -\eps \geq \mssd(x,K) -\eps \comma
\end{align*}
which concludes the assertion by arbitrariness of~$\eps>0$.
\end{proof}

The following is a rewriting of~\cite[Lem.~4.11]{AmbGigSav14}. Whereas our assumptions are milder, the proof in~\cite{AmbGigSav14} applies with minor modifications.

\begin{lemma}[{\cite[Lem.~4.11]{AmbGigSav14}}]\label{l:AGS2}
Let~$(X,\T,\mssd)$ be a complete extended metric-topological measure space, and~$\mssm$ be a $\sigma$-finite $\T$-Radon measure on~$\Bo{\T}$.
Further let~$\mssm'\eqdef \theta \mssm$ be another $\sigma$-finite measure on~$\Bo{\T}$ with density~$\theta$ satisfying the following condition: there exists a sequence of $\T$-compact sets~$K_i$ with~$K_i\subset K_{i+1}$, and constants~$r_i,c_i,C_i>0$ such that
\begin{equation}\label{eq:l:AGS2:0}
\mssm \tparen{\cap_i K_i^\complement}=0 \qquad \text{and} \qquad 0<c_i\leq \theta \leq C_i <\infty \quad \as{\mssm} \text{ on } \set{\mssd(\emparg, K_i) < r_i} \fstop
\end{equation}
Then, the relaxed gradient~$\slo[*,\mssd,\mssm']{f}$ induced by~$\mssm'$ coincides $\mssm$-a.e.\ with~$\slo[*,\mssd,\mssm]{f}$ for every~$f\in \domain{\Ch[*,\mssd,\mssm']}\cap \domain{\Ch[*,\mssd,\mssm]}$.
If moreover there exists a single~$r>0$ such that~\eqref{eq:l:AGS2:0} holds with~$r$ in place of~$r_i$, then
\[
f\in \domain{\Ch[*,\mssd,\mssm]}\comma \quad f\comma \slo[*,\mssd,\mssm]{f} \in L^2(\mssm') \implies f\in \domain{\Ch[*,\mssd,\mssm']} \fstop
\]
\end{lemma}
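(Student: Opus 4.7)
The plan is to follow the template of~\cite[Lem.~4.11]{AmbGigSav14}, adapted to our extended metric-topological setting, by localizing via Lipschitz cutoff functions $\chi_i$ supported in the $r_i$-neighborhoods of~$K_i$, where the two-sided bound~\eqref{eq:l:AGS2:0} on~$\theta$ makes $L^2(\mssm)$ and~$L^2(\mssm')$ equivalent on such neighborhoods. The identification of the two minimal relaxed slopes then follows from a Leibniz-type inequality for relaxed slopes and the strong locality of minimal relaxed slopes.

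Concretely, I would first set~$d_i \eqdef \mssd(\empargop, K_i)$ and~$\chi_i \eqdef \tparen{1 - r_i^{-1} d_i}_+$. The critical new ingredient in the extended metric-topological setting is Lemma~\ref{l:AGS}, which provides the $\T$-lower semicontinuity (hence $\Bo{\T}$-measurability) of~$d_i$; this ensures $\chi_i \in \Lip(\mssd,\Bo{\T})$, with $\chi_i \equiv 1$ on~$K_i$, $\supp\chi_i \subset \set{d_i \leq r_i}$, and $\slo{\chi_i} \leq r_i^{-1}\car_{\set{d_i \leq r_i}}$. Given $f \in \domain{\Ch[*,\mssd,\mssm']} \cap \domain{\Ch[*,\mssd,\mssm]}$, bounded by truncation, together with an optimal sequence $\seq{f_n}_n \subset \Lip(\mssd,\Bo{\T})$ with $L^2(\mssm')$-$\nlim f_n = f$ and $\slo{f_n} \rightharpoonup \slo[*,\mssd,\mssm']{f}$ weakly in~$L^2(\mssm')$, the cutoff products~$\chi_i f_n$ satisfy the Leibniz slope estimate $\slo{\chi_i f_n} \leq \chi_i \slo{f_n} + \abs{f_n}\slo{\chi_i}$, are supported in~$\set{d_i \leq r_i}$, and converge to~$\chi_i f$ in~$L^2(\mssm)$ by the norm equivalence on their common support. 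A Mazur-type convex combination then yields $\chi_i f \in \domain{\Ch[*,\mssd,\mssm]}$ with $\slo[*,\mssd,\mssm]{\chi_i f} \leq \chi_i\, \slo[*,\mssd,\mssm']{f} + \abs{f}\,\slo[*,\mssd,\mssm]{\chi_i}$ $\mssm$-a.e. On $\set{\chi_i = 1} \supset K_i$, the strong locality of minimal relaxed slopes applied to~$\chi_i f$ vs.~$f$ (respectively to~$\chi_i$ vs.\ the constant~$1$) gives $\slo[*,\mssd,\mssm]{\chi_i f} = \slo[*,\mssd,\mssm]{f}$ and $\slo[*,\mssd,\mssm]{\chi_i} = 0$ there; hence $\slo[*,\mssd,\mssm]{f} \leq \slo[*,\mssd,\mssm']{f}$ $\mssm$-a.e.\ on~$K_i$. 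The symmetric argument interchanging the roles of~$\mssm$ and~$\mssm'$ gives the reverse inequality, and the union over~$i$ yields the first assertion by $\mssm\tparen{\cap_i K_i^\complement} = 0$. For the second part, under the uniform-$r$ hypothesis the region $\set{d < r}$ has full $\mssm$-measure, a single cutoff suffices, and the same construction transfers an approximating sequence for $\Ch[*,\mssd,\mssm]$ into one for~$\Ch[*,\mssd,\mssm']$, yielding $f \in \domain{\Ch[*,\mssd,\mssm']}$ from the integrability hypotheses~$f, \slo[*,\mssd,\mssm]{f} \in L^2(\mssm')$.

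The main obstacle I expect is ensuring that the Leibniz-type weak limit truly delivers a relaxed slope with respect to~$\mssm$: this hinges on the fact that multiplication by the compactly supported~$\chi_i$ restricts the argument to a region where $L^2(\mssm)$ and $L^2(\mssm')$ coincide as Hilbert spaces, so weak convergence in one transfers to weak convergence in the other, and Mazur's lemma produces the needed strong convergence after convex combination. The supporting ingredients — the Leibniz estimate and the vanishing of $\slo[*]{\chi_i}$ on $\set{\chi_i = 1}$ — follow from the theory developed in~\cite{AmbGigSav14}, made applicable in our extended metric-topological setting through Proposition~\ref{p:ConsistencyCheeger}.
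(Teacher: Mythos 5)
Your reconstruction of the first assertion follows the same route as the paper, which in fact proves this lemma by simply invoking the original argument in~\cite[Lem.~4.11]{AmbGigSav14} verbatim, noting only that the compact set~$K$ appearing there should be replaced by a suitable~$K_i$ from the given exhaustion and that the $\Bo{\T}$-measurability of~$\mssd(\emparg,K_i)$ (needed to define the cutoffs~$\chi_i$) is furnished by Lemma~\ref{l:AGS}. You correctly identify both of these points, as well as the Leibniz-slope estimate, the Mazur argument transferring $L^2(\mssm')$-weak convergence to $L^2(\mssm)$-relaxed slopes on the cutoff's support, the locality of the minimal relaxed slope on $\set{\chi_i=1}$, and the symmetrization in~$\mssm$ and~$\mssm'$; so the first part is fine.

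Your treatment of the second assertion contains a gap. You write that ``under the uniform-$r$ hypothesis the region $\set{d<r}$ has full $\mssm$-measure, a single cutoff suffices.'' This is not correct: the set of full $\mssm$-measure is the increasing \emph{union} $\cup_i\set{\mssd(\emparg,K_i)<r}$, and no single member of this family need be conull (nor is any single cutoff of the form $\tparen{1-r^{-1}\mssd(\emparg, K_i)}_+$ or of the form $\tparen{1-r^{-1}\mssd(\emparg,\cup_j K_j)}_+$ of any use — the latter would be $\equiv 1$ $\mssm$-a.e.\ and therefore not a cutoff at all). The actual role of the uniform~$r$ is different: it caps the Lipschitz constant of every~$\chi_i$ by~$r^{-1}$, so that the Leibniz error term $\abs{f}\slo[*,\mssd,\mssm]{\chi_i}\leq r^{-1}\abs{f}\,\car_{K_i^\complement}$ is dominated, \emph{uniformly in}~$i$, by $r^{-1}\abs{f}\in L^2(\mssm')$. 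One then shows that $\chi_i f\in\domain{\Ch[*,\mssd,\mssm']}$ for every~$i$ with $\slo[*,\mssd,\mssm']{\chi_i f}\leq \chi_i\slo[*,\mssd,\mssm]{f}+r^{-1}\abs{f}\car_{K_i^\complement}$, that $\chi_i f\to f$ in $L^2(\mssm')$ by Dominated Convergence using $f\in L^2(\mssm')$, that the energies $\Ch[*,\mssd,\mssm'](\chi_i f)$ are bounded uniformly in~$i$ thanks to the hypothesis $\slo[*,\mssd,\mssm]{f}\in L^2(\mssm')$, and one concludes by $L^2$-lower semicontinuity of~$\Ch[*,\mssd,\mssm']$. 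A sequence of cutoffs is indispensable; what the uniform~$r$ buys is the uniform bound on their slopes, not a reduction to one cutoff.
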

\begin{proof}
Since~$\mssm$ is $\sigma$-finite $\T$-Radon, there exists a sequence of $\T$-compact sets~$K_i$ satisfying~$\mssm\tparen{\cap_i K_i^\complement}$.
The rest of the proof holds exactly as in~\cite{AmbGigSav14} having care to substitute the $\T$-compact set~$K$ there with~$K_i$ such that~$\mssm (K\cap K_i)>0$ which exists since~$\seq{K_i}_i$ exhausts~$X$ up to an $\mssm$-negligible set.
\end{proof}

\begin{proposition}\label{p:IneqCdC}
Let~$(\mbbX,\mcE)$ be a quasi-regular strongly local Dirichlet space with~$\mbbX$ satisfying~\ref{ass:Luzin} and admitting carr\'e du champ operator, and $\mssd\colon X^\tym{2}\to [0,\infty)$ be an extended distance.
Further assume that
\begin{enumerate}[$(a)$]
\item\label{i:p:IneqCdC:A} $(X,\T,\mssd)$ is a complete extended metric-topological space (Dfn.~\ref{d:AES}); 
\item\label{i:p:IneqCdC:B} $\mssm$ is \emph{$\mssd$-moderate} in the following sense: there exists a $\T$-compact $\mcE$-nest~$\seq{K_i}_i$ and~$\eps>0$ such that
\begin{equation}\label{eq:p:IneqCdC:00}
\kappa_i\eqdef \mssm\set{\mssd(\emparg, K_i)< \eps} <\infty\fstop
\end{equation}
\end{enumerate}
If $(\mbbX,\mcE)$ satisfies~$(\Rad{\mssd}{\mssm})$ for~$\Bo{\T}$, then
\begin{equation}\label{eq:p:IneqCdC:0}
\cdc(f)\leq \slo[w,\mssd_{\mssm}]{f}^2 \leq \slo[w,\mssd]{f}^2 \quad \as{\mssm}\comma \qquad f\in \bLip(\mssd,\Bo{\T})\fstop
\end{equation}
In particular,
\begin{equation}\label{eq:p:IneqCdC:001}
\mcE\leq \Ch[w,\mssd_{\mssm},\mssm]\leq \Ch[w,\mssd,\mssm]\fstop
\end{equation}
\end{proposition}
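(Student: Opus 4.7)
My plan is to handle the two inequalities in~\eqref{eq:p:IneqCdC:0} separately and then deduce~\eqref{eq:p:IneqCdC:001} by integration and a density argument. The second inequality $\slo[w,\mssd_\mssm]{f} \leq \slo[w,\mssd]{f}$ is purely geometric: by~\eqref{eq:EquivalenceRadStoL}, $(\Rad{\mssd}{\mssm})$ yields the distance-Rademacher inequality $\mssd \leq \mssd_\mssm$, so every $\mssd_\mssm$-absolutely continuous curve is $\mssd$-absolutely continuous with pointwise-smaller metric derivative. Consequently every test plan of bounded compression for $\mssd_\mssm$ is one for $\mssd$, and a $\mssm$-weak upper gradient of $f$ for $\mssd$ automatically tests against $\mssd_\mssm$-test plans, yielding the claimed bound on the minimal objects.

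The main inequality $\cdc(f) \leq \slo[w,\mssd_\mssm]{f}^2$ I would prove by reducing to the extended metric-topological \emph{probability} setting treated in~\cite{AmbErbSav16}. Using assumption~\ref{i:p:IneqCdC:B} together with the $\Bo{\T}$-measurability of $\mssd(\emparg, K_i)$ granted by Lemma~\ref{l:AGS}, I would construct a bounded strictly positive $\Bo{\T}$-measurable weight
\begin{equation*}
\theta(x)\eqdef Z^{-1}\sum_{i\geq 1} 2^{-i}\kappa_i^{-1}\tparen{1-\mssd(x,K_i)/\eps}_+\comma
\end{equation*}
with $Z>0$ making $\mssm^\star\eqdef\theta\mssm$ a probability measure. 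Then $\theta\in L^\infty(\mssm)$, and on the $\mcE$-quasi-open nest $G_k\eqdef\set{\mssd(\emparg,K_k)<\eps/2}$ one has both $\theta, \theta^{-1}\in L^\infty(\mssm\mrestr{G_k})$. The cut-off functions $\varrho_k(x)\eqdef \tparen{1-2\mssd(x,K_k)/\eps}_+$ satisfy $\car_{K_k}\leq\varrho_k\leq\car_{G_k}$ and lie in $\mcD\eqdef\bLip(\mssd,\Bo{\T})\cap\dom$ thanks to $(\Rad{\mssd}{\mssm})$ and to $\mssm G_k<\infty$.

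Defining $(\mcE^\star,\dom^\star)$ as the $L^2(\mssm^\star)$-closure of the restriction of $\mcE$ to $\mcD$, Corollary~\ref{c:LocalityDistances} applies with this choice of $\mcD$, $G_\bullet$ and $E_\bullet=K_\bullet$, yielding $\tparen{\cdc,\DzLocB{\mssm}}=\tparen{\cdc^\star,\DzLocBprime{\mssm^\star}}$, $\mssd_\mssm=\mssd_{\mssm^\star}$, and the property $(\Rad{\mssd}{\mssm^\star})$. By Lemma~\ref{l:RadCompleteness}, $(X,\T,\mssd_\mssm)$ is a complete extended metric-topological space, made into an extended metric-topological probability space by $\mssm^\star$. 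Invoking the probability-space analogue of~\eqref{eq:p:IneqCdC:0} from~\cite{AmbErbSav16}---together with the identifications of Proposition~\ref{p:ConsistencyCheeger}---one obtains $\cdc^\star(f)\leq\slo[w,\mssd_\mssm,\mssm^\star]{f}^2$ $\mssm^\star$-a.e.\ for $f\in\bLip(\mssd,\Bo{\T})$. The left-hand side coincides $\mssm$-a.e.\ with $\cdc(f)$ by the preceding step, and the right-hand side coincides $\mssm$-a.e.\ with $\slo[w,\mssd_\mssm,\mssm]{f}^2$ by Lemma~\ref{l:AGS2} applied to $(\mssm,\mssm^\star=\theta\mssm)$, whose hypotheses are precisely the compact exhaustion $\seq{K_i}_i$ together with the two-sided bounds on $\theta$ over $G_k$. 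This establishes~\eqref{eq:p:IneqCdC:0}, and~\eqref{eq:p:IneqCdC:001} then follows by integration and the standard density of $\bLip(\mssd,\Bo{\T})$ in $\domain{\Ch[w,\mssd,\mssm]}$ with respect to the graph norm.

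The main obstacle I anticipate is the careful verification of the hypotheses of Corollary~\ref{c:LocalityDistances}---in particular, that the truncated functions $\varrho_k$ built from the $\Bo{\T}$-measurable point-to-set distance $\mssd(\emparg,K_k)$ genuinely lie in $\mcD$ with a $\mssm$-integrable energy bound, and that $\mcD$ is $\dom$- and $\dom^\star$-dense in the respective domains. A secondary subtlety is the precise transfer of the probability-space inequality from~\cite{AmbErbSav16}, where the Cheeger energy is defined via the asymptotic Lipschitz constant $\Ch[a]$ rather than via weak upper gradients $\Ch[w]$; the required coincidence is guaranteed by Proposition~\ref{p:ConsistencyCheeger} in the probability case.
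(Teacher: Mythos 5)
Your overall strategy --- reduce to the probability case via a weight $\theta$, then transfer the inequality back using Lemma~\ref{l:AGS2} and Corollary~\ref{c:LocalityDistances} --- is the same as the paper's. However, the concrete construction of $\theta$ and the associated weighted form $(\mcE^\star,\dom^\star)$ has a genuine gap.

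You define $(\mcE^\star,\dom^\star)$ as ``the $L^2(\mssm^\star)$-closure of the restriction of $\mcE$ to $\mcD$'', which presupposes that the form $f\mapsto\int\cdc(f)\,\theta\diff\mssm$ on $\mcD$ is \emph{closable} in $L^2(\theta\mssm)$ --- and, further, that its closure is a quasi-regular strongly local Dirichlet form to which Corollary~\ref{c:LocalityDistances} can be applied. This closability is precisely what the Girsanov-type result~\cite[Thm.~2.2]{CheSun06} supplies, but its hypothesis is $\sqrt\theta\in\dom$, and your weight
$\theta=Z^{-1}\sum_{i\geq 1}2^{-i}\kappa_i^{-1}(1-\mssd(\emparg,K_i)/\eps)_+$
does not obviously satisfy it: each summand is Lipschitz and nonnegative, but the square root of a nonnegative Lipschitz function has unbounded slope where the function is small but positive, so $\cdc(\sqrt\theta)\sim\cdc(\theta)/\theta$ may fail to be $\mssm$-integrable. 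The paper sidesteps this by building $\varphi\eqdef\sum_i 2^{-i-1}f_i$ directly as a bounded $1$-Lipschitz limit (hence $\varphi\in\dom$ via $(\Rad{\mssd}{\mssm})$, moderance, and~\cite[Lem.~I.2.12]{MaRoe92}) and only then setting $\theta\eqdef\varphi^2/\norm{\varphi}^2_{L^2(\mssm)}$, so that $\sqrt\theta=\varphi/\norm{\varphi}_{L^2(\mssm)}\in\dom$ by construction.

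A secondary issue is that Corollary~\ref{c:LocalityDistances} requires $\mcD$ to be a \emph{pseudo-core} (i.e.\ dense) for both $\dom$ and $\dom^\star$. With your choice $\mcD\eqdef\bLip(\mssd,\Bo{\T})\cap\dom$, density in $\dom^\star$ is automatic by definition of the closure, but density in $\dom$ is not justified and in general need not hold under $(\Rad{\mssd}{\mssm})$ alone. The paper avoids this entirely by taking $\mcD\eqdef\dom$ when invoking Corollary~\ref{c:LocalityDistances}, the point being that once the Girsanov transform is in place, $\dom$ is dense in $\dom^\star=\dom'$ by construction, and $\cdc=\cdc'$ on $\dom$ by~\eqref{eq:l:IneqCdC:0.5}. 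You would do well to replace your choice of $\mcD$ accordingly. Your argument for the second (purely metric) inequality $\slo[w,\mssd_\mssm]{f}\leq\slo[w,\mssd]{f}$ via test plans is fine and in fact spells out what the paper leaves implicit.
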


\begin{remark}\label{r:IneqCdC}
The existence of a $\T$-compact $\mcE$-nest in~\ref{i:p:IneqCdC:B} is always satisfied as a consequence of the quasi-regularity of~$(\mbbX,\mcE)$.
The second condition is a form of \emph{moderance} of~$\mssm$ ---in the sense of~\cite[\S2.5.2]{LzDSSuz20}--- compatible with~$\mssd$.
Together with the assumption of the Rademacher property~$(\Rad{\mssd}{\mssm})$, it is tantamount to the $\mssm$-uniform algebraic $\T$-localizability (Dfn.~\ref{d:Localizability}) of~$(\mbbX,\mcE)$.
Consistently with Remark~\ref{r:AlgLocTrivial}, the assumption in~\eqref{eq:p:IneqCdC:00} is trivially satisfied whenever~$\mssm X<\infty$, which is why this assumption does not appear in~\cite{AmbErbSav16}, addressing only probability spaces.
\end{remark}

\begin{proof}[Proof of Proposition~\ref{p:IneqCdC}]
By~\eqref{eq:EquivalenceRadStoL} the space~$(\mbbX,\mcE)$ satisfies~$(\dRad{\mssd}{\mssm})$, which implies the second inequality by definition of the objects involved.
Thus, it suffices to show the first inequality.
Firstly, let us note that~$(X,\T,\mssd_{\mssm})$ is a complete extended metric-topological space by Lemma~\ref{l:RadCompleteness}.
Suppose for the moment that~$\mssm$ be a probability measure.
Since~$(X,\T)$ is a topological Luzin space, it is a Radon space.
In particular,~$\mssm$ is Radon.
Then, we can apply~\cite[Thm.~12.5]{AmbErbSav16} to the complete metric-topological Radon probability space~$(X,\T,\mssd,\mssm)$ and conclude the assertion.

\paragraph{Heuristics}
Now, let us show how to extend the statement to the case of any $\sigma$-finite measure~$\mssm$.
We need to treat simultaneously the square field operator~$\cdc$ and the minimal weak upper gradient~$\slo[w,\mssd_\mssm,\mssm]{\emparg}$.
To this end, we find a probability density~$\theta\in L^1(\mssm)$, set~$\mssm'\eqdef \theta\mssm$, and show that~$\cdc=\cdc'$ and~$\slo[w,\mssd_\mssm,\mssm]{\emparg}=\slo[w,\mssd_\mssm,\mssm']{\emparg}$.
The conclusion follows from these equalities together with the inequality established in the probability case applied to~$\mssm'$.

For the square field, we make use of the result on the Girsanov transform of~$(\mcE,\dom)$ by a factor~$\sqrt{\theta}\in\dom$, thoroughly discussed in the generality of quasi-regular Dirichlet spaces by C.-Z.~Chen and W.~Sun in~\cite{CheSun06}.
For the minimal weak upper gradient, we make use of the locality result for~$\slo[w,\mssd_\mssm,\mssm]{\emparg}$ under transformation of the reference measure by a factor~$\theta$ locally bounded away from~$0$ and infinity on neighborhoods of compact sets, Lemma~\ref{l:AGS2}.

\paragraph{Reduction}
Since~$\mssm X=\infty$ by assumption, there exists~$i_*$ such that~$\mssm K_i\geq 1$ for every~$i\geq i_*$.
Thus, up to discarding the first~$i_*$ elements of~$\seq{K_i}_i$, we may and will assume with no loss of generality that~$\kappa_i\geq1$ for every~$i$.
As a consequence,~$\kappa_i^{-1/2}\leq 1$ for every~$i\in\N$.
We shall further assume that~$\eps\leq 1$, again with no loss of generality up to replacing~$\eps$ by~$\eps\vee 1$.
Throughout the proof we let~$(K_i)_\delta\eqdef \set{\mssd(\emparg,K_i)<\delta}$ for all~$\delta>0$.

\paragraph{Construction of a density}
We start by showing that there exists a $\Bo{\T}$-measurable~$\theta\colon X\to \R$ satisfying:
\begin{equation}\label{eq:IneqCdC:0.1}
\theta\in L^1(\mssm)\cap L^\infty(\mssm)\comma\qquad \norm{\theta}_{L^1(\mssm)}=1\comma\qquad \theta>0 \as{\mssm}\comma\qquad \sqrt\theta\in \dom\fstop
\end{equation}
Let~$\seq{K_i}_i$ and~$\eps$ be as in~\eqref{eq:p:IneqCdC:00}, and~$S\colon [0,\infty]\to [0,1]$ be defined by
\begin{equation}\label{eq:MultiTrunc}
S(t)\eqdef
\begin{cases}
-\frac{1}{2}t+1 & \text{if } 0\leq t\leq 1
\\
\frac{1}{2} & \text{if } 1\leq t \leq 2
\\
-\frac{1}{2}t+\frac{3}{2} & \text{if } 2\leq t\leq 3
\\
0 & \text{if } 3\leq t\leq +\infty
\end{cases} \fstop
\end{equation}

Further set (cf.\ Fig.s~\ref{fig:FunctionPsi} and~\ref{fig:Phi3} below)
\[
\psi_i\eqdef S\tparen{\tfrac{3}{\eps}\,\mssd(\emparg, K_i)}\comma \qquad f_i\eqdef \frac{\eps}{3\sqrt{\kappa_i}}\,\psi_i \comma \qquad \varphi_n\eqdef \sum_{i=0}^n 2^{-i-1} f_i \fstop
\]
All the above functions are $\Bo{\T}$-measurable (in fact: $\T$-upper semicontinuous) by Lemma~\ref{l:AGS}.

\begin{figure}[htb!]
\includegraphics{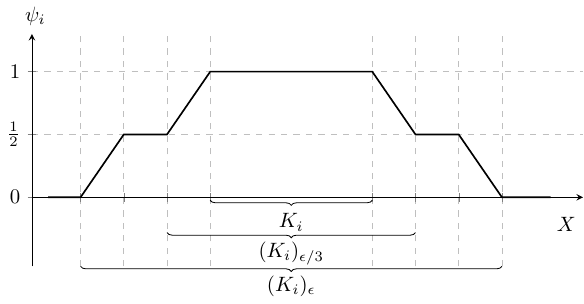}
\caption{The functions~$\psi_i$.}
\label{fig:FunctionPsi}
\end{figure}

\begin{figure}
\includegraphics{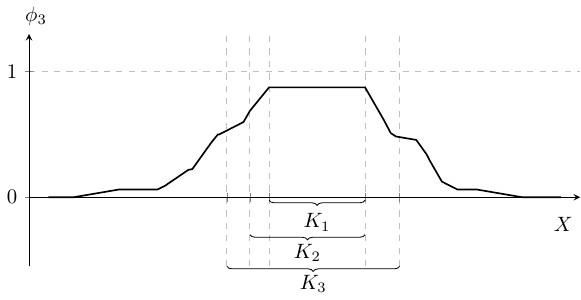}
\caption{The function~$\varphi_3$, taking value~$\tfrac{1}{2}+\tfrac{1}{4}+\tfrac{1}{8}$ on~$K_1$.}
\label{fig:Phi3}
\end{figure}

Since $\kappa_i\eqdef \mssm \set{\mssd(x,K_i)<\eps}<\infty$ we have that~$f_i\in L^2(\mssm)$ and~$\norm{f_i}_{L^2(\mssm)}\leq \tfrac{\eps}{3}\leq 1$ for every~$i$.
Thus, the sequence~$\seq{\varphi_n}_n$ satisfies
\begin{align*}
\norm{\varphi_n}_{L^2(\mssm)}\leq 1\comma \qquad  L^2(\mssm)\text{-}\nlim \varphi_n= \sum_i^\infty 2^{-i-1} f_i \defeq \varphi\in L^2(\mssm)
\end{align*}
by Monotone Convergence.
Furthermore,~$f_i\in \bLip(\mssd,\Bo{\T})$ and~$\Li[\mssd]{f_i}\leq (\kappa_i)^{-1/2}\leq 1$ for every~$i$ by definition, hence
\begin{align}\label{eq:IneqCdC:0.15}
\varphi_n\in \bLip(\mssd,\Bo{\T}) \qquad \text{and} \qquad \Li[\mssd]{\varphi_n}\leq 1
\end{align}
for every~$n$ by triangle inequality for the Lipschitz semi-norm~$\Li[\mssd]{\emparg}$.

As a consequence,~$\seq{\varphi_n}_n$ is uniformly bounded in~$\dom$ by~$(\Rad{\mssd}{\mssm})$ and thus~$\varphi\in \dom$ by e.g.~\cite[Lem.~I.2.12]{MaRoe92}.
Since
\begin{equation}\label{eq:l:IneqCdC:0.25}
\varphi\restr_{(K_i)_{\eps/3}}\geq 2^{-i-1}f_i\restr_{(K_i)_{\eps/3}}=\frac{2^{-i-2}\eps}{3\sqrt{\kappa_i}}>0
\end{equation}
and since~$\seq{K_i}_i$ exhausts~$X$ up to an $\mssm$-negligible set, then~$\varphi>0$ $\mssm$-a.e.\ on~$X$.
Letting $\theta\eqdef \varphi^2/\norm{\varphi}_{L^2(\mssm)}^2$ shows the assertion in~\eqref{eq:IneqCdC:0.1}.

\paragraph{Dirichlet forms}
Now, let us set~$\mssm'\eqdef \theta\mssm\sim \mssm$.
Since~$\sqrt\theta\in\domb$, by~\cite[Thm.~2.2]{CheSun06} the form
\begin{align*}
\mcE'(f)\eqdef \int_X \cdc(f)\diff\mssm'\comma \qquad f\in\dom \comma 
\end{align*}
is closable, and its closure~$\tparen{\mcE',\dom'}$ is a Dirichlet form on~$L^2(\mssm')$ with square field operator~$\cdc'$ satisfying
\begin{align}\label{eq:l:IneqCdC:0.5}
\cdc'(f)=\cdc(f) \as{\mssm'}\comma \qquad f\in \dom\cap L^\infty(\mssm')=\domb\comma
\end{align}
and the equality extends to all~$f\in\domb'$ by $\dom'$-density of~$\dom$ in~$\dom'$.
In particular, since~$\mssm'$ is a finite measure,~$\domb'\supset \bLip(\mssd,\Bo{\T})$ by~$(\Rad{\mssd}{\mssm})$, and~\eqref{eq:l:IneqCdC:0.5} holds for every~$f\in \bLip(\mssd,\Bo{\T})$.

\paragraph{Minimal relaxed slopes}
On the complete extended metric-topological measure space $(X,\T,\mssd_\mssm,\mssm)$ we have that
\begin{equation}\label{eq:l:IneqCdC:1}
\slo[*,\mssd_\mssm,\mssm]{f}=\slo[w,\mssd_\mssm,\mssm]{f}\comma \qquad f\in\bLip(\mssd_\mssm,\Bo{\T})\fstop
\end{equation}
This readily follows from the locality of both~$\slo[*]{\emparg}$ and~$\slo[w]{\emparg}$ in the sense of e.g.~\cite[Prop.~4.8(b)]{AmbGigSav14} and~\cite[Eqn.~(2.18)]{AmbGigSav14b}.

Secondly, let us verify that the space~$(X,\T,\mssd_\mssm,\mssm)$ satisfies the assumptions of Lemma~\ref{l:AGS2}.
The fact that~$(X,\T,\mssd_\mssm)$ is an extended metric-topological space was already noted in the beginning of the proof.
Thus, it suffices to show that~$\theta$ as above satisfies~\eqref{eq:l:AGS2:0} with~$\mssd_\mssm$ in place of~$\mssd$ there.
In fact, since~$\theta\in L^\infty(\mssm)$, it suffices to show the existence of~$c_i$ and~$r_i$.
We show that there exists~$c_i$ satisfying~\eqref{eq:l:AGS2:0} with~$r_i\eqdef \eps/3$.
Since~$\mssd\leq \mssd_\mssm$ by~$(\Rad{\mssd}{\mssm})$ and~\eqref{eq:EquivalenceRadStoL}, we have that~$\set{\mssd_\mssm(\emparg,K_i)<\eps/3}\subset (K_i)_{\eps/3}$ for every~$i\in\N$.
By~\eqref{eq:l:IneqCdC:0.25}, it suffices to set~$c_i\eqdef \paren{\frac{2^{-i-2} \eps}{3\sqrt{\kappa_i}\norm{\varphi}_{L^2(\mssm)}}}^2$.

Now, applying Lemma~\ref{l:AGS2} to the probability measure~$\mssm'$, we have that
\begin{align}\label{eq:l:IneqCdC:2}
\slo[*,\mssd_\mssm,\mssm]{f}=\slo[*,\mssd_\mssm,\mssm']{f}\comma \qquad f\in \bLip(\mssd_\mssm,\Bo{\T})\supset\bLip(\mssd,\Bo{\T})\fstop
\end{align}

\paragraph{Intrinsic distances}
We claim that~$\mssd_\mssm=\mssd_{\mssm'}$.
Let us verify the assumptions of Corollary~\ref{c:LocalityDistances} with~$\mcD\eqdef\dom$.
Indeed,~$\dom$ is a (pseudo-)core for both~$(\mcE,\dom)$ and~$(\mcE',\dom')$ and~$\cdc=\cdc'$ on~$\dom$ by~\eqref{eq:l:IneqCdC:0.5}.
This verifies assumption~\ref{i:c:LocalityDistances:4} in Corollary~\ref{c:LocalityDistances}.
In light of~\eqref{eq:l:AGS2:0} we see that~$\theta$ satisfies the assumptions in Corollary~\ref{c:LocalityDistances}\ref{i:c:LocalityDistances:1} with~$E_k\eqdef \intE K_k$,~$G_k\eqdef \intE (K_k)_{\eps/3}$, and~$a_k\eqdef c_k$.
Indeed~$E_\bullet,G_\bullet\in\msG_0$ since~$K_\bullet$ is an $\mcE$-nest, and~$E_\bullet,G_\bullet\in\msG_0'$ since~$\dom$ is $\dom'$-dense in~$\dom$ by construction of~$(\mcE',\dom')$.
Now, let~$\varrho_k\eqdef \braket{1-\tfrac{3}{\eps}\mssd(\emparg, K_k)}_+$ and note that~$\varrho_k\in \bLip(\mssd,\Bo{\T})$ by Lemma~\ref{l:AGS}.
Thus,~$\cdc(\varrho_k)\leq \Li[\mssd]{\varrho_k}^2= 9\eps^{-2}$ by~$(\Rad{\mssd}{\mssm})$ and in fact~$\cdc(\varrho_k)\leq 9\eps^{-2}\, \car_{G_k}$ by~\eqref{eq:SLoc:2}.
Since~$\theta\geq c_k>0$ on~$(K_k)_{\eps/3}\supset G_k$, we conclude that~$\cdc(\varrho_k)\in L^1(\mssm)$ too.
Thus, the assumptions in Corollary~\ref{c:LocalityDistances}\ref{i:c:LocalityDistances:2} hold as well. 
This concludes the verification of the assumptions in Corollary~\ref{c:LocalityDistances}, and thus~$\mssd_\mssm=\mssd_{\mssm'}$.

\paragraph{Conclusion}
Respectively by~\eqref{eq:l:IneqCdC:0.5},~\cite[Thm.~12.5]{AmbErbSav16},~\eqref{eq:l:IneqCdC:1} and~$\mssd_\mssm=\mssd_{\mssm'}$,~\eqref{eq:l:IneqCdC:2}, and again~\eqref{eq:l:IneqCdC:1},
\begin{align*}
\cdc(f)=\cdc'(f)\leq \slo[w,\mssd_{\mssm'},\mssm']{f}^2 = \slo[*,\mssd_\mssm,\mssm']{f}^2 = \slo[*,\mssd_\mssm,\mssm]{f}^2 = \slo[w,\mssd_\mssm,\mssm]{f}^2\comma \qquad f\in\bLip(\mssd,\Bo{\T})\comma
\end{align*}
which concludes the proof.
\end{proof}

\subsubsection{Form comparison under the Sobolev-to-Lipschitz property}
The chain of inequalities~\eqref{eq:p:IneqCdC:0} in Proposition~\ref{p:IneqCdC} is completed by including the slope of a Lipschitz function, viz.
\begin{equation*}
\cdc(f)\leq \wslo[\mssd_{\mssm}]{f}^2 \leq \wslo[\mssd]{f}^2\leq \slo[\mssd]{f}^2 \quad \as{\mssm}\comma \qquad f\in \bLip(\mssd,\Bo{\T})\comma
\end{equation*}
where the last inequality holds by the very definition of the minimal weak upper gradient.
In the same spirit of duality as in \S\ref{s:LocGlob}, this suggests that the opposite chain of inequalities, viz.
\begin{equation}\label{eq:SLChain1}
\cdc(f)\geq \wslo[\mssd_{\mssm}]{f}^2 \geq \wslo[\mssd]{f}^2\geq \slo[\mssd]{f}^2 \quad \as{\mssm}\comma \qquad f\in \DzLocB{\mssm,\T}\comma
\end{equation}
may be satisfied if some Sobolev-to-Lipschitz-type property holds.

Indeed, under the assumption of~$(\cSL{\T}{\mssm}{\mssd})$---hence, in light of~\eqref{eq:EquivalenceRadStoL}, under~$(\SL{\mssm}{\mssd})$ as well---the second inequality is a consequence of~$(\dSL{\mssd}{\mssm})$, given by~\eqref{eq:EquivalenceRadStoL}, together with the definition of the objects involved.
The last inequality is in fact an equality, since the opposite inequality always holds, as detailed above.
Thus,~\eqref{eq:SLChain1} in fact reads
\begin{equation}\label{eq:SLChain2}
\cdc(f)\geq \wslo[\mssd_{\mssm}]{f}^2 \geq \wslo[\mssd]{f}^2= \slo[\mssd]{f}^2 \quad \as{\mssm}\comma \qquad f\in \DzLocB{\mssm,\T}\fstop
\end{equation}

In order to discuss the validity of~\eqref{eq:SLChain2} we shall further need the following definition, introduced by L.~Ambrosio, N.~Gigli, and G.~Savar\'e in~\cite[Dfn.~3.13]{AmbGigSav15} (also cf.~\cite[Dfn.~12.4]{AmbErbSav16} for the generality of extended metric-topological measure spaces).

\begin{definition}[$\T$-upper regularity]\label{d:TUpperReg}
Let~$(\mbbX,\mcE)$ be a (quasi-regular strongly local) Dirichlet space with~$\mbbX$ satisfying~\ref{ass:Luzin} and admitting carr\'e du champ operator~$\cdc$.
We say that~$(\mcE,\dom)$ is \emph{$\T$-upper regular} if for every $f$ in a pseudo-core of~$(\mcE,\dom)$ there exists~$\seq{f_n}_n \subset \domb\cap \Cb(\T)$ and a sequence of bounded $\T$-upper semi-continuous functions~$g_n\colon X\to \R$ such that
\begin{equation}\label{eq:d:TUpperReg:0}
L^2(\mssm)\text{-}\nlim f_n =f\comma \qquad \sqrt{\cdc(f_n)}\leq g_n \as{\mssm}\comma \qquad \nlimsup \int_X g_n^2\diff\mssm \leq \mcE(f) \fstop
\end{equation}
\end{definition}

\begin{remark}
It is noted in the proof of~\cite[Thm.~3.14]{AmbGigSav15} (after the approximation results for the asymptotic Lipschitz constant in~\cite[\S8.3]{AmbGigSav12B}), resp.\ in~\cite[Thm.~9.2]{AmbErbSav16}, that the Cheeger energy~$\Ch[w,\mssd,\mssm]$ of a metric measure space, resp.\ of an extended metric-topological probability space, is always $\T$-upper regular.
As a consequence, in comparing a general quasi-regular Dirichlet form~$(\mcE,\dom)$ with a Cheeger energy on the same (extended metric-topological) space, it is in fact necessary to discuss the $\T$-upper regularity of~$(\mcE,\dom)$.
\end{remark}

Under the assumption of $\T$-upper regularity, we now show the dual statement to Proposition~\ref{p:IneqCdC}.
Again, we adapt to the $\sigma$-finite case the corresponding result in~\cite{AmbErbSav16} for extended metric-topological probability spaces.

\begin{proposition}\label{p:IneqCdC2}
Let~$(\mbbX,\mcE)$ be a quasi-regular strongly local Dirichlet space with~$\mbbX$ satisfying~\ref{ass:Luzin} and admitting carr\'e du champ operator, and $\mssd\colon X^\tym{2}\to [0,\infty]$ be an extended distance.
Further assume that
\begin{enumerate}[$(a)$]
\item\label{i:p:IneqCdC2:B} $\DzLocB{\mssm,\T}$ generates~$\T$;
\item\label{i:p:IneqCdC2:C} $(\mbbX,\mcE,\mssm)$ satisfies~$(\Loc{\mssm,\T})$;
\item\label{i:p:IneqCdC2:D} $(\mcE,\dom)$ is $\T$-upper regular.
\end{enumerate}
If $(\mbbX,\mcE)$ satisfies~$(\cSL{\T}{\mssd}{\mssm})$, then
\begin{equation}\label{eq:p:IneqCdC2:0}
\cdc(f)\geq \slo[w,\mssd_{\mssm}]{f}^2 \geq \slo[w,\mssd]{f}^2 \quad \as{\mssm}\comma \qquad f\in \DzLocB{\mssm,\T}\fstop
\end{equation}
In particular,~$\mcE\geq \Ch[w,\mssd_{\mssm},\mssm]\geq \Ch[w,\mssd,\mssm]$.
\end{proposition}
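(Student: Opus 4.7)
The second inequality~$\slo[w,\mssd_\mssm]{f}^2 \geq \slo[w,\mssd]{f}^2$ is essentially formal: by~\eqref{eq:EquivalenceRadStoL}, $(\cSL{\T}{\mssm}{\mssd})$ implies~$\mssd \geq \mssd_\mssm$, so every~$\mssd$-absolutely continuous curve is also~$\mssd_\mssm$-absolutely continuous with metric derivative~$|\dot\gamma|_{\mssd_\mssm} \leq |\dot\gamma|_\mssd$, and every~$\mssd$-test plan of bounded compression is automatically a~$\mssd_\mssm$-test plan. Consequently any~$\mssd_\mssm$-weak upper gradient is also a~$\mssd$-weak upper gradient, and the inequality between the minimal ones follows. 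The substance of the proof lies in~$\cdc(f) \geq \slo[w,\mssd_\mssm]{f}^2$ $\mssm$-a.e.\ for~$f \in \DzLocB{\mssm,\T}$, to which I turn next.

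My plan is to mirror the strategy of Proposition~\ref{p:IneqCdC} and reduce to the probability-measure case, where the analogous statement is~\cite[Thm.~12.5]{AmbErbSav16}. Using the latticial approximation to the identity~$\seq{\theta_n}_n \subset \DzB{\mssm,\T}$ afforded by~$(\Loc{\mssm,\T})$ in place of the Lipschitz cut-offs provided by the Rademacher property in Proposition~\ref{p:IneqCdC}, I would construct a positive probability density~$\theta \in L^1(\mssm)\cap L^\infty(\mssm)$ with~$\sqrt\theta \in \dom$, such that~$\theta,\theta^{-1}$ are uniformly bounded on~$\mssd_\mssm$-neighborhoods of the~$\T$-compact~$\mcE$-nest~$\seq{K_i}_i$ given by~\ref{i:p:IneqCdC:B}; the construction scheme of Figure~\ref{fig:FunctionPsi} applied to the~$\theta_n$ yields the desired~$\theta$. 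Setting~$\mssm' \eqdef \theta\mssm$, a probability measure equivalent to~$\mssm$, and forming the Girsanov-type Dirichlet form~$(\mcE',\dom')$ on~$L^2(\mssm')$ via~\cite[Thm.~2.2]{CheSun06}, one has~$\cdc=\cdc'$ on the common pseudo-core~$\dom$; Corollary~\ref{c:LocalityDistances} then yields~$\tparen{\cdc,\DzLocB{\mssm}} = \tparen{\cdc',\DzLocBprime{\mssm'}}$ and~$\mssd_\mssm = \mssd_{\mssm'}$, while Proposition~\ref{p:Locality}\ref{i:p:Locality:1} transfers~$(\cSL{\T}{\mssm}{\mssd})$ to~$(\cSL{\T}{\mssm'}{\mssd})$. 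Applying~\cite[Thm.~12.5]{AmbErbSav16} to~$(X,\T,\mssd,\mssm')$ with the form~$(\mcE',\dom')$---noting that~$\car \in \dom'$, so by Lemma~\ref{l:ModerateDomain} the broad-local and global formulations coincide---gives~$\cdc'(f) \geq \slo[w,\mssd_{\mssm'},\mssm']{f}^2$ $\mssm'$-a.e.\ for~$f \in \DzLocBprime{\mssm',\T}$. Finally, Lemma~\ref{l:AGS2} applied to the complete extended metric-topological space~$(X,\T,\mssd_\mssm)$ with measures~$\mssm,\mssm'$, together with the identification between minimal relaxed slopes and minimal weak upper gradients (Proposition~\ref{p:ConsistencyCheeger}, as already used in Proposition~\ref{p:IneqCdC}), yields~$\slo[w,\mssd_\mssm,\mssm]{f} = \slo[w,\mssd_\mssm,\mssm']{f} = \slo[w,\mssd_{\mssm'},\mssm']{f}$ $\mssm$-a.e., whence the claimed inequality by combining everything.

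The principal obstacle will be verifying that $\T$-upper regularity descends to~$(\mcE',\dom')$. Because~$\theta^{\pm 1}$ is bounded only locally on the exhausting sets~$\set{\mssd_\mssm(\emparg,K_i) < r_i}$, the control~$\nlimsup \int g_n^2 \diff\mssm' \leq \mcE'(f)$ cannot be deduced from its~$\mssm$-counterpart by a pointwise comparison of integrands; the approximating sequence~$\seq{f_n}_n \subset \domb \cap \Cb(\T)$ coming from the~$\T$-upper regularity of~$(\mcE,\dom)$ for an element of~$\dom'$ must first be localized by a~$\T$-continuous Sobolev cut-off to the region where~$\theta$ is bounded, and then controlled by a~$\T$-upper-semicontinuous~$g_n$. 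Assumption~\ref{i:p:IneqCdC2:B}, that~$\DzLocB{\mssm,\T}$ generate~$\T$, is expected to enter precisely at this stage, to guarantee a sufficiently rich supply of~$\T$-continuous Sobolev cut-offs with prescribed support compatible with the~$\mssd_\mssm$-neighborhoods of the nest, without disturbing the~$\T$-upper-semicontinuity of~$g_n$.
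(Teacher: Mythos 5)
Your plan reproduces the paper's strategy faithfully: build a probability density~$\theta$ from the latticial approximation~$\seq{\theta_n}_n$ furnished by~$(\Loc{\mssm,\T})$ (replacing the Lipschitz cut-offs used in Proposition~\ref{p:IneqCdC}), pass to the Girsanov form~$(\mcE',\dom')$ on~$L^2(\theta\mssm)$ via~\cite[Thm.~2.2]{CheSun06}, verify the hypotheses of Corollary~\ref{c:LocalityDistances} to get~$\cdc=\cdc'$ and~$\mssd_\mssm=\mssd_{\mssm'}$, apply~\cite[Thm.~12.5]{AmbErbSav16} in the probability setting, and transfer back to~$\mssm$ via Lemma~\ref{l:AGS2} and the~$\Ch[*]$/$\Ch[w]$ identification of Proposition~\ref{p:ConsistencyCheeger}. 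Your argument for~$\wslo[\mssd_\mssm]{f}\geq\wslo[\mssd]{f}$ via~$(\dSL{\mssd}{\mssm})$ and inclusion of test plans is also the right unpacking of what the paper calls ``the definition of the objects involved''.

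Where you go astray is in your assessment of the ``principal obstacle''. The density~$\theta=\varphi^2/\norm{\varphi}^2_{L^2(\mssm)}$ is \emph{globally} bounded above: since~$\psi_i\leq 1$ and~$\mssm G_i\geq 1$, the sum~$\varphi=\sum_i 2^{-i-1}(\mssm G_i)^{-1/2}\psi_i$ is uniformly bounded, so~$\theta\in L^\infty(\mssm)$. It is only~$\theta^{-1}$ that is merely locally bounded (on the exhaustion). The transfer of~$\T$-upper regularity from~$(\mcE,\dom)$ to~$(\mcE',\dom')$ needs exactly the global upper bound on~$\theta$, together with a standard Dirichlet-form bootstrap: from~$L^2(\mssm)\text{-}\lim f_n=f$,~$g_n\geq\sqrt{\cdc(f_n)}$, and~$\nlimsup\int g_n^2\diff\mssm\leq\mcE(f)$, lower semicontinuity of~$\mcE$ forces~$\int g_n^2\diff\mssm\to\mcE(f)$ and~$\mcE(f_n)\to\mcE(f)$, hence~$f_n\to f$ in~$\dom$-norm and~$g_n^2-\cdc(f_n)\to 0$ in~$L^1(\mssm)$; multiplying by~$\theta\in L^\infty(\mssm)$ then gives~$\nlimsup\int g_n^2\diff\mssm'\leq\mcE'(f)$ with the \emph{same}~$f_n,g_n$. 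No localization by Sobolev cut-offs is needed at this stage, and assumption~\ref{i:p:IneqCdC2:B} does not enter here: its role is to ensure, via Remark~\ref{r:ComparisonSL}\ref{i:r:ComparisonSL:1}, that~$(X,\T,\mssd_\mssm)$ is an extended metric-topological space --- a precondition for invoking Lemma~\ref{l:AGS2} and the Cheeger-energy machinery at all. A further small slip: the $\T$-compact~$\mcE$-nest you invoke from assumption~\emph{(b)} of Proposition~\ref{p:IneqCdC} is not a hypothesis of the present proposition; here the compact nest is manufactured from quasi-regularity of~$(\mcE,\dom)$ intersected with the sets~$E_i\eqdef\inter_\T\set{\psi_i\equiv1}$ arising from~$(\Loc{\mssm,\T})$.
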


\begin{remark}[Comparison with Proposition~\ref{p:IneqCdC}]\label{r:ComparisonSL}
As anticipated above, Proposition~\ref{p:IneqCdC2} ought to be understood as `dual' to Proposition~\ref{p:IneqCdC}.
In this respect:
\begin{enumerate}[$(a)$, wide]
\item\label{i:r:ComparisonSL:1} 
The assumption in Proposition~\ref{p:IneqCdC2}\ref{i:p:IneqCdC2:B} is one about the largeness of~$\DzLocB{\mssm,\T}$.
It is required to guarantee the validity of~\cite[Eqn.~(12.1b)]{AmbErbSav16} for the ---in our case: \emph{given}--- topology~$\T$.
In Proposition~\ref{p:IneqCdC} such largeness is granted directly by the Rademacher property.
Furthermore, let us note that this assumption is to be compared with the one in Proposition~\ref{p:IneqCdC}\ref{i:p:IneqCdC:A}.
Indeed, recall that~$\mssd_\mssm$ is always $\T^\tym{2}$-lower semicontinuous and $\T$-admissible, as noted after Definition~\ref{d:IntrinsicD}.
Now, since~$\DzLocB{\mssm,\T}$ generates~$\T$, it in particular separates points (since~$(X,\T)$ is Hausdorff), hence $\mssd_\mssm$ separates points as well.
Thus,~$(X,\T,\mssd_\mssm)$ is an extended metric-topological space (Dfn.~\ref{d:AES}), which translates into the assumption in Proposition~\ref{p:IneqCdC}\ref{i:p:IneqCdC:A} when~$\mssd$ is replace by~$\mssd_\mssm$.

\item\label{i:r:ComparisonSL:b} When~$\mbbX$ is additionally locally compact, the assumption in Proposition~\ref{p:IneqCdC2}\ref{i:p:IneqCdC2:B} may be directly replaced with
\begin{itemize}[$(a')$, leftmargin=5em]
\item $\mssd_\mssm$ separates points, i.e.\ it is an extended distance (as opposed to: extended \emph{pseudo}-distance).
\end{itemize}

\item As already commented in Remark~\ref{r:IneqCdC}, the assumption in Proposition~\ref{p:IneqCdC2}\ref{i:p:IneqCdC2:C} is dual to the one in Proposition~\ref{p:IneqCdC}\ref{i:p:IneqCdC:B}.

\item As for $\T$-upper regularity, we refer the reader to~\cite{AmbGigSav15,AmbErbSav16} for a detailed explanation of this condition.
As pointed out in~\cite{AmbGigSav15,AmbErbSav16}, the assumption of $\T$-upper regularity is \emph{necessary} to the validity of (the first inequality in~\eqref{eq:p:IneqCdC2:0} in) Proposition~\ref{p:IneqCdC2}.
\end{enumerate}
\end{remark}

\begin{proof}[Proof of Remark~\ref{r:ComparisonSL}\ref{i:r:ComparisonSL:b}] 
Since generating~$\T$ is a local property, it suffices to show the statement when~$(X,\T)$ is compact.
Since~$\mssd_\mssm$ separates points, so does~$\DzLocB{\mssm,\T}$.
Fix a $\T$-closed set $K\subset X$, and note that it is $\T$-compact.
For fixed~$x\in K^\complement$ and each~$y\in K$ let~$f_y\in \DzLocB{\mssm,\T}$ be separating~$x$ from~$y$.
Without loss of generality ---up to possibly changing the sign of~$f$ and adding to it a constant function in~$\DzLocB{\mssm,\T}$---, we may assume that~$f(x)=0$ and~$f(y)=a_y>0$.
(We may however \emph{not} directly assume that~$f(y)=1$, since~$\DzLocB{\mssm,\T}$ is not a linear space.)
Since each~$f_y$ is $\T$-continuous, the family~$\set{f_y> a_y/2}_{y\in K}$ is a $\T$-open cover of~$K$.
Let~$\seq{y_i}_{i\leq n}$ be defining a finite subcover and set~$f_{x,K}\eqdef \wedge_{i\leq n} f_{y_i}$.
Then,~$f_{x,K}\in \DzLocB{\mssm,\T}$,~$f_{x,K}(x)=0$, and~$f_{x,K}>0$ everywhere on~$K$; that is,~$f_{x,K}$ separates~$x$ from~$K$.
Since~$K$ was arbitrary,~$\DzLocB{\mssm,\T}$ separates points from $\T$-closed sets.
By standard arguments, any family of $\T$-continuous functions separating points from $\T$-closed sets generates~$\T$.
\end{proof}

\begin{proof}[Proof of Proposition~\ref{p:IneqCdC2}]
Since~$(\cSL{\T}{\mssm}{\mssd})$ implies~$(\dSL{\mssd}{\mssm})$ by~\eqref{eq:EquivalenceRadStoL}, the second inequality in~\eqref{eq:p:IneqCdC2:0} holds by definition of the objects involved.
In order to show the first inequality, we argue similarly to the proof of Proposition~\ref{p:IneqCdC}.

\paragraph{Construction of a density}
Let~$\seq{\theta_i}_i$ be a latticial approximation of the identity witnessing~$(\Loc{\mssm,\T})$. 
Without loss of generality, we may and will assume that~$\set{\theta_1\geq 3}\neq \emp$, hence that~$\set{\theta_i\geq 3}\neq \emp$ for every~$i\in\N$.
Further let~$S\colon [0,\infty]\to [0,1]$ be defined as in~\eqref{eq:MultiTrunc}, and set
\begin{equation*}
\psi_i\eqdef S\tparen{3-(\theta_i(\emparg)\wedge 3)}\comma \qquad i\in \N\fstop
\end{equation*}
Note that~$\psi_i$ has the same shape as in Figure~\ref{fig:FunctionPsi}, and, for each~$i\in\N$,
\begin{enumerate}[$(a)$]
\item\label{i:p:IneqCdC2:proof1} since~$\car \in \DzLocB{\mssm,\T}$ and~$\cdc(\car)\equiv 0$ by locality, and since~$\theta_i\in\DzB{\mssm,\T}$, we have~$\psi_i\in \DzB{\mssm,\T}\subset \dom$ by~\eqref{eq:ChainRuleLoc} and~\eqref{eq:SLoc:2}.
\item\label{i:p:IneqCdC2:proof2} since~$\theta_i\nearrow_i \infty$, the sets~$E_i\eqdef \inter_\T\set{\psi_i\equiv 1}$ form a $\T$-open exhaustion of~$X$;
\item\label{i:p:IneqCdC2:proof3} since~$\theta_i\nearrow_i \infty$ and~$\theta_i\in\Cb(\T)$ (hence~$\psi_i\in\Cb(\T)$), the sets~$G_i\eqdef \set{\psi_i>\tfrac{1}{2}}$ form a $\T$-open exhaustion of~$X$;
\item\label{i:p:IneqCdC2:proof4} $\cl_\T E_i\subset G_i$;
\item\label{i:p:IneqCdC2:proof5} since~$\psi_i\in L^2(\mssm)$, we have~$\mssm G_i<\infty$.
\end{enumerate}

Since~$\seq{G_i}_i$ is an exhaustion of~$X$, we may and will assume with no loss of generality ---up to dropping some of the first elements of all sequences above--- that~$\mssm G_1\geq 1$, in such a way~$(\mssm G_i)^{-1/2}\leq 1$ for every~$i\in\N$.
Now, set
\[
f_i\eqdef \frac{1}{\sqrt{\mssm G_i}}\psi_i \comma \qquad \varphi_n\eqdef \sum_i^n 2^{-i-1} f_i \comma
\]
and note that~$f_i\in \DzLocB{\mssm,\T}$, hence~$f_i\in \Lip(\mssd,\T)$ and~$\Li[\mssd]{f_i}\leq \Li[\mssd]{\psi_i}\leq 1$ by~$(\cSL{\T}{\mssm}{\mssd})$ and by~\ref{i:p:IneqCdC2:proof1} above, hence~\eqref{eq:IneqCdC:0.15} holds.
Furthermore, similarly to the proof of Proposition~\ref{p:IneqCdC}, we have~$\norm{\varphi_n}_{L^2(\mssm)}\leq 1$ for every~$n$.
Thus, similarly to the proof of Proposition~\ref{p:IneqCdC}, there exists the monotone limit~$\varphi\in \dom$.
Since~$\psi_i\nearrow_i 1$, we additionally have~$\varphi>0$ $\mssm$-a.e..
Letting~$\theta\eqdef \varphi^2/\norm{\varphi}_{L^2(\mssm)}^2$ shows the assertion in~\eqref{eq:IneqCdC:0.1}.

\paragraph{Dirichlet forms}
Arguing exactly as in the proof of Proposition~\ref{p:IneqCdC} we conclude~\eqref{eq:l:IneqCdC:0.5}.
Since~$\mssm'$ is a finite measure,~$\domb'\supset \DzLocB{\mssm,\T}$, and thus~\eqref{eq:l:IneqCdC:0.5} holds for every~$f\in \DzLocB{\mssm,\T}$.

\paragraph{Intrinsic distances}
We claim that~$\mssd_\mssm=\mssd_{\mssm'}$.
Let us verify the assumptions in Corollary~\ref{c:LocalityDistances} with~$\mcD\eqdef\dom$. 
Indeed,~$\dom$ is a (pseudo-)core for both~$(\mcE,\dom)$ and~$(\mcE',\dom')$ and~$\cdc=\cdc'$ on~$\dom$ by~\eqref{eq:l:IneqCdC:0.5}.
This verifies assumption~\ref{i:c:LocalityDistances:4} in Corollary~\ref{c:LocalityDistances}.
The sequences~$E_\bullet$ and~$G_\bullet$ are $\T$-open exhaustions of~$X$, and thus satisfy~$E_\bullet, G_\bullet\in\msG_0\cap\msG_0'$ (see~\ref{i:p:IneqCdC2:proof2},~\ref{i:p:IneqCdC2:proof3} above).
Furthermore~$a_i\eqdef\tparen{\frac{2^{-i-2}}{\sqrt{\mssm G_i} \norm{\varphi}_{L^2(\mssm)}}}^2\leq \theta$ everywhere on~$E_i$.
This verifies assumption~\ref{i:c:LocalityDistances:1} in Corollary~\ref{c:LocalityDistances}.

Finally, let~$\varrho_k\eqdef \tbraket{\theta_i\wedge 3 -2}_+\subset \dom$ and note that~$\varrho_k\in \DzB{\mssm,\T}$ by~\eqref{eq:TruncationLoc} and~\eqref{eq:SLoc:2}, and that~$\car_{E_i}\leq \varrho_i \leq \car_{G_i}$ since~$G_i\eqdef\set{\psi_i\geq \tfrac{1}{2}}=\set{\theta_i\geq 1}$.
That is, assumption~\ref{i:c:LocalityDistances:2} in Corollary~\ref{c:LocalityDistances} is satisfied.
This concludes the verification of the assumptions in Corollary~\ref{c:LocalityDistances}, and thus~$\mssd_\mssm=\mssd_{\mssm'}$.

\paragraph{Minimal relaxed slopes}
In light of Remark~\ref{r:ComparisonSL}\ref{i:r:ComparisonSL:1}, the space~$(X,\T,\mssd_\mssm)$ is an extended metric-topological space.
As in the proof of Proposition~\ref{p:IneqCdC}, let us verify that the space~$(X,\T,\mssd_\mssm,\mssm)$ satisfies the assumptions of Lemma~\ref{l:AGS2}.
By quasi-regularity of~$(\mcE,\dom)$ there exists a $\T$-compact $\mcE$-nest~$\seq{K'_i}_i$.
For each~$i\in\N$, the set~$K_i\eqdef \cl_\T(K_i'\cap E_i)$ is $\T$-compact, being the closure of a relatively $\T$-compact set, satisfies~$K_i\subset G_i$ by~\ref{i:p:IneqCdC2:proof4} above, and is thus a $\T$-compact $\mcE$-nest since~$\seq{E_i}_i$ and~$\seq{K_i}_i$ are $\mcE$-nests.
Thus, since~$\theta\in L^\infty(\mssm)$, it suffices to verify that there exists~$\eps>0$ and, for every~$i\in\N$, there exists~$c_i>0$, so that~$\theta>c_i$ on~$(K_i)_\eps\eqdef \set{\mssd_\mssm(\emparg, K_i)<\eps}$.

Since~$\psi_i\in \DzB{\T,\mssm}$ by~\ref{i:p:IneqCdC2:proof1} above,
\[
\mssd_\mssm(\emparg, K_i)\geq \inf_{x\in K_i} \tparen{\psi_i(x)-\psi_i(\emparg)}=1-\psi_i(\emparg)\comma
\]
hence
\[
(K_i)_\eps\subset\set{\psi_i\geq 1-\eps}=\set{f_i\geq \frac{1-\eps}{\sqrt{\mssm G_i}}}=\set{\frac{2^{-i-1}f_i}{\norm{\varphi}_{L^2(\mssm)}^2}\geq \frac{2^{-i-1}(1-\eps)}{\sqrt{\mssm G_i}\norm{\varphi}^2_{L^2(\mssm)}}}\comma
\]
and choosing~$\eps\eqdef 1/2$ and~$c_i\eqdef a_i$ as above we conclude that~$(K_i)_{1/2}\subset \set{\theta\geq c_i}$.
Thus,~\eqref{eq:l:AGS2:0} holds and the assumptions of Lemma~\ref{l:AGS2} are satisfied.
Now, applying (the second assertion in) Lemma~\ref{l:AGS2} to the probability measure~$\mssm'$, we have that
\begin{align}\label{eq:l:IneqCdC2:2}
\slo[*,\mssd_\mssm,\mssm]{f}=\slo[*,\mssd_\mssm,\mssm']{f}\comma \qquad f\in\domain{\Ch[*,\mssd_\mssm,\mssm]}
\fstop
\end{align}

\paragraph{Conclusion}
When~$\mssm$ is a probability measure, the sought inequality~$\cdc(f)\geq\wslo[\mssd_\mssm,\mssm]{f}^2$ is shown in~\cite{AmbErbSav16} under the assumption of $\T$-upper regularity.
Thus, it suffices to verify that $\T$-upper regularity too is transferred from~$(\mcE,\dom)$ to~$(\mcE',\dom')$.
Let~$\mcD$ be a pseudo-core for~$(\mcE,\dom)$ witnessing its $\T$-upper regularity, and note that~$\mcD$ is also a pseudo-core of~$(\mcE',\dom')$ by definition of the latter.
Now let~$f$, $\seq{f_n}_n$, and~$\seq{g_n}_n$ be as in~\eqref{eq:d:TUpperReg:0}.
Since~$\cdc'\equiv\cdc$ by~\eqref{eq:l:IneqCdC:0.5}, it is not difficult to show that~\eqref{eq:d:TUpperReg:0} holds as well with~$\cdc'$ in place of~$\cdc$ and~$\mssm'$ in place of~$\mssm$.

Respectively by definition of~$\DzB{\mssm,\T}$, construction of~$\dom'$, and by~\cite[Thm.~12.5]{AmbErbSav16}, we have $\DzB{\mssm,\T}\subset \dom\subset \dom'\subset\domain{\Ch[*,\mssd_\mssm,\mssm']}$.
Respectively by~\eqref{eq:l:IneqCdC:0.5},~\cite[Thm.~12.5]{AmbErbSav16},~\eqref{eq:l:IneqCdC:1} and~$\mssd_\mssm=\mssd_{\mssm'}$, we see that
\[
\cdc(f)=\cdc'(f)\geq\wslo[\mssd_{\mssm'},\mssm']{f}^2=\slo[*,\mssd_\mssm,\mssm']{f}^2\comma \qquad f\in\DzB{\mssm,\T}\fstop
\]
Applying Lemma~\ref{l:AGS2} while exchanging the roles of~$\mssm$ and~$\mssm'$ with~$\theta^{-1}$ in place of~$\theta$ we conclude from the above inequality that, in fact,~$\DzB{\mssm,\T}\subset \domain{\Ch[*,\mssd_\mssm,\mssm]}$.
Thus, continuing the above chain of inequalities, we conclude by~\eqref{eq:l:IneqCdC2:2}, and again by~\eqref{eq:l:IneqCdC:1}, that
\[
\cdc(f)\geq \slo[*,\mssd_\mssm,\mssm']{f}^2=\slo[*,\mssd_\mssm,\mssm]{f}^2=\wslo[\mssd_\mssm,\mssm]{f}^2\comma \qquad f\in\DzB{\mssm,\T}\fstop
\]
This last chain of inequalities extends to~$\DzLocB{\mssm,\T}$ by locality of all the objects involved, and the conclusion follows.
\end{proof}

Combining Propositions~\ref{p:IneqCdC} and~\ref{p:IneqCdC2} we further obtain the following identification of~$\mcE$ with~$\Ch[w,\mssd,\mssm]$ (or, equivalently, with~$\Ch[*,\mssd,\mssm]$).

\begin{corollary}\label{c:RadStoLCheegerComparison}
Let~$(\mbbX,\mcE)$ be a quasi-regular strongly local Dirichlet space with~$\mbbX$ satisfying~\ref{ass:Luzin} and admitting carr\'e du champ operator~$\cdc$, and~$\mssd\colon X^\tym{2}\to[0,\infty]$ be an extended distance.
Further assume that~$(\Rad{\mssd}{\mssm})$ and~$(\cSL{\T}{\mssm}{\mssd})$ hold.
Then,~$\mcE\leq \Ch[w,\mssd,\mssm]$, and the equality holds if and only if~$(\mbbX,\mcE)$ is additionally $\T$-upper regular.
\end{corollary}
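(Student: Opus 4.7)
The plan is to assemble the corollary by combining Propositions~\ref{p:IneqCdC} and~\ref{p:IneqCdC2}, having first checked that the corollary's hypotheses subsume the auxiliary conditions of each. The inequality $\mcE \leq \Ch[w,\mssd,\mssm]$ would follow directly from Proposition~\ref{p:IneqCdC} upon verifying $\mssd$-moderance of~$\mssm$: quasi-regularity supplies a $\T$-compact $\mcE$-nest $\seq{K_i}_i$, and I would invoke $(\Rad{\mssd}{\mssm})$ together with $\T$-admissibility of~$\mssd$ to trim/thicken each $K_i$ into a set of finite $\mssm$-measure---for instance, via $f_i \eqdef \mssd(\emparg, K_i) \wedge \eps$, which Rademacher places in $\DzLocB{\mssm}$, yielding integrable bounds on $\mssd$-neighbourhoods of $K_i$.

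For the "if" direction of the equivalence I would apply Proposition~\ref{p:IneqCdC2} under the additional hypothesis of $\T$-upper regularity. Two ancillary conditions must be verified. First, that $\DzLocB{\mssm,\T}$ generates~$\T$: since $(X,\T,\mssd)$ is an extended metric-topological space, the bounded $\T$-continuous $\mssd$-Lipschitz functions---in particular the maps $\mssd'(\emparg, x_0)$ for $\mssd'$ in a uniformity witnessing $\T$-admissibility of~$\mssd$---generate~$\T$ by Definition~\ref{d:AES}, and they lie in $\DzLocB{\mssm,\T}$ by $(\Rad{\mssd}{\mssm})$. Second, to verify $(\Loc{\mssm,\T})$, I would construct a latticial approximation of the identity by truncated distance functions of the form $\theta_n \eqdef n \wedge \tparen{n\,\mssd(\emparg, K_n^\complement)}$ for a $\T$-compact $\mcE$-nest~$\seq{K_n}_n$; Rademacher yields $\theta_n \in \DzB{\mssm,\T}$ with $\cdc(\theta_n) \leq n^2\, \car_{K_n}$ $\mssm$-a.e., and moderance of~$\mssm$ secures $\theta_n \in L^2(\mssm)$.

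The "only if" direction is immediate: as noted in the paper after Definition~\ref{d:TUpperReg}, the Cheeger energy $\Ch[w,\mssd,\mssm]$ is always $\T$-upper regular, so the identification $\mcE = \Ch[w,\mssd,\mssm]$ automatically transfers this property to $(\mcE,\dom)$.

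The main obstacle will be the verification of $(\Loc{\mssm,\T})$ in the $\sigma$-finite setting---producing cutoff functions whose energy is dominated \emph{uniformly} (not merely integrably) by~$\mssm$---which rests on a delicate interplay between Rademacher and the $\T$-compact $\mcE$-nest with good thickening behaviour. Bookkeeping the ancillary hypotheses of Propositions~\ref{p:IneqCdC} and~\ref{p:IneqCdC2} against the corollary's sparse assumption list constitutes the principal labour.
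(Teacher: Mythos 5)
Your high-level plan---assemble the corollary by applying Propositions~\ref{p:IneqCdC} and~\ref{p:IneqCdC2}, with the ``only if'' direction handled by noting that the Cheeger energy is always $\T$-upper regular---is exactly the paper's approach, and the ``only if'' direction is handled correctly. However, two of your proposed verifications contain concrete errors.

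First, the $\mssd$-moderance argument does not close: membership of~$f_i \eqdef \mssd(\emparg, K_i) \wedge \eps$ in~$\DzLocB{\mssm}$ controls the energy measure of~$f_i$ by~$\mssm$, but says nothing about the $\mssm$-integrability of~$f_i$ itself, so it cannot yield~$\mssm\set{\mssd(\emparg,K_i)<\eps} < \infty$. The spaces~$\DzLocB{\mssm}$ and~$L^1(\mssm)$ are not comparable in the $\sigma$-finite setting, and this is precisely why $\mssd$-moderance is listed as a \emph{separate} assumption in Proposition~\ref{p:IneqCdC} rather than deduced from~$(\Rad{\mssd}{\mssm})$.

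Second, and more seriously, your proposed latticial approximation~$\theta_n \eqdef n \wedge \tparen{n\,\mssd(\emparg, K_n^\complement)}$ is $n$-Lipschitz, so Rademacher gives~$\cdc(\theta_n) \leq n^2$, which exceeds~$1$ for~$n\geq 2$ and hence \emph{fails} the defining requirement~$\sq{\theta_n}\leq\mssm$ of~$\DzB{\mssm,\T}$. Your own stated estimate~$\cdc(\theta_n)\leq n^2\car_{K_n}$ already contradicts the claimed membership~$\theta_n\in\DzB{\mssm,\T}$. (There is additionally a continuity concern: $\mssd(\emparg, K_n^\complement)$ is $\mssd$-Lipschitz but need not be $\T$-continuous when~$\mssd$ is merely extended and~$\T_\mssd$ is strictly finer than~$\T$.) What is needed is a $1$-Lipschitz approximation of the identity, whose existence is a genuine hypothesis---the $\mssm$-uniform latticial $\T$-localizability of Definition~\ref{d:Localizability}---and not something that can be manufactured from $(\Rad{\mssd}{\mssm})$ alone. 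You do flag this obstacle, but the construction you propose to overcome it does not work; the paper's one-line proof presupposes these auxiliary conditions of Propositions~\ref{p:IneqCdC} and~\ref{p:IneqCdC2} rather than deriving them.
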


\begin{remark}[Comparison with~\cite{AmbGigSav15,AmbErbSav16}]\label{r:ComparisonAGS-AES}
Corollary~\ref{c:RadStoLCheegerComparison} ought to be compared with~\cite[Thm.~3.14]{AmbGigSav15} and~\cite[Thm.~12.5]{AmbErbSav16}.
Let us first note that the two approaches have different premises: in~\cite{AmbGigSav15, AmbErbSav16} the authors compare the form~$(\mcE,\dom)$ with the Cheeger energy~$\Ch[w,\mssd_\mssm,\mssm]$ constructed from (the intrinsic distance~$\mssd_\mssm$ of)~$(\mcE,\dom)$.
We rather compare~$(\mcE,\dom)$ and~$\Ch[w,\mssd,\mssm]$ for some \emph{assigned}~$\mssd$, \emph{a priori} unrelated to~$\mssd_\mssm$.

Apart from the point of view, a main difference among the results in~\cite{AmbGigSav15}, those in~\cite{AmbErbSav16}, and ours lies in the generality of the respective assumptions, as we now detail.

\paragraph{Comparison with~\cite{AmbGigSav15}}
Under the standing assumption in~\cite{AmbGigSav15}:
\begin{enumerate}[$({a}_1)$]
\item\label{i:r:ComparisonAGS-AES:1} $\mssd_\mssm$ is a finite distance metrizing~$\T$ and $(X,\mssd_\mssm)$ is separable and complete;
\item\label{i:r:ComparisonAGS-AES:2} $(\mbbX,\mcE,\mssm)$ satisfies (a slightly stronger version of)~$(\Loc{\mssm,\T})$ (see~\cite[Eqn.~(3.28)]{AmbGigSav15});
\end{enumerate}
it is in fact possible to \emph{prove}~$(\Rad{\mssd_\mssm}{\mssm})$ ---see~\cite[Thm.~3.9]{AmbGigSav15} or (in a more general setting than in~\cite{AmbGigSav15})~\cite[Cor.~3.15]{LzDSSuz20}---, while~$(\cSL{\T}{\mssm}{\mssd_\mssm})$ holds by definition of~$\mssd_\mssm$ (cf.\ the proof of~\cite[Thm.~3.9]{AmbGigSav15}).
Furthermore, thanks to the Rademacher property~$(\Rad{\mssd_\mssm}{\mssm})$ and to~\ref{i:r:ComparisonAGS-AES:1} above, the form~$(\mcE,\dom)$ is quasi-regular by~\cite[Prop.~3.21]{LzDSSuz20}.
Thus, our assumptions are weaker (and our result stronger) than those in~\cite{AmbGigSav15}.

\paragraph{Comparison with~\cite{AmbErbSav16}}
It is (part of) the assumptions in~\cite{AmbErbSav16} that
\begin{enumerate}[$({b}_1)$]
\item $(X,\T,\mssd)$ is a completely regular Hausdorff \emph{extended} metric-topological space;
\item $\mssm$ is a probability measure and ---roughly--- $\DzLocB{\mssm,\T}$ generates~$\T$.
\end{enumerate}

Again, these assumptions trivially imply~$(\cSL{\T}{\mssm}{\mssd_\mssm})$. 
They are however skew to ours in the level of generality of the spaces involved.
Indeed, whereas we always assume that~$(X,\T)$ satisfies~\ref{ass:Luzin} (which is less restrictive than~\cite{AmbGigSav15} but more restrictive than~\cite{AmbErbSav16}), we are able to address the case of \emph{extended} distances (as in~\cite{AmbErbSav16}) on $\sigma$-finite spaces (as in~\cite{AmbGigSav15}, and as opposed to~\cite{AmbErbSav16}, addressing only probability spaces).
\end{remark}

\begin{proof}[Proof of Corollary~\ref{c:RadStoLCheegerComparison}]
It suffices to verify the assumptions in Propositions~\ref{p:IneqCdC} and~\ref{p:IneqCdC2}.
\end{proof}

Finally, let us now comment on the last equality in~\eqref{eq:SLChain2}, viz.
\begin{equation}\label{eq:SLChain3}
\wslo[\mssd]{f}^2= \slo[\mssd]{f}^2 \quad \as{\mssm}\comma \qquad f\in \DzLocB{\mssm,\T}\fstop
\end{equation}

\begin{remark}[On the validity of~\eqref{eq:SLChain3}]\label{r:Validity1}
When~$\mssd$ is a distance (as opposed to: an extended distance), the equality~\eqref{eq:SLChain3} is known to hold for all $\mssd$-Lipschitz functions under some assumptions on the metric measure space~$(X,\mssd,\mssm)$.
In particular, it is satisfied whenever $(X,\mssd,\mssm)$ is a (measure) doubling metric measure space (see Dfn.~\ref{d:MMSp} below) supporting a weak $(1,2)$-Poincar\'e inequality (see Dfn.~\ref{d:DP} below).
\end{remark}

\section{Tensorization}
In this section, we will mostly confine ourselves to \emph{metric measure spaces} in the sense of the following definition.
\begin{definition}[Metric measure space]\label{d:MMSp}
A triple~$(X,\mssd,\mssm)$ is called a \emph{metric measure space} if~$(X,\mssd)$ is a complete and separable metric space, and~$\mssm$ is a Borel measure on~$X$ finite on $\mssd$-bounded sets.
\end{definition}

For the purpose of comparison with known results in the literature, we recall the definition of infinitesimal Hilbertianity, introduced by N.~Gigli in~\cite{Gig13}.
\begin{definition}[Infinitesimal Hilbertianity]\label{d:IH}
A metric measure space~$(X,\mssd,\mssm)$ is \emph{infinitesimally Hilbertian} if the Cheeger energy~$\Ch[*,\mssd,\mssm]$ satisfies the parallelogram identity on~$L^2(\mssm)$, viz.
\begin{align}\tag{$\IH{\mssd}{\mssm}$}
2\tparen{\Ch[*,\mssd,\mssm](f)+\Ch[*,\mssd,\mssm](g)}= \Ch[*,\mssd,\mssm](f+g) + \Ch[*,\mssd,\mssm](f-g) \fstop
\end{align}
\end{definition}

In the case when~$(X,\mssd,\mssm)$ is infinitesimal Hilbertian, the Cheeger energy~$\Ch[*,\mssd,\mssm]$ is a quadratic functional.
The non-relabelled bilinear form induced on~$L^2(\mssm)$ by polarization is in fact a local Dirichlet form.

\subsection{Product structures}\label{sss:Products}
Let~$(\mbbX,\mcE)$ and~$(\mbbX',\mcE')$ be Dirichlet spaces.
Further let
\[
\mbbX^\otym{}=(X^\otym{},\tau^\otym{},\A^\otym{},\mssm^\otym{})\eqdef (X\times X', \T\times\T',\A\hotimes \A',\mssm\hotimes \mssm')
\]
be the corresponding product space.
For a function~$\rep f^\otym{}\colon X^\otym{}\to\R$ and any~$\mbfx\eqdef(x,x')\in X^\otym{}$, define the sections of~$\rep f^\otym{}$ at~$\mbfx$ by~$\rep f^\otym{}_{\mbfx,1}\colon y\mapsto \rep f^\otym{}(y, x')$ and~$\rep f^\otym{}_{\mbfx,2}\colon y'\mapsto \rep f^\otym{}(x,y')$.
Since no confusion may arise, we also write~$f^\otym{}\eqdef \tclass[\mssm^\otym{}]{\rep f}$,~$f^\otym{}_{\mbfx,1}\eqdef\tclass[\mssm]{\rep f^\otym{}_{\mbfx,1}}$ and~$f^\otym{}_{\mbfx,2}\eqdef\tclass[\mssm']{\rep f^\otym{}_{\mbfx,2}}$.
We stress that the subscript number indicates the \emph{free} coordinate.

\subsubsection{Products of Dirichlet spaces}
Set
\begin{align*}
\mcD^\otym{}\eqdef \set{f\in L^2(\mssm^\otym{}): \begin{gathered} f^\otym{}_{\mbfx,1} \in \dom\ \forallae{\mssm'} x'\in X'
\\
f^\otym{}_{\mbfx,2} \in \dom'\ \forallae{\mssm} x\in X
\\
\mcE^\otym{}(f^\otym{})\eqdef \int\mcE(f^\otym{}_{\mbfx,1}) \diff\mssm'\ +\int \mcE'(f^\otym{}_{\mbfx,2}) \diff\mssm<\infty
 \end{gathered}}\fstop
\end{align*}

\begin{proposition}[Product structures]\label{p:Products}
The following assertions hold:
\begin{enumerate}[$(i)$]
\item\label{i:p:Products:1} If both~$\mbbX$ and~$\mbbX'$ satisfy either~\ref{ass:Hausdorff},~\ref{ass:Luzin}, or~\ref{ass:Polish}, then so does~$\mbbX^\otym{}$;
\item\label{i:p:Products:2} the quadratic form~$(\mcE^\otym{},\mcD^\otym{})$ is closable, and its closure is a Dirichlet form on~$\mbbX^\otym{}$ with domain
\begin{equation}\label{eq:ProductDomain}
\dom^\otym{}\eqdef \set{f\in L^2(\mssm^\otym{}) : \begin{gathered} x'\mapsto f^\otym{}_{\mbfx,1}\in L^2(\mssm'; \dom)
\\
x\mapsto f^\otym{}_{\mbfx,2}\in L^2(\mssm; \dom')
\end{gathered}}\semicolon
\end{equation}

\item\label{i:p:Products:3} if both~$(\mbbX,\mcE)$ and~$(\mbbX',\mcE')$ are strongly local, then so is~$(\mbbX^\otym{},\mcE^\otym{})$;

\item\label{i:p:Products:4} the algebraic tensor product $\dom\otimes_{\R}\dom'$ is $\dom^\otym{}$-dense in~$\dom^\otym{}$.

\item\label{i:p:Products:5} if both~$(\mbbX,\mcE)$ and~$(\mbbX',\mcE')$ are either quasi-regular or regular, then so is~$(\mbbX^\otym{},\mcE^\otym{})$;

\item\label{i:p:Products:6} if~$(\mbbX,\mcE)$, resp.~$(\mbbX',\mcE')$, admits carr\'e du champ operator~$\cdc$, resp.~$\cdc'$, then $(\mbbX^\otym{},\mcE^\otym{})$ admits carr\'e du champ operator
\begin{equation}\label{eq:CdCProduct}
\cdc^\otym{}(f^\otym{})(x,x')\eqdef \cdc(f^\otym{}_{\mbfx,1})(x)+\cdc'(f^\otym{}_{\mbfx,2})(x')\comma \qquad \mbfx\eqdef (x,x') \fstop
\end{equation}
\end{enumerate}

\begin{proof}
A proof of~\ref{i:p:Products:1} is standard.
Proofs of~\ref{i:p:Products:2},~\ref{i:p:Products:3}, and~\ref{i:p:Products:6} are found in~\cite[Prop.~V.2.1.2, p.~201]{BouHir91}.
A proof of~\ref{i:p:Products:4} is found in~\cite[Prop.~V.2.1.3(b), p.~201]{BouHir91}.
A proof of~\ref{i:p:Products:5} in the regular case follows from~\ref{i:p:Products:4}.
The quasi-regular case follows from the regular case via the transfer method, cf.~e.g.~\cite{Kuw98,CheMaRoe94}.
\end{proof}
\end{proposition}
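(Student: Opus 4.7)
The proof is essentially a bookkeeping exercise collating classical references, so my plan is to dispose of the parts in turn rather than attempt any single unified argument.

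For \ref{i:p:Products:1}, I would simply note that the Hausdorff, topological Luzin, and second-countable locally compact properties are all stable under finite products (the Luzin case uses that a product of continuous injective images of Polish spaces is a continuous injective image of the product Polish space, which is itself Polish). Full topological support of $\mssm^\otym{}$ follows since $\supp[\mssm\hotimes\mssm']=\supp[\mssm]\times\supp[\mssm']$, and the Radon property is preserved by products of Radon measures on Polish/locally compact second-countable spaces.

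Parts \ref{i:p:Products:2}, \ref{i:p:Products:3}, \ref{i:p:Products:4} and \ref{i:p:Products:6} are exactly the content of \cite[Prop.~V.2.1.2 and~Prop.~V.2.1.3(b), p.~201]{BouHir91}, which I would invoke directly: closability of $(\mcE^\otym{},\mcD^\otym{})$ and the identification of its closure with $\dom^\otym{}$ defined in~\eqref{eq:ProductDomain} follow from Fubini together with the characterisation of $\dom$, resp.\ $\dom'$, as the completion of the pre-Hilbert structures $(\mcE_1,\dom)$ and $(\mcE'_1,\dom')$; strong locality of the product form is then read off coordinatewise from the strong locality of each factor and the tensorial formula~\eqref{eq:CdCProduct}; the formula itself is the usual one for products of carr\'e du champs via the polarised integration-by-parts identity; and density of $\dom\otimes_\R\dom'$ in $\dom^\otym{}$ follows by approximating the vector-valued maps $x'\mapsto f^\otym{}_{\mbfx,1}$ in $L^2(\mssm';\dom)$ by finite sums of elementary tensors.

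For~\ref{i:p:Products:5} I would separate the two cases. In the regular case ($\mbbX$ and~$\mbbX'$ satisfying~\ref{ass:Polish}), the space $\mbbX^\otym{}$ again satisfies~\ref{ass:Polish} by~\ref{i:p:Products:1}, and one combines~\ref{i:p:Products:4} with the density of $\Cz(\T)\cap\dom$ in $\Cz(\T)\cap\dom_1$ (and the analogous statement for $\dom'$) to approximate elementary tensors by sums of products of continuous compactly-supported elements, which are themselves $\Cz(\T^\otym{})\cap \dom^\otym{}$. Density of such tensors in $\Cz(\T^\otym{})$ is the Stone--Weierstrass theorem applied to the unital subalgebra they generate. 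The quasi-regular case is obtained from the regular one by the quasi-homeomorphism/transfer method of~\cite{CheMaRoe94} (also~\cite{Kuw98}): a quasi-regular Dirichlet form on a topological Luzin space is quasi-homeomorphic to a regular one on a locally compact separable metric space, the product of two such quasi-homeomorphisms is a quasi-homeomorphism between the product forms, and quasi-regularity is invariant under quasi-homeomorphism.

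The only mildly delicate step is the compatibility of the product construction with the quasi-homeomorphism in~\ref{i:p:Products:5}; here one must check that products of $\mcE$- and $\mcE'$-nests give an $\mcE^\otym{}$-nest (which follows from~\ref{i:p:Products:4} since $\dom_{F_n}\otimes_\R \dom'_{F'_n}\subset \dom^\otym{}_{F_n\times F'_n}$), and that products of $\mcE$- and $\mcE'$-quasi-continuous functions are $\mcE^\otym{}$-quasi-continuous, which is immediate from the definition of nest. Otherwise the whole proposition is essentially a transcription from~\cite{BouHir91} into the present quasi-regular setting.
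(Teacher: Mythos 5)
Your proof is correct and follows essentially the same route as the paper: parts (ii), (iii), (iv), (vi) by direct appeal to Bouleau–Hirsch, part (i) as a standard product stability argument, and part (v) by first settling the regular case via (iv) and Stone–Weierstrass, then invoking the quasi-homeomorphism transfer method for the quasi-regular case. The extra detail you supply on why products of nests are nests and why elementary tensors of quasi-continuous functions are quasi-continuous is a useful and accurate elaboration of the paper's terse citation of the transfer method.
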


Combining~\eqref{eq:CdCProduct} with Corollary~\ref{c:BH}, we see that~\eqref{eq:CdCProduct} extends to~$f\in\dotloc{\dom^\otym{}}$.

\begin{corollary}
Let~$(\mbbX,\mcE)$ and~$(\mbbX',\mcE')$ be quasi-regular strongly local Dirichlet spaces with~$\mbbX,\mbbX'$ satisfying~\ref{ass:Luzin}.
Then,
\begin{equation}\label{eq:t:Tensor:5}
\cdc^\otym{}(f^\otym{})(x,x')\eqdef \cdc(f^\otym{}_{\mbfx,1})(x)+\cdc'(f^\otym{}_{\mbfx,2})(x')\comma \qquad \mbfx\eqdef (x,x')\comma \qquad f\in\dotloc{\dom^\otym{}} \fstop
\end{equation}
\end{corollary}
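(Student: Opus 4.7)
The plan is to combine the validity of the tensorization formula on the full product domain (Proposition~\ref{p:Products}\ref{i:p:Products:6}) with the strong locality of the broad-local carré du champ extensions (Corollary~\ref{c:BH}) applied both on the product and on each factor.

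Given $f\in\dotloc{\dom^\otym{}}$, I would choose an $\mcE^\otym{}$-quasi-open nest $G_\bullet\in\msG(X^\otym{})$ together with functions $f_n\in\dom^\otym{}$ satisfying $f_n\equiv f$ $\mssm^\otym{}$-a.e.\ on $G_n$. By Proposition~\ref{p:Products}\ref{i:p:Products:6},
\begin{equation*}
\cdc^\otym{}(f_n)(x,x')=\cdc((f_n)_{\mbfx,1})(x)+\cdc'((f_n)_{\mbfx,2})(x') \qquad \mssm^\otym{}\text{-a.e.},
\end{equation*}
while strong locality of the extension $\cdc^\otym{}$ on $\dotloc{\dom^\otym{}}$ (Corollary~\ref{c:BH}, eqn.~\eqref{eq:c:BH:0}) yields $\cdc^\otym{}(f)=\cdc^\otym{}(f_n)$ $\mssm^\otym{}$-a.e.\ on $G_n$. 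It therefore remains to replace the right-hand side by the analogous expression involving $f$ in place of $f_n$ and, in particular, to give meaning to the objects $\cdc(f_{\mbfx,1})$ and $\cdc'(f_{\mbfx,2})$.

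The central sectional claim is the following: for $\mssm'$-a.e.\ $x'\in X'$, the section $f_{\mbfx,1}$ belongs to $\dotloc{\dom}$, with an $\mcE$-quasi-open nest $H_\bullet(x')\in\msG(X)$ whose $n$-th term contains the slice $G_n^{x'}\eqdef\set{x\in X:(x,x')\in G_n}$ ($\mcE$-q.e.), and on which $f_{\mbfx,1}\equiv (f_n)_{\mbfx,1}$ $\mssm$-a.e.. The $\mssm$-a.e.\ coincidence of the two sections on $G_n^{x'}$ is immediate from Fubini applied to the equality $f_n\equiv f$ on $G_n$. The membership $(f_n)_{\mbfx,1}\in\dom$ for $\mssm'$-a.e.\ $x'$ is part of the characterisation~\eqref{eq:ProductDomain}. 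The nontrivial ingredient is the $\mcE$-quasi-openness of $G_n^{x'}$ for $\mssm'$-a.e.\ $x'$, which is a Fubini-type statement for quasi-open (equivalently: polar) subsets of a product of quasi-regular strongly local Dirichlet spaces; this can be established by transferring $(\mbbX^\otym{},\mcE^\otym{})$, $(\mbbX,\mcE)$, $(\mbbX',\mcE')$ to regular forms on compactifications via the transfer method of~\cite{CheMaRoe94}, where the Fubini theorem for polar sets of a product regular form is classical, and pulling the quasi-open structure back. Once this sectional claim is available (symmetrically also for $f_{\mbfx,2}$), strong locality of $\cdc$ on $\dotloc{\dom}$ (Corollary~\ref{c:BH}) applied sectionwise gives $\cdc((f_n)_{\mbfx,1})(x)=\cdc(f_{\mbfx,1})(x)$ $\mssm$-a.e.\ on $G_n^{x'}$, and analogously on the second factor, so that the tensorization identity transfers from $f_n$ to $f$ $\mssm^\otym{}$-a.e.\ on $G_n$.

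Letting $n\to\infty$ and using $\cup_n G_n=X^\otym{}$ $\mcE^\otym{}$-q.e.\ (hence $\mssm^\otym{}$-a.e.) yields~\eqref{eq:t:Tensor:5} globally. The main obstacle in this plan is the Fubini theorem for quasi-open sets on a product of quasi-regular Dirichlet spaces: it is the only nonroutine input, and once granted the rest of the argument is a standard localisation via strong locality. Should this Fubini statement be deemed too heavy for a corollary, an alternative is to avoid it by exploiting the density of $\dom\otimes_\R \dom'$ in $\dom^\otym{}$ (Proposition~\ref{p:Products}\ref{i:p:Products:4}): it suffices to verify the formula on simple tensors $u\otimes v$ with $u\in\dom$, $v\in\dom'$, where both sides are computed by the Leibniz rule, and then to extend it to $\dotloc{\dom^\otym{}}$ by combining strong locality with a cutoff/approximation argument on the chosen nest $G_\bullet$.
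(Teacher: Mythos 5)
Your argument is correct, and it is considerably more explicit than the paper's own treatment, which dispatches the corollary with the single sentence ``Combining~\eqref{eq:CdCProduct} with Corollary~\ref{c:BH}, we see that~\eqref{eq:CdCProduct} extends to~$f\in\dotloc{\dom^\otym{}}$'' and never spells out why the right-hand side of the identity makes sense for a general $f\in\dotloc{\dom^\otym{}}$. The sectional ingredient you correctly isolate---that slices of $\mcE^\otym{}$-quasi-open sets are $\mcE$-quasi-open for $\mssm'$-a.e.\ second coordinate, and the resulting sectioning of $\dotloc{\dom^\otym{}}$ into $\dotloc{\dom}$---is in fact established later in the paper, as Lemma~\ref{l:Quasi-Sections} and Corollary~\ref{c:Sectioning-BLD}, somewhat awkwardly placed \emph{after} the corollary under review. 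Where you genuinely diverge from the paper is in the proof of that Fubini-type lemma: you propose transferring all three forms to regular forms on compactifications via~\cite{CheMaRoe94} and invoking a Fubini statement for polar sets of products of regular Dirichlet forms, whereas the paper reduces to the probability-measure case (where the statement is~\cite[Exercise V.2.1(2), p.~208]{BouHir91}) by a Girsanov transform. Both routes are viable; the paper's is shorter because it bootstraps from the recorded probability case, while yours avoids relying on an exercise at the cost of introducing the transfer machinery. One caveat on your alternative plan: proving the identity on simple tensors $u\otimes v$ and invoking density of $\dom\otimes_{\R}\dom'$ merely re-derives Proposition~\ref{p:Products}\ref{i:p:Products:6} on $\dom^\otym{}$; passing from $\dom^\otym{}$ to $\dotloc{\dom^\otym{}}$ still requires the same sectional localization, so this alternative does not actually sidestep the Fubini-type lemma.
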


As a consequence, we further have
\begin{align}
\nonumber
f\otimes\car\in&\ \DzLocB{\mssm^\otym{}} \comma \car\otimes f'\in \DzLocB{\mssm^\otym{}} \comma & f\in&\ \DzLocB{\mssm}\comma f'\in \DzLocB{\mssm'} \comma
\\
\label{eq:TensorDzLocBT}
f\otimes\car\in&\ \DzLocB{\mssm^\otym{},\T^\otym{}} \comma \car\otimes f'\in \DzLocB{\mssm^\otym{},\T^\otym{}} \comma & f\in&\ \DzLocB{\mssm,\T}\comma f'\in \DzLocB{\mssm',\T'} \fstop
\end{align}

\paragraph{Sectioning}
Denote the sections of a set~$A^\otym{}\subset X^\otym{}$ by
\[
A^\otym{}_{\mbfx,1}\eqdef \set{x\in X: (x,x')\in A^\otym{}} \quad \text{and}\quad A^\otym{}_{\mbfx,2}\eqdef \set{x'\in X': (x,x')\in A^\otym{}}\comma \qquad \mbfx\eqdef (x,x,')\in X^\otym{} \fstop
\]

\begin{lemma}[Sectioning of quasi-notions]\label{l:Quasi-Sections}
Let~$(\mbbX,\mcE)$ and~$(\mbbX',\mcE')$ be quasi-regular strongly local Dirichlet spaces with~$\mbbX,\mbbX'$ satisfying~\ref{ass:Luzin}.
Then, the following assertions hold:
\begin{enumerate}[$(i)$]
\item\label{i:l:Quasi-Sections:1} if~$\seq{F^\otym{}_k}_k$ is an $\mcE^\otym{}$-nest, then~$\tseq{(F^\otym{}_k)_{\mbfx,1}}_k$ is an $\mcE$-nest for $\mssm'$-a.e.~$x'\in X'$, resp.~$\tseq{(F^\otym{}_k)_{\mbfx,2}}_k$ is an $\mcE'$-nest for $\mssm$-a.e.~$x\in X$;

\item if~$P^\otym{}\subset X^\otym{}$ is $\mcE^\otym{}$-polar, then~$P^\otym{}_{\mbfx,1}$ is $\mcE$-polar for $\mssm'$-a.e.~$x'\in X'$, resp.~$P^\otym{}_{\mbfx,2}$ is $\mcE'$-polar for $\mssm$-a.e.~$x\in X$;

\item\label{i:l:Quasi-Sections:3} if~$G^\otym{}\subset X^\otym{}$ is $\mcE^\otym{}$-quasi-open, then~$G^\otym{}_{\mbfx,1}$ is $\mcE$-quasi-open for $\mssm'$-a.e.~$x'\in X'$, resp.~$G^\otym{}_{\mbfx,2}$ is $\mcE'$-quasi-open for $\mssm$-a.e.~$x\in X$;

\item if~$f^\otym{}$ is $\mcE^\otym{}$-quasi-continuous, then~$f^\otym{}_{\mbfx,1}$ is $\mcE$-quasi-open for $\mssm'$-a.e.~$x'\in X'$, resp.~$f^\otym{}_{\mbfx,2}$ is $\mcE'$-quasi-open for $\mssm$-a.e.~$x\in X$.
\end{enumerate}

\begin{proof}
We only show~\ref{i:l:Quasi-Sections:1}, the other assertions being a straightforward consequence.
Let~$\seq{F^\otym{}_k}_k$ be an $\mcE^\otym{}$-nest. For each~$x'\in X$ and for each~$k\in\N$ let~$F_k\eqdef \set{x\in X: (x,x')\in F^\otym{}_k}$ be the $x'$-section of~$F^\otym{}_k$.
We show that~$\seq{F_k}_k$ is an $\mcE$-nest for~$\mssm'$-a.e.~$x'\in X'$.
Since~$F^\otym{}_k$ is $\T^\otym{}$-closed and sectioning preserves closedness, $F_k$~is $\T$-closed for every~$k$.
It suffices to show that~$\cap_k F_k^\complement$ is $\mcE$-polar.
When~$\mbbX$ and~$\mbbX'$ are probability spaces, this is claimed in, e.g.,~\cite[Exercise V.2.1(2), p.~208]{BouHir91}.

In order to address the case of~$\sigma$-finite $\mssm^\otym{}$, let~$\phi\in\dom$, resp.~$\phi'\in\dom'$, with~$\phi>0$ $\mssm$-a.e., resp.~$\phi'>0$ $\mssm'$-a.e., and set~$\phi^\otym{}\eqdef \phi\otimes \phi'\in\dom^\otym{}$.
Consider the Girsanov transforms~$(\mcE^\phi,\dom^\phi)$ of~$(\mcE,\dom)$,~$(\mcE^{\phi'},\dom^{\phi'})$ of~$(\mcE',\dom')$, and~$(\mcE^{\phi^\otym{}},\dom^{\phi^\otym{}})$ of~$(\mcE^\otym{},\dom^\otym{})$.
The assertion follows since Girsanov transforms by quasi-everywhere strictly positive functions preserve nests, hence polarity; see e.g.~\cite[p.~449]{CheSun06}.
\end{proof}
\end{lemma}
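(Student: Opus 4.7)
The plan is to reduce all four assertions to~\ref{i:l:Quasi-Sections:1}, and then to reduce~\ref{i:l:Quasi-Sections:1} from the $\sigma$-finite to the probability case by a Girsanov transform, where the statement is classical (cf.~\cite[Exercise V.2.1(2), p.~208]{BouHir91}).

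First I would verify that~\ref{i:l:Quasi-Sections:1} implies the remaining items by the standard equivalences between nests and quasi-notions. For polarity, if $P^\otym{}\subset X\setminus\cup_k F^\otym{}_k$ for some $\mcE^\otym{}$-nest, then $P^\otym{}_{\mbfx,1}\subset X\setminus \cup_k (F^\otym{}_k)_{\mbfx,1}$, and~\ref{i:l:Quasi-Sections:1} gives the conclusion for $\mssm'$-a.e.~$x'$. For quasi-openness, pick an $\mcE^\otym{}$-nest $\seq{F^\otym{}_k}_k$ such that $G^\otym{}\cap F^\otym{}_k$ is relatively open in $F^\otym{}_k$ for every $k$; then $G^\otym{}_{\mbfx,1}\cap (F^\otym{}_k)_{\mbfx,1}$ is relatively open in $(F^\otym{}_k)_{\mbfx,1}$ by definition of sections, so~\ref{i:l:Quasi-Sections:1} applies. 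The case of quasi-continuous functions is analogous, sectioning the nest witnessing continuity of $f^\otym{}$ on each~$F^\otym{}_k$.

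For~\ref{i:l:Quasi-Sections:1} itself, fix strictly positive $\phi\in\dom_b$ with $\phi>0$ $\mcE$-q.e.\ and $\phi^2\mssm$ a probability measure (such $\phi$ exists by quasi-regularity; see e.g.\ the construction used in Propositions~\ref{p:IneqCdC} and~\ref{p:IneqCdC2}), and analogously $\phi'\in\dom'_b$ for $\mssm'$. Set $\phi^\otym{}\eqdef \phi\otimes\phi'\in\dom^\otym{}_b$, which satisfies $\phi^\otym{}>0$ $\mcE^\otym{}$-q.e.\ and $(\phi^\otym{})^2\mssm^\otym{}$ is a probability measure. Key observation: by~\cite[Thm.~2.2]{CheSun06}, Girsanov transforms by strictly positive $\mcE$-q.e.\ factors in $\dom_b$ preserve the quasi-notions, in particular $\mcE$-nests, $\mcE$-polar sets, and $\mcE$-quasi-open sets (and similarly for~$\mcE'$ and~$\mcE^\otym{}$). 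Moreover, from Proposition~\ref{p:Products}\ref{i:p:Products:6} and the explicit form~\eqref{eq:CdCProduct}, the Girsanov transform of $(\mcE^\otym{},\dom^\otym{})$ by $\phi^\otym{}$ coincides with the product Dirichlet form of the Girsanov transform of $\mcE$ by $\phi$ with that of $\mcE'$ by $\phi'$. Thus, upon replacing the three Dirichlet spaces by their corresponding Girsanov transforms, we may and will assume that~$\mbbX$, $\mbbX'$, and thus $\mbbX^\otym{}$ are probability spaces.

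In the probability case, I would prove~\ref{i:l:Quasi-Sections:1} directly as follows. Let $\seq{F^\otym{}_k}_k$ be an $\mcE^\otym{}$-nest; equivalently, $\dom^\otym{}_\infty\eqdef \cup_k \dom^\otym{}_{F^\otym{}_k}$ is $\dom^\otym{}$-dense in $\dom^\otym{}$. For each $k$, I would show that if $u^\otym{}\in\dom^\otym{}_{F^\otym{}_k}$ (so that $\reptwo{u^\otym{}}\equiv 0$ $\mcE^\otym{}$-q.e.\ on $X^\otym{}\setminus F^\otym{}_k$, hence on the complement of $F^\otym{}_k$ in a conull set), then for $\mssm'$-a.e.~$x'$ the section $u^\otym{}_{\mbfx,1}$ lies in $\dom_{(F^\otym{}_k)_{\mbfx,1}}$; this is the combination of the characterization~\eqref{eq:ProductDomain} of $\dom^\otym{}$ together with Fubini's theorem applied to the $\mcE^\otym{}$-polar exceptional set. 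The remaining task is to transfer density: given a countable $\dom^\otym{}$-dense subset $\seq{u^\otym{}_j}_j\subset \dom^\otym{}_\infty$ (using that $\dom^\otym{}$ is separable by quasi-regularity), apply~\eqref{eq:ProductDomain} and Fubini to conclude that for $\mssm'$-a.e.~$x'$ the family of sections $\seq{(u^\otym{}_j)_{\mbfx,1}}_j$ is contained in $\cup_k \dom_{(F^\otym{}_k)_{\mbfx,1}}$ and is $\dom$-total there in $\dom$; i.e., $\tseq{(F^\otym{}_k)_{\mbfx,1}}_k$ is an $\mcE$-nest.

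The main obstacle will be this last density transfer: extracting, out of the $\dom^\otym{}$-density of $\dom^\otym{}_\infty$, a statement about $\dom$-density of sections for $\mssm'$-a.e.~$x'$. One needs a careful Fubini-type argument realizing that, up to a subsequence, the sectioning of a $\dom^\otym{}$-convergent sequence gives $\dom$-convergence of sections for $\mssm'$-a.e.~$x'$, and then to exhibit a countable witnessing family that is total in $\dom$ for a simultaneous co-null set of $x'$. The separability of $\dom$ and the definition~\eqref{eq:ProductDomain} make this feasible but require some care.
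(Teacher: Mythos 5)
Your reduction of (ii)--(iv) to (i) is correct, and the Girsanov reduction from the $\sigma$-finite case to the probability case is the same strategy as the paper's (you are, if anything, more careful than the paper in noting that $\phi^2\mssm$ should be a probability measure and in stating that the Girsanov transform of the product form is the product of the Girsanov transforms, a fact the paper leaves implicit). The divergence is in what you do with the probability case: the paper simply invokes \cite[Exercise~V.2.1(2), p.~208]{BouHir91}, whereas you attempt a direct proof and explicitly flag its crux as an unresolved ``obstacle.''

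That crux is a genuine gap, and the argument as sketched does not close it. You want to deduce, from the $\dom^\otym{}$-density of $\dom^\otym{}_\infty=\cup_k\dom^\otym{}_{F^\otym{}_k}$, that for $\mssm'$-a.e.~$x'$ the sections $\cup_k\dom_{(F^\otym{}_k)_{\mbfx,1}}$ are $\dom$-dense. Your Fubini step correctly yields that a $\dom^\otym{}$-convergent sequence has, along a subsequence, $\dom$-convergent sections for $\mssm'$-a.e.~$x'$. But a countable $\dom^\otym{}$-dense family $\seq{u^\otym{}_j}_j\subset\dom^\otym{}_\infty$ has sections $(u^\otym{}_j)_{\mbfx,1}$ that need not be $\dom$-total for any fixed $x'$; density of $\seq{u^\otym{}_j}_j$ in $\dom^\otym{}$ does not transfer to density of its sections in $\dom$. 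One would have to approximate functions of the form $v\otimes w$ with $w>0$, section, and divide by $w(x')$, while simultaneously controlling the a.e.-exceptional sets over a countable $\dom$-dense family of $v$'s and the chosen subsequences --- a nontrivial bookkeeping exercise that your sketch does not carry out. The standard route is to argue at the level of capacities (a Fubini-type inequality for the product capacity), which is essentially what \cite[Exercise~V.2.1(2)]{BouHir91} encodes and what the paper relies on. Either cite that reference as the paper does, or replace the density-transfer argument by a capacity-based one.
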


\begin{corollary}[Sectioning of broad local domains]\label{c:Sectioning-BLD}
Let~$(\mbbX,\mcE)$ and~$(\mbbX',\mcE')$ be quasi-regular strongly local Dirichlet spaces with~$\mbbX,\mbbX'$ satisfying~\ref{ass:Luzin}.
If~$f^\otym{}\in \dotloc{\dom^\otym{}}$, then~$f^\otym{}_{\mbfx,1}\in \dotloc{\dom}$ for $\mssm'$-a.e.~$x'\in X$ and~$f^\otym{}_{\mbfx,2}\in \dotloc{\dom'}$ for~$\mssm$-a.e.~$x\in X$.

\begin{proof}
Let~$(G^\otym{}_\bullet, f^\otym{}_\bullet)$ be witnessing that~$f^\otym{}\in\dotloc{\dom^\otym{}}$. 
We show the statement for $X$-sections, the one for $X'$-sections being analogous.
It suffices to note that, for $\mssm'$-a.e.~$x'\in X'$,
\begin{enumerate*}[$(a)$]
\item $(G^\otym{}_n)_{\mbfx,1}$ is $\mcE$-quasi-open by Lemma~\ref{l:Quasi-Sections}\ref{i:l:Quasi-Sections:3}

\item $(f^\otym{}_n)_{\mbfx,1}\in \dom$ by definition~\eqref{eq:ProductDomain} of~$\dom^\otym{}$;

\item $(f^\otym{}_n)_{\mbfx,1}\equiv f^\otym{}_{\mbfx,1}$ $\mssm$-a.e.\ on~$(G^\otym{}_n)_{\mbfx,1}$ by definition~$f^\otym{}_\bullet$.
\end{enumerate*}
\end{proof}
\end{corollary}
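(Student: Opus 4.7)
The plan is to exhibit a witness for membership of each section in $\dotloc{\dom}$ by sectioning a witness for~$f^\otym{}\in\dotloc{\dom^\otym{}}$. First I would unfold the definition: there exist $G^\otym{}_\bullet\in\msG(X^\otym{})$ and functions $f^\otym{}_n\in\dom^\otym{}$ with $f^\otym{}_n\equiv f^\otym{}$ $\mssm^\otym{}$-a.e.\ on $G^\otym{}_n$. I would then focus on the first coordinate (the second is symmetric) and propose the candidate witness
\[
\tseq{(G^\otym{}_n)_{\mbfx,1}}_n \,,\qquad \tseq{(f^\otym{}_n)_{\mbfx,1}}_n
\]
for $f^\otym{}_{\mbfx,1}\in\dotloc{\dom}$, for $\mssm'$-a.e.\ $x'\in X'$.

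Next I would verify the three defining conditions for this witness. Condition~(a): the sets $(G^\otym{}_n)_{\mbfx,1}$ are $\mcE$-quasi-open and form an increasing nest exhausting $X$ quasi-everywhere. This is a direct application of Lemma~\ref{l:Quasi-Sections}: part~\ref{i:l:Quasi-Sections:3} transfers quasi-openness, and the polar-sectioning statement transfers the $\mcE^\otym{}$-polar obstructions to $G^\otym{}_n\subset G^\otym{}_{n+1}$ and to $\cup_n G^\otym{}_n=X^\otym{}$ into $\mcE$-polar ones on the section. Condition~(b): $(f^\otym{}_n)_{\mbfx,1}\in\dom$ for $\mssm'$-a.e.\ $x'$; this is built into the definition~\eqref{eq:ProductDomain} of $\dom^\otym{}$. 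Condition~(c): $(f^\otym{}_n)_{\mbfx,1}\equiv f^\otym{}_{\mbfx,1}$ $\mssm$-a.e.\ on $(G^\otym{}_n)_{\mbfx,1}$; this is a Fubini argument applied to the $\mssm^\otym{}$-negligible set $G^\otym{}_n\cap\set{f^\otym{}_n\neq f^\otym{}}$.

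Finally I would collect all conditions: each of (a)--(c) holds for each $n$ on some $\mssm'$-conull subset of $X'$, so intersecting countably many conull sets yields one $\mssm'$-conull set on which the witness works for every $n$ simultaneously. I do not anticipate a genuine obstacle here: once Lemma~\ref{l:Quasi-Sections} is available, the argument is essentially a bookkeeping exercise with Fubini and countable exceptional sets. The only place requiring a small amount of care is Condition~(a), where one must simultaneously preserve quasi-openness \emph{and} the monotonicity/exhaustion properties of the nest on the section; but both are handled by the same lemma.
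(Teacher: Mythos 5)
Your proposal matches the paper's proof exactly: section the witness $(G^\otym{}_\bullet, f^\otym{}_\bullet)$, then verify the three defining conditions using Lemma~\ref{l:Quasi-Sections}\ref{i:l:Quasi-Sections:3} for quasi-openness, the definition~\eqref{eq:ProductDomain} of $\dom^\otym{}$ for membership of the sectioned functions, and a Fubini argument for the a.e.\ agreement on sections. Your added remarks about transferring the nest's monotonicity/exhaustion via polar-sectioning and intersecting countably many conull sets are correct bookkeeping details the paper leaves implicit.
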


\begin{corollary}[Sectioning of broad local spaces of uniformly bounded energy]\label{c:Sectioning-BLS}
Let $(\mbbX,\mcE)$, resp.\ $(\mbbX',\mcE')$, be a quasi-regular strongly local Dirichlet spaces with~$\mbbX$, resp.~$\mbbX'$, satisfying~\ref{ass:Luzin}.
If~$f^\otym{}\in \DzLocB{\mssm^\otym{}}$, resp.~$\DzLocB{\mssm^\otym{},\T^\otym{}}$, then~$f^\otym{}_{\mbfx,1}\in \DzLocB{\mssm}$, resp.~$\DzLocB{\mssm,\T}$, for $\mssm'$-a.e.~$x'\in X$ and~$f^\otym{}_{\mbfx,2}\in \DzLocB{\mssm'}$, resp.~$\DzLocB{\mssm',\T'}$, for~$\mssm$-a.e.~$x\in X$.

\begin{proof}
Straightforward consequence of Corollary~\ref{c:Sectioning-BLD} and~\eqref{eq:CdCProduct}.
\end{proof}
\end{corollary}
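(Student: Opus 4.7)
The plan is to invoke Corollary~\ref{c:Sectioning-BLD} to get the sections into the broad local domains, and then use the tensorized carré du champ identity~\eqref{eq:t:Tensor:5} to control the energy bounds pointwise via Fubini.

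More precisely, fix~$f^\otym{}\in \DzLocB{\mssm^\otym{}}$.
By Corollary~\ref{c:Sectioning-BLD}, there exists an $\mssm'$-conegligible set~$N'_1\subset X'$ so that~$f^\otym{}_{\mbfx,1}\in \dotloc{\dom}$ for every~$x'\in X'\setminus N'_1$, and symmetrically an $\mssm$-conegligible set~$N_1\subset X$ so that~$f^\otym{}_{\mbfx,2}\in \dotloc{\dom'}$ for every~$x\in X\setminus N_1$.
Applying~\eqref{eq:t:Tensor:5} to~$f^\otym{}$ and using the definition of $\DzLocB{\mssm^\otym{}}$, we have
\begin{equation*}
\cdc(f^\otym{}_{\mbfx,1})(x)+\cdc'(f^\otym{}_{\mbfx,2})(x')= \cdc^\otym{}(f^\otym{})(x,x')\leq 1 \as{\mssm^\otym{}}\fstop
\end{equation*}
Since both summands are non-negative, each one is individually bounded above by~$1$ for $\mssm^\otym{}$-a.e.~$(x,x')$.
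Fubini's theorem then yields an $\mssm'$-conegligible set~$N'_2\subset X'$ so that~$\cdc(f^\otym{}_{\mbfx,1})\leq 1$ $\mssm$-a.e.\ for every~$x'\in X'\setminus N'_2$, and symmetrically an $\mssm$-conegligible set~$N_2\subset X$ so that~$\cdc'(f^\otym{}_{\mbfx,2})\leq 1$ $\mssm'$-a.e.\ for every~$x\in X\setminus N_2$.
Finally, since~$f^\otym{}\in L^\infty(\mssm^\otym{})$, Fubini provides an $\mssm'$-conegligible set~$N'_3\subset X'$ with~$f^\otym{}_{\mbfx,1}\in L^\infty(\mssm)$ for all~$x'\in X'\setminus N'_3$, and symmetrically~$N_3\subset X$.
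Combining the three exceptional sets on each side shows that~$f^\otym{}_{\mbfx,1}\in \DzLocB{\mssm}$ for~$\mssm'$-a.e.~$x'$ and~$f^\otym{}_{\mbfx,2}\in \DzLocB{\mssm'}$ for~$\mssm$-a.e.~$x$, settling the first statement.

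For the continuous variant, assume additionally~$f^\otym{}\in \DzLocB{\mssm^\otym{},\T^\otym{}}$, so~$f^\otym{}$ admits a representative which is jointly $\T^\otym{}$-continuous.
Since~$\T^\otym{}=\T\times\T'$, the section~$f^\otym{}_{\mbfx,1}$ is $\T$-continuous for \emph{every} $x'\in X'$ and~$f^\otym{}_{\mbfx,2}$ is $\T'$-continuous for \emph{every} $x\in X$, and these continuous sections agree $\mssm$- (resp.~$\mssm'$-)a.e.\ with the pointwise sections of any other representative of~$f^\otym{}$.
Intersecting with the exceptional sets from the first part therefore places~$f^\otym{}_{\mbfx,1}$ in~$\DzLocB{\mssm,\T}$ for $\mssm'$-a.e.~$x'$, and analogously for the other sections, concluding the proof.

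The only mildly delicate point is the continuous case, where one must keep track of the fact that the bounds~$\cdc(f^\otym{}_{\mbfx,1})\leq 1$ involve the a.e.-class and may \emph{a priori} require passing to the continuous representative; this is handled by observing that the strong locality~\eqref{eq:c:BH:0} makes~$\cdc$ insensitive to the choice of $\mssm$-representative, so the bound on the class transfers verbatim to the continuous section.
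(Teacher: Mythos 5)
Your argument is correct and coincides with the paper's one-line proof, fleshed out: invoke Corollary~\ref{c:Sectioning-BLD} for membership of the sections in the broad local domains, use the tensorized carr\'e du champ formula~\eqref{eq:t:Tensor:5}, split the two non-negative summands in the pointwise bound~$\cdc^\otym{}(f^\otym{})\leq 1$, and apply Fubini to both the energy and the $L^\infty$-bound. One small inaccuracy in the closing remark: the insensitivity of~$\cdc$ to the choice of representative is not strong locality~\eqref{eq:c:BH:0}; rather, $\cdc\colon\dotloc{\dom}\to L^0(\mssm)$ is by construction a map on $\mssm$-classes, and the relevant observation is that for $\mssm'$-a.e.~$x'$ the section of the continuous representative and the section of any fixed Borel representative of~$f^\otym{}$ determine the \emph{same} $\mssm$-class (again by Fubini, since the two representatives differ only on an $\mssm^\otym{}$-null set), so the bound obtained via the latter applies to the former.
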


\subsubsection{Products of metric objects}\label{sss:MetricProduct}
Let~$X,X'$ be any (non-empty) sets.
For extended pseudo-distances~$\mssd\colon X^\tym{2}\to [0,\infty]$ and~$\mssd'\colon X^{\prime\tym{2}}\to [0,\infty]$, we denote by
\[
\mssd^\otym{}\tparen{(x,x'),(y,y')}=(\mssd\otimes \mssd')\tparen{(x,x'),(y,y')}\eqdef \sqrt{\mssd(x,y)^2+ \mssd'(x',y')^2}
\]
the ($\ell^2$-)product extended pseudo-distance on~$X\times X'$, and by
\[
(\mssd\oplus\mssd')\tparen{(x,x'),(y,y')}\eqdef \mssd(x,y)+ \mssd'(x',y')
\]
the ($\ell^1$-)product extended pseudo-distance on~$X\times X'$.
Note that~$\mssd^\otym{}$ and~$\mssd\oplus\mssd'$ induce the same topology on~$X\times X'$.

\begin{lemma}\label{l:TensorIntrinsicD}
Let $(\mbbX,\mcE)$, resp.\ $(\mbbX',\mcE')$, be a quasi-regular strongly local Dirichlet spaces with~$\mbbX$, resp.~$\mbbX'$, satisfying~\ref{ass:Luzin}.
Then,
\begin{equation}\label{eq:l:TensorIntrinsicD:0}
\mssd_{\mssm^\otym{}} \leq \mssd_\mssm\oplus\mssd_{\mssm'} \as{\mssm^\otym{}} \fstop
\end{equation}
\begin{proof}
Set~$\mbfx\eqdef (x,x')$ and~$\mbfy\eqdef(y,y')$. Then, 
\begin{align}
\nonumber
\mssd_{\mssm^\otym{}}\tparen{(x,x'),(y,y')}\leq&\ \mssd_{\mssm^\otym{}}\tparen{(x,x'),(y,x')}+\mssd_{\mssm^\otym{}}\tparen{(y,x'),(y,y')}
\\
\label{eq:l:TensorIntrinsicD:1}
=&\ \inf_{f^\otym{}\in\DzLocB{\mssm^\otym{},\T^\otym{}}} f^\otym{}(x,x')-f^\otym{}(y,x')+ \inf_{g^\otym{}\in\DzLocB{\mssm^\otym{},\T^\otym{}}} g^\otym{}(y,x')-g^\otym{}(y,y')
\fstop
\end{align}
Choosing~$f^\otym{}$ of the form~$f\otimes\car$, resp.~$\car\otimes f'$ in the first, resp.\ second, summand of~\eqref{eq:l:TensorIntrinsicD:1}, we may continue~\eqref{eq:l:TensorIntrinsicD:1} with
\begin{align*}
\mssd_{\mssm^\otym{}}\tparen{(x,x'),(y,y')}\leq&\ \inf_{\substack{\rep{f}\colon X\to\R\\ f\otimes \car\in\DzLocB{\mssm^\otym{},\T^\otym{}}}} \rep{f}(x)-\rep{f}(y) + \inf_{\substack{\rep{g}\colon X'\to\R\\ \car\otimes g \in\DzLocB{\mssm^\otym{},\T^\otym{}}}} \rep{g}(x')-\rep{g}(y')
\\
\leq&\ \inf_{f \in\DzLocB{\mssm,\T}} f(x)-f(y) + \inf_{f'\in\DzLocB{\mssm',\T'}} g(x')-g(y')
\\
=&\ \mssd_\mssm(x,y)+ \mssd_{\mssm'}(x',y')\comma
\end{align*}
where the second inequality holds in light of~\eqref{eq:TensorDzLocBT}.
\end{proof}
\end{lemma}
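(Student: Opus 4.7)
The plan is to exploit the triangle inequality for $\mssd_{\mssm^\otym{}}$ together with the fact that test functions of one variable lift to the product space with controlled energy measure. Fix $\mbfx=(x,x')$ and $\mbfy=(y,y')$ and insert the intermediate point $(y,x')$ to decompose
\[
\mssd_{\mssm^\otym{}}(\mbfx,\mbfy) \leq \mssd_{\mssm^\otym{}}\tparen{(x,x'),(y,x')} + \mssd_{\mssm^\otym{}}\tparen{(y,x'),(y,y')}\fstop
\]
The right-hand side suggests treating each summand with test functions that only feel the respective varying coordinate.

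The key observation is that whenever $f\in\DzLocB{\mssm,\T}$, the tensorized function $f\otimes\car$ belongs to $\DzLocB{\mssm^\otym{},\T^\otym{}}$. Indeed, continuity in the product topology is automatic, and by the extended product formula~\eqref{eq:t:Tensor:5} for the carré du champ on $\dotloc{\dom^\otym{}}$ combined with $\sq{\car}\equiv 0$ from~\eqref{eq:E(1)=0}, we have
\[
\cdc^\otym{}(f\otimes\car)(x,x') = \cdc(f)(x) + \cdc(\car)(x') = \cdc(f)(x) \leq 1 \as{\mssm^\otym{}}\comma
\]
so $\sq{f\otimes\car}\leq\mssm^\otym{}$. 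The symmetric statement holds for $\car\otimes g$ with $g\in\DzLocB{\mssm',\T'}$.

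Restricting the supremum in the definition~\eqref{eq:IntrinsicD} of $\mssd_{\mssm^\otym{}}$ to test functions of the form $f\otimes\car$ in the first summand and $\car\otimes g$ in the second, I obtain
\[
\mssd_{\mssm^\otym{}}\tparen{(x,x'),(y,x')} \leq \sup_{f\in\DzLocB{\mssm,\T}}\tparen{f(x)-f(y)} = \mssd_\mssm(x,y)\comma
\]
and analogously the second summand is bounded by $\mssd_{\mssm'}(x',y')$. Summing the two inequalities yields the claim.

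I do not anticipate any genuine obstacle. The only point requiring care is the verification that $f\otimes\car\in\DzLocB{\mssm^\otym{},\T^\otym{}}$, which is precisely~\eqref{eq:TensorDzLocBT} — and this in turn hinges on the extension of the tensor product formula~\eqref{eq:CdCProduct} to broad local domains via Corollary~\ref{c:BH}. Note also that, since both sides are $\T^\otym{2}$-lower semicontinuous (the left-hand side by construction, the right-hand side as a sum of $\T$-admissible distances), the inequality in fact holds pointwise on $X^\otym{}\times X^\otym{}$, although only the $\mssm^\otym{}$-a.e.\ version is asserted.
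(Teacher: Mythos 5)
Your key step goes in the wrong direction. Restricting the supremum defining $\mssd_{\mssm^\otym{}}$ to tensor-product test functions produces a \emph{lower} bound, not an upper bound: since $\{f\otimes\car : f\in\DzLocB{\mssm,\T}\}\subset\DzLocB{\mssm^\otym{},\T^\otym{}}$ by \eqref{eq:TensorDzLocBT}, and a supremum over a subfamily never exceeds the supremum over the whole family, what your argument actually establishes is the reverse inequality
\[
\mssd_\mssm(x,y)\ \leq\ \mssd_{\mssm^\otym{}}\tparen{(x,x'),(y,x')}\fstop
\]
(The printed proof of this lemma in fact shares the same directional slip, and moreover misprints the suprema of Definition~\ref{d:IntrinsicD} as infima; on neither reading does the chain of inequalities close.)

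The mechanism that does yield the upper bound is sectioning, not tensorization, and this is precisely what forces the a.e.\ qualifier in the statement. Fix $h\in\DzLocB{\mssm^\otym{},\T^\otym{}}$: by Corollary~\ref{c:Sectioning-BLS}, the section $y\mapsto h(y,x')$ lies in $\DzLocB{\mssm,\T}$ for $\mssm'$-a.e.~$x'$, whence $h(x,x')-h(y,x')\leq\mssd_\mssm(x,y)$ for those $x'$; symmetrically, $h(y,\cdot)\in\DzLocB{\mssm',\T'}$ for $\mssm$-a.e.~$y$. Combining this with the triangle inequality, and running $h$ over a countable $\dom^\otym{}$-dense family to control the exceptional null sets, gives $\mssd_{\mssm^\otym{}}\leq\mssd_\mssm\oplus\mssd_{\mssm'}$ only off an $\mssm^\otym{}$-negligible set. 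This also invalidates your closing remark: an a.e.\ inequality between two merely lower semicontinuous functions does not self-improve to a pointwise one, and that upgrade is exactly what Lemma~\ref{l:IntrinsicIneq} provides, under extra hypotheses whose role is precisely to make $\mssd_\mssm\oplus\mssd_{\mssm'}$ jointly continuous rather than merely l.s.c.
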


Now, let~$(X,\T,\mssd,\mssm)$ and~$(X',\T',\mssd',\mssm')$ be extended metric-topological measure spaces in the sense of Definition~\ref{d:AES}.
We denote by
\begin{align*}
\wslo[\mssd,\mssd']{f^\otym{}}^{\otym{}}(x,x')\eqdef \sqrt{\wslo[\mssd]{f^\otym{}_{\mbfx,1}}^2(x)+\wslo[\mssd']{f^\otym{}_{\mbfx,2}}^2(x')} \comma \qquad f^\otym{}\colon X^\otym{}\to \R\comma
\end{align*}
the \emph{Cartesian gradient} in~\cite[p.~1477]{AmbGigSav14b}, by
\begin{align}\label{eq:CartesianGrad}
\slo[c,\mssd,\mssd']{f^\otym{}}^{\otym{}}(x,x')\eqdef \sqrt{\slo[\mssd]{f^\otym{}_{\mbfx,1}}^2(x)+\slo[\mssd']{f^\otym{}_{\mbfx,2}}^2(x')} \comma \qquad f^\otym{}\colon X^\otym{}\to \R \comma
\end{align}
the \emph{Cartesian slope} in~\cite[Eqn.~(3.3)]{AmbPinSpe15}, and by~$\slo[*,c,\mssd,\mssd']{f^\otym{}}^{\otym{}}$ the minimal relaxed gradient associated to the Cheeger energy
\begin{align*}
\Ch[*,c,\mssm^\otym{}](f^\otym{})\eqdef \inf\set{\liminf_n \int_{X^\tym{2}} \tparen{\slo[c,\mssd,\mssd']{f^\otym{}_n}^{\otym{}}}^2 \diff\mssm^\otym{}}=\int_{X^\tym{2}} \tparen{\slo[*,c,\mssd,\mssd']{f^\otym{}}^{\otym{}}}^2 \diff\mssm^\otym{} \comma
\end{align*}
where the infimum is taken over all sequences~$\seq{f^\otym{}_n}_n\subset \Lip(\mssd^\otym{})$ with~$L^2(\mssm^\otym{})\text{-}\nlim f_n^\otym{}=f^\otym{}$.

Recall that, by e.g.~\cite[Thm.~2.2(ii)]{AmbPinSpe15}, for every~$f^\otym{}\in\Lip_{bs}(\mssd^\otym{})\subset L^2(\mssm^\otym{})$ there exists a sequence of functions~$\seq{f^\otym{}_n}_n\subset \Lip_{bs}(\mssd^\otym{})$ such that
\begin{align}\label{eq:APS}
L^2(\mssm^\otym{})\text{-}\nlim f^\otym{}_n= f^\otym{} \qquad \text{and} \qquad \slo[*,c,\mssd,\mssd']{f^\otym{}}^{\otym{}}= L^2(\mssm^\otym{})\text{-}\nlim \slo[c,\mssd,\mssd']{f^\otym{}_n}^{\otym{}}\fstop
\end{align}

\subsection{Tensorization of the Rademacher property}\label{sss:TensorizationConseq}
In this section we show the tensorization of the Rademacher property.
In the literature, it has been addressed in the case when~$\mcE=\Ch[\mssd,\mssm]$ is the Cheeger energy, in connection with the tensorization of the Cheeger energy under the assumptions of
\begin{itemize}
\item infinitesimal Hilbertianity (Dfn.~\ref{d:IH}), measure doubling and a weak $(1,2)$-Poincar\'e inequality (see Dfn.~\ref{d:DP} below) in~\cite{AmbPinSpe15};
\item under the Riemannian Curvature-Dimension condition~$\RCD(K,\infty)$ in~\cite{AmbGigSav14b};
\item more recently under the \emph{infinitesimal quasi-Hilbertianity} in~\cite{EriRajSou22}.
\end{itemize}
Here, we discuss the case of general quasi-regular strongly local Dirichlet spaces~$(\mbbX,\mcE)$, without any geometric assumption.
In particular, we never require a two-sided comparison of~$(\mcE,\dom)$ with~$\Ch[*,\mssd,\mssm]$ implicit in the definition of infinitesimal (quasi-)Hilbertianity in~\cite{EriRajSou22}.

Similarly to the discussion on transfer by weight in~\S\ref{s:Transfer}, our assumptions are weaker than those in~\cite{AmbGigSav14b,AmbPinSpe15,EriRajSou22}, since the Cheeger energy~$\Ch[*,\mssd,\mssm]$ of a metric measure space is always quasi-regular by~\cite[Thm.~4.1]{Sav14} or~\cite[Prop.~3.21]{LzDSSuz20}, cf.~ Remark~\ref{r:ComparisonAGS-AES}.

\begin{theorem}[Tensorization of $(\mathsf{Rad})$]\label{t:TensorRad}
Let~$\mbbX\eqdef (X,\mssd,\mssm)$ be a metric measure space, and~$(\mcE,\dom)$ be a quasi-regular strongly local Dirichlet form on~$\mbbX$ admitting carr\'e du champ operator~$\cdc$ and satisfying~$(\Rad{\mssd}{\mssm})$.
Further let~$(\mbbX',\mcE')$ be satisfying the same assumptions as~$(\mbbX,\mcE)$.

Then, their product space~$(\mbbX^\otym{},\mcE^\otym{})$ satisfies~$(\Rad{\mssd^\otym{}}{\mssm^\otym{}})$.
\begin{proof}
Since~$(X,\mssd,\mssm)$ and~$(X',\mssd',\mssm')$ are metric measure spaces in the sense of Definition~\ref{d:MMSp}, their product space~$(X^\otym{},\mssd^\otym{},\mssm^\otym{})$ is so as well.
Since~$\T_\mssd=\T$, all Lipschitz functions in this proof are continuous (in particular: Borel) for the relative distances/topologies, and we thus omit the specification of measure-representatives of such functions.

Let~$\seq{G_k}_k$, resp.~$\seq{G_k'}_k$, be a monotone exhaustion of~$X$, resp.~$X'$ consisting of well-separated (see Rmk.~\ref{r:CutOff}) bounded open sets.
Set~$G_k^\otym{}\eqdef G_k\times G_k'$, and note that~$G_k^\otym{}$ is a monotone exhaustion of~$X^\otym{}$ consisting of well-separated bounded open sets.
Similarly to Remark~\ref{r:CutOff}, for each~$k$ there exists~$\varrho_k^\otym{}\in\bLip(\mssd^\otym{})$ satisfying condition~\ref{i:p:LocalityProbab:2} in Proposition~\ref{p:LocalityProbab}, that is such that~$\car_{G_k^\otym{}}\leq\varrho_k^\otym{}\leq \car_{{G_{k+1}^\otym{}}^c}$ and
\[
\Li[\mssd^\otym{}]{\varrho_k^\otym{}}\leq c_k\eqdef \mssd^\otym{}(G_k^\otym{}, {G_{k+1}^\otym{}}^\complement)^{-1}<\infty\fstop
\]

Throughout the proof, let~$\mbfx\eqdef (x,x')\in X^\otym{}$.

\paragraph{Step I: $\bLip(\mssd^\otym{})\subset \dom^\otym{}$} 
Without loss of generality, we show that $\bLipu(\mssd^\otym{})\subset \dom^\otym{}$.
Let~$f^\otym{}\in \bLipu(\mssd^\otym{})$ and set~$M\eqdef \sup_{X^\otym{}} \abs{f^\otym{}}$.
Note that, for each fixed~$\mbfx\eqdef (x,x')\in X^\otym{}$,
\begin{equation}\label{eq:t:Tensor:1}
\begin{aligned}
f_{k,\mbfx,1}\eqdef&\ (f^\otym{}\varrho_k^\otym{})_{\mbfx,1}\in \bLip(\mssd) \comma \quad \Li[\mssd]{f_{\mbfx,k,1}}\leq Mc_k+1\comma
\\
f_{k,\mbfx,2}\eqdef&\ (f^\otym{}\varrho_k^\otym{})_{\mbfx,2}\in \bLip(\mssd') \comma \quad \Li[\mssd']{f_{\mbfx,k,2}}\leq Mc_k+1 \fstop
\end{aligned}
\end{equation}
By~$(\Rad{\mssd}{\mssm})$, resp.~$(\Rad{\mssd'}{\mssm'})$, we have
\begin{equation}\label{eq:t:Tensor:2}
\begin{aligned}
f_{k,\mbfx,1}\in&\ \dotloc{\dom}\comma & \cdc(f_{k,\mbfx,1})\leq&\ \Li[\mssd]{f_{k,\mbfx,1}}^2 \as{\mssm}\comma
\\
f_{k,\mbfx,2}\in&\ \dotloc{\dom'}\comma & \cdc'(f_{k,\mbfx,2})\leq&\ \Li[\mssd']{f_{k,\mbfx,2}}^2 \as{\mssm'} \fstop
\end{aligned}
\end{equation}
Furthermore, by definition of~$\varrho_k^\otym{}$ we have~$f_{k,\mbfx,1}\equiv 0$ everywhere on~$G_{k+1}^\complement$, resp.~$f_{k,\mbfx,2}\equiv 0$ everywhere on~${G_{k+1}'}^\complement$.
Combing this fact with~\eqref{eq:t:Tensor:1} and~\eqref{eq:t:Tensor:2}, we conclude from Corollary~\ref{c:BH} that
\begin{equation}\label{eq:t:Tensor:3}
\begin{aligned}
\cdc(f_{k,\mbfx,1})\leq&
\begin{cases}
(Mc_k+1)^2 & \as{\mssm}\ \text{on } G_{k+1}
\\
0 & \as{\mssm}\ \text{on } {G_{k+1}}^\complement
\end{cases}\comma
\\
\cdc'(f_{k,\mbfx,2})\leq&
\begin{cases}
(Mc_k+1)^2 & \as{\mssm'}\ \text{on } G'_{k+1}
\\
0 & \as{\mssm'}\ \text{on } {G'_{k+1}}^\complement
\end{cases} \fstop
\end{aligned}
\end{equation}
Thus, since $G_k$, resp.~$G_k'$, has finite $\mssm$-, resp.~$\mssm'$-measure for all~$k$, \eqref{eq:t:Tensor:3} implies that, for each~$k$ and every~$\mbfx\in X^\otym{}$,
\begin{equation}\label{eq:t:Tensor:4}
\begin{aligned}
f_{k,\mbfx,1}\in&\ \dom\comma & \norm{f_{k,\mbfx,1}}_\dom^2\leq&\ \mssm G_{k+1} \tparen{(Mc_k+1)^2+ M^2}\comma
\\
f_{k,\mbfx,2}\in&\ \dom'\comma & \norm{f_{k,\mbfx,2}}_{\dom'}^2\leq&\ \mssm' G_{k+1}' \tparen{(Mc_k+1)^2+ M^2}\fstop
\end{aligned}
\end{equation}
As a consequence,
\begin{equation*}
\begin{aligned}
\norm{x'\mapsto f_{k,\mbfx,1}}_{L^2(\mssm',\dom)}^2\leq \mssm^\otym{} G_{k+1}^\otym{} \tparen{(Mc_k+1)^2+ M^2}\comma
\\
\norm{x'\mapsto f_{k,\mbfx,1}}_{L^2(\mssm',\dom)}^2\leq \mssm^\otym{} G_{k+1}^\otym{} \tparen{(Mc_k+1)^2+ M^2}\comma
\end{aligned}
\end{equation*}
which finally shows that~$f^\otym{}\varrho_k^\otym{}\in \dom^\otym{}$ for each~$k$ by definition~\eqref{eq:ProductDomain} of~$\dom^\otym{}$.
Since~$\seq{G_k^\otym{}}_k$ is an open exhaustion of~$X^\otym{}$, the sequence~$\seq{G_k^\otym{}, f^\otym{}\varrho_k^\otym{}}_k$ witnesses that~$f^\otym{}\in\dotloc{\dom^\otym{}}$.

\paragraph{Step II: sharp estimate}
We now proceed to estimate of~$\cdc^\otym{}(\emparg)$ by~$\Li[\mssd^\otym{}]{\emparg}$.
Fix~$f^\otym{}\in \bLipu(\mssd^\otym{})$.
Since~$f^\otym{}\in\dotloc{\dom^\otym{}}$ by Step~I, by Corollary~\ref{c:BH} we have 
\begin{equation}\label{eq:t:Tensor:6}
\cdc^\otym{}(f^\otym{})(\mbfx)= \cdc^\otym{}(f^\otym{}\varrho_k^\otym{})(\mbfx) \quad \text{on } G_k^\otym{}\fstop
\end{equation}
Thus, it suffices to show that, for each~$k\in\N$,
\begin{equation}\label{eq:t:Tensor:7}
\cdc^\otym{}(f^\otym{}\varrho_k^\otym{})\leq 1 \as{\mssm^\otym{}}\ \text{on }G_k^\otym{}\fstop
\end{equation}

Let~$k$ be fixed.
Since~$f^\otym{}\varrho_k^\otym{}$ is $\mssd^\otym{}$-Lipschitz with $\mssd^\otym{}$-bounded support, we can find a sequence~$\tseq{f^\otym{}_{k,n}}_n$ satisfying~\eqref{eq:APS} with~$f^\otym{}\varrho_k^\otym{}$ in place of~$f^\otym{}$.
By Step~I we further have~$f^\otym{}_{k,n}\in \dotloc{\dom^\otym{}}$ for each~$n$, thus we may compute~$\cdc^\otym{}(f^\otym{}_{k,n})$ pointwise.

By~\eqref{eq:t:Tensor:5}, and applying Proposition~\ref{p:IneqCdC} on~$X$ and~$X'$,
\begin{align}
\nonumber
\cdc^\otym{}(f^\otym{}_{k,n})\leq&\ \cdc\tparen{(f^\otym{}_{k,n})_{\emparg,1}} + \cdc'\tparen{(f^\otym{}_{k,n})_{\emparg,2}} \leq \twslo[\mssd]{(f^\otym{}_{k,n})_{\emparg,1}}^2+\twslo[\mssd']{(f^\otym{}_{k,n})_{\emparg,2}}^2
\\
\label{eq:t:Tensor:9}
\leq&\ \tslo[\mssd]{(f^\otym{}_{k,n})_{\emparg,1}}^2+\tslo[\mssd']{(f^\otym{}_{k,n})_{\emparg,2}}^2 \defeq \paren{\tslo[c,\mssd,\mssd']{f^\otym{}_{k,n}}^\otym{}}^2
\comma
\end{align}
where the inequality~\eqref{eq:t:Tensor:9} holds since the slope of a Lipschitz function is a weak upper gradient.
Since the right-hand side of~\eqref{eq:t:Tensor:9} converges in~$L^2(\mssm^\otym{})$ by~\eqref{eq:APS}, integrating~\eqref{eq:t:Tensor:9} w.r.t.~$\mssm^\otym{}$ shows that~$\tseq{f^\otym{}_{k,n}}_n$ is uniformly bounded in~$\dom^\otym{}$.
Thus, up to possibly choosing a suitable non-relabeled subsequence, we may and will assume with no loss of generality that~$\tseq{f^\otym{}_{k,n}}_n$ is $\dom^\otym{}$-weakly convergent, in which case its $\dom^\otym{}$-weak limit is~$f^\otym{}\varrho^\otym{}_k$ in light of its $L^2(\mssm^\otym{})$-convergence to~$f^\otym{}\varrho^\otym{}_k$ and~\cite[Lem.~I.2.12]{MaRoe92}.

Fix now~$h^\otym{}\in{\domb^\otym{}}^+$. Then, by e.g.~\cite[Lem.~3.3(ii)]{AriHin05},
and by~\eqref{eq:t:Tensor:9},
\begin{align}\label{eq:t:Tensor:10}
\int h^\otym{}\, \cdc^\otym{}(f^\otym{}\varrho_k^\otym{}) \diff\mssm^\otym{}\leq&\ \nliminf \int h^\otym{}\, \cdc^\otym{}(f^\otym{}_{k,n})\diff\mssm^\otym{}
\leq \nliminf \int h^\otym{} \paren{\tslo[c,\mssd,\mssd']{f^\otym{}_{k,n}}^\otym{}}^2\diff\mssm^\otym{} \fstop
\end{align}
Since~$h^\otym{}\in L^\infty(\mssm^\otym{})$, by~\eqref{eq:APS} there exists
\begin{align}\label{eq:t:Tensor:12}
\nlim \int h^\otym{} \paren{\tslo[c,\mssd,\mssd']{f^\otym{}_{k,n}}^\otym{}}^2 \diff\mssm^\otym{} = \int h^\otym{}\paren{\tslo[*,c,\mssd,\mssd']{(f^\otym{}\varrho_k^\otym{})}^\otym{}}^2 \diff\mssm^\otym{}
\fstop
\end{align}
Combining~\eqref{eq:t:Tensor:10} with~\eqref{eq:t:Tensor:12} we thus have
\begin{align*}
\int h^\otym{}\, \cdc^\otym{}(f^\otym{}\varrho_k^\otym{}) \diff\mssm^\otym{} \leq& \int h^\otym{}\paren{\tslo[*,c,\mssd,\mssd']{(f^\otym{}\varrho_k^\otym{})}^\otym{}}^2 \diff\mssm^\otym{}
\comma
\end{align*}
whence, by arbitrariness of~$h^\otym{}$, we conclude that
\begin{align*}
\cdc^\otym{}(f^\otym{}\varrho_k^\otym{})\leq&\ \paren{\tslo[*,c,\mssd,\mssd']{(f^\otym{}\varrho_k^\otym{})}^\otym{}}^2
= \tslo[*,\mssd^\otym{}]{(f^\otym{}\varrho_k^\otym{})}^2
\leq \tslo[\mssd^\otym{}]{(f^\otym{}\varrho_k^\otym{})}^2
\comma
\end{align*}
where the equality is shown in~\cite[Thm.~3.2]{AmbPinSpe15}, and the last inequality holds again by definition of the objects involved.
Since~$\varrho^\otym{}_k\equiv \car$ on~$G^\otym{}_k$, by locality of~$\slo[\mssd^\otym{}]{\emparg}$ and locality of~$\cdc^\otym{}$ as in Corollary~\ref{c:BH}, we finally obtain
\begin{align*}
\cdc^\otym{}(f^\otym{}\varrho_k^\otym{})\car_{G_k^\otym{}}\leq \slo[\mssd^\otym{}]{f^\otym{}} \car_{G_k^\otym{}}\leq \Li[\mssd^\otym{}]{f^\otym{}} \leq 1
\end{align*}
by the assumption that~$f^\otym{}\in \bLipu(\mssd^\otym{})$, which concludes~\eqref{eq:t:Tensor:7} and therefore the assertion of the theorem.
\end{proof}
\end{theorem}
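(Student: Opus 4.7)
The plan is to show that every $f^\otym{} \in \bLipu(\mssd^\otym{})$ belongs to $\DzLocB{\mssm^\otym{}}$, i.e.\ that $f^\otym{} \in \dotloc{\dom^\otym{}}$ with $\cdc^\otym{}(f^\otym{}) \leq 1$ $\mssm^\otym{}$-a.e.. This splits naturally into two steps: a membership step, in which we use the Rademacher property on each factor together with Lipschitz cutoffs to produce the required local approximants, and a sharp estimate step, in which we combine the additivity of $\cdc^\otym{}$ along sections \eqref{eq:t:Tensor:5} with a relaxation argument due to Ambrosio--Pinamonti--Speight.

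First I would construct an exhaustion $G_k \subset X$, $G_k' \subset X'$ of well-separated bounded open sets (as in Remark~\ref{r:CutOff}), together with a bounded Lipschitz cutoff $\varrho_k^\otym{}$ on $X^\otym{}$ satisfying $\car_{G_k^\otym{}} \leq \varrho_k^\otym{} \leq \car_{G_{k+1}^\otym{}}$, with Lipschitz constant $c_k \eqdef \mssd^\otym{}(G_k^\otym{},(G_{k+1}^\otym{})^\complement)^{-1} < \infty$. For $f^\otym{} \in \bLipu(\mssd^\otym{})$ bounded by $M$, each section $(f^\otym{} \varrho_k^\otym{})_{\mbfx,1}$, resp.\ $(f^\otym{} \varrho_k^\otym{})_{\mbfx,2}$, is $\mssd$-, resp.\ $\mssd'$-Lipschitz with Lipschitz constant bounded by $Mc_k + 1$ and supported in $G_{k+1}$, resp.\ $G'_{k+1}$. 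Applying $(\Rad{\mssd}{\mssm})$ and $(\Rad{\mssd'}{\mssm'})$ to the sections (via Corollary~\ref{c:BH} and strong locality~\eqref{eq:c:BH:0}) yields uniform $\dom$- and $\dom'$-bounds, which by Fubini imply $f^\otym{} \varrho_k^\otym{} \in \dom^\otym{}$ by the characterization~\eqref{eq:ProductDomain}. Since $\seq{G_k^\otym{}}_k$ is an open exhaustion, the pair $(G_k^\otym{}, f^\otym{} \varrho_k^\otym{})$ witnesses $f^\otym{} \in \dotloc{\dom^\otym{}}$.

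Next, for the sharp estimate, I fix $k$ and approximate $f^\otym{} \varrho_k^\otym{}$ in $L^2(\mssm^\otym{})$ by $\seq{f^\otym{}_{k,n}}_n \subset \Lip_{bs}(\mssd^\otym{})$ satisfying~\eqref{eq:APS}, so that $\slo[c,\mssd,\mssd']{f^\otym{}_{k,n}}^\otym{}$ converges in $L^2(\mssm^\otym{})$ to $\slo[*,c,\mssd,\mssd']{f^\otym{} \varrho_k^\otym{}}^\otym{}$. Step I applied to each $f^\otym{}_{k,n}$ gives $f^\otym{}_{k,n} \in \dotloc{\dom^\otym{}}$, and combining~\eqref{eq:t:Tensor:5} with $(\Rad)$ on each factor and the trivial bound $\slo[w]{g} \leq \slo{g}$ yields $\cdc^\otym{}(f^\otym{}_{k,n}) \leq (\slo[c,\mssd,\mssd']{f^\otym{}_{k,n}}^\otym{})^2$ $\mssm^\otym{}$-a.e.. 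Testing against arbitrary $h^\otym{} \in (\domb^\otym{})^+$ and invoking the weak lower semicontinuity of $h^\otym{} \mapsto \int h^\otym{} \cdc^\otym{}(\cdot) \diff\mssm^\otym{}$ along $\dom^\otym{}$-weakly convergent sequences (due to Ariyoshi--Hino, used e.g.\ in~\cite[Lem.~3.3(ii)]{AriHin05}) gives, after extracting a weakly convergent subsequence (whose limit must be $f^\otym{} \varrho_k^\otym{}$ by~\cite[Lem.~I.2.12]{MaRoe92}),
\[
\int h^\otym{}\, \cdc^\otym{}(f^\otym{} \varrho_k^\otym{}) \diff\mssm^\otym{} \leq \int h^\otym{} \tparen{\slo[*,c,\mssd,\mssd']{f^\otym{} \varrho_k^\otym{}}^\otym{}}^2 \diff\mssm^\otym{} \fstop
\]
By arbitrariness of $h^\otym{}$ and the identification $\slo[*,c,\mssd,\mssd']{\emparg}^\otym{} = \slo[*,\mssd^\otym{}]{\emparg}$ from~\cite[Thm.~3.2]{AmbPinSpe15}, together with $\slo[*,\mssd^\otym{}]{\emparg} \leq \slo[\mssd^\otym{}]{\emparg}$, one gets $\cdc^\otym{}(f^\otym{} \varrho_k^\otym{}) \leq \slo[\mssd^\otym{}]{f^\otym{} \varrho_k^\otym{}}^2$ $\mssm^\otym{}$-a.e.. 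Restricting to $G_k^\otym{}$ and invoking the strong locality of $\cdc^\otym{}$ and of $\slo[\mssd^\otym{}]{\emparg}$ (where $\varrho_k^\otym{} \equiv 1$) concludes $\cdc^\otym{}(f^\otym{}) \leq \slo[\mssd^\otym{}]{f^\otym{}}^2 \leq 1$ on $G_k^\otym{}$, and letting $k \to \infty$ finishes the proof.

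The main obstacle is the passage to the limit in Step II: one must simultaneously control the $\dom^\otym{}$-weak convergence of $f^\otym{}_{k,n}$, use the Ariyoshi--Hino-type lower semicontinuity of energy measures, and invoke the Ambrosio--Pinamonti--Speight identification of the relaxed Cartesian slope with the product minimal relaxed slope. Step I is essentially bookkeeping once one has the right cutoffs, but Step II is where the geometric content of the tensorization result lies, since it exploits the $\ell^2$-product structure of $\mssd^\otym{}$ in a crucial way (the analogous inequality with $\mssd \oplus \mssd'$ would be suboptimal and fail to yield the unit Lipschitz bound).
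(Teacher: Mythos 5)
Your proposal reproduces the paper's argument faithfully: the same cutoff construction via well-separated bounded exhausting sets, the same Step I (Lipschitz sectioning plus the Rademacher property on each factor plus the $L^2$-fibered characterization~\eqref{eq:ProductDomain} to land in $\dom^\otym{}$), and the same Step II (APS approximation~\eqref{eq:APS}, the Ariyoshi--Hino lower semicontinuity of $h \mapsto \int h\,\cdc^\otym{}(\cdot)\,\diff\mssm^\otym{}$, passing to a $\dom^\otym{}$-weak subsequential limit identified via~\cite[Lem.~I.2.12]{MaRoe92}, and the Ambrosio--Pinamonti--Speight identification $\slo[*,c,\mssd,\mssd']{\,\cdot\,}^{\otym{}}=\slo[*,\mssd^\otym{}]{\,\cdot\,}$).

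One imprecision worth flagging: in Step II you attribute the pointwise bound $\cdc(g) \leq \tslo[\mssd]{g}^2$ (for $g$ Lipschitz) to ``$(\Rad)$ on each factor and the trivial bound $\slo[w]{g} \leq \slo{g}$''. The Rademacher property directly gives only the \emph{global} bound $\cdc(g) \leq \Li[\mssd]{g}^2$; upgrading to the local estimate $\cdc(g) \leq \slo[w,\mssd]{g}^2$ is exactly the content of Proposition~\ref{p:IneqCdC}, which uses $(\Rad)$ but also requires the completeness and $\mssd$-moderance hypotheses (both satisfied on a metric measure space). The paper invokes Proposition~\ref{p:IneqCdC} explicitly at this point; your formulation collapses this non-trivial intermediate step into the word ``$(\Rad)$'', which would leave a real gap if taken literally, since the passage from a global Lipschitz bound to a pointwise slope bound is where the form-comparison machinery enters. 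Aside from this attribution issue, the plan is correct and coincides with the paper's proof.
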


\subsection{Tensorization of the Sobolev-to-Lipschitz property}\label{sss:TensorSL}
In this section we show a slightly weaker form of tensorization of the Sobolev-to-Lipschitz property.
Precisely, we show that the Sobolev-to-Lipschitz property on the factors implies the \emph{continuous}-Sobolev-to-Lipschitz property on their product space.

\begin{theorem}\label{t:TensorSL}
Let~$\mbbX\eqdef (X,\mssd,\mssm)$ be a metric measure space (Dfn.~\ref{d:MMSp}), and~$(\mcE,\dom)$ be a quasi-regular strongly local Dirichlet form on~$\mbbX$ satisfying~$(\SL{\mssm}{\mssd})$.
Further let~$(\mbbX',\mcE')$ be satisfying the same assumptions as~$(\mbbX,\mcE)$.
Then, their product space~$(\mbbX^\otym{},\mcE^\otym{})$ satisfies~$(\cSL{\T^\otym{}}{\mssm^\otym{}}{\mssd^\otym{}})$.
\end{theorem}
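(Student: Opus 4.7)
The strategy is to show that every $\T^\otym{}$-continuous $f^\otym{}\in\DzLocB{\mssm^\otym{},\T^\otym{}}$ satisfies $|f^\otym{}(\mbfx)-f^\otym{}(\mbfy)|\leq\mssd^\otym{}(\mbfx,\mbfy)$ for all $\mbfx,\mbfy\in X^\otym{}$. First, reduce to sections: by Corollary~\ref{c:Sectioning-BLS}, for $\mssm'$-a.e.\ $x'\in X'$ the section $f^\otym{}_{\mbfx,1}$ belongs to $\DzLocB{\mssm}$; being a section of a $\T^\otym{}$-continuous function it is $\T$-continuous, so $f^\otym{}_{\mbfx,1}\in\DzLocB{\mssm,\T}$. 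The CdC identity~\eqref{eq:t:Tensor:5} (valid on the broad local domain by Corollary~\ref{c:BH}) and the bound $\cdc^\otym{}(f^\otym{})\leq 1$ $\mssm^\otym{}$-a.e.\ yield the pointwise decomposition $\cdc(f^\otym{}_{\mbfx,1})(x)+\cdc'(f^\otym{}_{\mbfx,2})(x')\leq 1$ $\mssm^\otym{}$-a.e., whence (after Fubini) $\cdc(f^\otym{}_{\cdot,1})\leq 1$ $\mssm$-a.e.\ for $\mssm'$-a.e.\ $x'$, and symmetrically for sections in the second variable.

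Second, apply $(\SL{\mssm}{\mssd})$ to $f^\otym{}_{\cdot,1}$: it admits a $\mssd$-$1$-Lipschitz $\mssm$-representative which is $\T$-continuous (since $\mssd$ metrizes $\T$ on the metric measure space $\mbbX$) and therefore coincides pointwise with the continuous section $f^\otym{}_{\cdot,1}$, using the full $\T$-support of $\mssm$ (from~\ref{ass:Luzin}). Hence $|f^\otym{}(x,x')-f^\otym{}(y,x')|\leq\mssd(x,y)$ for every $x,y\in X$ and $\mssm'$-a.e.\ $x'$; extending to every $x'\in X'$ by $\T^\otym{}$-continuity of $f^\otym{}$ and full $\T'$-support of $\mssm'$, and symmetrically in the second variable, we obtain the separate Lipschitz bounds
\[
|f^\otym{}(x,x')-f^\otym{}(y,x')|\leq\mssd(x,y)\comma\qquad |f^\otym{}(x,x')-f^\otym{}(x,y')|\leq\mssd'(x',y')
\]
for \emph{every} $x,y\in X$ and $x',y'\in X'$.

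The principal obstacle is upgrading these two separate bounds to a \emph{joint} $\mssd^\otym{}$-$1$-Lipschitz estimate. A direct triangle inequality yields only $|f^\otym{}(\mbfx)-f^\otym{}(\mbfy)|\leq\mssd(x,y)+\mssd'(x',y')$, which is merely $\sqrt{2}$-Lipschitz in $\mssd^\otym{}$; the sharp $\ell^2$-constant must exploit the joint pointwise CdC bound $\cdc(f^\otym{}_{\mbfx,1})(x)+\cdc'(f^\otym{}_{\mbfx,2})(x')\leq 1$ rather than the two marginal bounds $\leq 1$. The plan is to refine Step~2 pointwise: using scalar SL combined with a localisation of each section to small $\mssd$-balls and a Lebesgue-differentiation-type argument available in the metric measure setting, establish the pointwise slope bound $\slo[\mssd]{f^\otym{}_{\cdot,1}}(x)\leq\sqrt{\cdc(f^\otym{}_{\cdot,1})(x)}$ $\mssm$-a.e.\ (for $\mssm'$-a.e.\ $x'$), and symmetrically for the other section. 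This yields the Cartesian-slope estimate $\slo[c,\mssd,\mssd']{f^\otym{}}^{\otym{}}\leq\sqrt{\cdc^\otym{}(f^\otym{})}\leq 1$ $\mssm^\otym{}$-a.e.\ on $X^\otym{}$; invoking the domination of the minimal weak upper gradient on the product metric measure space by the Cartesian slope (see~\cite{AmbGigSav14b, AmbPinSpe15}) then gives $\slo[w,\mssd^\otym{}]{f^\otym{}}\leq 1$ $\mssm^\otym{}$-a.e., and integration along $\AC^2$-curves in $(X^\otym{},\mssd^\otym{},\mssm^\otym{})$ together with $\T^\otym{}$-continuity of $f^\otym{}$ and full support of $\mssm^\otym{}$ extends the bound to every pair $(\mbfx,\mbfy)$. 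The most delicate ingredient is the pointwise slope refinement, which exchanges the \emph{a priori} separate section-Lipschitz bounds for a pointwise comparison with the square root of the CdC, thereby accessing the joint $\ell^2$-constraint.
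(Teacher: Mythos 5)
Your first two steps—sectioning via Corollary~\ref{c:Sectioning-BLS}, applying $(\SL{\mssm}{\mssd})$ along each fibre to obtain separate $1$-Lipschitz bounds in the two variables, and correctly identifying the $\ell^1$-versus-$\ell^2$ obstruction—are sound, and you have put your finger on exactly the right difficulty. However, the proposed resolution is precisely the gap. The pointwise slope estimate $\slo[\mssd]{f^\otym{}_{\cdot,1}}(x)\leq\sqrt{\cdc(f^\otym{}_{\cdot,1})(x)}$ a.e.\ cannot be extracted from the global Sobolev-to-Lipschitz property of the factor. What $(\SL{\mssm}{\mssd})$ gives is a \emph{uniform} Lipschitz constant over the whole space under a \emph{uniform} bound $\cdc(f)\leq 1$; turning that into a pointwise estimate requires a localisation of the SL property to small balls, and the paper shows (Example~\ref{ese:FailureLocSL}) that SL does \emph{not} localise. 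The ``Lebesgue-differentiation-type argument'' you invoke is not available on an arbitrary metric measure space: it presupposes doubling, and even then you would need the first inequality of~\eqref{eq:SLChain2}, $\cdc(f)\geq\wslo[\mssd_\mssm]{f}^2$, which is essentially equivalent to $\T$-upper regularity (Prop.~\ref{p:IneqCdC2}), and the last equality~\eqref{eq:SLChain3}, $\wslo[\mssd]{f}=\slo[\mssd]{f}$ for $f\in\DzLocB{\mssm,\T}$, whose validity for such $f$ is \emph{equivalent to} the continuous-Sobolev-to-Lipschitz conclusion you are trying to prove. The paper discusses this circularity explicitly in the paragraph following the statement of Theorem~\ref{t:TensorSL}; your plan reproduces that circle without breaking it.

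The paper's actual proof avoids slopes and weak upper gradients entirely and instead goes through heat-semigroup asymptotics. From $(\SL{\mssm}{\mssd})$ one deduces irreducibility of each factor (Lemma~\ref{SLIrreducibility}), which licenses the Ariyoshi--Hino short-time asymptotics (Theorem~\ref{t:AriHino}) both on the factors and on the product. Tensorization of the semigroup, $T_t^\otym{}=T_t\otimes T_t'$, gives $\nu^\otym{}\text{-}\lim_{t\downarrow 0}(-2t\log T_t^\otym{}\car_{A\times A'})=\hr{\mssm,A}^2+\hr{\mssm',A'}^2$, which by the factor SL properties is $\leq\mssd(\emparg,A)^2+\mssd'(\emparg,A')^2=\mssd^\otym{}(\emparg,A\times A')^2$; the same limit applied directly to the product equals $\hr{\mssm^\otym{},A\times A'}^2$. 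Combined with $\mssd_{\mssm^\otym{}}(\emparg,A^\otym{})\leq\hr{\mssm^\otym{},A^\otym{}}$ (Lemma~\ref{l:Lemma}\ref{i:l:Lemma:3}) and a shrinking-balls argument (Lemma~\ref{l:SupDistanceShrinking}) together with lower semicontinuity of $\mssd_{\mssm^\otym{}}$, this yields $\mssd_{\mssm^\otym{}}\leq\mssd^\otym{}$ everywhere, i.e.\ $(\dSL{\mssd^\otym{}}{\mssm^\otym{}})$, which is equivalent to the claimed $(\cSL{\T^\otym{}}{\mssm^\otym{}}{\mssd^\otym{}})$ by~\eqref{eq:EquivalenceRadStoL}. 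The $\ell^2$-combination of distances emerges there from the additivity of $-2t\log T_t^\otym{}\car_{A^\otym{}}$ under tensorization of semigroups—not from any pointwise slope estimate.
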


Before discussing a proof of Theorem~\ref{t:TensorSL}, let us explain why both its statement and the proof presented below are not dual to the tensorization of the Rademacher property in Theorem~\ref{t:TensorRad}.

Indeed, the proof of Theorem~\ref{t:TensorRad} crucially relies on Proposition~\ref{p:IneqCdC}.
A dual proof for the tensorization of the Sobolev-to-Lipschitz property would then rely on the chain of inequalities in~\eqref{eq:SLChain2}.
Now, the validity of the first inequality in~\eqref{eq:SLChain2} is implied by ---in essence, equivalent to--- the $\T$-upper regularity of the form~$(\mcE,\dom)$, as discussed in (the proof of) Proposition~\ref{p:IneqCdC2}.
The validity of the last equality in~\eqref{eq:SLChain2}, that is,~\eqref{eq:SLChain3}, was already discussed for \emph{Lipschitz}~$f$ in Remark~\ref{r:Validity1} under further assumptions on the metric measure structure, such as, e.g., measure doubling and a weak $(1,2)$-Poincar\'e inequality.
Deducing the same equality for~$f\in\DzLocB{\mssm,\T}$ from the equality for~$f\in\bLip(\mssd)$ is equivalent to having already shown the continuous-Sobolev-to-Lipschitz property~$(\cSL{\T}{\mssm}{\mssd})$.
As a consequence, it seems difficult to establish the chain of inequalities in~\eqref{eq:SLChain2} without assuming further structural properties of the product metric measure structure.

\subsubsection{Semigroups, irreducibility, and maximal functions}\label{sss:Semigroups}
Let us collect here some definitions and simple facts about them.
Throughout this section, let~$(\mbbX,\mcE)$ and~$(\mbbX',\mcE')$ be any Dirichlet spaces.

\paragraph{Semigroups}
We denote by~$\seq{T_t}_{t\geq 0}$, $T_t\colon L^2(\mssm)\to L^2(\mssm)$, the (strongly continuous contraction) $L^2(\mssm)$-semigroup associated to~$(\mcE,\dom)$.
We denote again by~$T_t\colon L^\infty(\mssm)\to L^\infty(\mssm)$ the extension of~$T_t\colon L^2(\mssm)\to L^2(\mssm)$ to a weakly* continuous semigroup on~$L^\infty(\mssm)$.

We denote by~$T_t^\otym{}$ the semigroup(s) associated to~$(\mbbX^\otym{},\mcE^\otym{})$.
We note that~$T_t^\otym{}=T_t\otimes T_t'$, that is, in particular
\begin{equation}\label{eq:TensorSemigroup}
T_t^\otym{}(f\otimes f')(x,x')=(T_t f)(x) (T_t' f')(x') \comma \qquad t\geq 0\comma \quad f\in L^\infty(\mssm)\comma f'\in L^\infty(\mssm')\fstop
\end{equation}

\paragraph{Invariance and irreducibility}
A set~$A\in\A$ is \emph{$\mcE$-invariant} if
\[
T_t(\car_A f)=\car_A T_tf\comma \qquad f\in L^2(\mssm) \fstop
\]

For a characterization of~$\mcE$-invariance in the present setting, see e.g.~\cite[Prop.~2.38]{LzDSSuz21}.
We say that a Dirichlet space~$(\mbbX,\mcE)$ is \emph{irreducible} if every~$\mcE$-invariant~$A\in\A$ satisfies either~$\mssm A=0$ or~$\mssm A^\complement=0$.

\begin{lemma}\label{SLIrreducibility}
Let~$(\mbbX,\mcE)$ be a quasi-regular strongly local Dirichlet form on~$\mbbX$, and~$\mssd\colon X^\tym{2}\to [0,\infty)$ be a distance (not: extended) on~$X$.
If~$(\mbbX,\mcE)$ satisfies~$(\SL{\mssm}{\mssd})$, then it is irreducible.

\begin{proof}
Argue by contradiction that~$(\mbbX,\mcE)$ is \emph{not} irreducible but~$(\SL{\mssm}{\mssd})$ holds.
Since~$(\mcE,\dom)$ is not irreducible, there exists an $\mcE$-invariant~$A\in\A$ with~$\mssm A>0$ and~$\mssm A^\complement>0$.
By $\mcE$-invariance of~$A$ and~\cite[Lem.~2.39]{LzDSSuz21} we have~$\car_A\in\dotloc{\dom}$ and~$\sq{\car_A}\equiv 0$.
Hence, by~$(\SL{\mssm}{\mssd})$, the function~$\car_A$ has a $\mssd$-Lipschitz representative~$\widerep{\car_A}$ with~$\Li[\mssd]{\widerep{\car_A}}=0$.
Since~$\mssd$ is a distance (not: extended),~$\widerep{\car_A}$ is a constant, which is a contradiction since~$\mssm A,\mssm A^\complement>0$.
\end{proof}
\end{lemma}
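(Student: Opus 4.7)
The plan is to argue by contradiction, exploiting the fact that the Sobolev-to-Lipschitz property forces functions with vanishing energy to have constant representatives, and that indicator functions of sets with non-trivial complement are very far from constant.

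Assume $(\mbbX,\mcE)$ is not irreducible. Then there exists $A\in\A$ which is $\mcE$-invariant with $\mssm A, \mssm A^\complement > 0$. The first step is to translate this invariance into information about~$\car_A$ as a Dirichlet-type function. In the quasi-regular strongly local setting, $\mcE$-invariance of~$A$ implies in particular that~$\car_A$ belongs to the broad local domain~$\dotloc{\dom}$ and, moreover, that its associated energy measure vanishes identically, i.e.\ $\sq{\car_A}\equiv 0$ (this is a standard consequence of invariance via the characterization in the cited reference \cite[Lem.~2.39]{LzDSSuz21}; intuitively, since the semigroup leaves~$\car_A L^2(\mssm)$ stable, the form does not see any transition across~$\partial A$). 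In particular,~$\car_A\in\DzLoc{\mu}$ with~$\mu$ the zero measure (and hence trivially bounded above by any $\sigma$-finite~$\mu$ we like, e.g.~$\mu=\mssm$).

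Now I invoke~$(\SL{\mssm}{\mssd})$ applied to~$\car_A$: the function~$\car_A\in\DzLoc{\mssm}$ must admit an $\mssm$-representative~$\widerep{\car_A}\in\Lip^1(\mssd,\A)$. Since~$\sq{\car_A}\equiv 0$, the representative in fact has Lipschitz constant zero: this can be read off the definition, using that~$\DzLoc{\mu}$ with~$\mu\equiv 0$ produces Lipschitz representatives with Lipschitz constant zero (apply the Sobolev-to-Lipschitz property to~$\lambda\car_A$ for every~$\lambda>0$ if needed, or directly observe that the Lipschitz constant obtained is bounded by the square root of the density of~$\sq{\emparg}$ with respect to~$\mssm$, here vanishing).

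The final step is the key to why the hypothesis ``$\mssd$ is a distance, not an extended distance'' is essential (cf.\ Remark~\ref{r:McShane}): a $\mssd$-Lipschitz function with Lipschitz constant~$0$ must be constant on each $\mssd$-accessible component; since~$\mssd$ is everywhere finite,~$X$ has a single $\mssd$-accessible component, and hence~$\widerep{\car_A}$ is globally constant. But~$\widerep{\car_A}=\car_A$~$\mssm$-a.e., and since both~$\mssm A>0$ and~$\mssm A^\complement>0$, the function~$\car_A$ takes both values~$0$ and~$1$ on sets of positive measure, contradicting constancy. This contradiction yields irreducibility. The step I would most carefully check is the equivalence between $\mcE$-invariance and~$\car_A\in\dotloc{\dom}$ with vanishing energy measure, since this is where the strongly local structure is actually used; everything else is bookkeeping.
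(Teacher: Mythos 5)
Your proof follows the paper's argument verbatim: same contradiction setup, same invocation of the $\mcE$-invariance characterization from~\cite[Lem.~2.39]{LzDSSuz21} to get $\car_A\in\dotloc{\dom}$ with $\sq{\car_A}\equiv 0$, same application of $(\SL{\mssm}{\mssd})$, and same conclusion that a $0$-Lipschitz function with respect to a finite distance must be constant. The only place the paper is terser than you is the step from $\sq{\car_A}\equiv 0$ to a representative with $\Li[\mssd]{\widerep{\car_A}}=0$, which you correctly flag as needing the scaling $\lambda\car_A\in\DzLoc{\mssm}$ for all $\lambda>0$; to fully close it one then intersects the co-null sets on which the resulting $\lambda^{-1}$-Lipschitz representatives agree with $\car_A$ and lets $\lambda\to\infty$, using finiteness of $\mssd$, to see $\car_A$ is $\mssm$-a.e.\ constant.
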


See~\cite[\S2.6.2]{LzDSSuz21} for relations between $\mcE$-invariance and metric notions (in particular, $\mssd$-accessibility) in the non-irreducible case.

\paragraph{Maximal functions}
We recall the following adaptation to our setting of results in~\cite{AriHin05}. Set
\begin{align*}
\Dz{\mu,A}_{\loc,r}\eqdef \set{f\in\DzLoc{\mu}: f=0 \as{\mssm} \text{~on~} A\comm \abs{f}\leq r \as{\mssm}}\subset \DzLocB{\mu}\comm \qquad r>0\fstop
\end{align*}

\begin{proposition}[{\cite[Prop.~4.14]{LzDSSuz21}}]\label{p:Hino}
Let~$(\mbbX,\mcE)$ be a quasi-regular strongly local Di\-richlet space.
For each~$A\in\A$ there exists an $\mssm$-a.e.\ unique $\A$-measurable function~$\hr{\mssm,A}\colon X\rar [0,\infty]$ so that~$\hr{\mssm,A}\wedge r$ is the $\mssm$-a.e.\ maximal element of~$\Dz{\mssm,A}_{\loc, r}$.
\end{proposition}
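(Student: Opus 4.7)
The plan is to construct $\hr{\mssm,A}$ as the pointwise increasing limit of essentially maximal truncations $M_r \eqdef \hr{\mssm,A}\wedge r$ and verify that each $M_r$ is the a.e.-maximal element of~$\Dz{\mssm,A}_{\loc,r}$. Throughout, we work in the broad local space~$\dotloc{\dom}$, for which the truncation formula in Proposition~\ref{p:PropertiesLoc}\ref{i:p:PropertiesLoc:1} is the key structural tool.

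First, I would fix $r>0$ and observe that $\Dz{\mssm,A}_{\loc,r}$ is non-empty (it contains $0$) and stable under binary $\vee$ and $\wedge$: indeed, the truncation formula gives, for $f,g\in\Dz{\mssm,A}_{\loc,r}$,
\begin{align*}
\sq{f\vee g} &= \car_{\{\reptwo f\leq \reptwo g\}}\sq{g}+\car_{\{\reptwo f>\reptwo g\}}\sq{f}\leq \mssm\comma
\end{align*}
and similarly for $f\wedge g$, while the conditions $f\vee g=0$ on~$A$ and $|f\vee g|\leq r$ $\mssm$-a.e.\ are obvious. Since the family is thus upward directed and uniformly $L^\infty(\mssm)$-bounded by~$r$, the standard essential-supremum argument (select a countable maximizing sequence, replace it with the chain of its finite $\vee$-combinations) produces $f_n\nearrow M_r$~$\mssm$-a.e., with $f_n\in \Dz{\mssm,A}_{\loc,r}$ and $M_r$~$\mssm$-a.e.-maximal among all a.e.\ limits of such chains.

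The main step, and the main obstacle, is to verify that $M_r$ itself belongs to~$\Dz{\mssm,A}_{\loc,r}$. Fix $(G_\bullet,e_\bullet)\in\msG_0$ and a cut-off~$\varrho_k\in\domb$ with~$\car_{E_k}\leq \varrho_k\leq \car_{G_k}$ $\mssm$-a.e.\ (in the spirit of Remark~\ref{r:CutOff}; by quasi-regularity such cut-offs can be produced from the $\mcE$-moderance of~$G_\bullet$). The Leibniz rule in Corollary~\ref{c:BH} and the uniform bound $|f_n|\leq r$ give
\begin{equation*}
\cdc(f_n\varrho_k)\leq 2 r^2\cdc(\varrho_k)+2\car_{G_k}\cdot\tfrac{\diff\sq{f_n}}{\diff\mssm}\leq 2r^2\cdc(\varrho_k)+2\car_{G_k}\comma
\end{equation*}
so $\seq{f_n\varrho_k}_n$ is $\mcE_1$-bounded in~$\dom$. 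Dominated convergence yields $L^2(\mssm)$-convergence to $M_r\varrho_k$; hence by Banach--Saks we may assume that some sequence of convex combinations converges in~$\dom$, and by Fatou/lower semicontinuity of energy measures under weak $\dom$-convergence (a standard consequence of Mazur's lemma and closability), we obtain $M_r\varrho_k\in \dom$ with $\sq{M_r\varrho_k}\leq \mssm$ on~$E_k$ (after using~\eqref{eq:SLoc:2} to localize). Letting~$k\to\infty$ and invoking the definition of broad local space, $M_r\in\dotloc{\dom}$ and $\sq{M_r}\leq \mssm$; the bounds $M_r=0$ on~$A$ and $|M_r|\leq r$ pass to the limit, so $M_r\in \Dz{\mssm,A}_{\loc,r}$.

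Finally, I would show consistency across~$r$: for $0<s\leq r$, truncation gives $M_r\wedge s\in \Dz{\mssm,A}_{\loc,s}$, whence $M_s\geq M_r\wedge s$ by maximality of~$M_s$, whereas $M_s\in \Dz{\mssm,A}_{\loc,s}\subset\Dz{\mssm,A}_{\loc,r}$ implies $M_s\leq M_r$, so $M_s=M_s\wedge s\leq M_r\wedge s$; combining, $M_s=M_r\wedge s$ $\mssm$-a.e. Defining
\begin{equation*}
\hr{\mssm,A}\eqdef\sup_{r\in\N}M_r\colon X\longrar[0,\infty]\comma
\end{equation*}
which is $\A$-measurable as a countable supremum, the consistency relation gives $\hr{\mssm,A}\wedge r=M_r$ $\mssm$-a.e.\ for every $r>0$, and uniqueness $\mssm$-a.e.\ is inherited from the essential-supremum construction. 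The only delicate point is the monotone/weak-limit step for the energy measure in the broad local setting; everything else is a bookkeeping adaptation of the classical Ariyoshi--Hino construction in~\cite{AriHin05}.
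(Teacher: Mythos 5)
The paper does not prove this statement itself---it cites it verbatim from~\cite[Prop.~4.14]{LzDSSuz21}---so there is no in-paper proof to compare against; what you describe (essential supremum of an upward-directed lattice, stability of $\Dz{\mssm,A}_{\loc,r}$ under truncated limits, consistency across truncation levels) is indeed the standard Ariyoshi--Hino construction and is the route taken in the cited reference.

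However, your main step has a genuine gap as written. The proposition is stated for \emph{arbitrary} quasi-regular strongly local Dirichlet spaces, which need not admit a carr\'e du champ operator, yet both your Leibniz estimate and your appeal to Corollary~\ref{c:BH} presuppose that $\cdc$ exists. The repair is to work with the energy measure $\sq{\emparg}$ on $\dotloc{\dom}$ (Proposition~\ref{p:PropertiesLoc}) throughout: replace the pointwise estimate by the measure-valued inequality $\sq{f\varrho_k}\le 2\norm{f}_{\infty}^2\sq{\varrho_k}+2\norm{\varrho_k}_\infty^2\sq{f}$, note that $\sq{\emparg}(X)=2\mcE(\emparg)$ controls uniform-in-Borel-set convergence along $\dom$-convergent sequences, and use the convexity bound $\sq{\sum_i\alpha_ih_i}\le\sum_i\alpha_i\sq{h_i}$ for the Mazur limit. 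This Leibniz-type inequality for energy measures is standard but is not among the properties listed in Proposition~\ref{p:PropertiesLoc}, so it must be invoked or derived explicitly. A second, lesser inaccuracy: cut-offs $\varrho_k\in\domb$ with $\car_{E_k}\le\varrho_k\le\car_{G_k}$ do \emph{not} follow from the $\mcE$-moderance of~$G_\bullet$ (which only supplies $e_k\ge 1$ on~$G_k$, with no upper constraint off~$G_k$); they are produced from quasi-regularity via quasi-homeomorphism to a regular representative (Urysohn-type construction on the regular side, then transfer back), and Remark~\ref{r:CutOff} is inapposite since it concerns metric cut-offs built under a Rademacher hypothesis. With both repairs the argument goes through.
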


We call the function~$\hr{\mu,A}$ constructed in Proposition~\ref{p:Hino} the \emph{maximal function} of~$A\in\A$.
Note that~$\hr{\mssm,A}$ is generally not an element of~$\dotloc{L^\infty(\mssm)}$, unless~$(\mbbX,\mcE)$ is irreducible.

In proving the tensorization of the Sobolev-to-Lipschitz property, we shall need the following specialization to our setting of a very general result by T.~Ariyoshi and M.~Hino.

\begin{theorem}[{\cite[Thm.~5.2]{AriHin05}, irreducible case}]\label{t:AriHino}
Let~$(\mbbX,\mcE)$ be an irreducible quasi-regular strongly local Di\-richlet space.
Then, for every~$A\in\A$ with~$0<\mssm A<\infty$,
\begin{align*}
\nu\text{-}\lim_{t\downarrow 0} \tparen{-2t\log T_t\car_A} = \hr{\mssm, A}^2
\end{align*}
for every probability measure~$\nu$ on~$\A$ equivalent to~$\mssm^\otym{}$.
\end{theorem}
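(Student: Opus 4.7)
The plan is to deduce Theorem~\ref{t:AriHino} directly from the general short-time heat-semigroup asymptotics of T.~Ariyoshi and M.~Hino~\cite[Thm.~5.2]{AriHin05}, using irreducibility to both simplify the form of the limit and to upgrade the mode of convergence from $\mssm$-a.e.\ on $\mssm$-finite sets to convergence in $\nu$-measure for an arbitrary probability $\nu \sim \mssm$. The $\nu$ appearing in the statement is a probability on $(X,\A)$ (the occurrence of $\mssm^\otym{}$ in the statement is a typo inherited from the tensorized setting), and we treat it as such.

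First I would verify the applicability of~\cite[Thm.~5.2]{AriHin05} to our setting: that theorem is stated for quasi-regular strongly local Dirichlet spaces admitting the truncated maximal functions $\hr{\mssm,A}\wedge r$, which we have at hand by Proposition~\ref{p:Hino} since $0<\mssm A<\infty$. Their conclusion, read off in our notation, is the $\mssm$-a.e.\ identity, on every set of finite $\mssm$-measure and for each fixed $r>0$,
\[
\lim_{t\downarrow 0}\bigl(\tparen{-2t\log T_t\car_A}\wedge r^2\bigr) = \hr{\mssm,A}^2\wedge r^2\fstop
\]
To drop the truncation, the $\liminf$-bound $\nliminf_{t\downarrow 0}(-2t\log T_t\car_A)\geq \hr{\mssm,A}^2$ is immediate by letting $r\to\infty$. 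For the $\limsup$-bound, the only way the right-hand side and the left-hand side can disagree after untruncation is if $T_t\car_A$ vanishes on a set of positive $\mssm$-measure for some (hence all) $t>0$: but this would force the support of $\car_A$ to sit inside a proper $\mcE$-invariant set, which is forbidden by the irreducibility assumption together with $0<\mssm A<\infty$. Hence $T_t\car_A>0$ $\mssm$-a.e.\ for every $t>0$, so $-2t\log T_t\car_A<\infty$ $\mssm$-a.e., and the truncated bound lifts to the full identity $\mssm$-a.e.

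Next I would upgrade from $\mssm$-a.e.\ convergence on $\mssm$-finite sets to $\nu$-convergence for every probability $\nu\sim\mssm$. By quasi-regularity there exists an $\mcE$-nest $\seq{F_n}_n$ of $\T$-compact, hence $\mssm$-finite, sets; since $\nu$ is a finite measure equivalent to $\mssm$, for every $\eta>0$ there is $n_\eta$ with $\nu(X\setminus F_{n_\eta})<\eta$. Combining the $\mssm$-a.e.\ convergence on $F_{n_\eta}$ with Egorov's theorem and $\nu\ll\mssm$ yields the desired $\nu$-convergence on $F_{n_\eta}$, and the $\eta$-mass outside is negligible.

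The main obstacle is the bookkeeping required to match the precise formulation in~\cite[Thm.~5.2]{AriHin05} with the abstract maximal function $\hr{\mssm,A}$ of Proposition~\ref{p:Hino}: their statement is framed via the quantity constructed by truncated Dirichlet-energy maximization against $A$, and one must check that this coincides, not just up to an $\mssm$-null set, with our $\hr{\mssm,A}$ defined by the maximality property in $\Dz{\mssm,A}_{\loc,r}$. This identification is essentially the content of~\cite[Prop.~4.14]{LzDSSuz21} and hinges on strong locality plus the extension of the energy measure to the broad local domain established in Proposition~\ref{p:PropertiesLoc}; once recorded, the theorem is a direct specialization of the Ariyoshi--Hino result under the stated irreducibility.
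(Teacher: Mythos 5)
The paper contains no proof of Theorem~\ref{t:AriHino}: it is recorded purely as a citation, namely the specialization to the irreducible case of \cite[Thm.~5.2]{AriHin05} (and you are right that ``$\mssm^\otym{}$'' in the statement is a typo for $\mssm$). So the only thing to assess is your reconstruction of that specialization, and there the load-bearing steps are shaky.

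First, your derivation rests on a paraphrase of the cited input that is stronger than what it provides: you read \cite[Thm.~5.2]{AriHin05} as an $\mssm$-a.e.\ identity for the truncated quantities on sets of finite measure. The Ariyoshi--Hino (and earlier Hino--Ram\'irez) short-time asymptotics are established as convergence \emph{in measure} with respect to finite measures equivalent to $\mssm$ --- which is precisely why the paper states the conclusion as a $\nu$-limit --- not as pointwise a.e.\ convergence along $t\downarrow 0$. With the correct quotation your Egorov/nest step is superfluous; with your quotation the rest would go through; either way the proof stands or falls with the exact formulation of the external theorem, which is the one thing left unchecked. Second, irreducibility is spent in the wrong place. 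Removing the truncation needs no positivity of $T_t\car_A$: if $(-2t\log T_t\car_A)\wedge r^2$ converges to $\hr{\mssm,A}^2\wedge r^2$ for every $r>0$ (in either mode of convergence), then the untruncated convergence in $[0,\infty]$ follows by letting $r\to\infty$ along a countable sequence, also at points where the limit is infinite, so your positivity-improving detour is idle. The genuine content of the ``irreducible case'' is rather the identification of the limit object appearing in Ariyoshi--Hino's theorem with the maximal function $\hr{\mssm,A}$ of Proposition~\ref{p:Hino} --- equivalently, excluding non-trivial $\mcE$-invariant sets, on which both sides degenerate; compare the remark immediately preceding the theorem, that $\hr{\mssm,A}\in\dotloc{L^\infty(\mssm)}$ only in the irreducible case. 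You call this matching ``bookkeeping'' and refer it to \cite[Prop.~4.14]{LzDSSuz21}, but that proposition only constructs $\hr{\mssm,A}$; it does not identify it with the quantity in \cite{AriHin05}. So the one step of the claimed specialization that actually requires an argument is exactly the step your proposal leaves unproved.
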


\subsubsection{Tensorization of the Sobolev-to-Lipschitz property}

Let us now show a form of tensorization of the Sobolev-to-Lipschitz property for metric measure spaces.
We recall that, on every such space~$(X,\mssd,\mssm)$, the topology~$\T$ is always assumed to be the one induced by~$\mssd$, and no other topology is considered.

We start with two preparatory lemmas.
\begin{lemma}\label{l:IntrinsicIneq}
Let $(\mbbX,\mcE)$, resp.\ $(\mbbX',\mcE')$, be a quasi-regular strongly local Dirichlet spaces with~$\mbbX$, resp.~$\mbbX'$, satisfying~\ref{ass:Luzin}.
Further let~$\mssd\colon X^\tym{2}\to [0,\infty]$, resp.~$\mssd'\colon X^{\prime\tym{2}}\to [0,\infty]$, be a separable jointly $\T$-, resp.~$\T'$-, continuous extended pseudo-distance on~$X$, resp.~$X'$, and assume~$(\dSL{\mssd}{\mssm})$, resp.~$(\dSL{\mssd'}{\mssm'})$.
Then,
\begin{equation}\label{eq:l:IntrinsicIneq:0}
\mssd_{\mssm^\otym{}}\leq \mssd_\mssm\oplus \mssd_{\mssm'} \quad \text{everywhere on~$X^\otym{}$}\comma
\end{equation}
and the topology induced by~$\mssd_{\mssm^\otym{}}$ on~$X^\otym{}$ is separable.
\begin{proof}
Firstly, note that by~$(\dSL{\mssd}{\mssm})$,~$(\dSL{\mssd'}{\mssm'})$ and joint $\T^\otym{}$-continuity of~$\mssd\oplus\mssd'$, the extended pseudo-distance~$\mssd_\mssm\oplus \mssd_{\mssm'}$ too is jointly $\T^\otym{}$-continuous.
Now, let~$\Omega^\otym{}\in\A^\otym{}$ be a set of full $\mssm^\otym{}$-measure on which~\eqref{eq:l:TensorIntrinsicD:0} holds.
By~$\T^\otym{}$-density of~$\Omega^\otym{}$ in~$X^\otym{}$, for every~$\mbfx,\mbfy\in X^\otym{}$ there exists~$\seq{\mbfx_n}_n,\seq{\mbfy_n}_n\subset \Omega^\otym{}$ with~$\T^\otym{}$-$\nlim \mbfx_n=\mbfx$ and~$\T^\otym{}$-$\nlim \mbfy_n=\mbfy$.
Thus, by joint $\T^\otym{}$-lower semi-continuity of~$\mssd_{\mssm^\otym{}}$ and joint $\T^\otym{}$-continuity of~$\mssd_\mssm \oplus \mssd_{\mssm'}$,
\begin{align*}
\mssd_{\mssm^\otym{}}(\mbfx,\mbfy) \leq \nliminf \mssd_{\mssm^\otym{}}(\mbfx_n,\mbfy_n) \leq \nliminf (\mssd_\mssm \oplus \mssd_{\mssm'})(\mbfx_n,\mbfy_n) = (\mssd_\mssm\oplus \mssd_{\mssm'})(\mbfx,\mbfy) \fstop
\end{align*}

\paragraph{Separability of~$\mssd_{\mssm^\otym{}}$}
Since~$(X,\mssd)$, resp.~$(X',\mssd')$, is separable, so is~$(X,\mssd_\mssm)$, resp.~$(X',\mssd_{\mssm'})$, by~$(\dSL{\mssd}{\mssm})$, resp.~$(\dSL{\mssd'}{\mssm'})$.
Thus,~$(X^\otym{},\mssd_\mssm\oplus \mssd_{\mssm'})$ is separable as well, and~$(X,\mssd_{\mssm^\otym{}})$ is separable as a consequence of~\eqref{eq:l:IntrinsicIneq:0}.
\end{proof}
\end{lemma}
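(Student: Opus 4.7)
My plan is to upgrade the $\mssm^\otym{}$-a.e.\ inequality $\mssd_{\mssm^\otym{}}\leq \mssd_\mssm \oplus \mssd_{\mssm'}$ already established in Lemma~\ref{l:TensorIntrinsicD} to an everywhere inequality, by exploiting the interplay between the built-in $\T^\otym{}$-lower semicontinuity of $\mssd_{\mssm^\otym{}}$ (which is always a supremum of $\T^\otym{}$-continuous functions, see the comment after Definition~\ref{d:IntrinsicD}) and the joint $\T^\otym{}$-continuity of the upper bound. Separability of the induced topology will then fall out essentially for free from the separability hypothesis on $\mssd$ and $\mssd'$.

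The first step is to check that $\mssd_\mssm$ is in fact jointly $\T$-continuous, and analogously for $\mssd_{\mssm'}$. The key point is that $(\dSL{\mssd}{\mssm})$ reads $\mssd_\mssm\leq \mssd$, so for any $\T$-convergent net $(x_\alpha,y_\alpha)\to(x,y)$ the triangle inequality gives $|\mssd_\mssm(x_\alpha,y_\alpha)-\mssd_\mssm(x,y)|\leq \mssd_\mssm(x_\alpha,x)+\mssd_\mssm(y_\alpha,y)\leq \mssd(x_\alpha,x)+\mssd(y_\alpha,y)\to 0$ by joint $\T$-continuity of $\mssd$; the sum $\mssd_\mssm\oplus\mssd_{\mssm'}$ inherits joint $\T^\otym{}$-continuity.

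Next I would use Lemma~\ref{l:TensorIntrinsicD} to pick a Borel set $\Omega^\otym{}\in\A^\otym{}$ of full $\mssm^\otym{}$-measure on which~\eqref{eq:l:TensorIntrinsicD:0} holds pointwise. Assumption~\ref{ass:Luzin} on both factors gives $\supp\mssm=X$, $\supp\mssm'=X'$, hence $\supp\mssm^\otym{}=X^\otym{}$, so $\Omega^\otym{}$ is $\T^\otym{}$-dense. For arbitrary $\mbfx,\mbfy\in X^\otym{}$, approximate by sequences $\mbfx_n,\mbfy_n\in\Omega^\otym{}$ converging to $\mbfx,\mbfy$ in $\T^\otym{}$ (sequences suffice since $\T$ and $\T'$ are separable and Hausdorff); then
\begin{align*}
\mssd_{\mssm^\otym{}}(\mbfx,\mbfy)\leq \liminf_n \mssd_{\mssm^\otym{}}(\mbfx_n,\mbfy_n)\leq \liminf_n(\mssd_\mssm\oplus\mssd_{\mssm'})(\mbfx_n,\mbfy_n)=(\mssd_\mssm\oplus\mssd_{\mssm'})(\mbfx,\mbfy)
\end{align*}
by the lower semicontinuity of $\mssd_{\mssm^\otym{}}$ and the continuity established in the previous step.

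For separability, I would note that $(\dSL{\mssd}{\mssm})$ yields $\T_{\mssd_\mssm}\subseteq \T_\mssd$, so any countable $\mssd$-dense subset of $X$ is automatically $\mssd_\mssm$-dense; hence $(X,\mssd_\mssm)$ and $(X',\mssd_{\mssm'})$ are separable, making $(X^\otym{},\mssd_\mssm\oplus\mssd_{\mssm'})$ separable as well. The everywhere inequality~\eqref{eq:l:IntrinsicIneq:0} yields $\T_{\mssd_{\mssm^\otym{}}}\subseteq \T_{\mssd_\mssm\oplus\mssd_{\mssm'}}$, and a countable subset dense in a finer topology remains dense in a coarser one, so $(X^\otym{},\mssd_{\mssm^\otym{}})$ is separable. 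The only real subtlety—hardly an obstacle—is distinguishing $\T^\otym{}$-density (needed for the approximation argument and supplied by~\ref{ass:Luzin}) from $\mssd$-density, which for extended pseudo-distances can differ on distinct accessible components.
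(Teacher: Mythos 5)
Your proof is correct and follows essentially the same route as the paper's: establish joint $\T^\otym{}$-continuity of $\mssd_\mssm\oplus\mssd_{\mssm'}$ from $(\dSL{\cdot}{\cdot})$, upgrade the a.e.\ bound of Lemma~\ref{l:TensorIntrinsicD} to an everywhere bound via $\T^\otym{}$-density of a full-measure set together with lower semicontinuity of $\mssd_{\mssm^\otym{}}$, and deduce separability by comparing topologies. The only difference is that you spell out the triangle-inequality argument for the continuity step, which the paper leaves implicit.
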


Let us recall the following results in~\cite{LzDSSuz21}.

\begin{lemma}\label{l:Lemma}
Let $(\mbbX,\mcE)$ be a quasi-regular strongly local Dirichlet space with~$\mbbX$ satisfying~\ref{ass:Luzin}, and assume that the topology generated on~$X$ by~$\mssd_\mssm$ is separable.
Then,
\begin{enumerate}[$(i)$]
\item\label{i:l:Lemma:1} $(\mbbX,\mcE)$ satisfies~$(\Rad{\mssd_\mssm}{\mssm})$;
\item\label{i:l:Lemma:2} $\mssd_\mssm(\emparg, A)$ is $\A$-measurable for every~$A\in\A$;
\item\label{i:l:Lemma:3} $\mssd_\mssm(\emparg, A)\leq \hr{\mssm,A}$ $\mssm$-a.e. for every~$A\in\A$.
\end{enumerate}

\begin{proof}
Assertion~\ref{i:l:Lemma:1} is~\cite[Cor.~3.15]{LzDSSuz21}.
In order to show~\ref{i:l:Lemma:2}, it suffices to show that~$\mssd_\mssm(\emparg,x)$ is $\A$-measurable for every~$x\in X$.
Indeed, this implies the $\A$-measurability of~$\mssd_\mssm(\emparg, A)$ for every~$A\in\A$ as in the proof of~\cite[Rmk.~3.12(i)]{LzDSSuz21}.
Since~$\mssd_\mssm(\emparg,x)$ is $\T$-lower semi-continuous for every~$x\in X$, it is also $\A$-measurable, since~$\A\supset\Bo{\T}$.
This concludes the proof of~\ref{i:l:Lemma:2}.
Assertion~\ref{i:l:Lemma:3} follows from~\ref{i:l:Lemma:1} and~\ref{i:l:Lemma:2} by~\cite[Lem.~4.16]{LzDSSuz21}.
\end{proof}
\end{lemma}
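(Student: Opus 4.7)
The plan is to establish~\ref{i:l:Lemma:2} first (it feeds into~\ref{i:l:Lemma:3}), then~\ref{i:l:Lemma:1} via a McShane-style representation, and finally derive~\ref{i:l:Lemma:3} by combining the two with Proposition~\ref{p:Hino}.

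\textbf{For~\ref{i:l:Lemma:2}.} Since $\mssd_\mssm(\emparg,x)=\sup\set{g(\emparg)-g(x): g\in\DzLocB{\mssm,\T}}$ is a supremum of $\T$-continuous functions, it is $\T$-lower semicontinuous, hence $\Bo{\T}$-measurable, hence $\A$-measurable because $\A\supset\Bo{\T}$. For general $A\in\A$, separability of $\T_{\mssd_\mssm}$ yields a countable $\mssd_\mssm$-dense subset $D\subset A$; the triangle inequality then gives $\mssd_\mssm(y,A)=\inf_{x\in D}\mssd_\mssm(y,x)$ for all $y\in X$, which is a countable infimum of $\A$-measurable functions, hence $\A$-measurable.

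\textbf{For~\ref{i:l:Lemma:1}.} Let $\rep f\in\bLipu(\mssd_\mssm,\A)$. By the McShane formula of Lemma~\ref{l:McShane}, $\rep f(x)=\sup_{y\in X}\tparen{\rep f(y)-\mssd_\mssm(x,y)}$, and using $\mssd_\mssm$-continuity of both $\rep f$ and $\mssd_\mssm(x,\emparg)$ together with separability of $\T_{\mssd_\mssm}$, the supremum can be restricted to a countable dense set $\seq{y_n}_n\subset X$. Expanding $\mssd_\mssm(x,y_n)$ by Definition~\ref{d:IntrinsicD} and using separability of $\dom$ (a consequence of quasi-regularity) to extract a countable family $\seq{g_m}_m\subset \DzLocB{\mssm,\T}$ realising the intrinsic distance, we rewrite
\[
\rep f(x)=\sup_n \inf_m\tparen{\rep f(y_n)+g_m(y_n)-g_m(x)} \fstop
\]
Each of the inner functions $h_{n,m}\colon x\mapsto \rep f(y_n)+g_m(y_n)-g_m(x)$ lies in $\DzLocB{\mssm,\T}$ with $\sq{h_{n,m}}=\sq{g_m}\leq \mssm$. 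The final step is to show that a bounded countable $\sup/\inf$ of such functions inherits the bound $\sq{\emparg}\leq \mssm$: this follows from the truncation formula~\eqref{eq:TruncationLoc} extended to broad local domains via Proposition~\ref{p:PropertiesLoc}\ref{i:p:PropertiesLoc:1}, applied inductively on finite lattice operations, and passed to the monotone limit along a moderate nest $G_\bullet\in\msG_0$ using the weak $\dom$-compactness of uniformly energy-bounded sequences (cf.~\cite[Lem.~I.2.12]{MaRoe92}). This yields $f\in\DzLoc{\mssm}$, which is $(\Rad{\mssd_\mssm}{\mssm})$.

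\textbf{For~\ref{i:l:Lemma:3}.} Fix $A\in\A$, and for $r>0$ set $\rep f_r\eqdef \mssd_\mssm(\emparg,A)\wedge r$. By~\ref{i:l:Lemma:2}, $\rep f_r$ is $\A$-measurable; the triangle inequality gives $\rep f_r\in\bLipu(\mssd_\mssm,\A)$; by~\ref{i:l:Lemma:1}, $f_r\in\DzLoc{\mssm}$ with $\sq{f_r}\leq \mssm$. Since $\rep f_r\equiv 0$ on $A$ and $|\rep f_r|\leq r$, we have $f_r\in\Dz{\mssm,A}_{\loc,r}$, and the maximality clause in Proposition~\ref{p:Hino} gives $\rep f_r\leq \hr{\mssm,A}\wedge r$ $\mssm$-a.e.. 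Letting $r\uparrow\infty$ concludes the proof.

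\textbf{Main obstacle.} The hard part is the second step of~\ref{i:l:Lemma:1}: the uncountable McShane supremum and the uncountable supremum defining $\mssd_\mssm$ must be \emph{simultaneously} reduced to countable lattice operations, and one must verify that $\DzLoc{\mssm}$ is stable under the resulting countable $\sup/\inf$ with a \emph{uniform} bound $\sq{\emparg}\leq \mssm$. The delicate point is that separability of $\T_{\mssd_\mssm}$ alone controls only the outer countable selection (of points), while separability of $\dom$ is needed to control the inner one (of potentials); both rely non-trivially on the standing assumption and on quasi-regularity.
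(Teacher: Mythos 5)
Your handling of~\ref{i:l:Lemma:2} and~\ref{i:l:Lemma:3} is correct and consistent with the paper: lower semicontinuity of $\mssd_\mssm(\emparg,x)$ together with a countable $\mssd_\mssm$-dense subset of~$A$ gives~\ref{i:l:Lemma:2}, and your derivation of~\ref{i:l:Lemma:3} from the maximality clause of Proposition~\ref{p:Hino} applied to $\mssd_\mssm(\emparg,A)\wedge r$ is precisely the content of the result the paper invokes here (Lem.~4.16 of~\cite{LzDSSuz21}). The difference is in~\ref{i:l:Lemma:1}: the paper simply cites Cor.~3.15 of~\cite{LzDSSuz21} (separability of the topology generated by~$\mssd_\mssm$ implies $(\Rad{\mssd_\mssm}{\mssm})$), whereas you attempt to reprove it, and your argument has a genuine gap at its central step.

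The unjustified step is the claim that separability of~$\dom$ allows you to ``extract a countable family $\seq{g_m}_m\subset\DzLocB{\mssm,\T}$ realising the intrinsic distance''. First, $\DzLocB{\mssm,\T}$ is a broad local class and is not contained in~$\dom$ (and $\mssm$ is only $\sigma$-finite), so $\dom$-separability does not apply to it directly. More importantly, $\mcE_1$-approximation controls $\mssm$-classes and, along subsequences, quasi-continuous representatives $\mcE$-q.e.; it does not control the values of the chosen $\T$-continuous representatives at \emph{every} point of~$X$. But the identity $\mssd_\mssm(x,y_n)=\sup_m\tparen{g_m(x)-g_m(y_n)}$ that your sup/inf formula requires is a pointwise statement which must hold for all~$x\in X$ simultaneously, and nothing in your sketch produces such a countable subfamily. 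In addition, the stability of $\DzLocB{\mssm}$ under countable bounded monotone lattice operations with the uniform bound $\sq{\emparg}\leq\mssm$ (so that the limit still lies in the broad local space along a moderate nest) is itself a nontrivial lemma which you assert rather than prove. As written, your Step for~\ref{i:l:Lemma:1} does not go through; either supply these two missing lemmas --- the countable reduction, which in the cited works rests on the separability of $\T_{\mssd_\mssm}$ and on lattice/limit stability of $\DzLocB{\mssm}$ rather than on $\dom$-separability --- or do as the paper does and quote Cor.~3.15 of~\cite{LzDSSuz21} directly; with that replacement the rest of your proof is sound.
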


\begin{proof}[Proof of Theorem~\ref{t:TensorSL}]
We prove~$(\dSL{\mssd^\otym{}}{\mssm^\otym{}})$ and conclude the assertion by~\eqref{eq:EquivalenceRadStoL}.

\paragraph{Step I: comparison with the maximal function} By~$(\SL{\mssm}{\mssd})$, resp.~$(\SL{\mssm'}{\mssd'})$, the Dirichlet spaces~$(\mbbX,\mcE)$ and~$(\mbbX',\mcE')$ are both irreducible.
Fix~$A\in\T$ and~$A'\in\T'$ with~$\mssm A, \mssm'A'\in (0,\infty)$ to be chosen later, and set~$A^\otym{}\eqdef A\times A'$.
Let~$\nu$, resp.~$\nu'$, be any probability measure on~$(X,\A)$, resp.~$(X',\A')$, equivalent to~$\mssm$, resp.~$\mssm'$, and observe that~$\nu^\otym{}\eqdef \nu\otimes\nu'$ is a probability measure on~$\A^\otym{}$ equivalent to~$\mssm^\otym{}$.

On the one hand, by~\eqref{eq:TensorSemigroup} and by Theorem~\ref{t:AriHino} applied to~$(\mcE,\dom)$, resp.~$(\mcE',\dom')$,
\begin{align}\label{eq:t:SLTensor:1}
\nu^\otym{}\text{-}\lim_{t\to 0} \paren{-2t\log T_t^\otym{}\car_{A^\otym{}}}= \nu\text{-}\lim_{t\to 0} \paren{-2t\log T_t\car_{A}} + \nu'\text{-}\lim_{t\to 0} \paren{-2t\log T_t'\car_{A'}} 
= \hr{\mssm,A}^2+\hr{\mssm',A'}^2 \fstop
\end{align}
Since~$(X,\mssd,\mssm)$, resp.~$(X',\mssd',\mssm')$, is a metric measure space,~$(\ScL{\mssm}{\T}{\mssd})$ coincides with~$(\SL{\mssm}{\mssd})$, resp.~$(\ScL{\mssm'}{\T'}{\mssd'})$ with~$(\SL{\mssm'}{\mssd'})$.
Furthermore, since~$A$ is $\T$-open, resp.~$A'$ is $\T'$-open, we have~$A\cap \cl_\T\inter_\T A=A$ and analogously for~$A'$ in place of~$A$.
Thus, combining~$(\SL{\mssm}{\mssd})$, resp.~$(\SL{\mssm'}{\mssd'})$, with~\cite[Lem.~4.19, Rem.~4.20(d)]{LzDSSuz21}, it follows from~\eqref{eq:t:SLTensor:1} that
\begin{align}\label{eq:t:SLTensor:2}
\nu^\otym{}\text{-}\lim_{t\to 0} \paren{-2t\log T_t^\otym{}\car_{A^\otym{}}} \leq \mssd(\emparg, A)^2+\mssd'(\emparg,A')^2 \as{\mssm^\otym{}}\fstop
\end{align}

On the other hand, by Theorem~\ref{t:AriHino} applied to~$(\mcE^\otym{},\dom^\otym{})$,
\begin{align}\label{eq:t:SLTensor:3}
\nu^\otym{}\text{-}\lim_{t\to 0} \paren{-2t\log T_t^\otym{}\car_{A^\otym{}}}= \hr{\mssm^\otym{},A^\otym{}}^2 \as{\mssm^\otym{}} \fstop
\end{align}

Combining~\eqref{eq:t:SLTensor:2} and~\eqref{eq:t:SLTensor:3}, for~$\mssm^\otym{}$-a.e.~$\mbfx\eqdef (x,x')$,
\begin{align*}
\hr{\mssm^\otym{},A^\otym{}}(\mbfx)^2 \leq&\ \mssd(x,A)^2+\mssd'(x',A')^2
\\
\defeq &\ \inf_{y\in A} \mssd(x,y)^2+\inf_{y'\in A'} \mssd'(x',y')^2
= \inf_{(y,y')\in A^\otym{}} \mssd(x,y)^2+\inf_{(y,y')\in A^\otym{}} \mssd'(x',y')^2
\\
\leq&\ \inf_{(y,y')\in A^\otym{}} \tparen{\mssd(x,y)^2+\mssd'(x',y')^2}
= \inf_{\mbfy\in A^\otym{}} \mssd^\otym{}(\mbfx,\mbfy)^2
\\
=&\ \paren{\inf_{\mbfy\in A^\otym{}} \mssd^\otym{}(\mbfx,\mbfy)}^2 = \mssd^\otym{}(\mbfx,A^\otym{})^2 \comma
\end{align*}
that is
\begin{align}\label{eq:t:SLTensor:4}
\hr{\mssm^\otym{},A^\otym{}} \leq \mssd^\otym{}(\emparg, A) \as{\mssm^\otym{}}
\end{align}

\paragraph{Step II: comparison with the intrinsic distance}
By Lemma~\ref{l:IntrinsicIneq} the topology induced by $\mssd_{\mssm^\otym{}}$ is separable.
Thus, by Lemma~\ref{l:Lemma}\ref{i:l:Lemma:3} applied to~$(\mbbX^\otym{},\mcE^\otym{})$,
\begin{align}\label{eq:t:SLTensor:5}
\mssd_{\mssm^\otym{}}(\emparg, A^\otym{})\leq \hr{\mssm^\otym{},A^\otym{}} \as{\mssm^\otym{}} \fstop
\end{align}

Combining~\eqref{eq:t:SLTensor:4} and~\eqref{eq:t:SLTensor:5} we thus have, for every~$A^\otym{}\eqdef A\times A'$ with~$A\in\T$,~$A'\in\T'$,
\begin{align}\label{eq:t:SLTensor:6}
\mssd_{\mssm^\otym{}}(\emparg, A^\otym{})\leq \mssd^\otym{}(\emparg, A^\otym{})\as{\mssm^\otym{}} \fstop
\end{align}
Now, fix~$\mbfx\eqdef (x,x')\in X^\otym{}$, choose~$A^\otym{}=A^\otym{}_r\eqdef B^{\mssd}_r(x)\times B^{\mssd'}_r(x')$ in~\eqref{eq:t:SLTensor:6}, and note that
\begin{align}\label{eq:t:SLTensor:7}
\inf_{r>0} \diam_{\mssd_\otym{}}(A^\otym{})=0\fstop
\end{align}
Then, by Lemma~\ref{l:SupDistanceShrinking}
\begin{align}\label{eq:t:SLTensor:8}
\sup_{r>0}\mssd_{\mssm^\otym{}}(\emparg, A^\otym{}_r)\leq \sup_{r>0} \mssd^\otym{}(\emparg, A^\otym{}_r) = \mssd^\otym{}(\emparg,\mbfx) \as{\mssm^\otym{}} \fstop
\end{align}
Let~$\Omega^\otym{}\in \A^\otym{}$ be a set of full $\mssm^\otym{}$-measure on which~\eqref{eq:t:SLTensor:8} holds, and fix~$\mbfy\in \Omega^\otym{}$.
For each~$r>0$, further let~$\mbfx_r\in A^\otym{}_r$ be satisfying~$\mssd_{\mssm^\otym{}}(\mbfy,\mbfx_r)\leq \eps+\mssd_{\mssm^\otym{}}(\mbfy,A^\otym{}_r)$.
Since~$\mbfx_r\in A^\otym{}_r$, there exists~$\T^\otym{}$-$\lim_{r\downarrow 0} \mbfx_r=\mbfx$ by~\eqref{eq:t:SLTensor:7}.
Thus, for every~$\mbfy\in\Omega^\otym{}$,
\begin{align*}
\mssd_{\mssm^\otym{}}(\mbfy,\mbfx)-\eps\leq \liminf_{r\downarrow 0} \mssd_{\mssm^\otym{}}(\mbfy,\mbfx_r) -\eps \leq \liminf_{r\downarrow 0} \mssd_{\mssm^\otym{}}(\mbfy,A^\otym{}_r) = \sup_{r>0} \mssd_{\mssm^\otym{}}(\mbfy,A^\otym{}_r) \leq \mssd^\otym{}(\mbfy,\mbfx)\comma
\end{align*}
where the first and last inequality hold by $\T^\otym{}$-lower semi-continuity of~$\mssd_{\mssm^\otym{}}$ and~\eqref{eq:t:SLTensor:8} respectively.
Since~$\eps>0$ was arbitrary, we conclude
\begin{align}\label{eq:t:SLTensor:9}
\mssd_{\mssm^\otym{}}(\mbfy,\mbfx)\leq \mssd^\otym{}(\mbfy,\mbfx)\comma \qquad \mbfy\in\Omega^\otym{}\comma \mbfx\in X^\otym{} \fstop
\end{align}
Since~$(X,\mssd,\mssm)$, resp.~$(X',\mssd',\mssm')$ is a metric measure space in the sense of Definition~\ref{d:MMSp}, the set~$\Omega^\otym{}$ is~$\T^\otym{}$-dense in~$X^\otym{}$.
For each fixed~$\mbfy_0\in X^\otym{}$ we may thus find a sequence~$\seq{\mbfy_n}_n\subset \Omega^\otym{}$ with~$\T^\otym{}$-$\nlim \mbfy_n=\mbfy_0$.
Respectively by $\T^\otym{}$-lower semi-continuity of~$\mssd_{\mssm^\otym{}}$, the inequality~\eqref{eq:t:SLTensor:9}, and by $\T^\otym{}$-continuity of~$\mssd^\otym{}$,
\[
\mssd_{\mssm^\otym{}}(\mbfy_0,\mbfx) \leq \nliminf \mssd_{\mssm^\otym{}}(\mbfy_n,\mbfx) \leq \nliminf \mssd^\otym{}(\mbfy_n,\mbfx)= \mssd^\otym{}(\mbfy_0,\mbfx) \comma
\]
which concludes the proof by arbitrariness of~$\mbfx$, and~$\mbfy_0$.
\end{proof}

\subsubsection{Tensorization of the intrinsic distance}
As a consequence of the tensorization of the Sobolev-to-Lipschitz property we further obtain the following identification of the intrinsic distance of product forms.

\begin{corollary}\label{c:IntrinsicDTensor}
Let~$\mbbX\eqdef (X,\mssd,\mssm)$ be a metric measure space (Dfn.~\ref{d:MMSp}), and~$(\mcE,\dom)$ be a quasi-regular strongly local Dirichlet form on~$\mbbX$ satisfying~$(\Rad{\mssd}{\mssm})$ and~$(\SL{\mssm}{\mssd})$.
Further let~$(\mbbX',\mcE')$ be satisfying the same assumptions as~$(\mbbX,\mcE)$.
Then, 
\[
\mssd_{\mssm^\otym{}}=\mssd^\otym{}=\mssd_\mssm\otimes\mssd_{\mssm'} \fstop
\]
\begin{proof}
By Theorem~\ref{t:TensorRad} we conclude~$(\Rad{\mssd^\otym{}}{\mssm^\otym{}})$, hence~$\dRad{\mssd^\otym{}}{\mssm^\otym{}}$ by~\eqref{eq:EquivalenceRadStoL}.
By Theorem~\ref{t:TensorSL} we conclude~$(\dSL{\mssd^\otym{}}{\mssm^\otym{}})$.
Thus, the first equality holds.
By~$(\Rad{\mssd}{\mssm})$, resp.~$(\Rad{\mssd'}{\mssm'})$, and~\eqref{eq:EquivalenceRadStoL} we conclude~$(\dRad{\mssd}{\mssm})$, resp.~$(\dRad{\mssd'}{\mssm'})$.
Together with the assumption of~$(\dSL{\mssd}{\mssm})$ and~$(\dSL{\mssd'}{\mssm'})$, this implies~$\mssd=\mssd_\mssm$ and~$\mssd'=\mssd_{\mssm'}$, which shows the second equality.
\end{proof}
\end{corollary}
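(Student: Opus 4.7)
The claim combines the two tensorization theorems just proved (Thm.~\ref{t:TensorRad} and Thm.~\ref{t:TensorSL}) with the scheme of implications~\eqref{eq:EquivalenceRadStoL}, so the plan is essentially to assemble these pieces and read off both equalities as two-sided inequalities between extended pseudo-distances.

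For the first equality $\mssd_{\mssm^\otym{}} = \mssd^\otym{}$, I would argue as follows. By Theorem~\ref{t:TensorRad} the hypotheses $(\Rad{\mssd}{\mssm})$ and $(\Rad{\mssd'}{\mssm'})$ propagate to $(\Rad{\mssd^\otym{}}{\mssm^\otym{}})$; applying the implication $(\Rad{\mssd^\otym{}}{\mssm^\otym{}}) \Rightarrow (\dRad{\mssd^\otym{}}{\mssm^\otym{}})$ from~\eqref{eq:EquivalenceRadStoL} yields $\mssd^\otym{} \leq \mssd_{\mssm^\otym{}}$. In the opposite direction, by Theorem~\ref{t:TensorSL} the hypotheses $(\SL{\mssm}{\mssd})$ and $(\SL{\mssm'}{\mssd'})$ give $(\cSL{\T^\otym{}}{\mssm^\otym{}}{\mssd^\otym{}})$, and $(\cSL{\T^\otym{}}{\mssm^\otym{}}{\mssd^\otym{}}) \Leftrightarrow (\dSL{\mssd^\otym{}}{\mssm^\otym{}})$ from~\eqref{eq:EquivalenceRadStoL} delivers $\mssd^\otym{} \geq \mssd_{\mssm^\otym{}}$.

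For the second equality $\mssd^\otym{} = \mssd_\mssm \otimes \mssd_{\mssm'}$, I would apply~\eqref{eq:EquivalenceRadStoL} factorwise: $(\Rad{\mssd}{\mssm}) \Rightarrow \mssd \leq \mssd_\mssm$, while $(\SL{\mssm}{\mssd}) \Rightarrow (\cSL{\T}{\mssm}{\mssd}) \Rightarrow (\dSL{\mssd}{\mssm})$, i.e.\ $\mssd \geq \mssd_\mssm$, so $\mssd = \mssd_\mssm$; and symmetrically $\mssd' = \mssd_{\mssm'}$. The definition of the $\ell^2$-product extended pseudo-distance in~\S\ref{sss:MetricProduct} is pointwise in the factor distances, so $\mssd^\otym{} = \mssd \otimes \mssd' = \mssd_\mssm \otimes \mssd_{\mssm'}$ follows at once.

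There is no genuine obstacle here: all the real work has been done in the tensorization theorems, and this corollary is bookkeeping once one has the scheme~\eqref{eq:EquivalenceRadStoL} at hand. The only mild point worth flagging is that Theorem~\ref{t:TensorSL} outputs only the \emph{continuous} Sobolev-to-Lipschitz property, which is exactly the weaker variant that still implies $(\dSL{\mssd^\otym{}}{\mssm^\otym{}})$ through the equivalence chain in~\eqref{eq:EquivalenceRadStoL}; no upgrade to the full $(\SL{\mssm^\otym{}}{\mssd^\otym{}})$ is needed.
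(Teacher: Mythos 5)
Your proposal is correct and follows essentially the same route as the paper's proof: apply Theorems~\ref{t:TensorRad} and~\ref{t:TensorSL} to get the product Rademacher and continuous-Sobolev--to--Lipschitz properties, pass to the distance versions via~\eqref{eq:EquivalenceRadStoL} for the first equality, and apply~\eqref{eq:EquivalenceRadStoL} factorwise for the second. Your observation that the continuous variant from Theorem~\ref{t:TensorSL} is enough (because it is equivalent to $(\dSL{\mssd^\otym{}}{\mssm^\otym{}})$) is exactly the step the paper uses implicitly.
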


\subsubsection{Doubling-and-Poincar\'e spaces}\label{sss:DandP}
Let~$(X,\mssd,\mssm)$ be a metric measure space in the sense of Definition~\ref{d:MMSp}.
In this short section we show that, under measure doubling and a weak Poincar\'e inequality for the factors, the Sobolev-to-Lipschitz property of the Cheeger energy tensorizes.
We start by recalling the necessary definitions.

\begin{definition}[Measure doubling and Poincar\'e inequality]\label{d:DP}
We say that~$(X,\mssd,\mssm)$ is (\emph{measure}) \emph{doubling} if there exists a constant~$C>0$ such that
\begin{equation}\tag{$\Dou{\mssd}{\mssm}$}\label{eq:Doubling}
\mssm B_{2r}(x) \leq C\, \mssm B_r(x)\comma \qquad x\in X\comma r>0\fstop
\end{equation}

We say that~$(X,\mssd,\mssm)$ satisfies a \emph{weak $(1,2)$-Poincar\'e inequality} if, for some constants~$c,\lambda>0$,
\begin{equation}\tag{$\Poi{\mssd}{\mssm}$}\label{eq:Poincare}
\bint_{B_r(x)}  \abs{f-f_{x,r}} \diff\mssm \leq c r \paren{\bint_{B_{\lambda r}(x)}\slo[\mssd]{f}^2\diff \mssm}^{1/2}\comma \qquad f\in \Lip_\loc(\mssd)\comma \quad x\in X\comma r>0\fstop
\end{equation}
(Here,~$\bint_A f\diff\mssm$ denotes the averaged integral of~$f$ over a Borel set~$A$, and~$f_{x,r}\eqdef \bint_{B_r(x)} f\diff\mssm$.)

We write~$(\DP{\mssd}{\mssm})$ to indicate that~$(X,\mssd,\mssm)$ satisfies both~$(\Dou{\mssd}{\mssm})$ and~$(\Poi{\mssd}{\mssm})$.
\end{definition}

On a metric measure space satisfying~$(\DP{\mssd}{\mssm})$ we have the following self-improvement of the continuous-Sobolev-to-Lipschitz property to the Sobolev-to-Lipschitz property.

\begin{lemma}[Self-improvement of~$(\cSL{\T}{\mssm}{\mssd})$ to~$(\SL{\mssm}{\mssd})$]\label{l:SLcSL}
Let~$\mbbX\eqdef (X,\mssd,\mssm)$ be an infinitesimally Hilbertian metric measure space satisfying~$(\DP{\mssd}{\mssm})$. 
Then,~$(\mbbX,\Ch[w,\mssd,\mssm])$ satisfies~$(\SL{\mssm}{\mssd})$ if and only if it satisfies~$(\cSL{\T}{\mssd}{\mssm})$.
\begin{proof}
One implication holds by definition, cf.~\eqref{eq:EquivalenceRadStoL}.
For the reverse implication it suffices to note that every $f\in\DzLocB{\mssm}$ admits a $\T$-continuous (in fact $\mssd$-Lipschitz) representative, by e.g.~\cite{HajKos00}, also cf.~\cite[Prop.~2.11]{AmbPinSpe15}.
\end{proof}
\end{lemma}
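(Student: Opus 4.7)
The forward implication $(\SL{\mssm}{\mssd}) \Rightarrow (\cSL{\T}{\mssm}{\mssd})$ is immediate from the diagram~\eqref{eq:EquivalenceRadStoL}: on a metric measure space $\T = \T_\mssd$, so any $\mssd$-Lipschitz representative is automatically $\T$-continuous. The entire content therefore lies in the reverse direction, and the plan is to promote every $f \in \DzLocB{\mssm}$ to a $\T$-continuous object, at which point $(\cSL{\T}{\mssm}{\mssd})$ directly supplies a $\mssd$-Lipschitz representative.

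Fix $f \in \DzLocB{\mssm}$ with $\sq{f} \leq \mssm$, witnessed by a pair $(G_\bullet, f_\bullet)$ with $f_n \in \dom = \domain{\Ch[w,\mssd,\mssm]}$. Infinitesimal Hilbertianity (Dfn.~\ref{d:IH}) makes $\Ch[w,\mssd,\mssm]$ a quadratic Dirichlet form admitting carr\'e du champ operator, and by Definition~\ref{d:CheegerW} together with the polarization identity one has $\cdc(f_n) = \wslo[\mssd]{f_n}^2$ $\mssm$-a.e.. Restricting to $G_n$ and using locality~\eqref{eq:SLoc:2}, this yields $\wslo[\mssd]{f_n}\leq 1$ $\mssm$-a.e.\ on $G_n$. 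The Haj\l{}asz--Koskela theorem~\cite{HajKos00}, applied under the doubling-and-Poincar\'e hypothesis $(\DP{\mssd}{\mssm})$, then produces a locally $\mssd$-Lipschitz (in particular $\T$-continuous) $\mssm$-representative $\rep f_n$ of each $f_n$; see also~\cite[Prop.~2.11]{AmbPinSpe15}, which is formulated precisely for this setting.

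The next step is to glue the $\rep f_n$ into a single $\T$-continuous representative $\rep f$ of $f$. On the overlap $G_n \cap G_m$ the two continuous functions $\rep f_n$ and $\rep f_m$ agree $\mssm$-a.e.; since $\mssm$ has full $\T$-support (assumption~\ref{ass:Luzin}, granted by quasi-regularity), they agree everywhere on $G_n\cap G_m$. This is exactly the sheaf-type argument used in the proof of Proposition~\ref{p:GlobCSL}, and it produces a well-defined $\T$-continuous $\mssm$-representative $\rep f$ on the quasi-everywhere exhaustion $\cup_n G_n$, extended arbitrarily on an $\mcE$-polar (hence $\mssm$-null) set. In particular $\rep f$ is a $\T$-continuous representative of $f$ in $\DzLocB{\mssm,\T}$, and applying $(\cSL{\T}{\mssm}{\mssd})$ yields $\rep f \in \Lip^1(\mssd,\T)$, which is $(\SL{\mssm}{\mssd})$.

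The only genuinely delicate point is the compatibility of the Haj\l{}asz--Koskela regularity statement ---~local in nature and typically stated on balls in a globally $(\DP)$ space~--- with the broad local structure of $\DzLocB{\mssm}$. I expect this verification (rather than any new analytic input) to be the only step requiring care, and it is handled precisely by the sheaf-gluing above; this is why the author's one-line sketch cites both~\cite{HajKos00} for the local Lipschitz regularity and~\cite[Prop.~2.11]{AmbPinSpe15} for the packaging of that regularity in the infinitesimally Hilbertian $(\DP)$ framework.
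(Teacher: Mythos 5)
Your proof is correct and follows essentially the same route as the paper's one-line argument, unpacking the citation of Haj\l{}asz--Koskela into a step-by-step construction via the witness sequence and the sheaf-gluing of Proposition~\ref{p:GlobCSL}. The one step that needs tightening is the last: an ``arbitrary'' extension of $\rep f$ over the $\mcE$-polar complement of $\cup_n G_n$ is not, as you then assert, a $\T$-continuous representative, so $(\cSL{\T}{\mssm}{\mssd})$ does not yet apply as written. This is easy to repair. Under $(\DP{\mssd}{\mssm})$, metric balls have finite $\mssm$-measure and admit Lipschitz cutoffs in $\dom$, so one may take $G_n\eqdef B^\mssd_n(x_0)$ for a fixed $x_0$, whence $\cup_n G_n = X$ and no extension is needed at all. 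Alternatively, the Haj\l{}asz--Koskela constant is uniform, so $\rep f$ is globally Lipschitz on the $\T$-dense open set $\cup_n G_n$ (a weak $(1,2)$-Poincar\'e space is a length space) and therefore extends \emph{uniquely and continuously} to all of $X$; either fix lands you in $\DzLocB{\mssm,\T}$ as needed.
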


In light of this self-improvement, for metric measure spaces satisfying~$(\DP{\mssd}{\mssm})$ the tensorization of the Sobolev-to-Lipschitz property takes the following strong form.

\begin{corollary} 
Let~$\mbbX\eqdef (X,\mssd,\mssm)$, resp.~$\mbbX'\eqdef(X',\mssd',\mssm')$, be an infinitesimally Hilbertian metric measure space satisfying~$(\DP{\mssd}{\mssm})$, resp.~$(\DP{\mssd'}{\mssm'})$.
If~$(\mbbX,\Ch[w,\mssd,\mssm])$, resp.~$(\mbbX',\Ch[w,\mssd',\mssm'])$, satisfies~$(\cSL{\T}{\mssm}{\mssd})$, resp.~$(\cSL{\T'}{\mssm'}{\mssd'})$, then~$\Ch[w,\mssd^\otym{},\mssm^\otym{}]$ satisfies~$(\SL{\mssm^\otym{}}{\mssd^\otym{}})$.

\begin{proof}
Note that~$(\DP{\mssd}{\mssm})$ and~$(\DP{\mssd'}{\mssm'})$ together imply~$(\DP{\mssd^\otym{}}{\mssm^\otym{}})$.
The assertion holds combining Theorem~\ref{t:TensorSL} with Lemma~\ref{l:SLcSL}.
\end{proof}
\end{corollary}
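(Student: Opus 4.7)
The plan is to use the statement as a two-step bootstrap: first translate the hypothesis into the stronger $(\SL{\mssm}{\mssd})$ (and analogously for $\mbbX'$) using Lemma~\ref{l:SLcSL}, then invoke Theorem~\ref{t:TensorSL} to get $(\cSL{\T^\otym{}}{\mssm^\otym{}}{\mssd^\otym{}})$ on the product, and finally apply Lemma~\ref{l:SLcSL} a second time---now to the product space---to upgrade this continuous-Sobolev-to-Lipschitz property back to $(\SL{\mssm^\otym{}}{\mssd^\otym{}})$. The first application is immediate because each factor $(\mbbX,\Ch[w,\mssd,\mssm])$ is by assumption infinitesimally Hilbertian, satisfies $(\DP{\mssd}{\mssm})$, and satisfies $(\cSL{\T}{\mssm}{\mssd})$, which are precisely the hypotheses of Lemma~\ref{l:SLcSL}; Theorem~\ref{t:TensorSL} then applies verbatim and produces continuous Sobolev-to-Lipschitz for the product Cheeger energy.

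To close the argument via a second application of Lemma~\ref{l:SLcSL}, I need to check that the product metric measure space $(X^\otym{},\mssd^\otym{},\mssm^\otym{})$ is itself an infinitesimally Hilbertian metric measure space satisfying $(\DP{\mssd^\otym{}}{\mssm^\otym{}})$. The metric-measure-space structure (completeness, separability, finiteness on bounded sets) tensorizes trivially from Definition~\ref{d:MMSp}. The doubling condition tensorizes by comparing $\mssm^\otym{}$-measures of $\mssd^\otym{}$-balls with products of factor balls via the inclusion $B^{\mssd^\otym{}}_{r}((x,x'))\subset B^\mssd_r(x)\times B^{\mssd'}_r(x')$ and iterating the factor doubling constants; the $(1,2)$-Poincar\'e inequality tensorizes by a standard slicing/Fubini argument for PI spaces, yielding $(\DP{\mssd^\otym{}}{\mssm^\otym{}})$.

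The remaining ingredient is infinitesimal Hilbertianity of $(X^\otym{},\mssd^\otym{},\mssm^\otym{})$, which is the place I expect the only real obstacle. This is exactly the content of the tensorization theorem for Cheeger energies under doubling and Poincar\'e hypotheses, discussed in \S\ref{sss:TensorizationConseq}: the work of Ambrosio--Pinamonti--Speight (later sharpened by Eriksson-Bique--Rajala--Soultanis) guarantees that $\Ch[w,\mssd^\otym{},\mssm^\otym{}]$ is a quadratic form whenever both factor Cheeger energies are. Once this is in hand, Lemma~\ref{l:SLcSL} applied to $(\mbbX^\otym{},\Ch[w,\mssd^\otym{},\mssm^\otym{}])$ turns the $(\cSL{\T^\otym{}}{\mssm^\otym{}}{\mssd^\otym{}})$ delivered by Theorem~\ref{t:TensorSL} into $(\SL{\mssm^\otym{}}{\mssd^\otym{}})$, closing the proof.
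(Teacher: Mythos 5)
Your proposal is correct and coincides exactly with the paper's proof: Lemma~\ref{l:SLcSL} upgrades the continuous-Sobolev-to-Lipschitz hypothesis to the full Sobolev-to-Lipschitz property on each factor, Theorem~\ref{t:TensorSL} transfers it to the continuous version on the product, and Lemma~\ref{l:SLcSL} applied once more (after noting that doubling-and-Poincar\'e tensorizes) upgrades that back to the full Sobolev-to-Lipschitz property on the product. You also correctly spell out a step the paper's two-line proof leaves implicit, namely that the second application of Lemma~\ref{l:SLcSL} requires the product space to be infinitesimally Hilbertian with $\mcE^\otym{}=\Ch[w,\mssd^\otym{},\mssm^\otym{}]$, which under the doubling-and-Poincar\'e hypotheses is precisely the tensorization of Cheeger energies recalled in~\S\ref{sss:TensorizationConseq}.
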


\subsubsection{Tensorization of the Varadhan short-time asymptotics}
As a further consequence of the tensorization of both the Rademacher and the Sobolev-to-Lipschitz property, we further obtain the tensorization of Varadhan's short-time asymptotics for the heat flow.

Recall the notation for semigroups in~\S\ref{sss:Semigroups}. 
Whenever it exists, we denote by~$p_t(x,y)$ the \emph{heat kernel} on~$X$, i.e.\ the integral kernel of~$T_t$.
The kernel~$p_t^\otym{}(\mbfx,\mbfy)$ on~$X^\otym{}$ is defined analogously.

\begin{corollary}
Let~$\mbbX\eqdef (X,\mssd,\mssm)$ be an infinitesimally Hilbertian metric measure space satisfying~$(\DP{\mssd}{\mssm})$, and~\ref{ass:Polish}, with~$\Ch[\mssd,\mssm]$ satisfying~$(\SL{\mssm}{\mssd})$.
Further let~$\mbbX'$ be satisfying the same assumptions as~$\mbbX$.
Then, for every~$\mbfx\eqdef(x,x'), \mbfy\eqdef (y,y')\in X^\otym{}$,
\[
\lim_{t\downarrow 0} \tparen{-2t \log p_t^\otym{}(\mbfx,\mbfy)} =\mssd_{\mssm^\otym{}}(\mbfx,\mbfy)^2= \mssd^\otym{}(\mbfx,\mbfy)^2 = \mssd_\mssm(x,y)^2+\mssd_{\mssm'}(x',y')^2\fstop
\]
\end{corollary}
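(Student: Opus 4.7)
\emph{Plan.} The strategy is to reduce the asymptotic to the factors by exploiting the tensor-product structure of the heat semigroup, and then to combine pointwise Varadhan on each factor with the tensorization of the intrinsic distance already established in Corollary~\ref{c:IntrinsicDTensor}. First I would verify that the product $(X^\otym{},\mssd^\otym{},\mssm^\otym{})$ again satisfies $(\DP{\mssd^\otym{}}{\mssm^\otym{}})$ (a standard Fubini/slicing argument, together with the $\ell^1$--$\ell^2$ comparison); that, by~\cite{AmbPinSpe15}, infinitesimal Hilbertianity tensorizes in this setting, so that the product Cheeger energy $\Ch[\mssd^\otym{},\mssm^\otym{}]$ coincides with the product Dirichlet form $\mcE^\otym{}$ of Proposition~\ref{p:Products} built from $\mcE\eqdef\Ch[\mssd,\mssm]$ and $\mcE'\eqdef\Ch[\mssd',\mssm']$; and that, via~\eqref{eq:TensorSemigroup} together with the jointly continuous, Gaussian-bounded heat kernels produced by~$(\DP)$, the heat kernel on $X^\otym{}$ factorizes pointwise as $p_t^\otym{}(\mbfx,\mbfy)=p_t(x,y)\,p_t'(x',y')$.

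Second, I would apply Corollary~\ref{c:IntrinsicDTensor}: each factor satisfies $(\SL{\mssm}{\mssd})$ by hypothesis, and $(\Rad{\mssd}{\mssm})$ for $\Ch[\mssd,\mssm]$ under $(\DP)$ is the classical Cheeger-type Rademacher theorem on doubling--Poincaré spaces. Thus
\[
\mssd_{\mssm^\otym{}}=\mssd^\otym{}=\mssd_\mssm\otimes\mssd_{\mssm'} \fstop
\]
The identity $\mssd^\otym{}(\mbfx,\mbfy)^2=\mssd_\mssm(x,y)^2+\mssd_{\mssm'}(x',y')^2$ is then built into the definition of the $\ell^2$-product distance, after substituting $\mssd=\mssd_\mssm$ and $\mssd'=\mssd_{\mssm'}$, both of which follow from combining $(\Rad)$ and $(\SL)$ via~\eqref{eq:EquivalenceRadStoL}.

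Finally, the pointwise factorization of the heat kernel turns the short-time asymptotic into a sum,
\[
-2t\log p_t^\otym{}(\mbfx,\mbfy)=-2t\log p_t(x,y)-2t\log p_t'(x',y'),
\]
and on each factor the pointwise Varadhan formula $\lim_{t\downarrow 0}-2t\log p_t(x,y)=\mssd_\mssm(x,y)^2$ is classical in the $(\DP)$ setting (Sturm, Ramirez, Hino--Ramirez), since under these assumptions $\mssd=\mssd_\mssm$ is the intrinsic distance. Summing and combining with Step~2 gives the claim. The main obstacle is thus the import of two external ingredients: the tensorization of infinitesimal Hilbertianity under $(\DP)$, needed to identify $\Ch[\mssd^\otym{},\mssm^\otym{}]$ with $\mcE^\otym{}$ and thereby factorize the heat kernel, and the pointwise Varadhan formula on each factor. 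Once these are taken as given, the argument is pure bookkeeping on top of Corollary~\ref{c:IntrinsicDTensor}.
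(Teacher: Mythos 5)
Your proposal is correct but takes a genuinely different route from the paper's. The paper proves the first equality $\lim_{t\downarrow 0}(-2t\log p_t^\otym{})=\mssd_{\mssm^\otym{}}^2$ in one stroke, by verifying the hypotheses of Ramirez's lower bound~\cite[Thm.~4.1]{Ram01} and Sturm's Gaussian upper bound~\cite[Thm.~0.1]{Stu95b} directly for the \emph{product} structure $(\mbbX^\otym{},\mcE^\otym{})$: it checks~\ref{ass:Polish}, strong regularity (via Corollary~\ref{c:IntrinsicDTensor}), and $(\DP{\mssd^\otym{}}{\mssm^\otym{}})$ for the product, and then reads off the remaining equalities from Corollary~\ref{c:IntrinsicDTensor}. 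You instead factorize the heat kernel pointwise, $p_t^\otym{}(\mbfx,\mbfy)=p_t(x,y)\,p_t'(x',y')$, turn the short-time asymptotics into a \emph{sum} of one-factor asymptotics, invoke the pointwise Varadhan formula under $(\DP)$ on each factor separately (with $\mssd=\mssd_\mssm$ coming from $(\Rad{\mssd}{\mssm})$ for the Cheeger energy, which is automatic, together with the hypothesis $(\SL{\mssm}{\mssd})$), and only then apply Corollary~\ref{c:IntrinsicDTensor} and the algebraic $\ell^2$-product identity. Both routes are valid and both reduce to bookkeeping on top of Corollary~\ref{c:IntrinsicDTensor}; yours exploits the tensor structure more directly, at the price of having to make the pointwise kernel factorization explicit (which, as you note, follows from~\eqref{eq:TensorSemigroup} plus joint continuity of the factor kernels under $(\DP)$). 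A couple of your preliminary verifications are, however, unnecessary in your route: since $p_t^\otym{}$ is by the paper's convention the kernel of $T_t^\otym{}=T_t\otimes T_t'$, i.e.\ of the product form $\mcE^\otym{}$, the identification of $\Ch[\mssd^\otym{},\mssm^\otym{}]$ with $\mcE^\otym{}$ via~\cite{AmbPinSpe15} is not needed to factorize the kernel; and $(\DP{\mssd^\otym{}}{\mssm^\otym{}})$ is only needed if you apply Varadhan once to the product, as the paper does, rather than twice to the factors, as you do.
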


\begin{proof}
It suffices to verify the assumptions in~\cite[Thm.~4.1]{Ram01} and~\cite[Thm.~0.1]{Stu95b}.
Firstly,~$\mbbX^\otym{}$ satisfies~\ref{ass:Polish} since so do its factors.
On every space satisfying~\ref{ass:Polish}, our definition~\eqref{eq:IntrinsicD} of intrinsic distance coincides with the one in~\cite[p.~282]{Ram01} by~\cite[Prop.~2.31]{LzDSSuz20}.
Secondly, the Cheeger energy~$\Ch[\mssd^\otym{},\mssm^\otym{}]$ is strongly regular on~$\mbbX^\otym{}$ in light of Corollary~\ref{c:IntrinsicDTensor}.
Finally, the validity of~$(\DP{\mssd^\otym{}}{\mssm^\otym{}})$ follows from that of~$(\DP{\mssd}{\mssm})$ and~$(\DP{\mssd'}{\mssm'})$.
The inequality $\geq$ for the first equality then follows from~\cite[Thm.~4.1]{Ram01}.
The inequality $\leq$ is a consequence of the upper heat-kernel estimate~\cite[Thm.~0.1]{Stu95b}.
The second and third equalities follow from Corollary~\ref{c:IntrinsicDTensor}.
\end{proof}

\section{Direct integrals}
In this section we study the validity of the Rademacher and Sobolev-to-Lipschitz properties for direct integrals of Dirichlet forms as introduced in~\cite{LzDS20}, to which we refer the reader for a complete discussion; see also~\cite{LzDSWir21} for additional results, and~\cite{Kuw21} for a different but related decomposition.

We briefly recall the main definitions. For the sake of simplicity, we confine ourselves to the case of probability spaces. We expect that the result in this section can be adapted to the case of $\sigma$-finite measures by means of Propositions~\ref{p:Locality} and~\ref{p:LocalityProbab}.

Throughout this section, let~$\mbbX$ be satisfying~\ref{ass:Luzin} with~$\mssm$ a \emph{probability} measure, and~$(Z,\mcZ,\nu)$ be a countably generated probability space.
A \emph{disintegration} of~$\mssm$ over~$(Z,\mcZ,\nu)$ is a family~$\seq{\mssm_\zeta}_{\zeta\in Z}$ of non-zero measures on~$(X,\A)$ so that~$\zeta\mapsto \mssm_\zeta A$ is $\nu$-measurable for every~$A\in\A$ and
\begin{equation}\label{eq:Disintegration}
\mssm A= \int_Z \mssm_\zeta A \diff\nu(\zeta) \comma \qquad A\in\A\fstop
\end{equation}
A disintegration is \emph{separated} if there exists a family of pairwise disjoint sets~$\seq{A_\zeta}_{\zeta\in Z}\subset \A^\mssm$ so that~$A_\zeta$ is $\mssm_\zeta$-conegligible for $\nu$-a.e.~$\zeta\in Z$.

Henceforth, let~$\seq{\mssm_\zeta}_{\zeta\in Z}$ be a separated disintegration of~$\mssm$ over~$(Z,\mcZ,\nu)$ as above.
For a fixed Borel $\mssm$-representative~$\rep f$ of~$f\in L^2(\mssm)$, write~$f_\zeta\eqdef \tclass[\mssm_\zeta]{\rep f}$.
As it turns out, the choice of the $\mssm$-representative~$\rep f$ of~$f$ is immaterial, and the assignment~$\iota\colon f\mapsto (\zeta\mapsto f_\zeta)$ defines a unitary isomorphism providing a representation of~$L^2(\mssm)$ as the direct integral of the Hilbert spaces $L^2(\mssm_\zeta)$, see~\cite[Prop.~2.25(i)]{LzDS20}, viz.
\[
L^2(\mssm) \cong \dint{Z} L^2(\mssm_\zeta) \fstop
\]
Again since~$\seq{\mssm_\zeta}_{\zeta\in Z}$ is separated, such isomorphism is additionally a Riesz homomorphism, i.e.\ satisfying
\begin{equation}\label{eq:OrderIsoL1}
f\geq 0 \as{\mssm} \quad \iff \quad f_\zeta \geq 0 \as{\mssm_\zeta} \quad \forallae{\nu} \zeta\in Z \fstop
\end{equation}

\begin{lemma}\label{l:OrderPreserving}
Let~$f\in L^\infty(\mssm)\subset L^2(\mssm)$. Then,
\[
\norm{f}_{L^\infty(\mssm)}=\nu\text{-}\esssup\norm{f_\zeta}_{L^\infty(\mssm_\zeta)} \fstop
\]
\end{lemma}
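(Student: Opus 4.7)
The plan is to reduce the statement to the Riesz-homomorphism property~\eqref{eq:OrderIsoL1} applied to the functions $c-\abs{f}$ for suitable scalars~$c$. First I would record two elementary compatibilities. Since $f\in L^\infty(\mssm)\subset L^2(\mssm)$ and $\mssm$ is a probability measure, both $\abs{f}$ and every real constant belong to $L^2(\mssm)$, so differences of the form $c-\abs{f}$ are legitimate elements of $L^2(\mssm)$ to which~\eqref{eq:OrderIsoL1} applies. Moreover, sectioning commutes with taking the modulus: if $\rep f$ is a Borel $\mssm$-representative of~$f$, then $\abs{\rep f}$ is a Borel representative of $\abs{f}$, hence $\abs{f}_\zeta=\tclass[\mssm_\zeta]{\abs{\rep f}}=\abs{f_\zeta}$ for every $\zeta\in Z$.

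For the inequality $\nu\text{-}\esssup\norm{f_\zeta}_{L^\infty(\mssm_\zeta)}\leq \norm{f}_{L^\infty(\mssm)}$, set $M\eqdef\norm{f}_{L^\infty(\mssm)}<\infty$. Then $M-\abs{f}\geq 0$ $\mssm$-a.e., so by~\eqref{eq:OrderIsoL1}
\[
(M-\abs{f})_\zeta=M-\abs{f_\zeta}\geq 0\quad \mssm_\zeta\text{-a.e.}\quad \forallae{\nu}\zeta\in Z,
\]
whence $\norm{f_\zeta}_{L^\infty(\mssm_\zeta)}\leq M$ for $\nu$-a.e.~$\zeta$ and therefore the essential supremum is $\leq M$.

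For the reverse inequality, let $N\eqdef \nu\text{-}\esssup\norm{f_\zeta}_{L^\infty(\mssm_\zeta)}$. The case $N=\infty$ is trivial, so assume $N<\infty$ and fix $\eps>0$. By definition of~$N$, for $\nu$-a.e.~$\zeta\in Z$ we have $\abs{f_\zeta}\leq N$ $\mssm_\zeta$-a.e., hence
\[
(N+\eps-\abs{f})_\zeta=N+\eps-\abs{f_\zeta}\geq \eps>0\quad \mssm_\zeta\text{-a.e.}\quad \forallae{\nu}\zeta\in Z.
\]
Applying~\eqref{eq:OrderIsoL1} to $N+\eps-\abs{f}\in L^2(\mssm)$ yields $N+\eps-\abs{f}\geq 0$ $\mssm$-a.e., so $M\leq N+\eps$; letting $\eps\downarrow 0$ gives $M\leq N$.

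There is no real obstacle here: the whole argument is a direct transcription of the $\mssm$-a.e.\ order statement $\abs{f}\leq c$ into its fiberwise counterpart via the Riesz-homomorphism property. The only minor point to check is the compatibility of modulus with sectioning and that the functions involved lie in $L^2(\mssm)$, both of which follow from $f\in L^\infty(\mssm)$ and $\mssm X=1$.
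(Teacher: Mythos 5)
Your proof is correct, and for the inequality $\norm{f}_{L^\infty(\mssm)}\geq\nu\text{-}\esssup\norm{f_\zeta}_{L^\infty(\mssm_\zeta)}$ it is essentially identical to the paper's argument: both apply the implication ``$\Rightarrow$'' of~\eqref{eq:OrderIsoL1} to $M-\abs{f}\geq 0$. Where you genuinely diverge is in the reverse inequality $\norm{f}_{L^\infty(\mssm)}\leq N$. The paper argues by contradiction/level sets: for each $a<\norm{f}_{L^\infty(\mssm)}$ it exhibits a set $A_a$ of positive $\mssm$-measure where $\abs{f}\geq a$, and then uses the \emph{separatedness} of the disintegration (via the sets $A_\zeta$) together with~\eqref{eq:Disintegration} to find a set $B\in\mcZ$ of positive $\nu$-measure on which $\mssm_\zeta(A_a\cap A_\zeta)>0$ and hence $\norm{f_\zeta}_{L^\infty(\mssm_\zeta)}\geq a$. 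You instead invoke the ``$\Leftarrow$'' direction of the biconditional~\eqref{eq:OrderIsoL1} applied to $N+\eps-\abs{f}$, which is cleaner and more symmetric; it delegates the work that the paper does by hand (separatedness, level sets) to the already-cited Riesz-homomorphism statement. Note incidentally that your $\eps$ is superfluous: $N-\abs{f_\zeta}\geq 0$ $\mssm_\zeta$-a.e.\ for $\nu$-a.e.\ $\zeta$ already lets you apply~\eqref{eq:OrderIsoL1} directly, giving $\abs{f}\leq N$ $\mssm$-a.e. Your careful remarks on the compatibility of sectioning with modulus and with constants, and on $N-\abs{f}$ lying in $L^2(\mssm)$ because $\mssm$ is finite, are the right points to check and are argued correctly.
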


\begin{proof}
It follows from~\eqref{eq:OrderIsoL1} that if~$\abs{f}\leq a$ $\mssm$-a.e.\ for some constant~$a>0$, then~$\abs{f_\zeta}\leq a$ $\mssm_\zeta$-a.e.\ for $\nu$-a.e.~$\zeta\in Z$.
On the one hand, as a consequence,~$\norm{f_\zeta}_{L^\infty(\mssm_\zeta)}\leq \norm{f}_{L^\infty(\mssm)}$ for $\nu$-a.e.~$\zeta\in Z$, which shows the inequality~`$\geq$' in the assertion.
On the other hand, for every~$a<\norm{f}_{L^\infty(\mssm)}$ there exists a set~$A_a\in\A$ with~$\mssm A>0$ and~$\abs{f}\geq a$ $\mssm$-a.e.\ on~$A_a$.
Thus, by~\eqref{eq:Disintegration} and since the disintegration is separated, there exists a set~$B\in\mcZ$ of positive $\nu$-measure so that~$\mssm_\zeta (A_a\cap A_\zeta)>0$ for every~$\zeta\in B$, and~$\abs{f_\zeta}\geq a$ $\mssm_\zeta$-a.e.\ on~$A_a\cap A_\zeta$ again by~\eqref{eq:OrderIsoL1}.
Since~$\mssm_\zeta( A_a\cap A_\zeta)>0$ we have~$\norm{f_\zeta}_{L^\infty(\mssm_\zeta)}>a$, and since~$\nu B>0$ we conclude that~$\nu$-$\esssup \norm{f_\zeta}_{L^\infty(\mssm_\zeta)}>a$.
The inequality~`$\leq$' in the assertion follows by arbitrariness of~$a<\norm{f}_{L^\infty(\mssm)}$.
\end{proof}

The next definition of direct integrals of Dirichlet forms is taken from~\cite{LzDS20}.
We present here a simplified definition for probability spaces, and refer the reader to~\cite[\S2]{LzDS20} for a detailed discussion.

\begin{definition}[Direct integrals of Dirichlet forms,~{\cite[Dfn.s~2.11,~2.26, and~2.31]{LzDS20}}]
For each~$\zeta\in Z$ let~$(\mcE_\zeta,\dom_\zeta)$ be a quasi-regular strongly local Dirichlet form on~$L^2(\mssm_\zeta)$, and assume that~$\zeta\mapsto \dom_\zeta$ is a $\nu$-measurable field of Hilbert subspaces of~$\zeta\mapsto L^2(\mssm_\zeta)$.
The \emph{direct integral of Dirichlet forms} of the field~$\zeta\mapsto (\mcE_\zeta,\dom_\zeta)$ is the Dirichlet form
\begin{align}\label{eq:d:DirInt:0}
\begin{aligned}
\dom\eqdef&\ \set{f\in L^2(\mssm)\comma \int_Z \paren{\mcE_\zeta(f_\zeta) + \norm{f_\zeta}_{L^2(\mssm_\zeta)}^2} \diff\nu(\zeta) <\infty}\comma
\\
\mcE(f,g)\eqdef &\ \int_Z \mcE_\zeta(f_\zeta,g_\zeta)\diff\nu(\zeta) \comma \qquad f,g\in\dom \fstop
\end{aligned}
\end{align}
\end{definition}

In the rest of this section, we will work under the following standing assumption.

\begin{assumption}\label{ass:Superposition}
Let~$(\mbbX,\mcE)$ be a \emph{conservative} quasi-regular strongly local Dirichlet space, with~$\mbbX$ satisfying~\ref{ass:Luzin} and~$\mssm$ a \emph{probability} measure.
We assume that there exists a countably generated probability space~$(Z,\mcZ,\nu)$ with the following property:
for every~$\zeta\in Z$ there exists a quasi-regular strongly local Dirichlet space~$(\mbbX_\zeta,\mcE_\zeta)$ with~$\mbbX_\zeta=\mbbX$ for~$\nu$-a.e.~$\zeta\in Z$, and $(\mcE,\dom)$ is the direct-integral Dirichlet form of~$\zeta\mapsto (\mcE_\zeta,\dom_\zeta)$.
\end{assumption}

Everywhere in the following, without loss of generality up to removing a $\nu$-negligible set from~$Z$, we may and will assume that~$\mbbX_\zeta=\mbbX$ for \emph{every}~$\zeta\in Z$.
Whereas we require~$\mbbX_\zeta=\mbbX$, we stress that we do not require~$(\mbbX,\mcE_\zeta)$ to satisfy~\ref{ass:Luzin}, that is, we do \emph{not} require that~$\supp[\mssm_\zeta]=X$.

\begin{lemma}
Let~$(\mbbX,\mcE)$ be satisfying Assumption~\ref{ass:Superposition}. Then,
\begin{align}\label{eq:CdCDirInt:3}
f \in \DzLocB{\mssm} \iff f_\zeta\in \DzLocB{\mssm_\zeta} \quad \forallae{\nu} \zeta\in Z\fstop
\end{align}

\begin{proof}
Since~$(\mcE,\dom)$ is conservative by assumption, and since~$\mssm$ is a finite measure, we have~$\car\in \dom$, hence~$\car\in\dom_\zeta$ for every (without loss of generality, as opposed to: $\nu$-a.e.)~$\zeta\in Z$.
By Lemma~\ref{l:ModerateDomain} we have that
\begin{align}\label{eq:CdCDirInt:1}
\DzLocB{\mssm}=\DzB{\mssm} \qquad \text{and} \qquad \DzLocB{\mcE_\zeta \ \mssm_\zeta}=\DzB{\mcE_\zeta \ \mssm_\zeta} \comma \quad \zeta\in Z\fstop
\end{align}
By~\cite[\S{V.3}, Exercise~3.2(2), p.~216]{BouHir91} the form~$(\mcE,\dom)$ admits carr\'e du champ operator
\begin{equation*}
\cdc(f)(x)=\int \cdc_\zeta(f_\zeta)(x) \diff\nu(\zeta) \comma \qquad f\in \domb \fstop
\end{equation*}
By Lemma~\ref{l:OrderPreserving} we conclude
\begin{equation}\label{eq:CdCDirInt:2}
\norm{\cdc(f)}_{L^\infty(\mssm)}=\nu\text{-}\esssup \norm{\cdc_\zeta(f_\zeta)}_{L^\infty(\mssm_\zeta)}\comma \qquad f\in \domb \fstop \fstop
\end{equation}
Combining~\eqref{eq:d:DirInt:0} and~\eqref{eq:CdCDirInt:2} we have therefore that
\begin{align*}
\DzB{\mssm}= \set{f\in L^2(\mssm)\comma f_\zeta \in \DzB{\mssm_\zeta} \forallae{\nu} \zeta\in Z} \fstop
\end{align*}
By~\eqref{eq:CdCDirInt:1} we finally conclude that
\begin{align}
f \in \DzLocB{\mssm} \iff f_\zeta\in \DzLocB{\mssm_\zeta} \forallae{\nu} \zeta\in Z\fstop &\qedhere
\end{align}
\end{proof}
\end{lemma}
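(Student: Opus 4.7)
The plan is to reduce the broad local statement to a global one via Lemma~\ref{l:ModerateDomain}, and then exploit the explicit direct-integral formula for the carré du champ together with the Riesz-homomorphism property established in Lemma~\ref{l:OrderPreserving}. The first ingredient is available because $(\mcE,\dom)$ is conservative and $\mssm$ is a probability, so $\car\in\dom$; from the direct-integral description of $\dom$ in~\eqref{eq:d:DirInt:0} this forces $\car\in\dom_\zeta$ for $\nu$-a.e.~$\zeta$, and since~$\int_Z \mssm_\zeta X \diff\nu(\zeta)=\mssm X<\infty$ each~$\mssm_\zeta$ is finite, hence trivially absolutely $\mcE_\zeta$-moderate. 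Lemma~\ref{l:ModerateDomain} applied on both sides then collapses the asserted equivalence to the global statement
\[
f\in\DzB{\mssm} \iff f_\zeta\in\DzB{\mcE_\zeta\ \mssm_\zeta} \quad \forallae{\nu}\zeta\in Z \fstop
\]

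Next I would invoke the Bouleau--Hirsch formula for the carré du champ of a direct integral of Dirichlet forms (\cite[\S V.3, Exercise 3.2(2)]{BouHir91}): the form~$(\mcE,\dom)$ admits a carré du champ~$\cdc$ with $\cdc(f)=\int_Z \cdc_\zeta(f_\zeta)\diff\nu(\zeta)$ for every $f\in\domb$. Thus $f\in\DzB{\mssm}$ is equivalent to $\norm{\cdc(f)}_{L^\infty(\mssm)}\leq 1$. Under the Riesz-homomorphism decomposition $L^2(\mssm)\cong \dint{Z} L^2(\mssm_\zeta)$, the $\zeta$-component of $\cdc(f)$ is $\cdc_\zeta(f_\zeta)$ --- this is exactly where separation of the disintegration plays its role, ensuring that for $\mssm$-a.e.~$x$ there is a unique $\zeta(x)$ with $x\in A_{\zeta(x)}$ and that the integral formula reduces pointwise to $\cdc(f)(x)=\cdc_{\zeta(x)}(f_{\zeta(x)})(x)$.

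Applying the just-proved Lemma~\ref{l:OrderPreserving} to the bounded function $\cdc(f)$ then yields
\[
\norm{\cdc(f)}_{L^\infty(\mssm)}=\nu\text{-}\esssup_{\zeta\in Z} \norm{\cdc_\zeta(f_\zeta)}_{L^\infty(\mssm_\zeta)} \fstop
\]
The left-hand side is $\leq 1$ iff the essential supremum on the right is $\leq 1$, iff $\cdc_\zeta(f_\zeta)\leq 1$ $\mssm_\zeta$-a.e.\ for $\nu$-a.e.~$\zeta$, which is precisely $f_\zeta\in\DzB{\mcE_\zeta\ \mssm_\zeta}$ for $\nu$-a.e.~$\zeta$. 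Chaining this with the first reduction closes both implications.

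The main obstacle that I anticipate is the clean identification of the $\zeta$-fibers of~$\cdc(f)$ with~$\cdc_\zeta(f_\zeta)$, in a form compatible with Lemma~\ref{l:OrderPreserving}. Intrinsically this relies on the separation of the disintegration and on the measurability of the field $\zeta\mapsto\cdc_\zeta(f_\zeta)$, which in turn requires the Bouleau--Hirsch formula to be applicable in the generality of quasi-regular (not just regular) strongly local Dirichlet forms; the transfer to quasi-regular forms is standard but should be invoked explicitly. Aside from this measurability bookkeeping, the rest of the argument is a routine chain of equivalences.
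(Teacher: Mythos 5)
Your proof follows the same route as the paper's: reduce broad-local to global via Lemma~\ref{l:ModerateDomain} (using conservativity to get $\car\in\dom$, hence $\car\in\dom_\zeta$ $\nu$-a.e.), invoke the Bouleau--Hirsch formula for the carr\'e du champ of the direct integral, apply Lemma~\ref{l:OrderPreserving} to pass the $L^\infty$-bound through the disintegration, and finally combine with~\eqref{eq:d:DirInt:0} to match the $\dom$-membership. The small additions you make---that finiteness of $\mssm_\zeta$ (from $\int_Z \mssm_\zeta X\diff\nu(\zeta)=\mssm X<\infty$) yields the absolute moderance needed by Lemma~\ref{l:ModerateDomain}, and the remark on how separation makes $(\cdc(f))_\zeta=\cdc_\zeta(f_\zeta)$ compatible with Lemma~\ref{l:OrderPreserving}---are faithful elaborations of the same argument, not a different proof.
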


\begin{theorem}\label{t:RadDInt}
Let~$(\mbbX,\mcE)$ be satisfying Assumption~\ref{ass:Superposition}, and $\mssd\colon X^\tym{2}\to [0,\infty]$ be an extended pseudo-distance.
Then, the following are equivalent:
\begin{enumerate}[$(i)$]
\item $(\mbbX,\mcE)$ satisfies~$(\Rad{\mssd}{\mssm})$;
\item $(\mbbX,\mcE_\zeta)$ satisfies~$(\Rad{\mssd}{\mssm_\zeta})$ for~$\nu$-a.e.~$\zeta\in Z$.
\end{enumerate}

\begin{proof}
Fix~$\rep f\in \bLipu(\mssd,\A)$.
Assume first~$(\Rad{\mssd}{\mssm_\zeta})$ for $\nu$-a.e.~$\zeta\in Z$.
Then,~$\cdc_\zeta(f_\zeta)\leq 1$ $\mssm_\zeta$-a.e.\ for $\nu$-a.e.~$\zeta\in Z$ by the assumption, and combining this fact with~\eqref{eq:CdCDirInt:3} proves~$(\Rad{\mssd}{\mssm})$.

Vice versa, assume~$(\Rad{\mssd}{\mssm})$. Then~$\cdc(f)\leq 1$ $\mssm$-a.e.\ by assumption, and combining this fact with~\eqref{eq:CdCDirInt:3} proves~$(\Rad{\mssd}{\mssm_\zeta})$ for $\nu$-a.e.~$\zeta\in Z$.
\end{proof}
\end{theorem}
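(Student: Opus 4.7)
The plan is to reduce both sides of the asserted equivalence to fiberwise statements via the already-established equivalence \eqref{eq:CdCDirInt:3}, exploiting the fact that, under Assumption~\ref{ass:Superposition}, the broad local space of functions of unit energy for the direct-integral form disintegrates over the family $\seq{\mssm_\zeta}_{\zeta\in Z}$.

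First I would recall from the remark after Definition~\ref{d:RadStoL} that both $(\Rad{\mssd}{\mssm})$ and $(\Rad{\mssd}{\mssm_\zeta})$ may equivalently be tested on the sub-class $\bLipu(\mssd,\A)$: once $\rep f$ is additionally bounded, the conclusion $f\in\DzLoc{\mu}$ sharpens to $f\in\DzLocB{\mu}$, i.e.\ $f\in\dotloc{\dom}$ with $\cdc(f)\leq 1$ $\mu$-a.e., for $\mu\in\set{\mssm,\mssm_\zeta}$.  Fix then a Borel representative $\rep f\in\bLipu(\mssd,\A)$ and let $f\eqdef\tclass[\mssm]{\rep f}$ and $f_\zeta\eqdef\tclass[\mssm_\zeta]{\rep f}$ denote the associated classes. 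By definition, the assignment $\iota\colon f\mapsto (\zeta\mapsto f_\zeta)$ is the unitary isomorphism realizing $L^2(\mssm)$ as a direct integral, so the equivalence \eqref{eq:CdCDirInt:3} reads exactly
\[
f\in\DzLocB{\mssm}\iff f_\zeta\in\DzLocB{\mssm_\zeta}\quad\forallae{\nu}\zeta\in Z\fstop
\]

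For the implication $(\Rad{\mssd}{\mssm_\zeta})$ for $\nu$-a.e.\ $\zeta\in Z$ $\Rightarrow$ $(\Rad{\mssd}{\mssm})$, I would apply the fiberwise property to the fixed $\rep f$ to deduce $f_\zeta\in\DzLocB{\mssm_\zeta}$ for $\nu$-a.e.\ $\zeta$, and then invoke the right-to-left direction of \eqref{eq:CdCDirInt:3} to conclude $f\in\DzLocB{\mssm}$.  For the reverse implication, I would apply $(\Rad{\mssd}{\mssm})$ to $\rep f$ to get $f\in\DzLocB{\mssm}$, and then use the left-to-right direction of \eqref{eq:CdCDirInt:3} to produce a $\nu$-null set $N_{\rep f}\subset Z$ such that $f_\zeta\in\DzLocB{\mssm_\zeta}$ for every $\zeta\notin N_{\rep f}$, which is the asserted fiberwise Rademacher property.

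The argument is essentially a rephrasing of \eqref{eq:CdCDirInt:3} in Rademacher terms, and so there is no serious technical obstacle.  The only conceptual point worth flagging is the exchange of the universal quantifier over $\rep f\in\bLipu(\mssd,\A)$ with the $\nu$-almost-everywhere quantifier over $\zeta$: in the direction from fibers to $\mssm$ this is automatic, while in the opposite direction the exceptional $\nu$-null set $N_{\rep f}$ may a priori depend on~$\rep f$, as is customary in fiberwise formulations of pointwise-in-$f$ properties of Dirichlet spaces.
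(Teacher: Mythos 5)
Your proposal reproduces the paper's proof: both directions fix $\rep f\in\bLipu(\mssd,\A)$ and use~\eqref{eq:CdCDirInt:3} to translate $\cdc(f)\leq 1$ $\mssm$-a.e.\ into $\cdc_\zeta(f_\zeta)\leq 1$ $\mssm_\zeta$-a.e.\ for $\nu$-a.e.\ $\zeta$, and conversely, so the approach matches.

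Your remark on the quantifier exchange in the direction from $(\Rad{\mssd}{\mssm})$ to the fiberwise property is well placed and applies equally to the paper's own argument: as written, the proof produces, for each $\rep f$, an $\rep f$-dependent $\nu$-null set $N_{\rep f}$, and it does not explicitly extract a single exceptional $\nu$-null set valid simultaneously for all test functions in $\bLipu(\mssd,\A)$. If the assertion in~(ii) is read with a uniform exceptional set (as the phrasing and the typical use of such disintegration statements suggest), one would additionally need a separability and closure argument --- a countable test family in $\bLipu(\mssd,\A)$ together with stability of $\DzLocB{\mssm_\zeta}$ under the corresponding limits, so that the Rademacher inequality propagates from the countable family to all of $\bLipu(\mssd,\A)$ with a single null set in $Z$ --- and neither your proposal nor the paper supplies this step.
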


No statement analogous to Theorem~\ref{t:RadDInt} holds for the Sobolev-to-Lipschitz property.
This is discussed in some detail~\cite[Ex.~4.7]{LzDSSuz20} for the simplest possible direct-integral form, namely the direct sum of two forms each defined on a quasi-connected component of the resulting direct-sum space.
In~\cite[Ex.~4.7]{LzDSSuz20} however, the measures~$\mssm_\zeta$, $\zeta\in\set{\pm 1}$ are absolutely continuous w.r.t.~$\mssm$, and one might expect the situation to improve in the case when~$\mssm$ is properly `disintegrated' into $\mssm$-negligible fibers (as opposed to: split into $\mssm$-non-negligible components).
The next example shows that this is not the case, and again the Sobolev-to-Lipschitz property is not preserved by direct integration.

\begin{example}[cf., e.g.,~{\cite[Ex.~2.32]{LzDS20}}]
Let~$I\eqdef [0,1]$, and~$X\eqdef I^\tym{2}$ with standard topology, $2$-dimensional Euclidean distance~$\mssd_\tym{2}$, Borel $\sigma$-algebra, and endowed with the $2$-dimensional Lebesgue measure~$\mssm\eqdef \Leb^2$.
Consider the form
\begin{align*}
\dom\eqdef& \set{f\in L^2(X): f(\emparg,x_2)\in W^{1,2}(I) \quad \forallae{\Leb^1} x_2\in I}
\\
\mcE(f)\eqdef& \int_{I} \abs{\partial_1 f(x_1,x_2)}^2 \dLeb^2(x_1,x_2) \fstop
\end{align*}
The form~$(\mcE,\dom)$ is the direct integral over~$(Z,\nu)\eqdef (I,\Leb^1)$ of the forms
\[
\mcE_{\zeta}(f)\eqdef \int_I \abs{\diff f(\emparg,\zeta)}^2\dLeb^1\comma \qquad f\in \dom_\zeta\eqdef W^{1,2}(I) \comma
\]
with~$\mssm_\zeta\eqdef \Leb^1\otimes \delta_{\zeta}$.
In particular,~$\dom= L^2(Z; W^{1,2}(I))$.

Now, since~$\car\in\dom$ and~$\car_\zeta\in W^{1,2}(I)\defeq \dom_\zeta$ for every~$\zeta\in Z$, by~\eqref{eq:CdCDirInt:3} we have that~$\DzLocB{\mssm,\T}=\DzB{\mssm,\T}$, and analogously for~$\mssm_\zeta$ in place of~$\mssm$ for every~$\zeta\in Z$.
Thus, it suffices to show that there exists~$f\in \DzB{\mssm,\T}$ not admitting any $\mssd_{\tym{2}}$-Lipschitz $\mssm$-representative.
In fact, it is readily seen that $\zeta\mapsto \tparen{\mcE_\zeta, \dom_\zeta}$ is the ergodic decomposition of~$(\mcE,\dom)$ in the sense of~\cite{LzDS20}, hence
\[
\DzB{\mssm,\T}= \set{f\in \Cb(X) : f_\zeta\in \DzB{\mssm_\zeta,\T} \forallae{\nu} \zeta\in Z} \fstop
\]
In particular, every function of the form~$f=\car\otimes g$ with~$g\in \Cb(I)$ satisfies~$f\in \DzB{\mssm,\T}$ and~$\mcE(f)=0$.
Taking~$g\notin \bLipu(I,\mssd)$ shows that~$f$ is in general not $\mssd_{\tym{2}}$-Lipschitz.
\end{example}

A positive result in the spirit of Theorem~\ref{t:RadDInt} holds only for~$(\cSL{\T}{\mssm}{\mssd})$ under further assumptions, granting a consistent choice of representatives.

\begin{theorem}\label{t:cSLDInt}
Let~$(\mbbX,\mcE)$ be satisfying Assumption~\ref{ass:Superposition}, and $\mssd\colon X^\tym{2}\to [0,\infty]$ be an extended pseudo-distance.
Further assume that~$(\mbbX,\mcE_\zeta)$ satisfies~$(\cSL{\T}{\mssm_\zeta}{\mssd})$ and~$\supp[\mssm_\zeta]=X$ for every~$\zeta\in B$ for some~$B\in\mcZ$ with~$\nu B>0$.
Then, $(\mbbX,\mcE)$ satisfies~$(\cSL{\T}{\mssm}{\mssd})$.

\begin{proof}
Fix~$f\in \DzB{\mssm,\T}$.
Then~$f_\zeta\in \DzB{\mssm_\zeta,\T}$ for $\nu$-a.e.~$\zeta\in Z$ by~\eqref{eq:CdCDirInt:3}.
In particular, there exists~$\zeta\in B$ with~$f_\zeta\in \DzB{\mssm_\zeta,\T}$.
For such fixed~$\zeta$, by~$(\cSL{\T}{\mssm_{\zeta}}{\mssd})$, there exists a $\T$-continuous $\mssm_\zeta$-representative of~$f_\zeta$, additionally an element of~$\Lipu(\mssd,\T)$.
Since~$\mssm_\zeta$ has full $\T$-support, such representative is unique, again denoted by~$f_\zeta$, and we conclude that~$f_\zeta\equiv f$ everywhere on~$X$.
Thus,~$f\in \Lipu(\mssd,\T)$ and the proof is concluded.
\end{proof}
\end{theorem}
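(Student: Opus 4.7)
My plan is to reduce $(\cSL{\T}{\mssm}{\mssd})$ on the direct-integral form to the same property on a single well-chosen fibre $(\mbbX,\mcE_{\zeta_0})$, $\zeta_0\in B$, using the fibre characterisation \eqref{eq:CdCDirInt:3} of the broad local space of bounded energy together with the rigidity of $\T$-continuous representatives ensured by $\supp[\mssm_{\zeta_0}]=X$.

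First I fix an arbitrary $f\in\DzLoc{\mssm,\T}$. Truncating if necessary (as allowed in the discussion following Definition~\ref{d:RadStoL}), I may assume $f\in\DzLocB{\mssm,\T}$. Since $(\mcE,\dom)$ is conservative and $\mssm$ is a probability measure, we have $\car\in\dom$; moreover $\mssm$ is trivially absolutely $\mcE$-moderate, so Lemma~\ref{l:ModerateDomain} yields $f\in\DzB{\mssm,\T}$. Equation \eqref{eq:CdCDirInt:3} then gives $f_\zeta\in\DzLocB{\mssm_\zeta}$ for $\nu$-a.e.\ $\zeta\in Z$, and since $\nu B>0$ I select $\zeta_0\in B$ with $f_{\zeta_0}\in\DzLocB{\mssm_{\zeta_0}}$. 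Because $f$ is $\T$-continuous (hence Borel) and the assignment $f\mapsto f_\zeta$ is built from a Borel $\mssm$-representative of $f$, the very same function $f$ is a $\T$-continuous $\mssm_{\zeta_0}$-representative of the class $f_{\zeta_0}$, so $f_{\zeta_0}\in\DzLocB{\mssm_{\zeta_0},\T}$.

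Applying $(\cSL{\T}{\mssm_{\zeta_0}}{\mssd})$ on $(\mbbX,\mcE_{\zeta_0})$ then produces a $\T$-continuous Lipschitz representative of $f_{\zeta_0}$ with Lipschitz constant at most one; since $\supp[\mssm_{\zeta_0}]=X$, any two $\T$-continuous $\mssm_{\zeta_0}$-representatives coincide pointwise on $X$, so this Lipschitz representative must be $f$ itself, and I conclude $f\in\Lipu(\mssd,\T)$. The delicate point is the representative bookkeeping of the previous paragraph: the full-support hypothesis $\supp[\mssm_{\zeta_0}]=X$ is essential, for otherwise the Lipschitz representative supplied by the fibre Sobolev-to-Lipschitz property could differ from $f$ on a set of positive $\mssm$-measure, severing the link needed to transfer Lipschitz continuity back to $f$.
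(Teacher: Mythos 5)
Your proposal is correct and follows the paper's proof essentially verbatim: reduce via \eqref{eq:CdCDirInt:3} to a good fibre $\zeta_0\in B$, apply $(\cSL{\T}{\mssm_{\zeta_0}}{\mssd})$ there, and use $\supp[\mssm_{\zeta_0}]=X$ to force the Lipschitz representative to coincide with $f$ pointwise. Your added bookkeeping---the explicit truncation, the appeal to Lemma~\ref{l:ModerateDomain}, and the remark that the same $\T$-continuous Borel function $f$ serves as the $\mssm_{\zeta_0}$-representative of $f_{\zeta_0}$, so that membership in $\DzLocB{\mssm_{\zeta_0}}$ upgrades to $\DzLocB{\mssm_{\zeta_0},\T}$---spells out steps the paper leaves implicit, but does not alter the argument.
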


With a proof similar to the one of Theorem~\ref{t:cSLDInt}, one can show the following.
\begin{proposition}
Let~$(\mbbX,\mcE)$ be satisfying Assumption~\ref{ass:Superposition}, and $\mssd\colon X^\tym{2}\to [0,\infty]$ be an extended pseudo-distance.
Further assume that there exists~$\mssd\text{-}\supp[\mssm]=X$, and that~$(\mbbX,\mcE_\zeta)$ satisfies~$(\dcSL{\mssd}{\mssm_\zeta}{\mssd})$ and there exists~$\mssd\text{-}\supp[\mssm_\zeta]=X$ for every~$\zeta\in B$ for some~$B\in\mcZ$ with~$\nu B>0$.
Then, $(\mbbX,\mcE)$ satisfies~$(\dcSL{\mssd}{\mssm}{\mssd})$.
\end{proposition}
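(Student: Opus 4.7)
The plan is to mimic the proof of Theorem~\ref{t:cSLDInt} almost verbatim, exploiting the fact that the given $\mssd$-continuous $\A$-measurable representative~$\rep f$ of $f$ is \emph{simultaneously} an $\mssm_\zeta$-representative of $f_\zeta$ for every~$\zeta\in Z$ by the very definition $f_\zeta \eqdef \tclass[\mssm_\zeta]{\rep f}$. First, since the definition of~$(\dcSL{\mssd}{\mssm}{\mssd})$ is insensitive to truncation (as noted right after Definition~\ref{d:RadStoL}), we may assume with no loss of generality that~$\rep f$ is uniformly bounded, so that $f\in \DzLocB{\mssm}$. By~\eqref{eq:CdCDirInt:3} we then have $f_\zeta\in \DzLocB{\mssm_\zeta}$ for $\nu$-a.e.~$\zeta\in Z$; since $\nu B>0$, we may select some $\zeta_0\in B$ for which this inclusion holds, so that $f_{\zeta_0}\in \DzLocB{\mssm_{\zeta_0}}$ and $\rep f$ is a $\mssd$-continuous $\A$-measurable $\mssm_{\zeta_0}$-representative of $f_{\zeta_0}$.

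Applying~$(\dcSL{\mssd}{\mssm_{\zeta_0}}{\mssd})$ to~$f_{\zeta_0}$ yields some $\reptwo f_{\zeta_0}\in \Lipu(\mssd,\A^{\mssm_{\zeta_0}})$ that is an $\mssm_{\zeta_0}$-representative of~$f_{\zeta_0}$; in particular, $\reptwo f_{\zeta_0}\equiv \rep f$ $\mssm_{\zeta_0}$-a.e. Since $\mssd\text{-}\supp[\mssm_{\zeta_0}]=X$, the set $E\eqdef \{\reptwo f_{\zeta_0}= \rep f\}$ is $\mssm_{\zeta_0}$-conull and therefore $\mssd$-dense in~$X$ (every non-empty $\mssd$-open set meets~$E$ by the support condition). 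Both $\reptwo f_{\zeta_0}$ and~$\rep f$ being $\mssd$-continuous---the former by Lipschitzianity, the latter by assumption---their agreement on the $\mssd$-dense set~$E$ extends to all of~$X$, i.e.\ $\reptwo f_{\zeta_0}\equiv \rep f$ everywhere on~$X$. Consequently, $\rep f$ itself belongs to~$\Lipu(\mssd,\A)\subset \Lipu(\mssd,\A^\mssm)$ and is a $\mssd$-Lipschitz $\mssm$-representative of $f$, witnessing~$(\dcSL{\mssd}{\mssm}{\mssd})$ for~$(\mbbX,\mcE,\mssd,\mssm)$.

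The only delicate point---and arguably the reason behind the hypothesis $\mssd\text{-}\supp[\mssm_{\zeta_0}]=X$---is the passage from an a.e.\ identity under the fiber measure~$\mssm_{\zeta_0}$ to a pointwise identity on all of~$X$; the measurability issue is entirely painless, since the candidate Lipschitz representative we produce is the \emph{given} $\A$-measurable function~$\rep f$ itself, bypassing any concern about whether the fiber-wise representative~$\reptwo f_{\zeta_0}$ (which is \emph{a priori} only $\A^{\mssm_{\zeta_0}}$-measurable and therefore not obviously $\A^\mssm$-measurable) could be used directly. The assumption $\mssd\text{-}\supp[\mssm]=X$ does not enter the argument explicitly, but guarantees a posteriori that the chosen representative is the \emph{unique} $\mssd$-continuous representative of~$f$.
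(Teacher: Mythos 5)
Your proof is correct and is exactly the adaptation the paper has in mind when it defers to the proof of Theorem~\ref{t:cSLDInt}: you pass from~$f\in\DzLocB{\mssm}$ to a fiber~$f_{\zeta_0}\in\DzLocB{\mssm_{\zeta_0}}$ with~$\zeta_0\in B$ via~\eqref{eq:CdCDirInt:3}, apply the fiberwise $(\dcSL{\mssd}{\mssm_{\zeta_0}}{\mssd})$, and use the full $\mssd$-support of~$\mssm_{\zeta_0}$ to upgrade the $\mssm_{\zeta_0}$-a.e.\ agreement of the two $\mssd$-continuous functions~$\reptwo f_{\zeta_0}$ and~$\rep f$ to a pointwise identity, mirroring the role played by full $\T$-support in Theorem~\ref{t:cSLDInt}. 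Your closing observation that the hypothesis~$\mssd$-$\supp[\mssm]=X$ plays no active role in the argument (serving only to make the exhibited Lipschitz representative the unique $\mssd$-continuous one) is also accurate.
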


\begin{remark}
The assumption in Theorem~\ref{t:cSLDInt} that~$\mssm_\zeta$ have full $\T$-support for all~$\zeta$ in a set of positive $\nu$-measure may at first sight seem to contrast our standing assumption that~$\seq{\mssm_\zeta}_{\zeta\in Z}$ be separated.
This is however not the case, since~$\mssm_\zeta$ may be in general concentrated on sets much smaller than its support.
For instance, let~$Y$ be a locally compact Polish space, and denote by~$\Upsilon$ the configuration space over~$Y$, i.e.\ the space of all locally finite point measures on~$Y$, endowed with the vague topology and the corresponding Borel $\sigma$-algebra.
Further let~$\sigma$ be a Radon measure on~$Y$ and, for~$s\in\R_+$, let~$\pi_{s\sigma}$ be the Poisson measure on~$\Upsilon$ with intensity measure~$s\sigma$.
Finally, let~$\lambda$ be a diffuse Borel probability measure on~$\R_+$.
The mixed Poisson measure on~$\Upsilon$ with L\'evy measure~$\lambda$ and intensity measure~$\sigma$ is the probability measure~$\mu_{\lambda,\sigma}\eqdef \int_{\R_+} \pi_{s\sigma}\diff\lambda(s)$.
Then,~$\seq{\pi_{s\sigma}}_{s\in\R_+}$ is a \emph{separated} disintegration of~$\mu_{\lambda,\sigma}$ over~$(\R_+,\lambda)$, yet~$\mu_{\lambda,\sigma}$ and each~$\pi_{s\sigma}$ has \emph{full} topological support on~$\Upsilon$; for details on all these constructions see e.g.~\cite{LzDSSuz21}.
We refer the reader to~\cite[Ex.~3.13]{LzDS20} for further details and for the construction of a relevant direct integral of Dirichlet forms in this case.
\end{remark}

{\small

}

\end{document}